\def\thm@space@setup{%
 \thm@preskip=\parskip \thm@postskip=0pt
}
\def\th@remark{%
  \thm@headfont{\itshape}%
  \normalfont 
  \thm@preskip\parskip \thm@postskip=0pt
}
\newtheorem{Theorem}{Theorem}[section]
\newtheorem{Def}[Theorem]{Definition}
\newtheorem{Lem}[Theorem]{Lemma}
\newtheorem{Prop}[Theorem]{Proposition}
\newtheorem{Cor}[Theorem]{Corollary}
\newtheorem{Rem}[Theorem]{Remark}
\DeclareMathOperator{\Pol}{\mathrm{Pol}}
\DeclareMathOperator{\Func}{\mathrm{Fun}}
\DeclareMathOperator{\Fun}{\mathscr{F}}
\DeclareMathOperator{\Tr}{\mathrm{Tr}}
\DeclareMathOperator{\bidual}{\mathrm{bd}}
\newcommand{\C}{\mathbb{C}}
\newcommand{\R}{\mathbb{R}}
\newcommand{\N}{\mathbb{N}}
\newcommand{\Z}{\mathbb{Z}}
\newcommand{\Hsp}{\mathcal{H}}
\newcommand{\rd}{\mathrm{d}}
\newcommand{\evv}{\mathrm{ev}}
\newcommand{\Ww}{\mathcal{W}}
\newcommand{\Vv}{\mathcal{V}}
\newcommand{\mfsu}{\mathfrak{su}}
\newcommand{\mfg}{\mathfrak{g}}
\newcommand{\mfk}{\mathfrak{k}}
\newcommand{\mfh}{\mathfrak{h}}
\newcommand{\mfb}{\mathfrak{b}}
\newcommand{\mft}{\mathfrak{t}}
\newcommand{\mfa}{\mathfrak{a}}
\newcommand{\mfn}{\mathfrak{n}}
\newcommand{\mbK}{\mathbf{K}}
\newcommand{\mbL}{\mathbf{L}}
\newcommand{\mbE}{\mathbf{E}}
\newcommand{\mbF}{\mathbf{F}}
\newcommand{\mbX}{\mathbf{X}}
\newcommand{\msR}{\mathscr{R}}
\newcommand{\acsigma}{\sigma\!\!\!^{_{\textrm{ }\!\!\!\;_\mathbb{\shortmid}}}\,}
\DeclareMathOperator{\compact}{\mathrm{c}}
\DeclareMathOperator{\Res}{\mathrm{Res}}
\newcommand{\blhd}{\blacktriangleleft}
\newcommand{\can}{\mathrm{can}}
\newcommand{\opp}{\mathrm{op}}
\newcommand{\cop}{\mathrm{cop}}
\newcommand{\End}{\mathrm{End}}
\newcommand{\Ad}{\mathrm{Ad}}
\newcommand{\id}{\mathrm{id}}
\newcommand{\loc}{\mathrm{ext}}
\newcommand{\wt}{\mathrm{wt}}
\newcommand{\rwt}{\mathrm{rwt}}
\newcommand{\lwt}{\mathrm{lwt}}
\newcommand{\ext}{\mathrm{ext}}
\newcommand{\hol}{\mathrm{h}}
\newcommand{\vNtimes}{\bar{\otimes}}
\title{$I$-factorial quantum torsors and Heisenberg algebras of quantized universal enveloping type}
\author{Kenny De Commer\thanks{Vakgroep wiskunde, Vrije Universiteit Brussel (VUB), B-1050 Brussels, Belgium, email: {\tt kenny.de.commer@vub.ac.be}} \thanks{Partially supported by the FWO grant G.0251.15N and the grant H2020-MSCA-RISE-2015-691246-QUANTUM DYNAMICS}}
\date{}
\begin{document}
\maketitle
\date

\begin{abstract}\noindent We introduce a notion of \emph{$I$-factorial quantum torsor}, which consists of an integrable ergodic action of a locally compact quantum group on a type $I$-factor such that also the crossed product is a type $I$-factor. We show that any such $I$-factorial quantum torsor is at the same time a $I$-factorial quantum torsor for the dual locally compact quantum group, in such a way that the construction is involutive. As a motivating example, we show that quantized compact semisimple Lie groups, when amplified via a crossed product construction with the function algebra on the associated weight lattice, admit $I$-factorial quantum torsors, and give an explicit realization of the dual quantum torsor in terms of a deformed Heisenberg algebra for the Borel part of a quantized universal enveloping algebra. 
\end{abstract}

\section*{Introduction}

The theory of \emph{locally compact quantum groups} \cite{KV00,KV03} provides a vast generalization of the classical theory of locally compact groups. Being steeped in the theory of von Neumann algebras and Tomita-Takesaki theory, it is the proper setting in which to study quantum symmetries such as arising for example from subfactor theory \cite{EnN96,Eno98,Vae01}. One of the main attributes of the theory is the existence of a \emph{generalized Pontryagin duality theory}, allowing for a uniform treatment of many classical group theoretical results and constructions. 

In this article, we will be concerned with the theory of \emph{Galois objects}. These structures first arose in the setting of Hopf algebras as the proper generalization of the notion of \emph{torsor}, see \cite{Sch04} for an overview. A theory of Galois objects in the analytic framework of locally compact quantum groups was developed in \cite{DeC11}. They can be defined as von Neumann algebras with an ergodic and integrable action of a locally compact quantum group such that the crossed product von Neumann algebra is a type $I$-factor. In this paper we will study Galois objects which are \emph{themselves} type $I$-factors. These Galois objects will be called \emph{$I$-factorial Galois objects} or \emph{I-factorial quantum torsors}. Our main result shows that the natural adjoint action of the dual quantum group is again a $I$-factorial Galois object, and that moreover this construction is involutive. 

This situation is not as uncommon as it may seem on first sight. In the setting of finite-dimensional Hopf algebras, such structures were studied in \cite[Section 5]{AEGN02}. In the analytic setting, there is a canonical class of examples whereby the \emph{Heisenberg double} of a locally compact quantum group \cite{VV03} becomes a $I$-factorial quantum torsor for the Cartesian product of the quantum group with its dual. However, our main focus in this article will be on a class of $I$-factorial quantum torsors which are of Heisenberg type in the \emph{complex analytic} setting, but not in the \emph{real analytic} setting: they arise as a (deformed) Heisenberg double of a complex Hopf algebra, and come equipped with a $*$-structure swapping the holomorphic and the anti-holomorphic parts. This purely algebraic data contains however insufficient spectral information to integrate this structure to the locally compact quantum setting, and a more hands-on approach needs to be taken for this, using the general theory developed in the first part. 

The complex Hopf algebra that we are concerned with is the quantized universal enveloping algebra $U_q(\mfb)$ of a Borel subalgebra $\mfb$ of a semisimple complex Lie algebra $\mfg$. Indeed, this Hopf algebra has a natural skew self-pairing, giving rise to a \emph{Heisenberg double} and a \emph{Drinfeld double}, the latter being an amplification of the \emph{quantized universal enveloping algebra} $U_q(\mfg)$.  The key observation, allowing to connect the algebraic with the analytic framework, will be a variation on the fact that the Heisenberg algebra of the nilpotent part of the quantized Borel algebra can be realized as the function algebra on the (big open) \emph{quantum Schubert cell} associated to $\mfg$, see \cite[Section 10]{Jos95} and the more recent works \cite{Yak10,KOY13,DCN15}.

The precise contents of this paper are as follows. 

In the \emph{first four sections}, we deal with the general analytic theory. In the \emph{first section}, we recall the notion of a locally compact quantum group \cite{KV03}.  In the \emph{second section}, we introduce the notion of \emph{Galois object} for a locally compact quantum group \cite{DeC11}, and recall some of the main results. In the  \emph{third section} we introduce \emph{$I$-factorial Galois objects}, and prove our main duality theorem, Theorem \ref{TheoDualMain}. In the short fourth section, we briefly consider the example of the Heisenberg double in the context of quantum group von Neumann algebras.

In the \emph{next four sections}, we deal with the specific algebraic theory of quantized universal enveloping algebras and their duals. This part can be read independently of the first four sections. In the \emph{fifth section}, we recall the main definitions and results concerning quantized enveloping algebras and their duals. In the \emph{sixth section}, we amplify the quantized universal enveloping algebras by their Cartan parts, and realize their duals as \emph{multiplier Hopf $*$-algebras} \cite{VDae94}. In the \emph{seventh section}, we treat the theory of Galois objects in the algebraic setting of Hopf algebras \cite{Sch04} and multiplier Hopf $*$-algebras \cite{DeC09}, and show how the above amplified quantized universal enveloping algebras as well as their duals admit natural Galois objects. We also show how these two Galois objects can be brought into correspondence with each other. In the \emph{eighth section}, we recall some of the Levendorskii-Soibelman representation theory of duals of quantized universal enveloping algebras, as well as of the deformed Heisenberg $*$-algebra of the quantized universal enveloping Borel subalgebras. 

Finally, in the \emph{ninth section} we tie the above parts together, and show by means of our general theory, developed in the first part, how the above algebraic constructions integrate to the analytic setting. The \emph{tenth section} provides an outlook to further research. 

\emph{General notations}

We denote by $\otimes$ both the tensor product of vector spaces and Hilbert spaces -- it should be clear from the context which tensor product is used. We denote by $\vNtimes$ the spatial tensor product of von Neumann algebras. We further denote by \[\Sigma: V\otimes W\rightarrow W\otimes V,\quad v\otimes w \mapsto w\otimes v\] the flip map for vector spaces as well as Hilbert spaces, while for algebras $A$,$B$ we will rather use the notation \[\varsigma: A\otimes B\rightarrow B\otimes A,\quad a\otimes b\mapsto b\otimes a,\] and similarly for tensor products of von Neumann algebras. 

For $\Hsp$ a Hilbert space we let the scalar product be linear in the second factor, and denote 
\[
\omega_{\xi,\eta}(x) = \langle \xi,x\eta\rangle,\qquad x\in B(\Hsp),\xi,\eta\in \Hsp.
\] 
We also make frequent use of the leg numbering notation as is common in quantum group theory. For example, if $X\in B(\Hsp^{\otimes 2})$, then 
\[
X_{13} = (\id\otimes \varsigma)(X\otimes 1) = (1\otimes \Sigma)(X\otimes 1)(1\otimes \Sigma) \in B(\Hsp^{\otimes 3}).
\] 

When $M$ is a von Neumann algebra, we denote by $M^+$ its positive cone, by $M_*$ its pre-dual, by $M_*^+$ the positive cone of its predual, and by $\mathcal{S}_*(M)$ its space of normal states. For $\omega \in M_*$ we denote 
\[
\overline{\omega}(x) = \overline{\omega(x^*)}.
\] 
We denote by $L^2(M) = (L^2(M),\pi_M,J_M,\mathfrak{P})$ a \emph{standard form} of $M$ \cite[Chapter IX, Definition 1.13]{Tak2}, where $\pi_M$ is the standard representation of $M$ on $L^2(M)$, where $J_M = J_M^* = J_M^{-1}$ is the \emph{modular conjugation} and where $\mathfrak{P}$ is the positive cone. In practice we will suppress the notation $\pi_M$ and view $M\subseteq B(L^2(M))$. 

We also use standard notation for weight theory: if $\varphi$ is a normal, semi-finite, faithful (nsf) weight on a von Neumann algebra $M$, we denote 
\[
\mathscr{N}_{\varphi} = \{x\in M\mid \varphi(x^*x)<\infty\},\quad \mathscr{M}_{\varphi} = \mathscr{N}_{\varphi}^*\mathscr{N}_{\varphi},\quad \mathscr{M}_{\varphi}^+ = \mathscr{M}_{\varphi}\cap M^+.
\] 
We denote by
\[
\Lambda_{\varphi}: \mathscr{N}_{\varphi} \rightarrow L^2(M)
\]
the canonical \emph{GNS-map}.

For $A$ an algebra and $V,W$ subspaces of $A$, we denote \[VW = \{\sum_i v_i w_i \mid v_i\in V,w_i\in W\}.\] The same applies when $W$ is a submodule of a module for $A$.

When working with non-unital algebras, we will always assume that they  imbed into their multiplier algebra $M(A)$ and that the identity map $\iota: A \rightarrow A$ is non-degenerate, see \cite[Appendix]{VDae94} for a brief discussion of this terminology. 

For $(H,\Delta)$ a Hopf algebra, we will use the sumless Sweedler notation \[\Delta(x) = x_{(1)}\otimes x_{(2)},\qquad x\in H.\]
For $\theta_1,\theta_2$ two linear maps from $H$ into vector spaces $V,W$, we denote \[\theta_1* \theta_2 = (\theta_1\otimes \theta_2)\circ \Delta: H \rightarrow V\otimes W\] for their convolution product.

\section{Quantum group von Neumann algebras} 

\begin{Def}\cite[Definition 1.1]{KV03} A \emph{quantum group von Neumann algebra} consists of a von Neumann algebra $M$ together with a unital normal $*$-homomorphism \[\Delta: M \rightarrow M\vNtimes M\] satisfying the coassociativity condition \[(\Delta\otimes \id)\Delta = (\id\otimes \Delta)\Delta\] and for which there exist normal, semi-finite, faithful (nsf) weights \[\varphi,\psi: M^+ \rightarrow [0,+\infty]\] such that for all $x\in M^+$ and all $\omega\in \mathcal{S}_*(M)$ \[\varphi((\omega\otimes \id)\Delta(x)) = \varphi(x),\qquad \psi((\id\otimes \omega)\Delta(x)) = \psi(x).\] These properties are called respectively \emph{left-invariance} and \emph{right-invariance}.
\end{Def} 

The nsf weights $\varphi,\psi$ are unique up to a scalar multiple \cite[Theorem 2.5]{VD14}. If we fix $\varphi$, there is a unique right invariant nsf weight $\psi = \varphi^R$ such that the Connes cocycle derivative of $\psi$ with respect to $\varphi$ satisfies \[(D\psi : D\varphi)_t = \nu^{it^2/2}\delta^{it}\] for some scalar $\nu>0$ and some operator $\delta>0$ affiliated with $M$ which is \emph{group-like},  \[\Delta(\delta^{it}) = \delta^{it}\otimes \delta^{it},\] see for example the proof of \cite[Theorem 2.11]{VD14}. We will in the following always assume $\psi = \varphi^R$. Then $\nu$ and $\delta$ are independent of the choice of $\varphi$, and are called respectively  the \emph{scaling constant} and \emph{modular element}. We will use as a shorthand notation
\[J  = J_M,\qquad \Lambda = \Lambda_{\varphi},\qquad \Lambda^R = \Gamma =  \Lambda_{\psi}.\] The GNS-maps $\Lambda$ and $\Lambda^R$ are then related by \[\Lambda^R(x) = \nu^{-i/8}\Lambda(x\delta^{1/2}),\] with $x$ an element in $M$ such that $x\delta^{1/2}$ closes to a bounded operator in $\mathscr{N}_{\varphi}$.

Associated to $(M,\Delta)$ we have the \emph{unitary left and right regular corepresentations} \[W \in M\vNtimes B(L^2(M)),\qquad V\in B(L^2(M))\vNtimes M,\] uniquely determined by the fact that for all $\omega \in B(L^2(M))_*$ one has 
\[(\omega \otimes \id)(W^*) \Lambda(x) = \Lambda((\omega\otimes \id)\Delta(x)),\qquad x\in \mathscr{N}_{\varphi},\]
\[(\id \otimes \omega)(V) \Gamma(x) = \Gamma((\id\otimes \omega)\Delta(x)),\qquad x\in \mathscr{N}_{\psi}.\] 
They are multiplicative unitaries in the sense that $W$ and $V$ are unitaries satisfying the pentagon equations \begin{equation}\label{EqPentWV} W_{12}W_{13}W_{23} = W_{23}W_{12},\qquad V_{12}V_{13}V_{23} = V_{23}V_{12},\end{equation} see \cite[Theorem 1.2]{KV03}. They are independent of the precise choice of normalisation for $\varphi$ or $\psi$ and implement $\Delta$, \begin{equation}\label{EqImplMU} \Delta(x) = W^*(1\otimes x)W = V(x\otimes 1)V^*.\end{equation} Together with the pentagon equations this implies \begin{equation}\label{EqMultDel} (\Delta\otimes \id)(W) = W_{13}W_{23},\qquad (\id\otimes \Delta)(V) = V_{12}V_{13}.\end{equation}

The quantum group von Neumann algebra $(M,\Delta)$ can be modified by changing the product or coproduct, cf. \cite[Section 4]{KV03}. First of all, we can flip the coproduct \[\Delta^{\opp} = \varsigma \circ \Delta,\qquad M^{\cop}= (M,\Delta^{\opp}).\] We endow this with the respective left and right invariant nsf weights $\psi = \varphi^R$ and $\varphi = \psi^{R}$, so that the associated multiplicative unitaries become  \begin{equation}\label{EqMultUnCop}W^{\cop} = \Sigma V^*\Sigma,\qquad V^{\cop} = \Sigma W^*\Sigma,\end{equation} with $\Sigma$ the flip map on $L^2(M)\otimes L^2(M)$. 

On the other hand, we can also flip the product, which by Tomita theory can be realized by taking the commutant, \[M^{\prime}  =  JMJ.\] We can endow $M^{\prime}$ with the coproduct \[\Delta^{\prime}(x) = (J\otimes J)\Delta(JxJ)(J\otimes J),\] so that invariant weights are given by \[\varphi^{\prime}(x) = \varphi(Jx^*J),\qquad \psi'(x) =\varphi^{\prime R}(x) = \psi(Jx^*J).\] Choosing as the GNS-maps \[\Lambda^{\prime}(x) = J\Lambda(JxJ),\qquad \Gamma'(x) = \Lambda^{\prime R}(x) = J\Gamma(JxJ),\] we have the associated multiplicative unitaries \begin{equation}\label{EqMultUnOp} W^{\prime} = (J\otimes J)W(J\otimes J),\qquad V^{\prime} = (J\otimes J)V(J\otimes J).\end{equation}

 Quantum group von Neumann algebras also admit a duality theory. 

\begin{Theorem}\cite[Definition 1.5]{KV03} The $\sigma$-weak closure of \[\{(\omega\otimes \id)(W)\mid \omega \in M_*\}\subseteq B(L^2(M))\] is a von Neumann algebra, and defines a quantum group von Neumann algebra $\hat{M} = M^{\wedge}$ by means of the comultiplication \[\hat{\Delta}(x) = \Sigma W(x\otimes 1)W^*\Sigma.\]
\end{Theorem}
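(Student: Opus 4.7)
My approach will be to first establish the algebra structure, then the coproduct, and finally the invariant weights, with the last step being by far the most technical.

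\textbf{Step 1: $\hat M$ is a von Neumann algebra.} The easy half is closure under multiplication. For $\omega_1,\omega_2 \in M_*$, the second pentagon identity in \eqref{EqMultDel} gives
\[
(\omega_1\otimes \id)(W)\cdot (\omega_2\otimes \id)(W) = (\omega_1\otimes \omega_2\otimes \id)(W_{13}W_{23}) = (\omega_1*\omega_2\otimes \id)(W),
\]
where $\omega_1*\omega_2 = (\omega_1\otimes \omega_2)\circ \Delta \in M_*$ by normality of $\Delta$. Hence the slice set is already a subalgebra. Closure under involution is the more delicate half: one needs to show $(\omega\otimes \id)(W)^* = (\bar\omega\otimes \id)(W^*)$ can be approximated $\sigma$-weakly by slices of $W$ itself. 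I would invoke the manageability of $W$ in the sense of Woronowicz (a consequence of the existence of $\varphi$ and Tomita–Takesaki theory applied to it), which produces a positive self-adjoint operator $Q$ on $L^2(M)$ and a unitary $\widetilde W$ implementing the scaling group, together with the identity giving the polar decomposition of an unbounded antipode $\hat S$ on the slice subalgebra with $\hat S((\omega\otimes\id)(W)) = (\omega\otimes\id)(W^*)$. Applying $\hat S^*\!\hat S^{-1}$ (the square of the unitary part) gives the required approximation of $(\bar\omega\otimes\id)(W^*)$ by slices of $W$.

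\textbf{Step 2: the coproduct.} Set $\hat\Delta(x) = \Sigma W(x\otimes 1)W^*\Sigma$; this is trivially a unital normal $*$-homomorphism $B(L^2(M))\to B(L^2(M))^{\vNtimes 2}$. Coassociativity is a direct consequence of the pentagon equation for $W$: rewriting $\hat\Delta$ in terms of $\hat W := \Sigma W^*\Sigma$ gives $\hat\Delta(x) = \hat W^*(1\otimes x)\hat W$, and the pentagon equation $\hat W_{12}\hat W_{13}\hat W_{23} = \hat W_{23}\hat W_{12}$ for $\hat W$ (a formal consequence of that for $W$) yields $(\hat\Delta\otimes\id)\hat\Delta = (\id\otimes \hat\Delta)\hat\Delta$ by a two-line manipulation. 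To see $\hat\Delta(\hat M)\subseteq \hat M\vNtimes \hat M$, I would check on the generators $x=(\omega\otimes\id)(W)$: a pentagon computation gives $(\id\otimes \hat\Delta)(W)=W_{12}W_{13}$, so slicing at leg $1$ writes $\hat\Delta(x)$ as a $\sigma$-weakly convergent combination of elementary tensors of slices of $W$, which lie in $\hat M\odot \hat M$.

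\textbf{Step 3: invariant weights — the main obstacle.} The nontrivial content is producing left- and right-invariant nsf weights on $\hat M$. I would follow the standard construction by identifying a dense subspace $\mathcal I\subseteq M_*$ consisting of those $\omega$ for which there is a (necessarily unique) vector $\xi(\omega)\in L^2(M)$ with
\[
\langle \xi(\omega),\Lambda(x)\rangle = \omega(x^*), \qquad x\in \mathscr N_\varphi,
\]
and then declaring $\hat\Lambda((\omega\otimes\id)(W)) := \xi(\omega)$. The hard parts are then: (i) showing that this prescription is closable and that the associated sesquilinear form defines an nsf weight $\hat\varphi$ on $\hat M$; (ii) establishing left-invariance $(\id\otimes \hat\varphi)\hat\Delta(y) = \hat\varphi(y)\cdot 1$ for $y\in \hat M^+$. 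Both rely crucially on the modular/manageability machinery (implementation of the modular group of $\varphi$ by a unitary $P$ on $L^2(M)$ commuting appropriately with $W$, giving the formula $W^*(\xi\otimes P\eta) = \widetilde W(\xi\otimes P\eta)\dots$), and on the interplay of $W$ with the GNS-map $\Lambda$. Right-invariance can then be obtained either by repeating the argument with $V$ in place of $W$, or more efficiently by transporting $\hat\varphi$ through the unitary antipode of $\hat M$ (produced by the polar decomposition from Step 1). This last step is where the bulk of the technical work lies; the algebraic structure in Steps 1–2 is essentially forced by the pentagon equation, but the existence of invariant weights is the substantive analytic input.
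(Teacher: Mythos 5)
The paper offers no proof of this statement: it is quoted verbatim from \cite[Definition 1.5]{KV03} and used as a black box, so there is nothing internal to compare against. Your outline is, in substance, the standard Kustermans--Vaes construction from that reference: the convolution computation in Step 1, the $*$-closure via the polar decomposition of the antipode (equivalently, manageability of $W$, which is legitimately available since it is derived from $\varphi$ and Tomita--Takesaki theory applied to $M$ -- no circularity), the pentagon argument for $\hat\Delta$, and the GNS-type construction of $\hat\varphi$ from the set $\mathscr{I}$, which is exactly the set the paper itself introduces right after the theorem statement. You also correctly identify Step 3 as the genuinely hard analytic content. Two small slips, neither fatal: first, with $\hat\Delta(x)=\Sigma W(x\otimes 1)W^*\Sigma$ and the paper's convention $(\Delta\otimes\id)(W)=W_{13}W_{23}$, the pentagon gives $(\id\otimes\hat\Delta)(W)=W_{13}W_{12}$, not $W_{12}W_{13}$ (the order reverses; this does not affect the containment $\hat\Delta(\hat M)\subseteq\hat M\vNtimes\hat M$, since slicing leg $1$ still produces elementary tensors of slices of $W$). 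Second, your defining relation $\langle\xi(\omega),\Lambda(x)\rangle=\omega(x^*)$ has the arguments of the inner product in the opposite order from the paper's $\omega(y^*)=\langle\Lambda(y),\xi_\omega\rangle$; with the scalar product linear in the second variable, your version would make $\omega\mapsto\xi(\omega)$ conjugate-linear rather than linear. Apart from these, the plan is the right one, though of course Steps 1 and 3 remain a roadmap rather than a proof -- the closability of $\hat\Lambda$, the verification that the resulting form is an nsf weight, and the left-invariance computation are precisely the pages of analysis that \cite{KV03} (and \cite{VD14}) supply.
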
 
One calls $\hat{M}= (\hat{M},\hat{\Delta})$ the \emph{(Pontryagin) dual} of $M= (M,\Delta)$. 

By \cite[Proposition 2.15]{KV03} the regular corepresentations of $\hat{M}$ are given by \begin{equation}\label{EqFormDual}\hat{W} = \Sigma W^*\Sigma,\qquad \hat{V} = (J\otimes J)W(J\otimes J).\end{equation}

 With $\mathscr{I}$ the set of $\omega \in M_*$ for which there exists a (necessarily unique) vector $\xi_{\omega}\in L^2(M)$ such that \[\omega(y^*) =\langle \Lambda(y),\xi_{\omega}\rangle,\qquad \forall y \in \mathscr{N}_{\varphi},\] one can show that 
there exists a unique left invariant nsf weight \[\hat{\varphi}: \hat{M}^+\rightarrow [0,+\infty]\] such that
 the $\sigma$-strong-norm closure $\hat{\Lambda}$ of \[\hat{M}\supseteq \{(\omega\otimes \id)(W)\mid \omega \in \mathscr{I}\} \rightarrow L^2(M),\quad (\omega\otimes \id)(W) \mapsto \xi_{\omega}\] is the GNS-map of $\hat{\varphi}$, see e.g.~ \cite[Theorem 3.13]{VD14}. In the following we take $\hat{\varphi}$ as a canonical left invariant weight on $\hat{M}$, and identify $L^2(\hat{M}) = L^2(M)$.

\begin{Theorem}\cite[Theorem 3.18]{VD14} The following \emph{Pontryagin biduality} holds:  \[M^{\wedge\wedge} = M.\] 
\end{Theorem}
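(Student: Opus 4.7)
The plan is to compute $M^{\wedge\wedge}$ by unwinding the definition of the dual twice, making essential use of the formula $\hat{W} = \Sigma W^*\Sigma$ from \eqref{EqFormDual}. By the definition of the dual applied to $\hat{M}$ and its own left regular corepresentation $\hat{W}$, the bidual $M^{\wedge\wedge}$ is the $\sigma$-weak closure of $\{(\omega\otimes\id)(\hat{W})\mid \omega\in \hat{M}_*\}$. Substituting the formula gives
\[
(\omega \otimes \id)(\hat{W}) = (\omega \otimes \id)(\Sigma W^* \Sigma) = (\id \otimes \omega)(W^*),
\]
so $M^{\wedge\wedge}$ is the $\sigma$-weakly closed algebra generated by the right slices of $W^*$ (equivalently $W$) by normal functionals on $\hat{M}$.

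The first step is to show $W \in M \vNtimes \hat{M}$. The excerpt already records $W \in M \vNtimes B(L^2(M))$, while the defining property of $\hat{M}$ puts the left slices of $W$ inside $\hat{M}$; the standard slice-characterisation of spatial tensor products combines these into $W \in M \vNtimes \hat{M}$. Consequently every $(\id\otimes\omega)(W^*)$ lies in $M$, yielding the easy inclusion $M^{\wedge\wedge} \subseteq M$.

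For the reverse inclusion $M \subseteq M^{\wedge\wedge}$, I first note that since $W \in M \vNtimes \hat{M}$, slicing in the second leg only depends on the restriction of the functional to $\hat{M}$, so
\[
\{(\id\otimes\omega)(W)\mid \omega\in\hat{M}_*\} \;=\; \{(\id\otimes\omega)(W)\mid \omega\in B(L^2(M))_*\}.
\]
It therefore suffices to show that the right slices of $W$ by arbitrary normal functionals on $B(L^2(M))$ generate $M$ as a von Neumann algebra. This is the real content of the theorem: unlike the preceding steps it is not formal, but uses the intrinsic LCQG structure. The plan here is to exploit $\Delta(x) = W^*(1\otimes x)W$ together with left-invariance of $\varphi$ and the GNS formula $(\omega\otimes\id)(W^*)\Lambda(x) = \Lambda((\omega\otimes\id)\Delta(x))$, approximating arbitrary $x\in M$ from right slices using the fact that $M$ acts in standard form on $L^2(M)$; alternatively, one invokes the manageability formalism for multiplicative unitaries in the sense of Woronowicz. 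The main obstacle is precisely this density statement, which packages the nontrivial analytic input of Tomita--Takesaki theory and weight invariance; by contrast, the earlier steps are formal manipulations with $\Sigma$ and tensor-product slicing.
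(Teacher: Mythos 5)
Your formal reductions are correct and match the standard route: applying the definition of the dual to $(\hat{M},\hat{\Delta})$ with $\hat{W}=\Sigma W^*\Sigma$ does give
\[
M^{\wedge\wedge}=\{(\id\otimes\omega)(W^*)\mid \omega\in \hat{M}_*\}''
\]
and the slice characterisation of spatial tensor products (a consequence of the commutation theorem) does give $W\in M\,\vNtimes\,\hat{M}$, hence the inclusion $M^{\wedge\wedge}\subseteq M$. Note that the paper itself offers no proof here -- it simply cites \cite[Theorem 3.18]{VD14} -- so your argument can only be judged on its own terms.

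On those terms there is a genuine gap: the reverse inclusion, i.e.\ the statement that $\{(\id\otimes\omega)(W)\mid\omega\in B(L^2(M))_*\}$ is $\sigma$-weakly dense in $M$, is exactly where the entire content of the theorem sits, and you announce a ``plan'' for it rather than a proof. The difficulty is not cosmetic. The natural first move -- observe that $W\in N\,\vNtimes\,B(L^2(M))$ for $N$ the von Neumann algebra generated by the right slices, so that $\Delta(x)=W^*(1\otimes x)W\in N\,\vNtimes\,B(L^2(M))$ and hence $(\id\otimes\omega)\Delta(x)\in N$ for all $x\in M$ -- only reduces the problem to the density of $\{(\id\otimes\omega)\Delta(x)\mid x\in M,\ \omega\}$ in $M$. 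Since a quantum group von Neumann algebra has no bounded counit, one cannot simply ``evaluate $\omega$ at the identity'' to recover $x$; this density is itself a nontrivial theorem whose known proofs use the invariant weights in an essential way (for instance via the GNS identity $(\omega\otimes\id)(W^*)\Lambda(x)=\Lambda((\omega\otimes\id)\Delta(x))$, the density of the vectors $\xi_\omega$ in $L^2(M)$, and the fact that $\Lambda(\mathscr{N}_\varphi)$ is dense, or via the right regular corepresentation $V$ and the commutant relations). Your closing sentence concedes that ``the main obstacle is precisely this density statement''; until that statement is actually established, the argument proves only $M^{\wedge\wedge}\subseteq M$, not equality.
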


We can write here an actual equality since by construction the above von Neumann algebras are both concretely implemented on the same Hilbert space $L^2(M)$. 

Let $\hat{J}$ be the modular conjugation of $\hat{M}$ with respect to $L^2(M)$. Then conjugation with $\hat{J}$ leaves $M$ fixed and we obtain a $*$-anti-isomorphism \[R: M \rightarrow M,\quad x\mapsto \hat{J}x^*\hat{J},\] called the \emph{unitary antipode} of $(M,\Delta)$. It satisfies \[\Delta\circ R = (R\otimes R)\circ \Delta^{\opp},\quad \varphi^R = \varphi \circ R.\]
 
We denote the scaling constant and modular element of $\hat{M}$ by $\hat{\nu}$ and $\hat{\delta}$. In fact, one has $\hat{\nu} = \nu^{-1}$. The modular conjugations $J$ and $\hat{J}$ commute up to a scalar, \[J\hat{J} = \nu^{-i/4} \hat{J}J.\]

We will mostly need the quantum group von Neumann algebra $\hat{M}' = M^{\wedge\prime}$, which by \eqref{EqFormDual}, its dual and \eqref{EqMultUnOp} has respective regular corepresentations \begin{equation}\label{EqFormDualOp} \hat{W}' = V,\qquad \hat{V}' = (J\hat{J}\otimes J\hat{J})W(\hat{J}J\otimes \hat{J}J).\end{equation}
It follows in particular that \[M^{\wedge\,\prime} = \{(\id\otimes \omega)(V) \mid \omega \in M_*\}'',\] and by \eqref{EqImplMU}  we obtain that \begin{equation}\label{EqObsImpl}\hat{V}' (x\otimes 1)\hat{V}^{\prime*} = \hat{\Delta}'(x),\qquad \forall x\in \hat{M}'.\end{equation} Also the operation $M \mapsto M^{\wedge\prime}$ is involutive, but only after twisting with $J\hat{J}$. More precisely, \[M^{\wedge\prime\wedge\prime} = M^{\prime \cop}\] and we have an isomorphism of quantum groups von Neumann algebras \[M^{\prime \cop}\rightarrow M,\quad x \mapsto J\hat{J} x \hat{J}J.\] We also have the identity $\hat{M}' = M^{\cop \wedge}$, but one has to be careful however with the respective GNS-maps as they are shifted by a gauge factor: for $x\in \hat{M}' = M^{\cop,\wedge}$ square-integrable, we have \[\Lambda^{R\wedge}(x) = \nu^{-i/8}\hat{\Lambda}^{\prime}(x).\]

\section{Coactions and Galois objects}

Let us fix a quantum group von Neumann algebra $(M,\Delta)$. 

\begin{Def}
We call \emph{right coaction} of $(M,\Delta)$ on a von Neumann algebra $N$ any unital normal $*$-homomorphism $\alpha: N\rightarrow N\vNtimes M$ such that \[(\id\otimes \Delta)\alpha= (\alpha\otimes \id)\alpha.\] 
\end{Def} 

Similarly, one defines left coactions $\gamma:N\rightarrow M\vNtimes N$. Any right coaction $(N,\alpha)$ of $(M,\Delta)$ then determines a left coaction $(N,\alpha^{\opp})$ of $(M,\Delta^{\opp})$ by \[\alpha^{\opp}: N \rightarrow M\vNtimes N,\quad x \mapsto \varsigma \alpha(x),\] allowing to transfer statements concerning right coactions to corresponding ones for left coactions. 

We will in the following be mainly concerned with right coactions. 

\begin{Def} The \emph{subalgebra of coinvariants} for a right coaction $(N,\alpha)$ is the von Neumann subalgebra  \[N^{\alpha} = \{x\in N\mid \alpha(x) = x\otimes 1\}\subseteq N.\] We call $\alpha$ \emph{ergodic} if $N^{\alpha}= \C$.
\end{Def}

\begin{Def} The  \emph{crossed product} von Neumann algebra for a right coaction $(N,\alpha)$ is the von Neumann algebra \[N\rtimes M = N\rtimes_{\alpha} M = \{(1\otimes x)\alpha(y)\mid x\in \hat{M}',y\in N\}'' \subseteq N\vNtimes B(L^2(M)).\] 
\end{Def}

One has on $N\rtimes M$ a \emph{dual right coaction} $\hat{\alpha}$ of $\hat{M}'$, determined by \begin{equation}\label{EqDualCoaction}\hat{\alpha}(z) = \hat{V}'_{23}(z\otimes 1)\hat{V}^{\prime*}_{23}.\end{equation} This definition entails for $x\in N$ and $y\in \hat{M}'$ 
\[\hat{\alpha}(\alpha(x)) = 1\otimes \alpha(x),\qquad \hat{\alpha}(1\otimes y) = 1\otimes \hat{\Delta}'(y).\] 
 
Let us recall the following \emph{biduality result}, cf. \cite[Theorem 2.6]{Vae01} for the left handed version.

\begin{Theorem}\label{TheoBiduCoac}  Considering \[(N\rtimes_{\alpha} M)\rtimes_{\hat{\alpha}} \hat{M}' \subseteq N\vNtimes B(L^2(M)\otimes L^2(M)),\] one has an isomorphism of von Neumann algebras \begin{equation}\label{EqDefChiRtimes}\pi_{\bidual}: N \vNtimes B(L^2(M)) \cong 
(N\rtimes M)\rtimes \hat{M}',\qquad x \mapsto V^*_{23}(\alpha\otimes \id)(x)V_{23}.\end{equation} In particular, for $x\in N,y\in \hat{M}'$ and $z \in M^{\wedge\prime\wedge\prime}$ we have \[\alpha(x) \mapsto \alpha(x)\otimes 1,\quad 1\otimes y \mapsto 1\otimes \hat{\Delta}'(y),\quad 1\otimes z \mapsto 1\otimes 1 \otimes z.\]

Moreover, this isomorphism satisfies the equivariance condition 
\begin{equation}\label{EqEqBidual}
(\pi_{\bidual}^{-1}\otimes \Ad(J\hat{J}))(\alpha^{\wedge\wedge}(x)) = \Sigma_{23}W_{23}(\alpha\otimes \id)(\pi_{\bidual}^{-1}(x))W_{23}^*\Sigma_{23}
\end{equation}
for $x\in (N\rtimes M) \rtimes \hat{M'}$.
\end{Theorem}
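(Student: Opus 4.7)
The plan is to exhibit $\pi_{\bidual}$ as a normal $*$-isomorphism by computing its action on a natural generating set for $N\vNtimes B(L^2(M))$. Normality and the $*$-homomorphism property are immediate from the formula \eqref{EqDefChiRtimes}, and injectivity follows because coactions are always faithful while $V_{23}$ is unitary. The substantive work splits into three parts: verifying the three generator formulas in the statement, deducing surjectivity onto $(N\rtimes M)\rtimes\hat{M}'$, and finally checking the equivariance condition \eqref{EqEqBidual}.

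For the generator calculations: for $x\in N$, the coaction identity $(\alpha\otimes\id)\alpha=(\id\otimes\Delta)\alpha$ combined with the implementation $\Delta(b)=V(b\otimes 1)V^*$ from \eqref{EqImplMU} gives $(\alpha\otimes\id)\alpha(x)=V_{23}\,\alpha(x)_{12}\,V_{23}^*$, so the outer conjugation by $V_{23}^*$ cancels and $\pi_{\bidual}(\alpha(x))=\alpha(x)\otimes 1$. For $y\in\hat{M}'$, the identity $\hat{W}'=V$ from \eqref{EqFormDualOp} together with the fact that $\hat{W}'$ implements $\hat{\Delta}'$ as $W$ implements $\Delta$ yields $V^*(1\otimes y)V=\hat{\Delta}'(y)$, so $\pi_{\bidual}(1\otimes y)=1\otimes\hat{\Delta}'(y)$. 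For $z\in M^{\wedge\prime\wedge\prime}$, the identification $M^{\wedge\prime\wedge\prime}\cong M^{\prime\cop}$ recalled just before the theorem places $z$, as a subset of $B(L^2(M))$, inside $M'=JMJ$; since the second leg of $V$ lies in $M$, the element $1\otimes 1\otimes z$ commutes with $V_{23}$ and one obtains $\pi_{\bidual}(1\otimes z)=1\otimes 1\otimes z$.

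Each of these images clearly lies in $(N\rtimes M)\rtimes\hat{M}'$: the first in $(N\rtimes M)\otimes 1$, the second equals $\hat{\alpha}(1\otimes y)$ by the formula for $\hat{\alpha}$, and the third sits in the trailing $1\otimes 1\otimes M^{\wedge\prime\wedge\prime}$ factor of the iterated crossed product by definition. Conversely, these three families of images jointly generate $(N\rtimes M)\rtimes\hat{M}'$: on the one hand $\alpha(N)$ and $1\otimes\hat{M}'$ generate $N\rtimes M$, so their $\hat{\alpha}$-images together with $1\otimes 1\otimes M^{\wedge\prime\wedge\prime}$ generate the iterated crossed product; on the other hand $\hat{M}'$ and $M^{\wedge\prime\wedge\prime}=M'$ jointly generate $B(L^2(M))$, their common commutant $\hat{M}\cap M$ reducing to $\C$. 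Surjectivity, and hence the isomorphism claim, follow.

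For the equivariance \eqref{EqEqBidual}, the approach is to expand $\alpha^{\wedge\wedge}$ by applying the dual-coaction formula \eqref{EqDualCoaction} twice, which introduces two copies of $\hat{V}'$—equivalently two copies of $W$ conjugated by $J\hat{J}$ via \eqref{EqFormDualOp}—and then to use the pentagon equation \eqref{EqPentWV} together with the commutation relations between the relevant multiplicative unitaries to collapse the resulting product of four unitaries into the single $\Sigma_{23}W_{23}$-conjugation on the right-hand side. The $\Ad(J\hat{J})$ twist on the last leg absorbs the spatial identification $M^{\wedge\prime\wedge\prime}\to M$, $x\mapsto J\hat{J}x\hat{J}J$, that arises from the double iteration of $M\mapsto M^{\wedge\prime}$. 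I expect this last step to be the main obstacle: tracking the various $J$, $\hat{J}$, and $\Sigma$ twists through two applications of the dual-coaction construction and executing the pentagon manipulation cleanly is delicate bookkeeping, even though each individual substitution is routine.
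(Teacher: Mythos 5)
The paper offers no proof of this theorem: it cites the left-handed version from \cite[Theorem 2.6]{Vae01}, so there is nothing internal to compare against. Your treatment of the isomorphism part is correct and essentially complete: the three generator computations are right (the $N$-case via $(\alpha\otimes\id)\alpha=(\id\otimes\Delta)\alpha$ together with $\Delta=\Ad(V)(\cdot\otimes 1)$; the $\hat{M}'$-case via $\hat{W}'=V$; the $M^{\wedge\prime\wedge\prime}$-case via the second leg of $V$ lying in $M$), and your generation statements on both sides (that $\hat{M}'$ and $M'$ generate $B(L^2(M))$, that $\alpha(N)$ and $1\otimes B(L^2(M))$ generate $N\vNtimes B(L^2(M))$, and the definition of the iterated crossed product) do yield surjectivity onto $(N\rtimes M)\rtimes\hat{M}'$.

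The gap is the equivariance \eqref{EqEqBidual}. You state a strategy --- expand $\alpha^{\wedge\wedge}$ by two applications of \eqref{EqDualCoaction} and collapse the resulting unitaries with the pentagon equation --- and explicitly acknowledge that the bookkeeping has not been carried out. As written this is a plan rather than a proof, and it is exactly the place where the conventions can bite: the paper itself warns that iterating $M\mapsto M^{\wedge\prime}$ shifts GNS-maps by a gauge factor, and your route requires an explicit formula for the regular corepresentation of $(\hat{M}')^{\wedge\prime}$. A cleaner way to finish, consistent with your own setup, is to observe that both sides of \eqref{EqEqBidual} are compositions of normal $*$-homomorphisms and hence need only be compared on your three generating families. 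For $x=\alpha(y)\otimes 1=\hat\alpha(\alpha(y))$ both sides give $\alpha(y)\otimes 1$, using $(\id\otimes\Delta)\alpha(y)=W_{23}^*\alpha(y)_{13}W_{23}$ on the right and $\alpha^{\wedge\wedge}\circ\hat{\alpha}=\hat{\alpha}\otimes 1$ on the left. For $x=1\otimes\hat\Delta'(z)$ with $z\in\hat{M}'$ both sides give $1\otimes z\otimes 1$, since the second leg of $W$ lies in $\hat{M}$ and commutes with $z$. For $x=1\otimes 1\otimes w$ with $w\in M^{\wedge\prime\wedge\prime}$ the identity reduces to $(\id\otimes\Ad(J\hat{J}))(\Delta^{\prime\cop}(w))=\hat{W}^*(w\otimes 1)\hat{W}$, which follows from $\Delta(v)=W^*(1\otimes v)W$ together with the standard relation $W^*=(\hat{J}\otimes J)W(\hat{J}\otimes J)$; that relation is nowhere stated in the paper and is the one genuinely nontrivial input your sketch would eventually have to produce.
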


\begin{Def} A right coaction $(N,\alpha)$ is called \emph{integrable} if the set \[\{x \in N^+\mid \exists y\in N^+,\forall \omega \in M_*^+, \varphi((\omega\otimes \id)\alpha(x)) = \omega(y)\}\] has $\sigma$-weakly dense linear span in $N$.
\end{Def}

If $x$ lies in the above set, the element $y$ is uniquely determined and lies in $N^{\alpha}$. We then write \[y = (\id\otimes \varphi)\alpha(x)\in N^{\alpha}.\] Choosing a fixed nsf weight $\mu$ on $N^{\alpha}$, we can in the case of an integrable coaction define on $N$ the nsf weight \begin{equation}\label{EqVarphiN}\mu_N(x) = \mu((\id\otimes \varphi)\alpha(x)),\qquad x\in N^+\end{equation} with associated GNS-map $\Lambda_N$. For example, any dual coaction $\hat{\alpha}$ is integrable, and for $\mu_N$ an nsf weight on $N$ we then write \[\mu_{N\rtimes M}(x) =\mu_N(\alpha^{-1}((\id\otimes \hat{\varphi}')(\hat{\alpha}(x))),\qquad x\in N\rtimes M.\] We can in this case make $L^2(N)\otimes L^2(M)$ into a GNS-space for $N\rtimes M$ by the unique GNS-map $\Lambda_{N\rtimes M}$ such that \[\Lambda_{N\rtimes M}((1\otimes y)\alpha(x))  =  \Lambda_{N}(x)\otimes \hat{\Lambda}'(y),\quad  x\in \mathscr{N}_{\mu_{N}},y\in\mathscr{N}_{\hat{\varphi}'},\] see \cite[Definition 3.4, Proposition 3.10]{Vae01}. This realisation of $L^2(N)\otimes L^2(M)$ as the standard Hilbert space for $N\rtimes M$ is independent of the choice of $\mu$ by \cite[Proposition 4.1]{Vae01}, as well as from the choice of invariant measure on $\hat{M}'$ for that matter. With $J_{N\rtimes M}$ the associated modular conjugation, the canonical unitary \[U=J_{N\rtimes M} (J_N\otimes \hat{J})\] is called the \emph{standard implementing unitary corepresentation} for $\alpha$,  as it satisfies by \cite[Proposition 3.7.1., Proposition 3.12, Theorem 4.4]{Vae01} the properties \[U \in B(L^2(N))\vNtimes M,\quad U(x\otimes 1)U^* = \alpha(x),\qquad (\id\otimes \Delta)(U) = U_{12}U_{13}.\] 

In the case of an \emph{integrable} coaction $(N,\alpha)$, we have the following expression for the standard implementing unitary corepresentation.

\begin{Lem}\cite[Proposition 4.3]{Vae01} Let $\alpha$ be an integrable right coaction, let $\mu$ be an nsf weight on $N^{\alpha}$ and let $\mu_N$ be the associated nsf weight on $N$ with GNS-map $\Lambda_N$. Then the standard implementing unitary corepresentation $U \in B(L^2(N))\vNtimes M$ satisfies \[(\id\otimes \omega_{\eta,\xi})(U)\Lambda_N(x) = \Lambda_N((\id\otimes \omega_{\eta,\delta^{-1/2}\xi})\alpha(x))\] for all $\eta \in L^2(M)$, $\xi\in \mathscr{D}(\delta^{-1/2})$ and $x\in \mathscr{N}_{\varphi_N}$. 
\end{Lem}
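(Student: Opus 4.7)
The plan is to define an auxiliary operator $U_0 \in B(L^2(N))\vNtimes M$ through the formula (read as prescribing matrix coefficients against vector functionals $\omega_{\eta,\xi}$ on $M$), verify that $U_0$ is unitary and has the right structural properties, and then identify it with the standard implementation $U = J_{N\rtimes M}(J_N\otimes\hat{J})$.

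For well-definedness and boundedness, fix $x\in\mathscr{N}_{\mu_N}$ and $\xi\in\mathscr{D}(\delta^{-1/2})$, and put $A = (\id\otimes\omega_{\eta,\delta^{-1/2}\xi})\alpha(x)$. Using $\mu_N = \mu\circ(\id\otimes\varphi)\circ\alpha$, I would compute
\[
\|\Lambda_N(A)\|^2 = \mu\bigl((\id\otimes\varphi)\alpha(A^*A)\bigr),
\]
with $A^*A = (\id\otimes\omega_{\delta^{-1/2}\xi,\eta}\otimes\omega_{\eta,\delta^{-1/2}\xi})(\alpha(x^*)_{12}\alpha(x)_{13})$. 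Expanding $\alpha(A^*A)$ via the coaction identity $(\alpha\otimes\id)\alpha = (\id\otimes\Delta)\alpha$ and applying left-invariance of $\varphi$ to the resulting $(\omega\otimes\id)\Delta$-terms, the $\delta^{-1/2}$-factors combine with the vector functionals on $M$ to yield a bound of the form $\|\eta\|^2\|\xi\|^2\|\Lambda_N(x)\|^2$. So the formula extends by density to a bounded $U_0 \in B(L^2(N))\vNtimes M$. The intertwining $U_0(z\otimes 1)U_0^* = \alpha(z)$ and the corepresentation law $(\id\otimes\Delta)(U_0) = (U_0)_{12}(U_0)_{13}$ follow from the same type of Sweedler manipulation, again using $(\alpha\otimes\id)\alpha = (\id\otimes\Delta)\alpha$ together with the group-like identity $\Delta(\delta^{it}) = \delta^{it}\otimes\delta^{it}$ to commute the $\delta^{-1/2}$ past $\Delta$.

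To identify $U_0$ with $U = J_{N\rtimes M}(J_N\otimes\hat{J})$, I would compute the right-hand side directly via Tomita theory on the dual weight $\mu_{N\rtimes M}$, using the explicit GNS map $\Lambda_{N\rtimes M}((1\otimes y)\alpha(x)) = \Lambda_N(x)\otimes\hat{\Lambda}'(y)$. The $\delta^{-1/2}$ appearing in the final formula traces back to the identity $\Lambda^R(x) = \nu^{-i/8}\Lambda(x\delta^{1/2})$: it converts the left-invariant $\varphi$-normalization hidden in $\mu_N$ to the right-invariant normalization naturally carried by the modular conjugation $J_{N\rtimes M}$. The main obstacle is this last matching step, which requires careful simultaneous bookkeeping of the modular conjugations $J_N$, $\hat{J}$, $J_{N\rtimes M}$ and of the scaling data $\nu$ and $\delta$; once $U_0$ is known to be a unitary corepresentation implementing $\alpha$, however, the identification with $U$ reduces to matching a single normalization via the standard-form property.
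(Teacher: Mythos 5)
The paper does not prove this lemma at all: it is imported verbatim from \cite[Proposition 4.3]{Vae01}, where the formula is obtained by explicitly computing the modular conjugation $J_{N\rtimes M}$ of the dual weight on its GNS space $L^2(N)\otimes L^2(M)$ and then evaluating $U=J_{N\rtimes M}(J_N\otimes\hat{J})$ on vectors $\Lambda_{N\rtimes M}(\alpha(x)(1\otimes y))$. Your proposal inverts this (define $U_0$ by the formula, then identify it with $U$), which could in principle work, but as written it has two genuine gaps. First, the isometry estimate is attributed to the wrong invariance property. After using $(\alpha\otimes\id)\alpha=(\id\otimes\Delta)\alpha$ and $\mu_N=\mu\circ(\id\otimes\varphi)\circ\alpha$, the weight $\varphi$ lands on the \emph{first} leg of $\Delta$ while the vector functionals $\omega_{\,\cdot\,,\delta^{-1/2}\xi}$ sit on the second; the expression to be evaluated is $\varphi((\id\otimes\omega)\Delta(z))$, which is \emph{not} governed by left invariance (that would be $\varphi((\omega\otimes\id)\Delta(z))=\omega(1)\varphi(z)$, leaving no room for $\delta$ at all). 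What is needed is the relative invariance identity $\varphi((\id\otimes\omega)\Delta(z))=\omega(\delta)\varphi(z)$, a nontrivial consequence of $(D\psi:D\varphi)_t=\nu^{it^2/2}\delta^{it}$, the group-likeness of $\delta$, and right invariance of $\psi$. It is precisely this identity, applied with $\omega=\omega_{\delta^{-1/2}\xi}$ so that $\omega(\delta)=\|\xi\|^2$, that makes the $\delta^{-1/2}$ cancel and gives $\sum_i\|\Lambda_N((\id\otimes\omega_{e_i,\delta^{-1/2}\xi})\alpha(x))\|^2=\|\xi\|^2\|\Lambda_N(x)\|^2$ for an orthonormal basis $\{e_i\}$. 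Invoking left invariance here, as you do, is not a presentational slip: the computation simply does not close without the $\delta$-twisted formula.

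Second, the identification $U_0=U$ does not ``reduce to matching a single normalization.'' If $U_0$ and $U$ are both unitary corepresentations implementing $\alpha$, one only gets $U^*U_0\in N'\vNtimes B(L^2(M))$ together with a cocycle-type relation under $\id\otimes\Delta$; there is no uniqueness theorem forcing agreement up to a scalar. The real content of \cite[Proposition 4.3]{Vae01} is exactly the step you defer to ``careful bookkeeping'': computing the modular operator and conjugation of the dual weight $\mu_{N\rtimes M}$ on the concrete GNS realization $\Lambda_{N\rtimes M}((1\otimes y)\alpha(x))=\Lambda_N(x)\otimes\hat{\Lambda}'(y)$, and reading off $J_{N\rtimes M}(J_N\otimes\hat{J})$ from it. Either that computation must be carried out, or the identification must proceed by verifying that $U_0$ satisfies the defining positive-cone property of the standard implementation; as it stands, the final and hardest step of your argument is asserted rather than proved.
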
 

We can then also present $N\rtimes M$ directly on $L^2(N)$, although not necessarily in a faithful way.

\begin{Theorem}\label{TheoIntImp}\cite[Theorem 5.3]{Vae01} If $(N,\alpha)$ is an integrable right coaction of $(M,\Delta)$, there exists a unique normal unital $*$-homomorphism 
\begin{equation}\label{EqRepNcrossM}
\pi_{\rtimes}: N\rtimes M \rightarrow B(L^2(N))
\end{equation}
such that 
\[
\alpha(x) \mapsto x,\qquad 1\otimes (\id\otimes \omega)(V) \mapsto (\id\otimes \omega)(U)
\] for $x\in N$ and $\omega\in M_*$.  Moreover, the range of this map equals $J_N (N^{\alpha})' J_N$. 
\end{Theorem}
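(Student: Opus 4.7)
The plan is to construct $\pi_\rtimes$ by defining it separately on the two generating subalgebras $\alpha(N)$ and $1 \otimes \hat M'$ of $N \rtimes M$, verifying compatibility through a corepresentation computation, extending by normality, and then identifying the image. For the first piece, I view $N$ in its standard form on $L^2(N)$ and set $\alpha(x) \mapsto x$. For the second, I build an auxiliary normal unital $*$-homomorphism $\tilde\pi : \hat M' \to B(L^2(N))$ via the implementing corepresentation $U \in B(L^2(N)) \vNtimes M$. Since $(\id \otimes \Delta)(U) = U_{12}U_{13}$ exactly mirrors the defining identity for the right multiplicative unitary $V$, slicing gives the multiplicative relation
\[
(\id \otimes \omega_1)(U)(\id \otimes \omega_2)(U) = (\id \otimes (\omega_1 \otimes \omega_2)\Delta)(U),
\]
which matches the multiplication in the dense subset $\{(\id \otimes \omega)(V) \mid \omega \in M_*\}$ of $\hat M'$; together with $*$-preservation and standard density/normality arguments, this produces $\tilde\pi$ sending $(\id \otimes \omega)(V) \mapsto (\id \otimes \omega)(U)$.

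I would then set $\pi_\rtimes(\alpha(x)(1 \otimes y)) = x\,\tilde\pi(y)$ on the $*$-subalgebra of $N \rtimes M$ generated by $\alpha(N)$ and $1 \otimes \hat M'$. The crucial well-definedness check is that mixed products in $N \rtimes M$ (viewed inside $N \vNtimes B(L^2(M))$) match products of their proposed images in $B(L^2(N))$. The key structural identity is
\[
V_{23}U_{12} = U_{12}U_{13}V_{23},
\]
which follows from $(\id \otimes \Delta)(U) = U_{12}U_{13}$ combined with the implementation $\Delta(m) = V(m \otimes 1)V^*$ from \eqref{EqImplMU}. Slicing the third leg by $\omega \in M_*$ translates this into the commutation between $1 \otimes (\id \otimes \omega)(V) = 1 \otimes y$ and $\alpha(x) = U(x \otimes 1)U^*$ inside the ambient algebra, and projecting onto the first leg shows that $x$ and $\tilde\pi(y)$ satisfy the analogous relation inside $B(L^2(N))$. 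Normality and density then extend $\pi_\rtimes$ uniquely to all of $N \rtimes M$.

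For the image, first observe that $N \subseteq (J_N N^\alpha J_N)' = J_N(N^\alpha)' J_N$ since $J_N N^\alpha J_N \subseteq J_N N J_N = N'$. For $\tilde\pi(\hat M')$, the triviality of $\alpha|_{N^\alpha}$ yields $U(x \otimes 1) = (x \otimes 1)U$ for $x \in N^\alpha$, i.e.\ $U \in (N^\alpha)' \vNtimes B(L^2(M))$, so $\tilde\pi(\hat M') \subseteq (N^\alpha)'$; the refinement to the twisted commutant $J_N(N^\alpha)' J_N$ uses the explicit presentation $U = J_{N \rtimes M}(J_N \otimes \hat J)$ and a short Tomita--Takesaki computation that injects the extra $J_N$-conjugation. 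The reverse containment follows from a commutant/density argument combined with the biduality of Theorem \ref{TheoBiduCoac}. The main obstacle is the pentagon-slicing verification of well-definedness; a secondary difficulty is the image identification, where the precise occurrence of $J_N$ (rather than landing in the untwisted $(N^\alpha)'$) hinges on the specific modular-theoretic form of the standard implementing corepresentation.
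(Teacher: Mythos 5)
The paper does not prove this statement itself---it is imported verbatim from \cite[Theorem 5.3]{Vae01}---so your attempt has to be judged against that proof. It has a genuine gap at its central step. You construct $\tilde\pi:\hat M'\to B(L^2(N))$, $(\id\otimes\omega)(V)\mapsto(\id\otimes\omega)(U)$, purely from the corepresentation identity $(\id\otimes\Delta)(U)=U_{12}U_{13}$ plus ``standard density/normality arguments.'' If that worked, it would show that \emph{every} unitary corepresentation $U\in B(\Hsp)\vNtimes M$ induces a normal $*$-representation of the reduced dual $\hat M'$. This is false in general: already for $M=L^\infty(G)$ with $G$ a nonamenable locally compact group, corepresentations are unitary representations of $G$, and $(\id\otimes\omega)(V)\mapsto(\id\otimes\omega)(U)$ is well defined only when $U$ is weakly contained in the regular representation. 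Multiplicativity and $*$-compatibility on the set of slices do not give well-definedness (the same element of $\hat M'$ arises from many $\omega$'s), let alone $\sigma$-weak continuity; and a $*$-homomorphism defined on a $\sigma$-weakly dense $*$-subalgebra of $N\rtimes M$ does not automatically extend normally, so the final ``normality and density'' step has the same problem. Tellingly, your argument never uses the hypothesis that $\alpha$ is integrable, which is exactly where the analytic content lies.

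In \cite{Vae01} the map is obtained quite differently: integrability provides the nsf operator-valued weight $(\id\otimes\varphi)\alpha:N^+\to (N^\alpha)^+$, hence the weight $\mu_N$ and its GNS space $L^2(N)$, and one builds an explicit isometry $L^2(N)\to L^2(N)\otimes L^2(M)$ intertwining the relevant GNS constructions; $\pi_\rtimes$ is then the compression of $N\rtimes M$ by the range projection, which lies in $(N\rtimes M)'$. This yields simultaneously the well-definedness, the normality, the formulas $\alpha(x)\mapsto x$ and $1\otimes(\id\otimes\omega)(V)\mapsto(\id\otimes\omega)(U)$, and (after the Tomita--Takesaki bookkeeping with $U=J_{N\rtimes M}(J_N\otimes\hat J)$) the identification of the range with $J_N(N^\alpha)'J_N$. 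Your covariance identity $V_{23}U_{12}=U_{12}U_{13}V_{23}$ and your observations about the range are correct and do appear as ingredients, but they cannot substitute for the construction of the isometry, which is the step that makes the homomorphism exist at all.
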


In particular, we will write \begin{equation}\label{EqRephatM} \hat{\pi}': \hat{M}'\rightarrow B(L^2(N)),\qquad 1\otimes x \mapsto \hat{\pi}'(x) = \pi_{\rtimes}(1\otimes x),\qquad x\in \hat{M}'.\end{equation}

\begin{Def}\cite[Definition 3.9]{DeC11}\label{DefGalObvN} Let $(N,\alpha)$ be an integrable coaction. We call $(N,\alpha)$ a \emph{(right) Galois coaction} if $\pi_{\rtimes}$ is faithful. 

We call $(N,\alpha)$ a \emph{(right) Galois object} if moreover $\alpha$ is ergodic. 
\end{Def}

In case $(N,\alpha)$ is an integrable and ergodic coaction, there exists a unique nsf-weight $\varphi_N$ on $N$ such that \[\varphi_N(x)1 = (\id\otimes \varphi)\alpha(x),\qquad x\in N^+.\] 
We will call it, by analogy, the \emph{left invariant nsf weight} on $N$. There exists then a unique unitary \[\Ww: L^2(N)\otimes L^2(M)\rightarrow L^2(N)\otimes L^2(N),\] called the \emph{Galois map}, such that for all $\omega \in B(L^{2}(N))_*$ \[(\omega\otimes \id)(\mathcal{W}^*) \Lambda_{N}(x) = \Lambda_{N}((\omega\otimes \id)\alpha(x)),\quad x\in \mathscr{N}_{\varphi_{N}}.\] It can be shown that $(N,\alpha)$ is a Galois object if and only if $\Ww$ is unitary. Note that in \cite{DeC11} the notation $\widetilde{G} = \Sigma \mathcal{W}^*\Sigma$ was used.

For $(N,\alpha)$ a Galois object, it was shown in \cite[Section 4]{DeC11} that \begin{equation}\label{EqWInN}\Ww \in N\vNtimes B(L^2(M),L^2(N)),\end{equation} that $\Ww$ satisfies the \emph{hybrid pentagon equation} \begin{equation}\label{EqHybPent} \Ww_{12}\Ww_{13}W_{23} = \Ww_{23}\Ww_{12},\end{equation} and that $\Ww$ implements the coaction $\alpha$ in the sense that \begin{equation}\label{EqImplGalAl}\alpha(x) = \Ww^*(1\otimes x)\Ww,\qquad x\in N.\end{equation} In particular, we find \begin{equation}\label{ActAlphN} (\alpha\otimes \id)\Ww= \Ww_{13}W_{23}.\end{equation} We also recall from \cite[Lemma 4.2, (iii),(iv)]{DeC11} that, with $\hat{\pi}'$ as in \eqref{EqRephatM},  \begin{equation}\label{EqGalImpCross2} \Ww(1\otimes x) = (1\otimes \hat{\pi}'(x))\Ww,\qquad x\in \hat{M}',\end{equation}
\begin{equation}\label{EqGalImpCross3} (\hat{\pi}'(JxJ)\otimes 1)\Ww = \Ww(\hat{\pi}'\otimes \id)((J\otimes \hat{J})\hat{\Delta}'(x)(J\otimes \hat{J})),\quad x\in \hat{M}',
\end{equation} and, by \cite[Lemma 4.9.(iii)]{DeC11}
\begin{equation}\label{EqGalImpCross4} J_N\hat{\pi}'(x)J_N = \hat{\pi}'(JxJ),\qquad x\in \hat{M}'.
\end{equation}

Note that in case $(N,\alpha)$ is a Galois object, we obtain an isomorphism \begin{equation}\label{EqIsoCrossI} N\rtimes M \underset{\pi_{\rtimes}}{\cong} B(L^2(N))\end{equation} by the last part of Theorem \ref{TheoIntImp}. By \eqref{EqImplGalAl} and \eqref{EqGalImpCross2} the inverse of $\pi_{\rtimes}$ is implemented by the Galois unitary, \begin{equation}\label{EqGalCrossIso} \pi_{\rtimes}^{-1}: B(L^2(N))\cong N\rtimes M,\quad x\mapsto \Ww^*(1\otimes x)\Ww.\end{equation}
Let us write \begin{equation}\label{EqWeightB}\varphi_{B(L^2(N))} = \varphi_{N\rtimes M}\circ \pi_{\rtimes}^{-1}.\end{equation} From \cite[Proposition 3.7]{DeC11} it follows that $\varphi_{B(L^2(N))}$ satisfies \[\varphi_{B(L^2(N))}(\Lambda_N(x)\Lambda_N(y)^*) = \varphi_N(xy^*),\qquad x,y\in \mathscr{N}_{\varphi}\cap \mathscr{N}_{\varphi}^*,\] where we identify $L^2(N) = B(\C,L^2(N))$. Hence \begin{equation}\label{EqFormWeightB}\varphi_{B(L^2(N))}(x) = \Tr(\nabla_N^{1/2}x\nabla_N^{1/2}),\quad x\in B(L^2(N))_+,\end{equation} where $\nabla_N$ is the modular operator of $\varphi_N$. 

Consider now $L^2(N) \otimes L^2(N)$ as the standard space for $B(L^2(N))$ by  
 \begin{equation}\label{EqStandB} \pi_{B(L^2(N))}(x) = x\otimes 1,\quad J_{B(L^2(N))}(\xi\otimes \eta) = J_N\eta\otimes J_N\xi,\end{equation} and with $\mathfrak{P}_{B(L^2(N))}$ corresponding to the positive Hilbert-Schmidt operators in $B(L^2(N))$ under the imbedding \[L^2(N) \otimes L^2(N) \hookrightarrow B(L^2(N)),\quad \xi\otimes \eta\mapsto \xi(J_N\eta)^*.\]
Then it is elementary to verify that  \[\Lambda_{B(L^2(N))}(\xi\Lambda_N(y^*)^*) = \xi\otimes \Lambda_N(y),\quad \xi\in L^2(N),y\in \mathscr{N}_{\varphi_N}\cap \mathscr{N}_{\varphi_N}^*.\] 
 It now follows from \cite[Proposition 3.7]{DeC11} that \begin{equation}\label{EqLinkFacCross}\Ww^* \Sigma \Lambda_{B(L^2(N))}(x) = \Lambda_{N\rtimes M}(\pi_{\rtimes}^{-1}(x)),\qquad x\in \mathscr{N}_{\varphi_{B(L^2(N))}}.\end{equation}

If $(N,\alpha)$ is a right coaction, an nsf weight $\psi_{N}$ on $N$ is called \emph{right invariant} if for all $\omega \in \mathcal{S}_*(M)$ and all $x\in N^+$ one has \[\psi_{N}((\id \otimes \omega)\alpha(x)) = \psi_{N}(x).\] 

\begin{Theorem}\cite[Theorem 4.19]{DeC11} If $(N,\alpha)$ is a Galois object, there exists a right invariant nsf weight $\psi_{N}$ for $\alpha$, unique up to multiplication by a positive scalar. 
\end{Theorem}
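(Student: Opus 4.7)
The plan is to define $\psi_N$ via the formal recipe $\psi_N(x)\,1 = (\id \otimes \psi)\alpha(x)$ for $x \in N^+$, where $\psi = \varphi^R$ is the right invariant nsf weight on $M$. \emph{If} the right-hand side makes sense, then by coassociativity together with the right invariance of $\psi$ on $M$ we get
\[
\alpha\bigl((\id \otimes \psi)\alpha(x)\bigr) = (\id \otimes \id \otimes \psi)(\id \otimes \Delta)\alpha(x) = (\id \otimes \psi)\alpha(x) \otimes 1,
\]
which by ergodicity $N^\alpha = \mathbb{C}$ forces $(\id \otimes \psi)\alpha(x)$ to be a scalar, and right invariance of the resulting $\psi_N$ as a weight on $N$ is then automatic.

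The real content is to show that the set of such $x$ has $\sigma$-weakly dense linear span in $N$ and to produce an actual nsf weight. Here I would exploit the modular element $\delta$ of $M$, formally $\psi = \varphi(\delta^{1/2}\cdot\delta^{1/2})$, to construct a strictly positive self-adjoint operator $\delta_N$ affiliated with $N$ satisfying a cocycle-type relation of the form $\alpha(\delta_N^{it}) = \delta_N^{it}\otimes \delta^{it}$ (up to a factor involving $\nu$). Such $\delta_N$ would play the role of the ``modular element of the Galois object''. Its existence can be extracted either from the modular theory of $N\rtimes M \cong B(L^2(N))$ under $\pi_\rtimes$, using the dual-weight formula \eqref{EqFormWeightB} applied to $\varphi_{B(L^2(N))}$, or directly from the hybrid pentagon \eqref{EqHybPent} for $\Ww$ combined with the Connes cocycle relation $(D\psi:D\varphi)_t = \nu^{it^2/2}\delta^{it}$ on $M$. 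The weight $\psi_N$ is then defined to be the nsf weight on $N$ with $(D\psi_N:D\varphi_N)_t = \nu^{it^2/2}\delta_N^{it}$; the prescribed identity $\psi_N(x)\,1 = (\id \otimes \psi)\alpha(x)$, together with the $\sigma$-weak density of its domain of finiteness, then transfers from the analogous statements for $M$ via the intertwining property \eqref{EqImplGalAl} of the Galois unitary.

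For uniqueness, given two right invariant nsf weights $\psi_N^{(1)}, \psi_N^{(2)}$, their Connes cocycle $u_t = (D\psi_N^{(1)}:D\psi_N^{(2)})_t \in N$ must satisfy $\alpha(u_t) = u_t \otimes 1$ by the standard cocycle argument applied to both right invariance conditions, so $u_t \in N^\alpha = \mathbb{C}$; writing $u_t = \lambda^{it}$ then yields $\psi_N^{(1)} = \lambda\psi_N^{(2)}$. The main obstacle is the construction of $\delta_N$ with enough regularity to rigorously define $\psi_N$ and to control the density of $\mathscr{N}_{\psi_N}$; the formal computation is transparent, but properly handling the unboundedness of $\delta_N$ and verifying that the domain of finiteness is $\sigma$-weakly dense requires careful use of Vaes' modular theory of integrable coactions \cite{Vae01}, in particular the interaction of the standard implementing unitary corepresentation $U$ with the modular data of $\varphi_N$ and $\psi$.
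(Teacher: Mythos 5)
Your opening move does not work. The second equality in your display requires $(\id\otimes\psi)\Delta(m)=\psi(m)1$ for $m\in M^+$, which is \emph{left} invariance of $\psi$; but $\psi=\varphi^R$ is only \emph{right} invariant, i.e.\ $(\psi\otimes\id)\Delta(m)=\psi(m)1$. So $(\id\otimes\psi)\alpha(x)$ is not coinvariant, and there is no reason for it to be a scalar. Indeed, granting the objects you construct later --- $\delta_N$ with $\alpha(\delta_N^{it})=\delta_N^{it}\otimes\delta^{it}$ as in \eqref{EqDelGrouplike} and $\psi_N$ with $(D\psi_N:D\varphi_N)_t=\nu^{it^2/2}\delta_N^{it}$ as in \eqref{EqConDerPhiPsi} --- a formal computation using $\psi=\varphi(\delta^{1/2}\,\cdot\,\delta^{1/2})$ and $(\id\otimes\varphi)\alpha=\varphi_N(\cdot)1$ gives
\[
(\id\otimes\psi)\alpha(x)=\delta_N^{-1/2}\,\varphi_N(\delta_N^{1/2}x\delta_N^{1/2})\,\delta_N^{-1/2}=\psi_N(x)\,\delta_N^{-1},
\]
which is a multiple of $1$ only when $\delta_N$ is scalar (and then $\delta=1$). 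Hence the ``prescribed identity'' $\psi_N(x)1=(\id\otimes\psi)\alpha(x)$ that you propose to verify at the end is false in general. What does survive of this paragraph is that $\mu\bigl((\id\otimes\psi)\alpha(\cdot)\bigr)$ is right invariant for any normal state $\mu$ (this uses only $(\psi\otimes\omega)\Delta=\omega(1)\psi$), but its semifiniteness --- $\psi$-integrability of $\alpha$ --- is not among the hypotheses and would itself have to be proved.

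Your second paragraph does identify the correct route, which is the one taken in \cite[Theorem 4.19]{DeC11}: produce a strictly positive $\delta_N\,\eta\, N$ that is group-like over $\delta$ in the sense of \eqref{EqDelGrouplike} and define $\psi_N$ through the Connes cocycle $(D\psi_N:D\varphi_N)_t=\nu^{it^2/2}\delta_N^{it}$. But as written this is a plan rather than a proof: the existence of $\delta_N$ with the required regularity --- which is the entire analytic content of the theorem --- is only asserted to be ``extractable'' from the modular theory of $N\rtimes M$ or from the pentagon equation, and no argument is given. The uniqueness step also has a gap: to conclude $\alpha(u_t)=u_t\otimes 1$ for $u_t=(D\psi_N^{(1)}:D\psi_N^{(2)})_t$ you need the covariance $\alpha\circ\sigma_t^{\psi_N}=(\sigma_t^{\psi_N}\otimes\kappa_t)\circ\alpha$ for a one-parameter group $\kappa_t$ on $M$ that is the \emph{same} for every right invariant weight; this is a nontrivial intermediate result (proved via the balanced weight on $M_2(\C)\otimes N$), not an automatic consequence of ``the standard cocycle argument''.
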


In general, there is no canonical normalisation available for $\psi_{N}$. Nevertheless, once $\psi_N$ has been chosen there exists by its construction in \cite[Theorem 4.19]{DeC11} and \cite[Lemma 4.18 and Theorem 4.23]{DeC11} a unique $\delta_N>0$ affiliated with $N$ such that \begin{equation}\label{EqConDerPhiPsi}(D\psi_{N} : D\varphi_{N})_t = \nu^{it^2/2} \delta_{N}^{it}\end{equation} with $\delta_{N} \eta N$ an invertible positive operator and $\nu$ the scaling constant of $M$. We further have  by \cite[Proposition 4.16]{DeC11} that  
\begin{equation}\label{EqDelGrouplike} \alpha(\delta_{N}^{it}) = \delta_{N}^{it}\otimes \delta^{it},\end{equation} with $\delta$ the modular element of $M$, and this uniquely characterizes $\delta_N$ up to a positive scalar by ergodicity of $\alpha$. 

Evidently, one can develop also a theory of \emph{left} Galois objects $(N,\gamma)$, where $\gamma$ is an integrable ergodic left coaction with the Galois map $\Vv$ determined by \begin{equation}\label{EqVv}(\omega \otimes \id)(\Vv)\Gamma_N(x) = \Gamma_N((\omega \otimes \id)\gamma(x)),\qquad \omega \in B(L^2(M))_*\end{equation} a unitary, where $\Gamma_N$ is the GNS-map with respect to the weight $\psi_N$ determined by $\psi_N(x) = \psi((\id\otimes \omega)\gamma(x))$ for all $x\in N^+$ and $\omega\in \mathcal{S}_*(N)$. All of the above results then have their left analogue. 

The main source of examples of Galois objects comes from the theory of \emph{projective corepresentations}.   

\begin{Def}\cite[Definition 7.1 and Theorem 7.2]{DeC11} Let $(M,\Delta)$ be a quantum group von Neumann algebra. A \emph{projective corepresentation} of $(M,\Delta)$ is a coaction $\alpha: N \rightarrow N\vNtimes M$ with $N$ a type $I$-factor. 
\end{Def}

For $\alpha: N\rightarrow N\vNtimes M$ a projective corepresentation we can form 
\[N_{\alpha} = \alpha(N)' \cap (N\rtimes M),\] and we have a canonical isomorphism of von Neumann algebras \[N\rtimes M \cong N\vNtimes N_{\alpha},\] as $N$ is a type $I$ factor. It is easy to see that the dual right coaction $\hat{\alpha}$ of $\hat{M}'$ restricts to a coaction \begin{equation}\label{EqAlCirc}\alpha^{\circ}: N_{\alpha} \rightarrow N_{\alpha}\vNtimes \hat{M}'.\end{equation}

\begin{Theorem}\label{TheoRightGaloisDual} The couple $(N_{\alpha},\alpha^{\circ})$ is a right Galois object for $(\hat{M}',\hat{\Delta}')$.
\end{Theorem}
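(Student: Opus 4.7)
The plan is to exploit the tensor decomposition of $N \rtimes M$ afforded by the type I factor structure of $\alpha(N) \cong N$, and then read off ergodicity, integrability and the Galois property of $\alpha^{\circ}$ from corresponding properties of the dual coaction $\hat{\alpha}$, with biduality (Theorem \ref{TheoBiduCoac}) as the central tool.

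First, since $\alpha(N) \cong N$ is a type I subfactor of $N \rtimes M$, there is a standard factorization
\[
N \rtimes M \;\cong\; \alpha(N) \vNtimes N_{\alpha}.
\]
Under this identification the dual coaction fixes the first tensor factor, $\hat{\alpha}(\alpha(x)) = \alpha(x) \otimes 1$, so it takes the form $\hat{\alpha} = \id_{\alpha(N)} \otimes \alpha^{\circ}$; the inclusion $\alpha(N) \subseteq (N \rtimes M)^{\hat{\alpha}}$ is then immediate, and the reverse inclusion, $(N \rtimes M)^{\hat{\alpha}} \subseteq \alpha(N)$, follows from biduality. Ergodicity of $\alpha^{\circ}$ is then straightforward:
\[
(N_{\alpha})^{\alpha^{\circ}}
= N_{\alpha} \cap (N \rtimes M)^{\hat{\alpha}}
= (\alpha(N)' \cap N \rtimes M) \cap \alpha(N)
= Z(\alpha(N)) = \C,
\]
since $\alpha(N) \cong N$ is a factor. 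Integrability is inherited from that of the dual coaction $\hat{\alpha}$: using $\hat{\alpha} = \id \otimes \alpha^{\circ}$, slicing $\hat{\alpha}$-integrable positive elements of $\alpha(N) \vNtimes N_{\alpha}$ by normal states on the type I factor $\alpha(N)$ yields a $\sigma$-weakly dense family of $\alpha^{\circ}$-integrable elements in $N_{\alpha}^{+}$.

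For the Galois condition I would invoke Theorem \ref{TheoBiduCoac} to identify $(N \rtimes M) \rtimes \hat{M}' \cong N \vNtimes B(L^{2}(M))$. Since $\hat{\alpha} = \id \otimes \alpha^{\circ}$ under the factorization of $N \rtimes M$, the left-hand side factorizes further as $\alpha(N) \vNtimes (N_{\alpha} \rtimes \hat{M}')$, and cancelling the common type I tensor factor $\alpha(N) \cong N$ yields
\[
N_{\alpha} \rtimes \hat{M}' \;\cong\; B(L^{2}(M)),
\]
which is a type I factor. Consequently $\pi_{\rtimes} : N_{\alpha} \rtimes \hat{M}' \to B(L^{2}(N_{\alpha}))$ is a nonzero normal $*$-homomorphism out of a factor, hence automatically faithful, giving the Galois property.

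The main obstacle lies in the careful spatial bookkeeping: namely showing that the abstract isomorphism $\pi_{\bidual}$ of Theorem \ref{TheoBiduCoac} is compatible with the tensor decomposition of $N \rtimes M$, so that the cancellation of the $\alpha(N)$ factor can be performed in a controlled way and produces a genuine isomorphism $N_{\alpha} \rtimes \hat{M}' \cong B(L^{2}(M))$ rather than merely an abstract one. Once this compatibility is established, the remaining steps are purely conceptual.
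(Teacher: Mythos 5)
Your argument is correct, and it is essentially the argument that the paper outsources: the proof given in the text is just a citation of \cite[Theorem 7.2]{DeC11}, and the route there is the same one you take — split $N\rtimes M\cong \alpha(N)\,\vNtimes\, N_{\alpha}$ using that $\alpha(N)$ is a type $I$ factor, observe that $\hat{\alpha}=\id\otimes\alpha^{\circ}$ under this splitting, read off ergodicity from $(N\rtimes M)^{\hat{\alpha}}=\alpha(N)$ and integrability from integrability of the dual coaction, and get the Galois property from the biduality isomorphism $(N\rtimes M)\rtimes\hat{M}'\cong N\,\vNtimes\, B(L^2(M))$. All three verifications check out: the slicing argument for integrability works because $(\omega\otimes\id\otimes\hat{\varphi}')\hat{\alpha}(z)=\hat{\varphi}'\bigl((\omega\otimes\id)\alpha^{\circ}(\cdot)\bigr)$ applied to $(\omega\otimes\id)(z)$ is controlled by $(\id\otimes\hat{\varphi}')\hat{\alpha}(z)\in\alpha(N)$, and slices of a $\sigma$-weakly dense set are dense; and the ergodicity computation is exactly $Z(\alpha(N))=\C$.

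One small overstatement: the cancellation of the tensor factor $\alpha(N)$ in $\alpha(N)\,\vNtimes\,(N_{\alpha}\rtimes\hat{M}')\cong N\,\vNtimes\, B(L^2(M))$ identifies $N_{\alpha}\rtimes\hat{M}'$ with the relative commutant $\alpha(N)'\cap\bigl(N\,\vNtimes\, B(L^2(M))\bigr)$, which is a type $I$ factor, but pinning it down as $B(L^2(M))$ on the nose requires an extra multiplicity count that your cancellation does not directly supply (for infinite-dimensional $\Hsp$ the cardinal arithmetic alone does not determine the multiplicity space). This is harmless: your conclusion only uses that $N_{\alpha}\rtimes\hat{M}'$ is a factor, so that the unital normal $*$-homomorphism $\pi_{\rtimes}$ has trivial kernel. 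The ``spatial bookkeeping'' you flag as the main obstacle is indeed the only place requiring care, and it is handled by the explicit statement in Theorem \ref{TheoBiduCoac} that $\pi_{\bidual}$ sends $\alpha(x)\mapsto\alpha(x)\otimes 1$ and $1\otimes z\mapsto 1\otimes 1\otimes z$, which is exactly the compatibility you need.
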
 

\begin{proof} This is contained in the proof of Theorem 7.2 of \cite{DeC11}.
\end{proof}

In particular, let $\alpha:N\rightarrow N\vNtimes M$ be a right Galois object for $(M,\Delta)$. As $N\rtimes M$ is a type $I$-factor, $\hat{\alpha}$ is a projective corepresentation of $\hat{M}'$. We hence obtain a right Galois object for $(M,\Delta)$ defined by \[((N\rtimes_{\alpha}M)_{\hat{\alpha}},\hat{\alpha}^{\circ}).\]

\begin{Theorem}\label{TheoBidualGal} Let $(N,\alpha)$ be a right Galois object for $(M,\Delta)$. Then we have an isomorphism of von Neumann algebras \[\pi: N \cong (N\rtimes_{\alpha}M)_{\hat{\alpha}}, \quad x\mapsto \pi_{\bidual}(\Ww^*(x\otimes 1)\Ww)\] which is equivariant in the sense that \begin{equation}\label{EqEqProj} (\pi \otimes \Ad(J\hat{J}))(\alpha(x)) = \hat{\alpha}^{\circ}(\pi(x)),\qquad x\in N.\end{equation}
\end{Theorem}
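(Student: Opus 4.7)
The plan is to transport everything to $N\vNtimes B(L^2(M))$ via the biduality isomorphism $\pi_{\bidual}^{-1}$ of Theorem \ref{TheoBiduCoac}, thereby reducing the claim to properties of the Galois unitary $\Ww$. Since $\Ww \in N\vNtimes B(L^2(M),L^2(N))$ commutes with $N'\otimes 1$ in its first leg, the element $\Ww^*(x\otimes 1)\Ww$ also commutes with $N'\otimes 1$ and thus lies in $N\vNtimes B(L^2(M))$; hence $\pi$ is a well-defined $*$-homomorphism $N \to (N\rtimes M)\rtimes \hat{M}'$, obviously injective as conjugation by a unitary.

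To identify the image, observe that under $\pi_{\bidual}^{-1}$ the fixed-point algebra $(N\rtimes M)_{\hat{\alpha}}$ pulls back to the relative commutant $(N\rtimes M)' \cap (N\vNtimes B(L^2(M)))$. Indeed, by Theorem \ref{TheoBiduCoac} the generators $\alpha(y)\otimes 1 = \hat{\alpha}(\alpha(y))$ and $1\otimes \hat{\Delta}'(w) = \hat{\alpha}(1\otimes w)$ of $\hat{\alpha}(N\rtimes M)$ correspond via $\pi_{\bidual}^{-1}$ to $\alpha(y)$ and $1\otimes w$ in $N\vNtimes B(L^2(M))$, and these generate $N\rtimes M$. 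From \eqref{EqGalCrossIso} we have $N\rtimes M = \Ww^*(1\otimes B(L^2(N)))\Ww$ in $B(L^2(N)\otimes L^2(M))$, so its commutant there is $\Ww^*(B(L^2(N))\otimes 1)\Ww$. Intersecting with $N\vNtimes B(L^2(M))$ retains precisely those $z\in B(L^2(N))$ for which $\Ww^*(z\otimes 1)\Ww$ commutes with $N'\otimes 1$, and by the same intertwining property of $\Ww$ used for well-definedness this forces $z\in N''=N$. Hence $\pi_{\bidual}^{-1}((N\rtimes M)_{\hat{\alpha}}) = \Ww^*(N\otimes 1)\Ww = \pi_{\bidual}^{-1}(\pi(N))$, yielding simultaneously $\pi(N)\subseteq (N\rtimes M)_{\hat{\alpha}}$ and surjectivity of $\pi$ onto this algebra.

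For the equivariance condition \eqref{EqEqProj}, apply $\pi_{\bidual}^{-1}\otimes \Ad(J\hat{J})$ to both sides. Since $\hat{\alpha}^{\circ}$ is the restriction of the bidual coaction $\alpha^{\wedge\wedge}$ to $(N\rtimes M)_{\hat{\alpha}}$, formula \eqref{EqEqBidual} unfolds the right-hand side as $\Sigma_{23}W_{23}(\alpha\otimes \id)(\Ww^*(x\otimes 1)\Ww)W_{23}^*\Sigma_{23}$; expanding with \eqref{ActAlphN}, i.e.\ $(\alpha\otimes \id)\Ww = \Ww_{13}W_{23}$, the $W_{23}$-factors telescope and a $\Sigma_{23}$-conjugation reshuffles legs to yield $\Ww_{12}^*\alpha(x)_{13}\Ww_{12}$. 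On the left-hand side, $(J\hat{J})^2 = \nu^{-i/4}$ is a scalar, so $(\Ad(J\hat{J}))^2 = \id$, and the result collapses to $((\pi_{\bidual}^{-1}\circ \pi)\otimes \id)(\alpha(x))$; writing $\alpha(x) = y_i\otimes m_i$ in sumless Sweedler notation and using $\pi_{\bidual}^{-1}\circ\pi(y) = \Ww^*(y\otimes 1)\Ww$, this is $\Ww^*(y_i\otimes 1)\Ww \otimes m_i = \Ww_{12}^*\alpha(x)_{13}\Ww_{12}$, matching the right-hand side. The principal obstacle is the correct invocation of \eqref{EqEqBidual} (which governs $\alpha^{\wedge\wedge}$ rather than $\hat{\alpha}^{\circ}$ directly) and the leg-numbering bookkeeping; once the scalar identity $(J\hat{J})^2 = \nu^{-i/4}$ absorbs the double application of $\Ad(J\hat{J})$, both sides visibly coincide.
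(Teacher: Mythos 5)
Your proof is correct and follows essentially the same route as the paper: you identify $\pi_{\bidual}^{-1}((N\rtimes_{\alpha}M)_{\hat{\alpha}})$ with the relative commutant $(N\vNtimes B(L^2(M)))\cap (N\rtimes M)'$, use the intertwining property of $\Ww$ with $N'\otimes 1$ to show this equals $\Ww^*(N\otimes 1)\Ww$, and reduce the equivariance to the identity $W_{23}(\alpha\otimes \id)(\Ww^*(x\otimes 1)\Ww)W_{23}^* = \Ww_{13}^*\alpha(x)_{12}\Ww_{13}$, which follows from \eqref{EqEqBidual}, \eqref{EqImplGalAl} and \eqref{ActAlphN} exactly as in the paper. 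Two cosmetic quibbles only: $(N\rtimes M)_{\hat{\alpha}}$ is a relative commutant rather than a fixed-point algebra, and $(J\hat{J})^2 = \nu^{i/4}$ rather than $\nu^{-i/4}$ --- in either case a unimodular scalar, so $\Ad(J\hat{J})$ is indeed involutive as you use.
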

 
Recall that the isomorphism $\pi_{\bidual}$ was defined in \eqref{EqDefChiRtimes}.

\begin{proof} This is indirectly contained in \cite{DeC11}, but let us give a direct proof. 

As $\pi_{\bidual}^{-1}$ puts $N\rtimes M$ is in its ordinary position on $L^2(N)\otimes L^2(M)$, we find \[\pi_{\bidual}^{-1}( (N\rtimes M)_{\hat{\alpha}})  = (N\vNtimes B(L^2(M))\cap (N\rtimes M)'.\]

Let now $\Ww$ be the Galois unitary for $(N,\alpha)$. It then follows from \eqref{EqImplGalAl}, \eqref{EqWInN}, \eqref{EqGalImpCross2} and the final part of Theorem \ref{TheoIntImp} that \begin{equation}\label{EqConjXX} N\otimes 1 = \Ww((N\vNtimes B(L^2(M))\cap (N\rtimes M)')\Ww^*.\end{equation} This proves that $\pi$ as in the statement of the theorem is a well-defined isomorphism. 

It remains to show \eqref{EqEqProj}, which by \eqref{EqEqBidual} reduces to \[W_{23}(\alpha\otimes \id)(\Ww^*(x\otimes 1)\Ww)W_{23}^*  = \Ww_{13}^*\alpha(x)_{12}\Ww_{13},\qquad x\in N.\] 
But this follows from the fact that $\Ww$ implements $\alpha$, together with the hybrid pentagon equation \eqref{EqHybPent}.
\end{proof} 

Let us single out an important operation which was hidden in the definition of the map $\pi$ in the above theorem.

\begin{Lem} Let $(N,\alpha)$ be a Galois object with Galois unitary $\Ww$. Then \[\Ad_{\alpha}(x) = \Ww^*(x\otimes 1)\Ww\] defines a right coaction of $(\hat{M},\hat{\Delta})$ on $N$.
\end{Lem}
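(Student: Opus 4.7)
The plan is to verify the three defining axioms of a right coaction of $(\hat M,\hat\Delta)$ on $N$: that $\Ad_\alpha$ is a normal unital $*$-homomorphism, that its range sits inside $N \vNtimes \hat M$, and that it is coassociative. Normality, unitality, and multiplicativity are immediate, since $\Ad_\alpha$ is nothing but spatial conjugation by the unitary $\Ww$ (after adjoining a trivial second leg). The substantive content therefore lies in the range statement and in the coassociativity identity.

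For the range, I would check the two commutant conditions separately. For the first leg, I invoke \eqref{EqWInN}, which says $\Ww \in N \vNtimes B(L^2(M),L^2(N))$ and so lets me slide any $y' \in N'$ freely through $\Ww^*$ and $\Ww$; combined with the commutation of $y'$ with $x \in N$, this gives $(y'\otimes 1)\Ad_\alpha(x) = \Ad_\alpha(x)(y'\otimes 1)$, placing the first leg in $(N')' = N$. For the second leg, I use \eqref{EqGalImpCross2} in the form $\Ww(1\otimes z') = (1\otimes \hat\pi'(z'))\Ww$ for $z' \in \hat M'$, together with its adjoint, to turn $(1\otimes z')\Ww^*(x\otimes 1)\Ww$ into $\Ww^*(x\otimes \hat\pi'(z'))\Ww = \Ww^*(x\otimes 1)\Ww(1\otimes z')$; this commutation with $1 \otimes z'$ places the second leg in $(\hat M')' = \hat M$.

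For coassociativity, the main tool is the hybrid pentagon equation \eqref{EqHybPent}. Writing $\hat\Delta(y) = \Sigma W(y\otimes 1)W^*\Sigma$, I expand $(\id\otimes\hat\Delta)\Ad_\alpha(x)$ on $L^2(N)_1 \otimes L^2(M)_2 \otimes L^2(M)_3$ as $\Sigma_{23}W_{23}\Ww_{12}^*(x_1)\Ww_{12}W_{23}^*\Sigma_{23}$, and then rewrite $W_{23}\Ww_{12}^* = \Ww_{13}^*\Ww_{12}^*\Ww_{23}$ (with its adjoint for $\Ww_{12}W_{23}^*$). Since $x_1$ commutes with $\Ww_{23}$, the two $\Ww_{23}^{\pm 1}$ factors cancel, and the outer conjugation by $\Sigma_{23}$ swaps legs $2$ and $3$, sending $\Ww_{13}$ to $\Ww_{12}$ and vice versa while fixing $x_1$. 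The resulting expression $\Ww_{12}^*\Ww_{13}^*(x_1)\Ww_{13}\Ww_{12}$ is exactly what one gets from the direct expansion of $(\Ad_\alpha \otimes \id)\Ad_\alpha(x)$, obtained by substituting the definition of $\Ad_\alpha$ into the first leg of $\Ww_{13}^*(x_1)\Ww_{13}$.

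The main obstacle I anticipate is purely bookkeeping. Since $\Ww$ is a unitary between the \emph{different} Hilbert spaces $L^2(N)\otimes L^2(M)$ and $L^2(N)\otimes L^2(N)$, the leg notation has to simultaneously track algebraic positions and the underlying Hilbert spaces, and in particular the extension $\Ad_\alpha \otimes \id$ on $N \vNtimes \hat M$ is delicate because the new $\hat M$-leg produced by $\Ad_\alpha$ is not the $L^2(M)$-leg it consumes. Once the leg-positions are set up correctly, both the range statement and the coassociativity reduce to short manipulations with the hybrid pentagon, the implementation identity \eqref{EqGalImpCross2}, and the flip $\Sigma_{23}$.
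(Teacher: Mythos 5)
Your proposal is correct and takes essentially the same route as the paper's own (very terse) proof, which likewise derives the containment $\Ad_{\alpha}(x)\in N\vNtimes\hat{M}$ from \eqref{EqWInN} and \eqref{EqGalImpCross2} and the coassociativity from the hybrid pentagon \eqref{EqHybPent}; you have merely written out the commutant computations and the leg bookkeeping that the paper leaves implicit.
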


\begin{proof}  By \eqref{EqWInN} and \eqref{EqGalImpCross2}, we have \[\Ww^*(x\otimes 1)\Ww \in N\vNtimes \hat{M},\qquad x\in N.\] Since \[\hat{\Delta}(y)= \Ad(\Sigma W^*\Sigma)(1\otimes y),\qquad y\in \hat{M},\] the coaction property of $\Ad_{\alpha}$ follows straightforwardly from \eqref{EqHybPent}.
\end{proof}

\begin{Def}\label{DefAdj} We call the coaction $\Ad_{\alpha}$ the \emph{adjoint coaction} associated to the Galois object.
\end{Def} 

In the Hopf algebra setting, this is known as the \emph{Miyashita-Ulbrich action}, see \cite{Sch04}. 
  
\section{$I$-factorial Galois objects}

We now come to the main topic of this paper: Galois objects which define \emph{at the same time} a projective corepresentation, that is, Galois object structures on $B(\Hsp)$ for some Hilbert space $\Hsp$.

\begin{Def} Let $(M,\Delta)$ be a quantum group von Neumann algebra.  We call \emph{$I$-factorial (right) Galois object} any (right) Galois object $(N,\alpha)$ with $N$ a type $I$-factor.
\end{Def}

Our main theorem is the following \emph{duality statement}. Recall the notion of adjoint coaction from Definition \ref{DefAdj}.

\begin{Theorem}\label{TheoDualMain} If $(N,\alpha)$ is a $I$-factorial Galois object for $(M,\Delta)$, then $(N,\Ad_{\alpha})$ is a $I$-factorial Galois object for $(\hat{M},\hat{\Delta})$. Moreover, \[\Ad_{\Ad_{\alpha}} = \alpha.\]
\end{Theorem}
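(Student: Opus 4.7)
The plan is to verify that $(N,\Ad_{\alpha})$ is a $I$-factorial Galois object for $(\hat{M},\hat{\Delta})$ by checking ergodicity, integrability, and the Galois property, and then to derive involutivity from the symmetry of the construction. Type $I$-factoriality is built in by hypothesis, and the preceding lemma already provides the coaction.

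For ergodicity, if $x\in N$ satisfies $\Ad_{\alpha}(x)=x\otimes 1$, then $(x\otimes 1)\Ww=\Ww(x\otimes 1)$, so for every $y\in N$ a short computation using $\alpha(y)=\Ww^{*}(1\otimes y)\Ww$ gives $(x\otimes 1)\alpha(y)=\Ww^{*}(x\otimes y)\Ww=\alpha(y)(x\otimes 1)$. Slicing with $(\id\otimes\omega)$ for $\omega\in M_{*}$, we see that $x$ commutes with every element of the form $(\id\otimes\omega)\alpha(y)$, and since these are $\sigma$-weakly dense in $N$ by injectivity of $\alpha$, factoriality forces $x\in\C 1$.

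Next, I plan to construct the Galois unitary $\Ww^{\flat}:L^{2}(N)\otimes L^{2}(\hat{M})\to L^{2}(N)\otimes L^{2}(N)$ for $\Ad_{\alpha}$ explicitly, from which integrability and the Galois property will follow. Matching the two implementations $(\Ww^{\flat})^{*}(1\otimes x)\Ww^{\flat}=\Ad_{\alpha}(x)=\Ww^{*}(x\otimes 1)\Ww$ forces $\Ww(\Ww^{\flat})^{*}$ to intertwine $1\otimes N$ with $N\otimes 1$ on $L^{2}(N)\otimes L^{2}(N)$; the flip $\Sigma$ effects this swap, but to also secure $\Ww^{\flat}\in N\vNtimes B(L^{2}(\hat{M}),L^{2}(N))$ one must dress $\Sigma\Ww$ by modular-conjugation twists, in particular by $J_{N}$ on the target leg and by $\hat{J}$ on the source leg $L^{2}(M)=L^{2}(\hat{M})$, using the compatibilities \eqref{EqGalImpCross2}--\eqref{EqGalImpCross4}. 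The hybrid pentagon \eqref{EqHybPent} for $\Ww$ and the identity $\hat{W}=\Sigma W^{*}\Sigma$ then translate into the corresponding pentagon $\Ww^{\flat}_{12}\Ww^{\flat}_{13}\hat{W}_{23}=\Ww^{\flat}_{23}\Ww^{\flat}_{12}$ for $\Ww^{\flat}$. Its unitarity yields the Galois property, and the left-invariant weight built out of $\Ww^{\flat}$ in the sense of \eqref{EqVarphiN} delivers integrability.

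Finally, the involutivity $\Ad_{\Ad_{\alpha}}=\alpha$ comes from iterating the construction: applying the same recipe to $\Ww^{\flat}$ should return $\Ww$ on the nose, because the Pontryagin biduality $\hat{\hat{M}}=M$ is exact and the ingredients $J_{N},\hat{J},\Sigma$ are all involutive, so substituting yields $\Ad_{\Ad_{\alpha}}(x)=(\Ww^{\flat\flat})^{*}(x\otimes 1)\Ww^{\flat\flat}=\Ww^{*}(1\otimes x)\Ww=\alpha(x)$. The hard part will be pinning down the exact expression for $\Ww^{\flat}$ in terms of $\Ww$, $J_{N}$, $\hat{J}$, and $\Sigma$ so that all four requirements hold simultaneously: correct $N$-module linearity, exact implementation of $\Ad_{\alpha}$, hybrid pentagon with $\hat{W}$, and self-duality under iteration. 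A potentially cleaner alternative is to describe $\Ww^{\flat}$ intrinsically as the Galois unitary of $((N\rtimes M)_{\hat{\alpha}},\hat{\alpha}^{\circ})$ (Galois for $\hat{M}'$ by Theorem \ref{TheoRightGaloisDual}), transported through the biduality isomorphism $\pi$ and through $\Ad(J\hat{J})$, which realises $\hat{M}^{\wedge\prime\wedge\prime}\cong\hat{M}$, using the equivariance \eqref{EqEqBidual} of $\pi_{\bidual}$.
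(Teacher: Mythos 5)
Your outline has the right skeleton but defers exactly the two steps that carry all of the difficulty, and neither is resolved as written. Your primary route to the Galois property is circular: matching $(\Ww^{\flat})^{*}(1\otimes x)\Ww^{\flat}=\Ww^{*}(x\otimes 1)\Ww$ presupposes that $\Ad_{\alpha}$ already admits a Galois unitary, i.e.\ that it is integrable and ergodic with unitary Galois map, which is what is to be proved; and the Galois map is defined through the GNS map of the left invariant weight, so one cannot ``build the weight out of $\Ww^{\flat}$'' afterwards. Your alternative route --- transporting the Galois object $((N\rtimes M)_{\hat{\alpha}},\hat{\alpha}^{\circ})$ of Theorem \ref{TheoRightGaloisDual} through $\pi_{\rtimes}$ and the anti-isomorphisms $\Ad(J_N)$ and $\Ad(\hat{J})$ --- is the one that works (and is what the paper does), but the substantive content there is the identification of the transported coaction with $\Ad_{\alpha}$: this comes down to showing that $\Ww_{12}\hat{V}^{\prime*}_{23}\Ww_{12}^{*}((J_N\otimes \hat{J})\Ww^{*}(J_N\otimes J_N))_{23}$ has trivial middle leg, using \eqref{EqGalImpCross2}--\eqref{EqGalImpCross4} and \eqref{EqObsImpl}, and this computation appears nowhere in your proposal. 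Your ergodicity argument, by contrast, is fine but is also the easy part.

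For $\Ad_{\Ad_{\alpha}}=\alpha$, the appeal to involutivity of $J_N,\hat{J},\Sigma$ and exactness of biduality is not enough: the duality $M\mapsto \hat{M}'$ is only involutive up to conjugation by $J\hat{J}$ (one has $M^{\wedge\prime\wedge\prime}=M^{\prime\cop}$), and the GNS maps are shifted by gauge factors, so the claim that iterating the recipe returns $\Ww$ ``on the nose'' is precisely what must be proved. The actual formula is $\Ww_{\Ad}=\Xi\hat{\Vv}\,(\hat{\pi}'\otimes \id)((J\hat{J}\otimes J\hat{J})\hat{W}(\hat{J}J\otimes 1))$, where $\Xi=\Sigma_{24}$ is a \emph{partial} flip on $(\Hsp\otimes\overline{\Hsp})^{\otimes 2}$ rather than the full flip, and where an extra factor built from $\hat{W}$ is carried through the representation $\hat{\pi}'$ of $\hat{M}'$ on $L^2(N)$ --- ingredients your ansatz does not anticipate, and whose derivation (via a chain of commuting squares relating the GNS maps of $\varphi_N$, $\varphi_{B(L^2(N))}$ and $\varphi_{N\rtimes M}$) is the bulk of the work. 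Once the formula is in hand, involutivity is obtained not by iterating the construction but by verifying directly that $\Ww_{\Ad}\alpha(x)=(x\otimes 1)\Ww_{\Ad}$, which reduces via $(\alpha\otimes\id)(\Ww)=\Ww_{13}W_{23}$ to a comultiplication identity for the conjugated $\hat{W}$; that verification is also missing from your plan.
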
 

The theorem will be proven in two steps, see Theorem \ref{TheoAdjGal} and Theorem \ref{TheoMain}.

\begin{Theorem}\label{TheoAdjGal} If $(N,\alpha)$ is a $I$-factorial Galois object with respect to $(M,\Delta)$, then $(N,\Ad_{\alpha})$ is a $I$-factorial Galois object with respect to $(\hat{M},\hat{\Delta})$.
\end{Theorem}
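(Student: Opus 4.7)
The $I$-factoriality is automatic since $N$ is a type $I$ factor by assumption, and the coaction property of $\Ad_\alpha$ was established in the preceding lemma. It therefore remains to verify that $\Ad_\alpha$ is ergodic, integrable, and Galois.

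For \emph{ergodicity}, suppose $x\in N$ satisfies $\Ad_\alpha(x)=x\otimes 1$, equivalently $(x\otimes 1)\Ww=\Ww(x\otimes 1)$ as operators $L^2(N)\otimes L^2(M)\to L^2(N)\otimes L^2(N)$. Slicing over the second leg of $\Ww$, $x$ must commute with every first-leg slice of $\Ww$. From $\alpha(y)=\Ww^*(1\otimes y)\Ww$ in \eqref{EqImplGalAl}, every first-leg slice of $\alpha(N)$ lies in the von Neumann algebra generated by the first-leg slices of $\Ww$; a standard density argument for Galois objects, based on \eqref{ActAlphN}, shows these slices generate a $\sigma$-weakly dense subalgebra of $N$. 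Since $N$ is a factor, this forces $x\in \mathbb{C}\cdot 1$.

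For \emph{integrability} and the \emph{Galois property}, I exploit the bidual description provided by Theorems \ref{TheoRightGaloisDual} and \ref{TheoBidualGal}. Since $N\rtimes M\cong B(L^2(N))$ is a type $I$ factor by \eqref{EqIsoCrossI}, the dual coaction $\hat\alpha$ is a projective corepresentation of $(\hat M',\hat\Delta')$ on it, so Theorem \ref{TheoRightGaloisDual} yields a Galois object $((N\rtimes M)_{\hat\alpha},\hat\alpha^\circ)$ for $(\hat M')^{\wedge\prime}=M^{\prime\cop}$, and Theorem \ref{TheoBidualGal} supplies an isomorphism $\pi:N\to(N\rtimes M)_{\hat\alpha}$ given by $\pi(x)=\pi_{\bidual}(\Ad_\alpha(x))$. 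My plan is to transport the Galois-object structure of $((N\rtimes M)_{\hat\alpha},\hat\alpha^\circ)$ for $M^{\prime\cop}$ back to $N$ via $\pi^{-1}$ and the quantum-group isomorphism $\hat M\cong (M^{\prime\cop})^\wedge$ dual to $M^{\prime\cop}\to M$, $x\mapsto J\hat J x\hat JJ$, and then to verify that the transported coaction is exactly $\Ad_\alpha$. Granted this intertwining, integrability and the Galois property for $\Ad_\alpha$ are inherited directly from the corresponding known properties of $\hat\alpha^\circ$, since they are preserved under the isomorphism $\pi$ together with the compatibility of invariant weights and Galois unitaries under the twist $\Ad(J\hat J)$.

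The main obstacle is to make this intertwining explicit. It requires a careful computation combining \eqref{EqDefChiRtimes} for $\pi_{\bidual}$, \eqref{EqDualCoaction} and \eqref{EqFormDualOp} for $\hat\alpha$ and $\hat V'$, the hybrid pentagon \eqref{EqHybPent}, and the intertwining relations \eqref{EqGalImpCross2}--\eqref{EqGalImpCross4} between $\Ww$ and $\hat\pi'$. In particular, the modular conjugations $J,\hat J$ and the scalar $\nu^{-i/4}$ relating $J\hat J$ to $\hat JJ$ must be tracked carefully throughout, so that the final identification of coactions is consistent on both sides.
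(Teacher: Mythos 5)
Your ergodicity argument is plausible (and in fact unnecessary once the right identification is in place), but the plan for integrability and the Galois property rests on the wrong duality, and this is a genuine gap. Applying Theorem \ref{TheoRightGaloisDual} to the dual coaction $\hat\alpha$ on the type $I$ factor $N\rtimes M$ produces, as you say, a Galois object $((N\rtimes M)_{\hat\alpha},\hat\alpha^{\circ})$ for $(\hat M')^{\wedge\prime}=M^{\prime\cop}\cong M$ --- that is, for (a copy of) the \emph{original} quantum group $M$, not for $\hat M$. The equivariance \eqref{EqEqProj} of Theorem \ref{TheoBidualGal} says precisely that transporting $\hat\alpha^{\circ}$ back to $N$ along $\pi^{-1}$ (and undoing the $\Ad(J\hat J)$ twist) recovers $\alpha$ itself. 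So the coaction you would obtain on $N$ is $\alpha$, a coaction of $M$, whereas $\Ad_{\alpha}$ is a coaction of $\hat M$; these are coactions of generally non-isomorphic quantum groups, and the isomorphism $\hat M\cong(M^{\prime\cop})^{\wedge}$ you invoke concerns the dual of the acting quantum group, not the acting quantum group itself. The ``intertwining'' you propose to verify therefore cannot hold, and carrying out your plan would only re-derive the hypothesis that $(N,\alpha)$ is a Galois object for $M$.

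The repair is to go down one dual, not two: use that $N$ \emph{itself} is a type $I$ factor, so that $\alpha$ is a projective corepresentation of $(M,\Delta)$, and apply Theorem \ref{TheoRightGaloisDual} directly to $\alpha$. This yields the right Galois object $(N_{\alpha},\alpha^{\circ})$ for $(\hat M',\hat\Delta')$ with $N_{\alpha}=\alpha(N)'\cap(N\rtimes M)$, which is a type $I$ factor since both $N$ and $N\rtimes M\cong B(L^2(N))$ are, and $\pi_{\rtimes}$ carries $N_{\alpha}$ onto $N'$. Conjugating by the $*$-anti-isomorphisms $x\mapsto J_Nx^*J_N$ and $x\mapsto\hat Jx^*\hat J$ transports $\alpha^{\circ}$ to a right coaction of $\hat M$ on $N$ which is automatically a $I$-factorial Galois object. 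The substantive remaining step --- which your proposal does not reach --- is to identify this transported coaction with $\Ad_{\alpha}$; this amounts to showing that the operator
\[
\Ww_{12}\hat V^{\prime*}_{23}\Ww_{12}^*\bigl((J_N\otimes\hat J)\Ww^*(J_N\otimes J_N)\bigr)_{23}
\]
commutes with $1\otimes x\otimes 1$ for all $x\in N\cup\hat\pi'(\hat M')$, using \eqref{EqImplGalAl}, \eqref{EqObsImpl} and \eqref{EqGalImpCross2}--\eqref{EqGalImpCross4}, together with the fact that $N$ and $\hat\pi'(\hat M')$ generate $B(L^2(N))$.
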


\begin{proof} As $N$ is a type $I$-factor, we can interpret $\alpha$ as a right projective corepresentation of $(M,\Delta)$. Using the notation from Theorem \ref{TheoRightGaloisDual} and the paragraph following it, we can hence consider the right Galois object $(N_{\alpha},\alpha^{\circ})$ for $\hat{M}'$. As by definition \[N_{\alpha} = \alpha(N)' \cap N\rtimes M,\] with $N$ a type $I$-factor by assumption and $N\rtimes M$ a type $I$-factor by \eqref{EqIsoCrossI}, it follows that $N_{\alpha}$ is again a type $I$-factor, and $(N_{\alpha},\alpha^{\circ})$ is a right $I$-factorial Galois object for $\hat{M}'$ by Theorem \ref{TheoRightGaloisDual}.

Now the isomorphism $\pi_{\rtimes}: N\rtimes M  \cong B(L^2(N))$ from Theorem \ref{TheoIntImp} restricts to an isomorphism \[N_{\alpha} \underset{\pi_{\rtimes}}{\cong} N'.\]  Since \[N \rightarrow N',\quad x\mapsto J_Nx^*J_N\] is a $^*$-anti-isomorphism, and \[\hat{M}\rightarrow \hat{M}',\quad x\mapsto \hat{J}x^*\hat{J}\] a comultiplication-preserving $*$-anti-isomorphism, it follows that we can transport the above right coaction $\alpha^{\circ}$ of $\hat{M}'$ on $N_{\alpha}$ to a right coaction of $\hat{M}$ on $N$,   \begin{equation}\label{EqDiffCoact}x \mapsto (J_N\otimes \hat{J})(\pi_{\rtimes}\otimes \id)(\alpha^{\circ}(\pi_{\rtimes}^{-1}(J_NxJ_N)))(J_N\otimes \hat{J}),\qquad x\in N,\end{equation} making $N$ a $I$-factorial right Galois object for $\hat{M}$.

Let us prove that the coaction in \eqref{EqDiffCoact} is equal to $\Ad_{\alpha}$. This is equivalent with proving that, for $x\in N'$,  \begin{multline*}\alpha^{\circ}(\pi_{\rtimes}^{-1}(x)) \\= (\pi_{\rtimes}^{-1}\otimes \id) ((J_N\otimes \hat{J})\Ww^*(J_N\otimes J_N))(x\otimes 1)((J_N\otimes \hat{J})\Ww^*(J_N\otimes J_N))^*.\end{multline*}

Recalling now that $\alpha^{\circ}$ is the restiction of the dual coaction $\hat{\alpha}$, we see by the formula for $\pi_{\rtimes}^{-1}$ in \eqref{EqGalCrossIso} and the defining formula \eqref{EqDualCoaction} for the dual coaction $\hat{\alpha}$ that it is sufficient to show that \begin{equation}\label{EqPentAdCo} \Ww_{12}\hat{V}^{\prime*}_{23} \Ww_{12}^*((J_N\otimes \hat{J})\Ww^*(J_N\otimes J_N))_{23}  \in B(L^2(N))\vNtimes N\vNtimes B(L^2(M)).\end{equation}

In fact, we will show something stronger, namely that the above operator lies in $B(L^2(N))\vNtimes \C\vNtimes B(L^2(M))$. 

To see this, we first observe that $\hat{\pi}'(\hat{M}')$ and $N$ generate $B(L^2(N))$ as a von Neumann algebra by \eqref{EqIsoCrossI}. It is hence enough to prove that the above operator commutes with $1\otimes x\otimes 1$ for $x\in N \cup \hat{\pi}'(\hat{M}')$. 

For $x\in N$ we compute, using that $\Ww\in N\vNtimes B(L^2(M),L^2(N))$ and that $\Ww$ implements $\alpha$, \begin{eqnarray*}&& \hspace{-1.5cm} \Ww_{12}\hat{V}_{23}^{\prime*} \Ww_{12}^*((J_N\otimes \hat{J})\Ww^*(J_N\otimes J_N))_{23} (1\otimes x \otimes 1) \\ &&= \Ww_{12}\hat{V}_{23}^{\prime*} \Ww_{12}^*(1\otimes x \otimes 1)((J_N\otimes \hat{J})\Ww^*(J_N\otimes J_N))_{23}  \\ &&= \Ww_{12}\hat{V}_{23}^{\prime*}\alpha(x)_{12} \Ww_{12}^*((J_N\otimes \hat{J})\Ww^*(J_N\otimes J_N))_{23} \\ &&= \Ww_{12}\alpha(x)_{12} \hat{V}_{23}^{\prime*}\Ww_{12}^*((J_N\otimes \hat{J})\Ww^*(J_N\otimes J_N))_{23}  \\ &&=  (1\otimes x\otimes 1)\Ww_{12} \hat{V}_{23}^{\prime*}\Ww_{12}^*((J_N\otimes \hat{J})\Ww^*(J_N\otimes J_N))_{23}.\end{eqnarray*} For $x\in \hat{M}'$, we find using \eqref{EqGalImpCross4}, \eqref{EqGalImpCross3}, \eqref{EqGalImpCross2} and \eqref{EqObsImpl} that   \begin{eqnarray*}&& \hspace{-1cm} \Ww_{12}\hat{V}_{23}^{\prime*} \Ww_{12}^*((J_N\otimes \hat{J})\Ww^*(J_N\otimes J_N))_{23} (1\otimes \hat{\pi}'(x) \otimes 1) \\ &&=  \Ww_{12}\hat{V}_{23}^{\prime*} \Ww_{12}^*((J_N\otimes \hat{J})\Ww^*(\hat{\pi}'(JxJ) \otimes 1)(J_N\otimes J_N))_{23} \\ &&= \Ww_{12}\hat{V}_{23}^{\prime*} \Ww_{12}^*((\hat{\pi}'\otimes \id)(\hat{\Delta}'(x)))_{23} ((J_N\otimes \hat{J})\Ww^*(J_N\otimes J_N))_{23} \\
&& = \Ww_{12}\hat{V}_{23}^{\prime*} \hat{\Delta}'(x)_{23} \Ww_{12}^*((J_N\otimes \hat{J})\Ww^*(J_N\otimes J_N))_{23} \\ &&= \Ww_{12}(1\otimes x \otimes 1)\hat{V}_{23}^{\prime*}  \Ww_{12}^*((J_N\otimes \hat{J})\Ww^*(J_N\otimes J_N))_{23} \\ && =  (1\otimes \hat{\pi}'(x) \otimes 1)\Ww_{12}\hat{V}_{23}^{\prime*}  \Ww_{12}^*((J_N\otimes \hat{J})\Ww^*(J_N\otimes J_N))_{23} 
\end{eqnarray*}
\end{proof}

The second part of Theorem \ref{TheoDualMain} is a bit more involved. If we are only interested in proving $(N,\Ad_{\Ad_{\alpha}})\cong (N,\alpha)$ equivariantly, the proof is not that hard, and can be derived relatively straightforwardly from Theorem \ref{TheoBidualGal}. However, to have an actual equality requires computing the Galois unitary of $\Ad_{\alpha}$.  We need some preparations.

Fix a Hilbert space $\Hsp$ such that $N  = B(\Hsp)$. Then we can identify the standard form of $N$ as \[L^2(N) \cong \Hsp\otimes \overline{\Hsp},\] with $N$ acting in the canonical way on the first component, with \[J_N: \Hsp \otimes \overline{\Hsp} \rightarrow \Hsp\otimes \overline{\Hsp},\quad \xi\otimes \overline{\eta} \mapsto \eta\otimes \overline{\xi}\] and with the self-dual cone $\mathfrak{P}_N$ consisting of the positive Hilbert-Schmidt operators under the canonical embedding $\Hsp \otimes \overline{\Hsp} \hookrightarrow B(\Hsp)$. 

We then have canonically that $N' = 1\otimes B(\overline{\Hsp}) \cong B(\overline{\Hsp})$, and we can identify the GNS-space of $N'$ with $L^2(N) = \Hsp \otimes \overline{\Hsp}$ in such a way that $N'$ acts by its natural action on the second leg and such that $J_{N'} = J_N$ and $\mathfrak{P}_{N'} = \mathfrak{P}_N$. 

For the standard form of $B(L^2(N))$ we have two natural choices. On the one hand, one has the \emph{first standard implementation} which was given by \eqref{EqStandB}.
On the other hand, since we also have \[B(L^2(N)) = B(\Hsp)\vNtimes B(\overline{\Hsp}) = N\vNtimes N',\] we can use the tensor product standard construction on \[L^2(N) \otimes L^2(N') = L^2(N)\otimes L^2(N) = (\Hsp\otimes \overline{\Hsp})^{\otimes 2}\] such that \[\pi_{N\vNtimes N'}(x)  = x_{14},\quad  J_{N\vNtimes N'} = J_N\otimes J_N\] and $\mathfrak{P}_{N\vNtimes N'}$ the closed positive linear span of elements $\xi\otimes \eta$ with $\xi,\eta\in \mathfrak{P}_{N}$. We will call this the \emph{second standard implementation}. A careful inspection shows that the two standard forms are related by the involutive unitary \begin{multline*} \Xi = \Sigma_{24} \in B((\Hsp\otimes \overline{\Hsp})^{\otimes 2}) = B(L^2(N)\otimes L^2(N)),\\ \Ad(\Xi)\circ \pi_{B(L^2(N))} = \pi_{N\vNtimes N'}.\end{multline*}

Let us use in the following the notation \[\hat{\Vv} = (J_N\otimes J_N)\Ww(J_N\otimes J),\] by analogy with \eqref{EqFormDual}. Recall also again the isomorphism $\pi_{\rtimes}: N\rtimes M \cong B(L^2(N))$ from \eqref{EqRepNcrossM}, which restricts to the representation $\hat{\pi}'$ of $\hat{M}'$ on $L^2(N)$.
 
\begin{Theorem}\label{TheoCompCalcGalAd} Let $\Ww_{\Ad}$ be the Galois unitary for $(N,\Ad_{\alpha})$. Then $\Ww_{\Ad}$ satisfies \begin{equation}\label{EqExprWad} \Ww_{\Ad} = \Xi\hat{\Vv}((\hat{\pi}'\otimes \id)((J\hat{J}\otimes J\hat{J})\hat{W}(\hat{J}J\otimes 1))\end{equation} as a map from $L^2(N)\otimes L^2(M)$ to  $L^2(N)\otimes L^2(N)$. 
\end{Theorem}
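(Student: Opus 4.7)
The plan is to verify the formula \eqref{EqExprWad} by transporting the Galois unitary of the dual Galois object $(N_\alpha,\alpha^\circ)$ back to $N$, following the chain of identifications used in the proof of Theorem \ref{TheoAdjGal}. Recall that there $\Ad_\alpha$ was shown to coincide with the coaction \eqref{EqDiffCoact}, obtained from $\alpha^\circ$ on $N_\alpha$ by composition with the appropriate conjugations by $J_N$ (on the source, passing $N \to N_\alpha$ via $\pi_{\rtimes}^{-1}$) and by $\hat{J}$ (on the codomain, passing $\hat{M}' \to \hat{M}$).

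I would first express the Galois unitary $\Ww_\circ$ of the right Galois object $(N_\alpha,\alpha^\circ)$ for $\hat{M}'$. Since $\alpha^\circ$ is the restriction to $N_\alpha$ of $\hat{\alpha}$, which by \eqref{EqDualCoaction} is implemented by $\hat{V}'_{23}$, the operator $\Ww_\circ$ is essentially $\hat{V}'$ composed with the GNS-identification of $L^2(N_\alpha)$. This identification passes through $\pi_{\rtimes}: N\rtimes M \cong B(L^2(N))$, and by \eqref{EqLinkFacCross}, $\Lambda_{N\rtimes M}\circ\pi_{\rtimes}^{-1} = \Ww^*\Sigma \circ \Lambda_{B(L^2(N))}$. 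The passage between the two standard implementations of $B(L^2(N))$ described right before the theorem (the one from \eqref{EqStandB} and the tensor product one arising from $B(L^2(N)) = N\vNtimes N'$) is implemented by $\Xi = \Sigma_{24}$, accounting for the leading factor $\Xi$ in the formula.

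Next, I would transport $\Ww_\circ$ to $\Ww_{\Ad}$ by conjugating with $J_N\otimes J_N$ on the source and with $\hat{J}$ on the $\hat{M}'$-leg. Expanding $\hat{V}'$ using \eqref{EqFormDualOp} as $(J\hat{J}\otimes J\hat{J})W(\hat{J}J\otimes \hat{J}J)$, the factors regroup naturally: the $J_N$-conjugations acting on $\Ww$ combine with an outer $J$ to form $\hat{\Vv} = (J_N\otimes J_N)\Ww(J_N\otimes J)$, while the remaining $W$ together with the surviving $J\hat{J}$, $\hat{J}J$, and $\hat{J}$ conjugations recombine, via $\hat{W} = \Sigma W^*\Sigma$ from \eqref{EqFormDual}, into the second factor $(\hat{\pi}'\otimes \id)((J\hat{J}\otimes J\hat{J})\hat{W}(\hat{J}J\otimes 1))$. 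The relation \eqref{EqGalImpCross4}, $J_N\hat{\pi}'(x)J_N = \hat{\pi}'(JxJ)$, plays a crucial role in converting the action of $J_N$-conjugated $N$-elements on $L^2(N)$ into the action of $\hat{\pi}'$ on $\hat{M}'$-elements.

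The main obstacle is the meticulous bookkeeping of modular conjugations, standard implementations, and scalar gauges. In particular, the gauge $\nu^{-i/8}$ relating $\Lambda^{R\wedge}$ and $\hat{\Lambda}'$, together with the factor $\nu^{-i/4}$ in $J\hat{J} = \nu^{-i/4}\hat{J}J$, must cancel precisely to leave no residual scalar in \eqref{EqExprWad}. As a consistency check, one should verify that the resulting operator satisfies the defining GNS property of the Galois map for $(N,\Ad_\alpha)$, with the left invariant nsf weight $\varphi_N^{\Ad}$ identified (up to a positive scalar) with the transport of the canonical left invariant weight on $N_\alpha$ via the isomorphisms above.
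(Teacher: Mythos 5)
Your plan is essentially the paper's own proof: the paper likewise passes to the right Galois object $\widetilde{\alpha}^{\circ} = (\pi_{\rtimes}\otimes\id)\circ\alpha^{\circ}\circ\pi_{\rtimes}^{-1}$ on $N'$, relates $\Ww_{\Ad} = (J_N\otimes J_N)\Ww_{\widetilde{\alpha}^{\circ}}(J_N\otimes\hat{J})$, and computes $\Ww_{\widetilde{\alpha}^{\circ}}$ using exactly the ingredients you list — \eqref{EqLinkFacCross}, the unitary $\Xi$ between the two standard implementations, \eqref{EqFormDualOp}, and \eqref{EqGalImpCross4}. Be aware that the ``meticulous bookkeeping'' you defer is where the substance lies: the paper carries it out via a chain of four commuting squares of GNS-maps (\eqref{EqnComm1}--\eqref{EqnComm4}) built from $\hat{\alpha}$, $\hat{\Delta}'$ and $\pi_{\rtimes}$, which yields the closed formula \eqref{EqExpralti} before the final conjugation; also, since the paper works with $\hat{\Lambda}'$ and $\hat{\varphi}'$ throughout rather than $\Lambda^{R\wedge}$, the scalar gauges $\nu^{-i/8}$ and $\nu^{-i/4}$ you worry about never actually enter the computation.
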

\begin{proof} Let $\varphi_N$ be the left invariant nsf weight on $N$ for $\alpha$, and let $\varphi_{\Ad}$ be the left invariant nsf weight for $\Ad_{\alpha}$. Let $\Lambda_N,\Lambda_{\Ad}$ be their respective GNS-maps. Recall the coaction $\alpha^{\circ}$ from \eqref{EqAlCirc}, which is the restriction of the dual coaction $\hat{\alpha}$ to $N_{\alpha} = \alpha(N)'\cap N\rtimes M$, and let $\widetilde{\alpha}^{\circ}$ be the coaction \[\widetilde{\alpha}^{\circ} = (\pi_{\rtimes}\otimes \id)\circ \alpha^{\circ}\circ \pi_{\rtimes}^{-1}\] of $\hat{M}'$ on $N'$, so that by the proof of Theorem \ref{TheoAdjGal} \begin{equation}\label{DefAltild} \Ad_{\alpha}(x) =(J_N\otimes \hat{J})\widetilde{\alpha}^{\circ}(J_NxJ_N)(J_N\otimes \hat{J}),\qquad x\in N.\end{equation} Then $(N',\widetilde{\alpha}^{\circ})$ is a right Galois object with left invariant weight \[\varphi_{\Ad}'(x) = \varphi_{\Ad}(J_Nx^*J_N),\] for which the canonical GNS-map is hence given by \[\Lambda_{\Ad}'(x) = J_N\Lambda_{\Ad}(J_NxJ_N),\qquad x\in \mathscr{N}_{\varphi_{\widetilde{\alpha}^{\circ}}}.\] By construction we see that under the isomorphism $B(L^2(N)) \cong N\vNtimes N'$, we have \[\varphi_{B(L^2(N))} \cong \varphi_N\otimes \varphi_{\Ad}',\] where $\varphi_{B(L^2(N))}$ is the nsf weight introduced in \eqref{EqWeightB}. Hence the GNS-map for the  weight $\varphi_{B(L^2(N))}$ is given with respect to the second standard implementation as \[\Lambda_{N\vNtimes N'}(xy) = \Lambda_N(x) \otimes \Lambda_{\Ad}'(y),\qquad x\in N,y\in N'.\] On the other hand, we will continue to denote \[\Lambda_{B(L^2(N))} = \Xi \circ \Lambda_{N\vNtimes N'}\] for the GNS-implementation with respect to the first standard implementation.

Let now $\Ww_{\widetilde{\alpha}^{\circ}}$ be the Galois unitary for $\widetilde{\alpha}^{\circ}$. By \eqref{DefAltild}, we immediately get \begin{equation}\label{EqWadaltild} \Ww_{\Ad} = (J_N\otimes J_N)\Ww_{\widetilde{\alpha}^{\circ}} (J_N\otimes \hat{J}).\end{equation} 

By definition, $\Ww_{\widetilde{\alpha}^{\circ}}$ satisfies \[\Ww_{\widetilde{\alpha}^{\circ}}^*: L^2(N)\otimes L^2(N)\rightarrow L^2(N)\otimes L^2(M),\]\[\Lambda_{\Ad}'(x)\otimes \Lambda_{\Ad}'(y)  \mapsto (\Lambda_{\Ad}'\otimes \hat{\Lambda}')(\widetilde{\alpha}^{\circ}(y)(x\otimes 1)),\qquad x,y\in \mathscr{N}_{\varphi_{\widetilde{\alpha}^{\circ}}}.\] 

Consider the following maps $Z_1,Z_2,Z_3$, which are clearly well-defined and isometric: 
\[Z_1: L^2(N)\otimes L^2(N) \otimes L^2(N)\rightarrow L^2(N) \otimes L^2(N) \otimes L^2(M),\]\[\Lambda_{B(L^2(N))}(x) \otimes \Lambda_{\Ad}'(y) \mapsto (\Lambda_{B(L^2(N))}\otimes \hat{\Lambda}')((\pi_{\rtimes}\otimes \id)(\hat{\alpha}(\pi_{\rtimes}^{-1}(x)))(y\otimes 1)),\]

\[Z_2: L^2(N) \otimes L^2(M)\otimes L^2(N) \rightarrow L^2(N)\otimes L^2(N)\otimes L^2(M),\]\[\Lambda_N(x) \otimes \hat{\Lambda}'(y)\otimes \Lambda_{\Ad}'(z) \mapsto (\Lambda_{B(L^2(N))}\otimes \hat{\Lambda}')((\hat{\pi}'\otimes \id)(\hat{\Delta}'(y))_{23}(xz\otimes 1)),\]

\[Z_3: L^2(N) \otimes L^2(N) \otimes L^2(M) \rightarrow L^2(M)\otimes L^2(N)\otimes L^2(N),\]\[\Lambda_{B(L^2(N))}(x)\otimes \hat{\Lambda}'(y) \mapsto ( \Lambda_{N\rtimes M}\otimes \hat{\Lambda}')(\hat{\Delta}'(y)_{23}(\pi_{\rtimes}^{-1}(x)\otimes 1)).\]

Then it is easily verified that one has the following commuting squares, where we write $Z_0 =  1\otimes \Ww_{\widetilde{\alpha}^{\circ}}^*$,

\begin{equation}\label{EqnComm1} \xymatrix{
\ar[d]_{\Xi_{12}\Sigma_{23}}L^2(N)\otimes L^2(N)\otimes L^2(N) \ar[r]^{Z_0}  &
L^2(N)\otimes L^2(N)\otimes L^2(M) \ar[d]^{\Xi_{12}} \\
L^2(N)\otimes L^2(N)\otimes L^2(N) \ar[r]_{Z_1} &
L^2(N)\otimes L^2(N)\otimes L^2(M)}
\end{equation}
\begin{equation}\label{EqnComm2} \xymatrix{
L^2(N)\otimes L^2(N)\otimes L^2(N)\ar[r]^{Z_1}  &
L^2(N)\otimes L^2(N)\otimes L^2(M)  \\
L^2(N)\otimes L^2(M)\otimes L^2(N) \ar[u]^{\Sigma_{12}\Ww_{12}}\ar[r]_{Z_2} & L^2(N) \otimes L^2(N)\otimes L^2(M)\ar[u]_{\id}}
\end{equation}

\begin{equation}\label{EqnComm3} \xymatrix{
\ar[d]_{\Xi_{12}\Sigma_{23}}L^2(N)\otimes L^2(M)\otimes L^2(N)\ar[r]^{Z_2}  &
L^2(N)\otimes L^2(N)\otimes L^2(M) \ar[d]^{\Ww^*_{12}\Sigma_{12}} \\
L^2(N)\otimes L^2(N)\otimes L^2(M) \ar[r]_{Z_3} & L^2(N) \otimes L^2(M)\otimes L^2(M) }
\end{equation}

\begin{equation}\label{EqnComm4} \xymatrix{
L^2(N)\otimes L^2(N)\otimes L^2(M)\ar[r]^{Z_3}  &
L^2(N)\otimes L^2(M)\otimes L^2(M)  \\\ar[u]^{\Sigma_{12}\Ww_{12}}
L^2(N)\otimes L^2(M)\otimes L^2(M) \ar[r]_{Z_4}& L^2(N) \otimes L^2(M)\otimes L^2(M) \ar[u]_{\id}}
\end{equation}

where $Z_4 = 1 \otimes ((\hat{J}\otimes \hat{J})\hat{W}^*(\hat{J}\otimes \hat{J}))$. 

Combining the commuting squares \eqref{EqnComm1}, \eqref{EqnComm2}, \eqref{EqnComm3} and \eqref{EqnComm4} together, clearing away $\Sigma_{23}$ and using that $\Xi_{12}$ and $\Ww_{13}^*$ commute since the first leg of the latter lies in $N$, we find 
\begin{equation}\label{EqExpralti} 1\otimes \Ww_{\widetilde{\alpha}^{\circ}}^* = (\Sigma \Xi)_{12}^* \Ww_{12} ((\hat{J}\otimes \hat{J})\hat{W}^*(\hat{J}\otimes \hat{J}))_{23}\Ww_{12}^*\Ww_{23}^*(\Sigma \Xi)_{12}(\Sigma \Xi)_{13}\end{equation} Recall now \eqref{EqWadaltild}. Then multiplying \eqref{EqExpralti} to the left with $J_N\otimes J_N\otimes \hat{J}$ and to the right with $J_N\otimes J_N\otimes J_N$, the left hand side turns into $1\otimes \Ww_{\Ad}^*$. On the other hand,  using that \[\Xi = (J_N\otimes J_N)(\Sigma \Xi)^*(J_N\otimes J_N)\] by a small computation, and using that the second leg of $\hat{\Vv}$ intertwines the standard representation of $\hat{M}'$ with $\hat{\pi}'$ by \eqref{EqGalImpCross2}, we find from \eqref{EqGalImpCross4} that \[1\otimes
\Ww_{\Ad}^* = \Xi_{12}((\hat{\pi}'\otimes \id)((J\hat{J}\otimes 1))\hat{W}^*(\hat{J}J\otimes \hat{J}J))\hat{\Vv}^*)_{23}\Xi_{12}\Xi_{13}.\] Now using that $\Xi_{12}\Xi_{13} = \Xi_{23}\Xi_{12}$, we obtain by taking adjoints and moving $\Xi_{12}$ to the other side that  \[ \Xi_{12}(1\otimes \Ww_{\Ad})\Xi_{12} = 1 \otimes (\Xi\hat{\Vv}((\hat{\pi}'\otimes \id)((J\hat{J}\otimes J\hat{J})\hat{W}(\hat{J}J\otimes 1))).\] But since the first leg of $\Ww_{\Ad}$ lies in $N$, and $1\otimes N$ commutes with $\Xi$, we obtain the expression for $\Ww_{\Ad}$ in \eqref{TheoCompCalcGalAd}.
\end{proof}

\begin{Theorem}\label{TheoMain} If $(N,\alpha)$ is a $I$-factorial Galois object, then $\Ad_{\Ad_{\alpha}} = \alpha$. 
\end{Theorem}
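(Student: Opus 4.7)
The plan is to carry out a direct computation of $\Ad_{\Ad_{\alpha}}(x)=\Ww_{\Ad}^{*}(x\otimes 1)\Ww_{\Ad}$ for $x\in N$ using the explicit formula \eqref{EqExprWad} for $\Ww_{\Ad}$ from Theorem~\ref{TheoCompCalcGalAd}, and to compare the result with $\alpha(x)=\Ww^{*}(1\otimes x)\Ww$ coming from \eqref{EqImplGalAl}.

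I would write $\Ww_{\Ad}=\Xi\,\hat{\Vv}\,T$, with $T=(\hat{\pi}'\otimes\id)\bigl((J\hat{J}\otimes J\hat{J})\hat{W}(\hat{J}J\otimes 1)\bigr)$, and strip off the conjugations one at a time. First, viewing $L^{2}(N)=\Hsp\otimes\overline{\Hsp}$, the operator $x\otimes 1\in N\otimes\C$ acts only on the first $\Hsp$-factor, which is untouched by $\Xi=\Sigma_{24}$, so $\Xi(x\otimes 1)\Xi=x\otimes 1$ trivially. Second, since $\hat{\Vv}=(J_{N}\otimes J_{N})\Ww(J_{N}\otimes J)$ and $J_{N}xJ_{N}\in N'$ commutes with $\Ww$ (whose first leg lies in $N$ by \eqref{EqWInN}), one obtains $\hat{\Vv}^{*}(x\otimes 1)\hat{\Vv}=x\otimes 1$ now as an operator on $L^{2}(N)\otimes L^{2}(M)$. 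After these reductions, the task collapses to proving the identity $T^{*}(x\otimes 1)T=\Ww^{*}(1\otimes x)\Ww$, which is the real content of the theorem.

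To handle $T^{*}(x\otimes 1)T$, I would first rewrite the kernel $(J\hat{J}\otimes J\hat{J})\hat{W}(\hat{J}J\otimes 1)$ in terms of $\hat{V}'$, exploiting $\hat{W}=\Sigma W^{*}\Sigma$ together with \eqref{EqFormDual} and \eqref{EqFormDualOp}. This repositioning makes it evident that the kernel lies in $\hat{M}'\vNtimes B(L^{2}(M))$, so the slice $(\hat{\pi}'\otimes\id)$ is meaningful, and its conjugation action on the second leg implements a version of $\hat{\Delta}'$ via \eqref{EqObsImpl}, up to scalar factors arising from the relation $J\hat{J}=\nu^{-i/4}\hat{J}J$. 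Then, using the intertwining relation \eqref{EqGalImpCross2} to transport $\hat{\pi}'(y)\otimes 1$ through $\Ww$ as $1\otimes y$, combined with \eqref{EqGalImpCross3} and \eqref{EqGalImpCross4} to convert the modular-conjugation twist on the $\hat{M}'$-part into a twist of $\hat{\Delta}'$, the conjugation $T^{*}(x\otimes 1)T$ can be rewritten as a conjugation by $\Ww$ of an operator of the form $1\otimes x$ on $L^{2}(N)\otimes L^{2}(N)$. An application of the hybrid pentagon equation \eqref{EqHybPent} together with the multiplicativity of $\Ww$ in the form \eqref{ActAlphN} then collapses everything down to $\Ww^{*}(1\otimes x)\Ww=\alpha(x)$.

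The principal obstacle will be clerical: bookkeeping the nest of modular conjugations $J$, $\hat{J}$, $J_{N}$ and the scaling constant $\nu$ while invoking the intertwining identities \eqref{EqGalImpCross2}--\eqref{EqGalImpCross4}. A misplaced $J\hat{J}$ versus $\hat{J}J$ or an unnoticed $\nu^{i/4}$ would destroy the final identity, so the computation must be carried out carefully leg by leg. Once the identity $T^{*}(x\otimes 1)T=\Ww^{*}(1\otimes x)\Ww$ is in hand, combining it with the two trivial reductions above yields $\Ad_{\Ad_{\alpha}}(x)=\alpha(x)$, which is the conclusion of the theorem.
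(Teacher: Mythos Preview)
Your first two reductions --- stripping off $\Xi$ and then $\hat{\Vv}$ to reduce the claim to the identity $T\alpha(x)=(x\otimes 1)T$ (equivalently $T^{*}(x\otimes 1)T=\alpha(x)$) with $T=(\hat{\pi}'\otimes\id)\bigl((J\hat{J}\otimes J\hat{J})\hat{W}(\hat{J}J\otimes 1)\bigr)$ --- are correct and are exactly the reductions the paper makes.

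The gap is in your final paragraph. You invoke \eqref{EqGalImpCross2} (``transport $\hat{\pi}'(y)\otimes 1$ through $\Ww$''), the hybrid pentagon \eqref{EqHybPent} and \eqref{ActAlphN}, but none of these identities can be applied directly: they are all statements about $\Ww$, whereas $T$ contains no $\Ww$, only $\hat{\pi}'$ applied to a kernel built from $\hat{W}$. As written, your sketch never explains how $\Ww$ enters the computation of $T^{*}(x\otimes 1)T$, so the phrase ``rewritten as a conjugation by $\Ww$ of an operator of the form $1\otimes x$'' is unsupported. The paper closes this gap by the step you are missing: one uses that the slices $\{(\id\otimes\omega)(\Ww^{*}):\omega\in B(L^{2}(M),L^{2}(N))_{*}\}$ are $\sigma$-weakly dense in $N$, so that it suffices to verify $T\alpha(x)=(x\otimes 1)T$ after tensoring with an extra leg and replacing $x\otimes 1$ by $\Ww_{13}^{*}$ and $\alpha(x)$ by $(\alpha\otimes\id)(\Ww^{*})=W_{23}^{*}\Ww_{13}^{*}$ via \eqref{ActAlphN}. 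Only after this substitution does the problem become a three-legged operator identity to which \eqref{EqGalImpCross2}--\eqref{EqGalImpCross3} and the corepresentation property of $\hat{V}'=(J\hat{J}\otimes J\hat{J})W(\hat{J}J\otimes\hat{J}J)$ can be applied. Once you insert that density step, the remaining manipulations you list are indeed the right ones, and you recover the paper's argument.
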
 
\begin{proof}  We are to prove that \[\Ww_{\Ad}\alpha(x) = (x\otimes 1)\Ww_{\Ad},\qquad x\in N.\]
Since both legs of $\Xi$ commute with $N$, and since the first leg of $\hat{\Vv}$ commutes with $N$, it is by Theorem \ref{TheoCompCalcGalAd} sufficient to prove that for $x\in N$ one has \begin{multline*}(\hat{\pi}'\otimes \id)((J\hat{J}\otimes 1)\hat{W}(\hat{J}J\otimes 1))\alpha(x) \\= (x\otimes 1)(\hat{\pi}'\otimes \id)((J\hat{J}\otimes 1)\hat{W}(\hat{J}J\otimes 1)).\end{multline*}
As any $x\in N$ can be approximated $\sigma$-weakly by elements of the form $(\id\otimes \omega)(\Ww^*)$ with $\omega \in B(L^2(M),L^2(N))_*$, and since $(\alpha\otimes \id)(\Ww) = \Ww_{13}W_{23}$, we thus have to prove the identity \begin{multline*} ((\hat{\pi}'\otimes \id)((J\hat{J}\otimes 1)\hat{W}(\hat{J}J\otimes 1)))_{12}W_{23}^*\Ww_{13}^* \\ = \Ww_{13}^*((\hat{\pi}'\otimes \id)((J\hat{J}\otimes 1)\hat{W}(\hat{J}J\otimes 1)))_{12}.\end{multline*} However, this is equivalent with \begin{multline*} (\hat{\Delta}'\otimes \id)((J\hat{J}\otimes J\hat{J})\hat{W}(\hat{J}J\otimes \hat{J}J)) \\ = (J\hat{J}\otimes J\hat{J} \otimes J\hat{J})\hat{W}_{23}\hat{W}_{12} (\hat{J}J \otimes \hat{J}J \otimes \hat{J}J),\end{multline*} which follows by \eqref{EqGalImpCross3} and the fact that $\hat{V}' = (J\hat{J}\otimes J\hat{J})W(\hat{J}J\otimes \hat{J}J)$ is the right regular corepresentation for $\hat{M}'$. 

\end{proof} 

The following theorem relates the invariant weights on a $I$-factorial Galois object $(N,\alpha)$ and its dual. 

\begin{Theorem}\label{TheoWeight} Let $(N,\alpha)$ be a $I$-factorial right Galois object for $(M,\Delta)$, say $N = B(\Hsp)$ for a Hilbert space $\Hsp$. Let $h$ be the unique positive (unbounded, invertible) operator on $\Hsp$ such that the left invariant weight $\varphi_N$ for $\alpha$ is given by \[\varphi_N(x) = \Tr(h^{1/2}xh^{1/2}),\quad \forall x\in N^+.\] Then the left invariant weight $\varphi_{\Ad}$ for $(N,\Ad_{\alpha})$ is given by \[\varphi_{\Ad}(x) = \Tr(h^{-1/2}xh^{-1/2}),\qquad x\in N^+.\]  
\end{Theorem}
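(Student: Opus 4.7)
The plan is to compute $\varphi_{\Ad}$ by passing through the canonical nsf weight $\varphi_{B(L^2(N))}$ on $B(L^2(N))$ introduced in \eqref{EqWeightB}, and exploiting the tensor product decomposition $B(L^2(N)) = N\vNtimes N'$ which arose naturally in the proof of Theorem \ref{TheoCompCalcGalAd}.

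First I would fix the concrete identification $L^2(N) \cong \Hsp\otimes \overline{\Hsp}$ used in Section 3. Under this identification, one checks by a direct computation on rank-one operators that the modular operator of the weight $\varphi_N(x) = \Tr(h^{1/2}xh^{1/2})$ is
\[
\nabla_N = h\otimes \overline{h^{-1}},
\]
where for a positive selfadjoint $k$ on $\Hsp$ the operator $\overline{k}$ on $\overline{\Hsp}$ is defined by $\overline{k}\,\overline{\xi} = \overline{k\xi}$. Combining with \eqref{EqFormWeightB}, this gives
\[
\varphi_{B(L^2(N))}(a\otimes b) = \Tr_{\Hsp}(h^{1/2}ah^{1/2})\cdot \Tr_{\overline{\Hsp}}(\overline{h^{-1/2}}\, b\, \overline{h^{-1/2}}),\qquad a\in N,\ b\in N'.
\]

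Next I would recall from the proof of Theorem \ref{TheoCompCalcGalAd} the factorization $\varphi_{B(L^2(N))} \cong \varphi_N\otimes \varphi'_{\Ad}$, together with the identity $\varphi'_{\Ad}(x) = \varphi_{\Ad}(J_N x^*J_N)$ for $x\in N'$. Comparing with the computation above forces
\[
\varphi'_{\Ad}(b) = \Tr_{\overline{\Hsp}}(\overline{h^{-1/2}}\, b\, \overline{h^{-1/2}}),\qquad b\in N'.
\]

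Finally I would transport this back to $N$: the $*$-isomorphism $N\to N'$, $a\mapsto J_Na^*J_N$, is easily computed under the identification $L^2(N) = \Hsp\otimes \overline{\Hsp}$ to be the map $a \mapsto \check{a}$ with $\check{a}\,\overline{\eta} = \overline{a\eta}$ (a $*$-isomorphism, since it is the composition of two $*$-anti-isomorphisms). Since $\check{h^{-1/2}} = \overline{h^{-1/2}}$ and $a\mapsto \check{a}$ is $*$-preserving and multiplicative, we have $\overline{h^{-1/2}}\,\check{a}\,\overline{h^{-1/2}} = \check{h^{-1/2}ah^{-1/2}}$, and complex conjugation preserves trace, so $\Tr_{\overline{\Hsp}}(\check{y}) = \Tr_{\Hsp}(y)$. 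Putting these together yields
\[
\varphi_{\Ad}(a) = \varphi'_{\Ad}(J_Na^*J_N) = \Tr_{\overline{\Hsp}}(\overline{h^{-1/2}}\,\check{a}\,\overline{h^{-1/2}}) = \Tr_{\Hsp}(h^{-1/2}ah^{-1/2}),
\]
as required. The only delicate point is the careful bookkeeping of the conjugate-linear identification $L^2(N) = \Hsp\otimes\overline{\Hsp}$ together with the modular conjugation $J_N$, so as to correctly identify $\nabla_N$ and the anti-isomorphism $N\to N'$; once this is set up, everything reduces to a short trace computation.
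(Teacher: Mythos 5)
Your argument is correct and follows essentially the same route as the paper's proof: compute $\nabla_N = h\otimes(\overline{h})^{-1}$, factor $\varphi_{B(L^2(N))}\cong\varphi_N\otimes\varphi'_{\Ad}$ over $N\vNtimes N'$, read off $\varphi'_{\Ad}$, and transport back through $J_N$. One harmless slip: $a\mapsto J_Na^*J_N$ is a $*$-\emph{anti}-isomorphism onto $N'$ (sending $a$ to $\overline{\eta}\mapsto\overline{a^*\eta}$), not a $*$-isomorphism, but since your expression $h^{-1/2}ah^{-1/2}$ is palindromic and the trace is preserved on positive elements, the conclusion is unaffected.
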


\begin{proof} Let as before \[\varphi_{N\rtimes M}: (N\rtimes_{\alpha} M)^+ \rightarrow [0,+\infty],\quad x\mapsto \varphi_N \circ \alpha^{-1}\circ (\id\otimes \id\otimes \hat{\varphi}')\hat{\alpha}(x)\] be the dual weight of $\varphi_N$, and denote as in \eqref
{EqWeightB} and \eqref{EqFormWeightB} \[\varphi_{B(L^2(N))} = \varphi_{N\rtimes M}\circ \pi_{\rtimes}^{-1} = \Tr(\nabla_N^{1/2}\,\cdot\,\nabla_N^{1/2}).\] With $h$ as in the statement of the theorem, we have however \[\nabla_N^{it} = h^{it}\otimes \overline{h^{it}}  = h^{it}\otimes(\overline{h})^{-it},\] where we write $\overline{x}\overline{\xi} = \overline{x\xi}$ for $x\in B(\Hsp)$ and $\overline{\xi}\in \overline{\Hsp}$ the conjugate of $\xi\in \Hsp$. Hence on $B(\Hsp)\vNtimes B(\overline{\Hsp}) \cong N \vNtimes N' \cong B(L^2(N))$, the above weight can be expressed as \[\varphi_{B(L^2(N))} = \Tr(h^{1/2}\,\cdot\,h^{1/2}) \otimes \Tr((\overline{h})^{-1/2}\,\cdot\,(\overline{h})^{-1/2}).\]

Now by the proof of Theorem \ref{TheoAdjGal} we have that \[\Ad_{\alpha}(x) = (J_N\otimes \hat{J})\widetilde{\alpha}^{\circ}(J_NxJ_N)(J_N\otimes \hat{J}),\] where \[\widetilde{\alpha}^{\circ}(x) = (\pi_{\rtimes}\otimes \id)\alpha^{\circ}(\pi_{\rtimes}^{-1}(x)),\qquad x\in N'.\] As $\alpha^{\circ}$ is the restriction of $\hat{\alpha}$ to the factor $ \alpha(N)'\cap N\rtimes M \cong N'\cong B(\overline{\Hsp})$ which splits of, it  follows immediately from the above  that the left invariant weight for $\widetilde{\alpha}^{\circ}$ must be \[x \mapsto \Tr((\overline{h})^{-1/2} x (\overline{h})^{-1/2}),\qquad x\in B(\overline{\Hsp}).\] From the above form for $\Ad_{\alpha}$, we then deduce that \[\varphi_{\Ad} = \Tr(h^{-1/2}\,\cdot\,h^{-1/2}).\] 
\end{proof}

Note that the inversion $h \rightarrow h^{-1}$ in the above theorem is not unexpected: if we rescale $\varphi \rightarrow \lambda\varphi$ for $\lambda>0$, then the dual weight gets rescaled in the inverse way, $\hat{\varphi} \rightarrow \lambda^{-1}\hat{\varphi}$. 

For our application, we will need the left-handed version of Theorem \ref{TheoAdjGal} and Theorem \ref{TheoWeight}. Recall from \eqref{EqVv} the definition of the Galois unitary $\Vv$ for \emph{left} Galois objects. 

\begin{Theorem}\label{TheoAdGalLeft} Let $(N,\gamma)$ be a left $I$-factorial Galois object with respect to $(M,\Delta)$. Then $(N,\Ad_{\gamma})$ is a right $I$-factorial Galois object with respect to $(\hat{M}',\hat{\Delta}')$, where \[\Ad_{\gamma}(x) = \Sigma \Vv(1\otimes x)\Vv^*\Sigma.\] Moreover, if $N= B(\Hsp)$ with the right invariant weight $\psi_N$ for $\gamma$ given by \[\psi_N(x) = \Tr(k^{1/2}xk^{1/2}),\quad \forall x\in N^+\] for some unbounded positive operator $k$ on $\Hsp$, then the left invariant weight $\varphi_{\Ad}$ for $(N,\Ad_{\gamma})$ is given by \[\varphi_{\Ad}(x) = \Tr(k^{-1/2}xk^{-1/2}),\qquad x\in N^+.\]  
\end{Theorem}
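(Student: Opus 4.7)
The plan is to reduce the statement to the right-handed versions already established (Theorems \ref{TheoAdjGal} and \ref{TheoWeight}) via the standard $\opp$/$\cop$ duality. Starting from a left $I$-factorial Galois object $(N,\gamma)$ for $(M,\Delta)$, I form the associated right coaction
\[\gamma^{\opp} : N \to N \vNtimes M^{\cop}, \qquad x\mapsto \varsigma(\gamma(x)),\]
which is a right coaction of the quantum group von Neumann algebra $(M^{\cop},\Delta^{\opp})$. Since ergodicity, integrability and the Galois condition depend only on the underlying crossed product and coalgebraic data, which is unaffected by this flip, $(N,\gamma^{\opp})$ is automatically a $I$-factorial right Galois object for $(M^{\cop},\Delta^{\opp})$. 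Applying Theorem \ref{TheoAdjGal} to it then provides a $I$-factorial right Galois object structure $(N,\Ad_{\gamma^{\opp}})$ for the Pontryagin dual $(M^{\cop})^{\wedge}$. Using the identification $(M^{\cop})^{\wedge} = \hat{M}'$ with coproduct $\hat{\Delta}'$ recalled at the end of Section~1, this gives precisely the right $I$-factorial Galois object structure on $N$ for $(\hat{M}', \hat{\Delta}')$ claimed in the statement.

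It remains to match this coaction with the displayed formula $\Ad_\gamma(x) = \Sigma\Vv(1\otimes x)\Vv^*\Sigma$. For this I will express the right Galois unitary $\Ww_{\gamma^{\opp}}$ of $\gamma^{\opp}$ in terms of the left Galois unitary $\Vv$ of $\gamma$. A crucial intermediate observation is that the right invariant weight $\psi_N$ of $\gamma$ plays the role of the left invariant weight for $\gamma^{\opp}$, since $\varphi$ and $\psi$ swap under the $\cop$ operation; consequently $\Gamma_N$ is the GNS map that features on both sides. Unpacking the defining relation $(\omega\otimes\id)(\Vv)\Gamma_N(x) = \Gamma_N((\omega\otimes\id)\gamma(x))$ and comparing it with the defining relation of $\Ww_{\gamma^{\opp}}$, using that $(\omega\otimes\id)\gamma^{\opp}(x) = (\id\otimes\omega)\gamma(x)$, leads to an identity of the form $\Ww_{\gamma^{\opp}} = \Vv^*\Sigma$ (with $\Sigma$ the flip between $L^2(N)\otimes L^2(M)$ and $L^2(M)\otimes L^2(N)$). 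Substituting this into $\Ad_{\gamma^{\opp}}(x) = \Ww_{\gamma^{\opp}}^*(x\otimes 1)\Ww_{\gamma^{\opp}}$ produces the desired $\Sigma\Vv(1\otimes x)\Vv^*\Sigma$.

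For the weight formula, the same conversion swaps left and right invariance, so that $\psi_N(x) = \Tr(k^{1/2}xk^{1/2})$ is now the left invariant weight for the right coaction $\gamma^{\opp}$. A direct application of Theorem \ref{TheoWeight} to $(N,\gamma^{\opp})$ with $h=k$ then yields the left invariant weight for the adjoint coaction $\Ad_{\gamma^{\opp}} = \Ad_\gamma$ as $\Tr(k^{-1/2}\,\cdot\,k^{-1/2})$, as claimed.

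The main obstacle will be the careful bookkeeping required to establish $\Ww_{\gamma^{\opp}} = \Vv^*\Sigma$ precisely, together with verifying that the identification $(M^{\cop})^{\wedge} = \hat{M}'$ is fully compatible with all GNS conventions used and does not introduce spurious gauge factors (compare the remark in Section~1 that $\Lambda^{R\wedge}$ and $\hat{\Lambda}'$ differ by a factor $\nu^{-i/8}$); any such factors should cancel in the bilateral expression $\Ww_{\gamma^{\opp}}^*(\,\cdot\,\otimes 1)\Ww_{\gamma^{\opp}}$, but this compatibility must be checked explicitly.
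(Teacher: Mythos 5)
Your reduction to the right-handed statements via the $\opp/\cop$ flip is exactly the intended argument: the paper offers no separate proof of this theorem, asserting it as the left analogue of Theorems \ref{TheoAdjGal} and \ref{TheoWeight}, and your chain $(N,\gamma)\rightsquigarrow(N,\gamma^{\opp})$ over $M^{\cop}$, the identification $(M^{\cop})^{\wedge}=\hat{M}'$ with coproduct $\hat{\Delta}'$, the observation that $\psi_N$ becomes the left invariant weight for $\gamma^{\opp}$, and the cancellation of the $\nu^{-i/8}$ gauge factor in the bilateral expression are all correct.

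The one concrete slip is in the flip bookkeeping that you yourself flag as the delicate point. Comparing the defining relations, one finds
\[
\Ww_{\gamma^{\opp}}^*\bigl(\Gamma_N(x)\otimes\Gamma_N(y)\bigr)\;=\;\Sigma\,\Vv\,\Sigma\bigl(\Gamma_N(x)\otimes\Gamma_N(y)\bigr),
\]
i.e.\ $\Ww_{\gamma^{\opp}}=\Sigma\Vv^*\Sigma$, where the inner $\Sigma$ is the flip on $L^2(N)\otimes L^2(N)$: the right Galois map applies the coaction to the \emph{second} tensorand ($\gamma^{\opp}(y)(x\otimes 1)$) while $\Vv$ applies $\gamma$ to the \emph{first} ($\gamma(x)(1\otimes y)$), so an extra flip of the two copies of $L^2(N)$ is forced. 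With your formula $\Ww_{\gamma^{\opp}}=\Vv^*\Sigma$ the substitution into $\Ww_{\gamma^{\opp}}^*(x\otimes1)\Ww_{\gamma^{\opp}}$ yields $\Sigma\Vv(x\otimes1)\Vv^*\Sigma$, which places $x$ in the intertwiner leg of $\Vv$ rather than in the leg lying in $N$, and does not equal the stated $\Sigma\Vv(1\otimes x)\Vv^*\Sigma$. Once the identity is corrected to $\Ww_{\gamma^{\opp}}=\Sigma\Vv^*\Sigma$, the extra flips conjugate $x\otimes 1$ into $1\otimes x$ and the claimed formula for $\Ad_\gamma$ drops out; the weight computation via Theorem \ref{TheoWeight} with $h=k$ is unaffected and stands as you wrote it.
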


\section{Example: Heisenberg double}

Let $(M,\Delta)$ be a quantum group von Neumann algebra, and consider the tensor product quantum group von Neumann algebra $\tilde{M} = \hat{M}\vNtimes M$ with comultiplication \[\tilde{\Delta}(x) =  \varsigma_{23}(\hat{\Delta}\otimes \Delta)(x).\] Then one obtains a right coaction of $\tilde{M}$ on $\tilde{N} := B(L^2(M))$ by \[\tilde{\alpha}(x)  = \hat{V}_{12}V_{13} (x\otimes 1\otimes 1)V_{13}^*\hat{V}_{12}^*.\] Indeed, since $V\in \hat{M}'\vNtimes M$ and $\hat{V}\in M^{\prime}\vNtimes \hat{M}$, the map $\tilde{\alpha}$ restricts to the coaction $\Delta$ of $M$ on $M$ and the coaction $\hat{\Delta}$ of $\hat{M}$ on $\hat{M}$. Since $M$ and $\hat{M}$ generate $B(L^2(M))$ (see for example \cite[Proposition 2.5]{VV03}), this is sufficient to conclude that $\tilde{\alpha}$ is well-defined. This is a generalization of the \emph{Heisenberg algebra} for the Cartesian product of an abelian compact group with its Pontryagin dual. In general we call $(\tilde{N},\tilde{\alpha})$ the \emph{Heisenberg double} of $(M,\Delta)$. 

\begin{Prop} The coaction $(\tilde{N},\tilde{\alpha})$ is a $I$-factorial Galois object.
\end{Prop}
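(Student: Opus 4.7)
The strategy is to verify in turn each of the defining conditions for a $I$-factorial Galois object: type $I$-factoriality of $\tilde N$ (immediate since $\tilde N=B(L^2(M))$), the coaction property of $\tilde{\alpha}$, ergodicity, integrability, and the Galois condition. The organising principle is that $\tilde{\alpha}$ restricts nicely on the generating subalgebras $M,\hat M\subseteq B(L^2(M))$: for $a\in M$ one computes $\tilde{\alpha}(a)=\Delta(a)_{13}$ (using that $V$ implements $\Delta$ and that $\hat{V}_{12}$ commutes with $a\otimes 1\otimes 1$ since the first leg of $\hat{V}$ lies in $M'$), and dually $\tilde{\alpha}(\hat a)=\hat{\Delta}(\hat a)_{12}$ for $\hat a\in\hat M$. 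Together with the fact $M\vee\hat M=B(L^2(M))$ quoted in the proposition, this reduces many verifications for $\tilde{\alpha}$ to the corresponding statements for $\Delta$ and $\hat{\Delta}$.

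Coassociativity follows from the pentagon equations \eqref{EqPentWV} for $V$ and $\hat V$, together with the twist $\varsigma_{23}$ in $\tilde{\Delta}$ and the mutual leg commutations coming from $V\in\hat M'\vNtimes M$ and $\hat V\in M'\vNtimes\hat M$. For ergodicity, starting from $\tilde{\alpha}(x)=x\otimes 1\otimes 1$, one applies slice maps $\id\otimes\id\otimes\omega$ and $\id\otimes\omega\otimes\id$ to obtain identities of the form $\hat V(T\otimes 1)\hat V^* = x\otimes 1$ and $V(S\otimes 1)V^* = x\otimes 1$ for suitable slices $S,T$ of $x$; combined with the structural knowledge of the Galois maps of $\Delta$ and $\hat\Delta$, these force $x$ to commute with the generating algebras $M$ and $\hat M$ respectively, and hence with all of $B(L^2(M))$, so $x\in\mathbb{C}$. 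Integrability is then obtained by exhibiting a left-invariant nsf weight $\varphi_{\tilde N} = \Tr(h\,\cdot\,)$ on $B(L^2(M))$ for an appropriate positive operator $h$ built from the modular operators of $\varphi$ and $\hat{\varphi}$; invariance reduces on the generators to the invariance of $\varphi$ under $\Delta$ and of $\hat{\varphi}$ under $\hat{\Delta}$.

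The main obstacle is the Galois condition, i.e.\ faithfulness of the canonical map $\pi_\rtimes\colon\tilde N\rtimes_{\tilde\alpha}\tilde M\to B(L^2(\tilde N))$ of Theorem \ref{TheoIntImp}. Ergodicity and the final clause of that theorem already force the image of $\pi_\rtimes$ to equal all of $B(L^2(\tilde N))$, so faithfulness is equivalent to $\tilde N\rtimes_{\tilde\alpha}\tilde M$ being itself a type $I$-factor (any normal surjective $*$-homomorphism between type $I$-factors of the same dimension is automatically an isomorphism). I would establish the latter by explicitly unwinding the crossed product, using the biduality Theorem \ref{TheoBiduCoac}, the generation $\tilde N\rtimes\tilde M=(\tilde\alpha(\tilde N)\cup 1\otimes \tilde M^{\wedge\prime})''$, and the pentagon and intertwining relations among $W,V,\hat W,\hat V$ to identify it with $B(L^2(M)\otimes L^2(M))$. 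The delicate combinatorial bookkeeping of leg positions is where the argument really lives; the remaining steps are essentially formal.
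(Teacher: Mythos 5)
Your proposal is correct and follows essentially the same route as the paper: ergodicity from the density of (products of) slices of the first leg of $\hat{V}_{12}V_{13}$, integrability from the Haar weights of $M$ and $\hat{M}$, and the Galois condition reduced to showing the crossed product is a type $I$-factor, which the paper does by conjugating the four generators $\tilde{\alpha}(M)$, $\tilde{\alpha}(\hat{M})$, $1\otimes M'\otimes 1$, $1\otimes 1\otimes \hat{M}'$ with explicit unitaries built from $W$ and $\hat{W}$ to land on $B(L^2(M))\vNtimes B(L^2(M))$. The only differences are cosmetic: the paper verifies integrability on the elements $y^*x^*xy$ with $x\in\mathscr{N}_{\varphi}$, $y\in\mathscr{N}_{\hat{\varphi}}$ rather than exhibiting the invariant weight up front, and deduces faithfulness of $\pi_{\rtimes}$ directly from factoriality of the crossed product (a unital normal $*$-homomorphism on a factor is injective), with no need for your ``same dimension'' clause.
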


\begin{proof} The $\sigma$-weak closure of the first leg of $\hat{V}_{12}V_{13}$ contains the subspace $M' \hat{M}' = J\hat{J}M\hat{M}\hat{J}J$, and is hence $\sigma$-weakly dense in $B(L^2(M))$. As any coinvariant element in $B(L^2(M))$ commutes with the first leg of $\hat{V}_{12}V_{13}$, the coaction $\tilde{\alpha}$ is ergodic.

We deduce easily that $\tilde{\alpha}$ is integrable, as all elements of the form $y^*x^*xy$ for $x\in \mathscr{N}_{\varphi}$ and $y \in \mathscr{N}_{\hat{\varphi}}$ are integrable. 

Now the crossed product by $\tilde{\alpha}$ is generated by $\Delta(M)_{13}, 1\otimes 1\otimes \hat{M}', \hat{\Delta}(\hat{M})_{12}$ and $1\otimes M^{\prime}\otimes 1$. Applying $\Ad(\Sigma_{23}\Sigma_{13}W_{13})$, we obtain that it is isomorphic to the von Neumann algebra generated by \[(M\otimes 1\otimes 1)\cup (\hat{M}'\otimes 1\otimes 1)\cup (\hat{\Delta}\otimes \id)\hat{\Delta}(\hat{M}) \cup 1 \otimes 1\otimes M'.\] But $M\hat{M}' = \hat{J}M \hat{M}\hat{J}$ is $\sigma$-weakly dense in $B(L^2(M))$. In particular, we can throw in another copy of $\hat{M}\otimes 1\otimes 1$ in the first leg. Using coassociativity of $\hat{\Delta}$ and the fact that $(\hat{M}\otimes 1)\hat{\Delta}(\hat{M})$ is $\sigma$-weakly dense in $\hat{M}\vNtimes \hat{M}$, we obtain that the above von Neumann algebra is the same as the one generated by  \[B(L^2(M))\otimes 1\otimes 1 \cup 1\otimes \hat{\Delta}(\hat{M}) \cup 1 \otimes 1\otimes M'.\] Applying $\Ad(\Sigma_{23}\hat{W}_{23})$, we see that this becomes an isomorphic copy of the von Neumann algebra $B(L^2(M))\vNtimes B(L^2(M))$. In other words, the crossed product  $\tilde{N}\rtimes \tilde{M}$ is a type $I$-factor, which is sufficient to conclude that $(\tilde{N},\tilde{\alpha})$ is a Galois object since then \eqref{EqRepNcrossM} must necessarily be faithful. 
\end{proof}

Recall \cite[Proposition 2.1]{KV03} that $M$ can be endowed with a \emph{one-parameter scaling group} $\tau_t$, determined by the formula \[\tau_t(x) = \hat{\nabla}^{it} x\hat{\nabla}^{-it},\qquad x\in M,\] where $\hat{\nabla}$ is the modular operator of $\hat{M}$. 

\begin{Prop}\label{PropWeightHeis} The invariant weights on $\tilde{N}$ are given by \[\varphi_{\tilde{N}}(x) = \Tr(h^{1/2}xh^{1/2}),\qquad \psi_{\tilde{N}}(x) = \Tr(k^{1/2}xk^{1/2}),\] where $h,k$ are the positive invertible operators such that \[h^{it} = \nu^{it^2/2}\nabla^{it}J\delta^{it}J,\quad k^{it} = P^{-it},\] with $\nabla$ the modular operator of $\varphi$ and $P^{it}$ determined by  \[P^{it}\Lambda(x) = \nu^{t/2}\Lambda(\tau_t(x)),\qquad x\in \mathscr{N}_{\varphi}.\]
\end{Prop}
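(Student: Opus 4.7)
The strategy is to verify directly that the claimed formulas define left and right invariant nsf weights on $\tilde{N} = B(L^2(M))$ for $\tilde\alpha$; uniqueness up to positive scalar then follows from the general theory of Galois objects. As a preliminary, one checks that $t \mapsto h^{it}$ and $t \mapsto k^{it}$ are strongly continuous one-parameter groups of unitaries, so that $h,k$ are genuine positive invertible unbounded self-adjoint operators on $L^2(M)$. This is immediate for $k^{it} = P^{-it}$. For $h^{it} = \nu^{it^2/2}\nabla^{it}J\delta^{it}J$, the Kustermans--Vaes commutation $\nabla^{is}\delta^{it}\nabla^{-is} = \nu^{ist}\delta^{it}$ together with the antilinearity of $J$ yields the cocycle identity $h^{it}h^{is} = h^{i(t+s)}$, the scalar $\nu^{it^2/2}$ being exactly what is needed to absorb the non-commutation of $\nabla^{it}$ with $J\delta^{is}J$.

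For right invariance of $\psi_{\tilde N}$ I would first observe that $\tilde\alpha$ restricts to $\Delta(m)_{13}$ on $m \in M$ and to $\hat\Delta(\hat m)_{12}$ on $\hat m \in \hat M$, using that $V \in \hat M'\vNtimes M$, $\hat V \in M'\vNtimes \hat M$ together with the implementation identities $\Delta(m) = V(m\otimes 1)V^*$ and $\hat\Delta(\hat m) = \hat V(\hat m\otimes 1)\hat V^*$. Working with Hilbert--Schmidt type elements (to sidestep the fact that $\psi_{\tilde M} = \hat\psi\otimes\psi$ is infinite on $\hat M$ and $M$ individually), the right invariance of $\psi$ on $M$ and of $\hat\psi$ on $\hat M$, combined with the fact that $P^{it}$ implements the scaling group $\tau_t$, forces $k^{it} = P^{-it}$ as the implementer of the right invariant modular cocycle for $\psi_{\tilde N}$.

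For left invariance of $\varphi_{\tilde N}$ I would proceed via the Connes cocycle approach: by the general Galois theory, $(D\psi_{\tilde N} : D\varphi_{\tilde N})_t = \nu_{\tilde M}^{it^2/2} \delta_{\tilde N}^{it}$, where the scaling constant of $\tilde M = \hat M \vNtimes M$ is $\nu_{\tilde M} = \hat\nu \cdot \nu = 1$, and $\delta_{\tilde N}^{it}$ is the unique group-like element satisfying $\tilde\alpha(\delta_{\tilde N}^{it}) = \delta_{\tilde N}^{it} \otimes \hat\delta^{it} \otimes \delta^{it}$. Setting $\delta_{\tilde N}^{it} := k^{it}h^{-it} = \nu^{-it^2/2} P^{-it} J\delta^{-it}J \nabla^{-it}$, this group-like property would be verified using $\Delta(\delta^{it}) = \delta^{it}\otimes\delta^{it}$, $\hat\Delta(\hat\delta^{it}) = \hat\delta^{it}\otimes\hat\delta^{it}$, and the behaviour of $P, \nabla, J$ under conjugation by $\hat V_{12}V_{13}$. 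Combined with the right invariance already established, this pins down $\varphi_{\tilde N}$ up to the required positive scalar.

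The main obstacle is the precise identification of $h$: the form $\nu^{it^2/2}\nabla^{it}J\delta^{it}J$ reflects a delicate interplay between the modular theory of $\varphi$, the modular element $\delta$ that relates $\varphi$ with $\psi = \varphi^R$, and the scaling constant $\nu$. The bulk of the actual verification therefore amounts to careful Kustermans--Vaes style bookkeeping of the commutation relations among $\nabla, P, J, \delta, \hat\delta, \hat\nabla$.
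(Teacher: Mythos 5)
Your plan inverts the paper's logic, and the inversion is where it breaks down. The paper anchors the computation at the \emph{left} invariant weight $\varphi_{\tilde N}$, because that is the one weight with a normalization-free, directly computable characterization $\varphi_{\tilde N}(x)1 = (\id\otimes\varphi_{\tilde M})\tilde\alpha(x)$ (evaluated on elements $y^*x^*xy$ with $x\in\mathscr{N}_{\hat\varphi}$, $y\in\mathscr{N}_{\varphi}$, via the identification $B(L^2(M))\cong M\ltimes_{\Delta}M$ and the dual weight; this is the content of the cited results of Vaes--Van Daele). It then obtains $\psi_{\tilde N}$ from the group-like element $\delta_{\tilde N}$, observing that $\delta_{\tilde N}^{it}=\nu^{it^2/2}\hat\delta^{it}\delta^{it}$ is a product of elements of $\hat M$ and $M$, on which $\tilde\alpha$ restricts to $\hat\Delta$ and $\Delta$, so the group-like property $\tilde\alpha(\delta_{\tilde N}^{it})=\delta_{\tilde N}^{it}\otimes\hat\delta^{it}\otimes\delta^{it}$ is a one-line check. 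You instead anchor at $\psi_{\tilde N}$, but your justification of right invariance of $\Tr(P^{-1/2}\,\cdot\,P^{-1/2})$ is not an argument: the facts that $\tilde\alpha$ restricts to $\Delta$ on $M$ and $\hat\Delta$ on $\hat M$, and that $\psi$, $\hat\psi$ are right invariant there, say nothing by themselves about the invariance of a weight defined on all of $B(L^2(M))$, and ``forces $k^{it}=P^{-it}$'' is asserted rather than derived. Since the right invariant weight carries an intrinsic scalar ambiguity and admits no formula analogous to $(\id\otimes\varphi_{\tilde M})\tilde\alpha$, this step is the genuine gap, and it is exactly the step your plan relies on to launch the rest of the argument.

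Two secondary points. First, writing the candidate modular element as $\delta_{\tilde N}^{it}=k^{it}h^{-it}=\nu^{-it^2/2}P^{-it}J\delta^{-it}J\nabla^{-it}$ makes the group-like verification the hardest part of the proof: the ``Kustermans--Vaes bookkeeping'' you defer is precisely the identity relating $P^{-it}J\delta^{-it}J\nabla^{-it}$ to $\hat\delta^{it}\delta^{it}$, i.e.\ the content of the remarks in Vaes--Van Daele that the paper invokes. Recognizing $\delta_{\tilde N}^{it}$ directly as $\nu^{it^2/2}\hat\delta^{it}\delta^{it}$ is what makes the argument short. Second, even if repaired, your route (from $\psi_{\tilde N}$ and $\delta_{\tilde N}$, each only defined up to a positive scalar, to $\varphi_{\tilde N}$) can only recover $h$ up to a positive scalar, whereas the statement fixes $h$ exactly via the canonical normalization of $\varphi_{\tilde N}$. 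Your preliminary verification that $h^{it}$ is a one-parameter group, using $\nabla^{is}\delta^{it}\nabla^{-is}=\nu^{ist}\delta^{it}$ and the antilinearity of $J$, and your computation $\nu_{\tilde M}=\hat\nu\nu=1$, are both correct.
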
 

\begin{proof} The formula for $\varphi_{\tilde{N}}$ follows from \cite[Proposition 2.8 and Proposition 2.9]{VV03}. By \cite[Theorem 4.19]{DeC11}, $\psi_{\tilde{N}} = \varphi_{\tilde{N}}(\delta_{\tilde{N}}^{1/2}\,\cdot\,\delta_{\tilde{N}}^{1/2})$ with the positive operator $\delta_{\tilde{N}}$ determined up to a positive scalar by the fact that \[\tilde{\alpha}(\delta_{\tilde{N}}^{it}) = \delta_{\tilde{N}}^{it}\otimes \hat{\delta}^{it}\otimes \delta^{it}.\] It follows that we can take \[\delta_{\tilde{N}}^{it} = \nu^{it^2/2} \hat{\delta}^{it}\delta^{it},\] and the form for $\psi_{\tilde{N}}$ now follows from the remarks above \cite[Proposition 2.10]{VV03}.
\end{proof} 

Let us now compute the Galois unitary associated to $(\tilde{N},\tilde{\alpha})$. In fact, following the discussion after \cite[Proposition 2.9]{VV03}, we may realize $\Lambda_{\tilde{N}}$ as the unique GNS-map having the linear span of $\mathscr{N}_{\hat{\varphi}}\mathscr{N}_{\varphi}$ as its core and on which \[\Lambda_{\tilde{N}}(xy) = \hat{\Lambda}(x)\otimes \Lambda(y).\] The corresponding GNS-representation of $B(L^2(M))$ is by \[x \rightarrow V(x\otimes 1)V^*,\] which identifies $B(L^2(M))$ with $M\ltimes_{\Delta}M$. Taking this presentation, we can represent the Galois unitary $\tilde{\Ww}$ of $(\tilde{N},\tilde{\alpha})$ as a unitary operator on $L^2(M)^{\otimes 4}$. 

\begin{Prop} The Galois unitary of $(\tilde{N},\tilde{\alpha})$ is given by 
\[\tilde{\Ww} = W_{14}W_{24} W_{31}^*.\]
\end{Prop}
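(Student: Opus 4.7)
The strategy is to verify directly that the candidate $\tilde{\Ww} = W_{14}W_{24}W_{31}^*$ satisfies the defining identity of the Galois unitary, namely
\[(\omega\otimes \id)(\tilde{\Ww}^*)\Lambda_{\tilde{N}}(z) = \Lambda_{\tilde{M}}((\omega\otimes \id)\tilde{\alpha}(z))\]
for $\omega \in B(L^2(\tilde{N}))_*$ and $z\in \mathscr{N}_{\varphi_{\tilde{N}}}$. By the core property stated just before the proposition, it suffices to check this on elements $z = xy$ with $x\in \mathscr{N}_{\hat{\varphi}}$ and $y\in \mathscr{N}_{\varphi}$, using $\Lambda_{\tilde{N}}(xy) = \hat{\Lambda}(x)\otimes \Lambda(y)$ and $\Lambda_{\tilde{M}} = \hat{\Lambda}\otimes \Lambda$.

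The first step is to factorize the coaction. Since $V\in \hat{M}'\vNtimes M$ commutes with $x\otimes 1\otimes 1$ for $x\in \hat{M}$, and $\hat{V}\in M'\vNtimes \hat{M}$ commutes with $\Delta(y)_{13}$ for $y\in M$, I would obtain
\[\tilde{\alpha}(xy) = \hat{V}_{12}V_{13}(xy\otimes 1\otimes 1)V_{13}^*\hat{V}_{12}^* = \hat{\Delta}(x)_{12}\,\Delta(y)_{13}.\]
Second, I would rewrite the candidate using $\hat{W} = \Sigma W^*\Sigma$ (which gives $W_{31}^* = \hat{W}_{13}$) as $\tilde{\Ww} = W_{14}W_{24}\hat{W}_{13}$. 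This recasting is suggestive: $\hat{W}_{13}$ takes care of the $\hat{\Delta}(x)_{12}$ factor, $W_{14}$ takes care of the $\Delta(y)_{13}$ factor, and the additional $W_{24}$ compensates for the $V$-twist built into the GNS-representation $\pi(z) = V(z\otimes 1)V^*$ on $L^2(\tilde{N}) = L^2(M)^{\otimes 2}$.

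The final step is a direct computation: applying $(\omega\otimes \id\otimes \id)(\tilde{\Ww}^*) = (\omega\otimes \id\otimes \id)(\hat{W}_{13}^*W_{24}^*W_{14}^*)$ to $\hat{\Lambda}(x)\otimes \Lambda(y)$ and using the defining identities
\[(\omega'\otimes \id)(W^*)\Lambda(y) = \Lambda((\omega'\otimes \id)\Delta(y)),\qquad (\omega'\otimes \id)(\hat{W}^*)\hat{\Lambda}(x) = \hat{\Lambda}((\omega'\otimes \id)\hat{\Delta}(x)),\]
combined with the pentagon equations \eqref{EqPentWV} to slide the three factors past one another, one recovers $(\hat{\Lambda}\otimes \Lambda)\bigl((\omega\otimes \id\otimes \id)(\hat{\Delta}(x)_{12}\Delta(y)_{13})\bigr)$, which by the factorization above equals the right-hand side $\Lambda_{\tilde{M}}((\omega\otimes \id)\tilde{\alpha}(xy))$. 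The main obstacle is the careful bookkeeping of the atomic leg structure: because $\tilde{N}$ sits in $B(L^2(M)^{\otimes 2})$ via the $V$-twisted map $\pi$ rather than canonically, verifying that $W_{24}$ provides exactly the right compensation — so that the slice $(\omega\otimes \id)(\tilde{\Ww}^*)$ in legs $(1,2)$ really matches the pairing of $\omega$ with the $\tilde{N}$-leg of $\tilde{\alpha}(xy)$ — is the delicate point where the pentagon identities interact most intricately with the representation $\pi$.
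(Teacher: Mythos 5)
Your proposal is correct and follows essentially the same route as the paper's proof: one verifies the defining KSGNS/slicing identity on the core $\mathscr{N}_{\hat{\varphi}}\mathscr{N}_{\varphi}$, using the factorization $\tilde{\alpha}(xy)=\hat{\Delta}(x)_{12}\Delta(y)_{13}$ and the fact that the GNS representation $z\mapsto V(z\otimes 1)V^*$ sends $x\in \hat{M}$ to $x\otimes 1$ while turning $\Delta(y)_{13}$ into $((\Delta\otimes\id)\Delta(y))_{124}$, which is precisely what produces the extra factor $W_{24}$. The only small imprecision is the appeal to the pentagon equations for ``sliding the three factors past one another'': what is actually used is $(\Delta\otimes\id)(W)=W_{13}W_{23}$ from \eqref{EqMultDel} together with the observation that $\hat{W}_{13}^*$ acts on a different GNS leg than $W_{14}^*,W_{24}^*$, so no commutation of the three factors is required.
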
 
\begin{proof} Using KSGNS-maps of the form $\id\otimes \Lambda$ and identities of the form \[(\id\otimes \Lambda)(\Delta(y)) = W^*(\id\otimes \Lambda(y)),\qquad y\in \mathscr{N}_{\varphi},\] we compute for $x\in \mathscr{N}_{\hat{\varphi}}$, $y\in \mathscr{N}_{\varphi}$ and $\xi,\eta\in L^2(M)\otimes L^2(M)$ that \begin{eqnarray*} \hspace{-0.6cm} && \tilde{\Ww}^*((\xi\otimes \eta) \otimes \Lambda_{\tilde{N}}(xy))\\ && = ((\id\otimes \id\otimes \hat{\Lambda})(\hat{\Delta}(x)_{13})\otimes \id)(\id\otimes \id\otimes \Lambda_M)((\Delta\otimes \id)\Delta(y))(\xi\otimes \eta) \\ && = ((\id\otimes \id\otimes \hat{\Lambda})(\hat{\Delta}(x)_{13})\otimes \id) W_{23}^*W_{13}^* (\xi\otimes \eta \otimes \Lambda(y)) \\ &&= \hat{W}_{13}^*W_{24}^* W_{14}^* (\xi\otimes \eta\otimes \hat{\Lambda}(x)\otimes \Lambda(y)) \\ &&= \hat{W}_{13}^*W_{24}^* W_{14}^* ((\xi\otimes \eta)\otimes \Lambda_{\widetilde{N}}(xy)).
\end{eqnarray*} 
\end{proof}

To have an expression for $\tilde{\Ww} \in \tilde{N}\vNtimes B(L^2(\tilde{M}),L^2(\tilde{N}))$ with the first leg in its ordinary representation on $B(L^2(M))$, note that \begin{multline*} V_{12}^*W_{14}W_{24}W_{31}^*V_{12} = V_{12}^*(\Delta\otimes \id)(W)_{124}W_{31}^*V_{12}\\ = W_{14}V_{12}^*W_{31}^*V_{12} = W_{14}W_{31}^*,\end{multline*}  
so we obtain the natural expression \[\tilde{\Ww} = W_{13}\hat{W}_{12} \in \tilde{N}\vNtimes B(L^2(\tilde{M}),L^2(\tilde{N})).\]

Identifying the dual of $\widetilde{M}$ with $M\vNtimes \hat{M}$ in the natural way, we get the following corollary. 

\begin{Cor} The adjoint coaction $\Ad_{\tilde{\alpha}}$ of $M\vNtimes \hat{M}$ on $B(L^2(M))$ is given by \[x \mapsto \hat{W}_{12}^*W_{13}^*(x\otimes 1\otimes 1)W_{13}\hat{W}_{12}.\] 
\end{Cor}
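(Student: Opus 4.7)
The proof of this corollary is essentially an unpacking of Definition \ref{DefAdj} applied to the expression for $\tilde{\Ww}$ that was just computed. By definition, $\Ad_{\tilde{\alpha}}(x) = \tilde{\Ww}^*(x \otimes 1)\tilde{\Ww}$, where the ``$1$'' here sits in $B(L^2(\tilde{M})) = B(L^2(M)) \vNtimes B(L^2(M))$, i.e.~occupies two legs in our labelling. Using the second expression $\tilde{\Ww} = W_{13}\hat{W}_{12}$ (with first leg of $\tilde{\Ww}$ now in the standard representation of $\tilde{N} = B(L^2(M))$), the plan is to simply substitute and observe that
\[
\tilde{\Ww}^*(x \otimes 1 \otimes 1)\tilde{\Ww} = \hat{W}_{12}^*W_{13}^*(x \otimes 1 \otimes 1)W_{13}\hat{W}_{12},
\]
which is the formula in the statement.

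The only thing that needs to be checked is that this expression indeed defines a coaction into $\tilde{N} \vNtimes (M \vNtimes \hat{M})$, i.e.~that the second and third legs land in $M$ and $\hat{M}$ respectively under the natural identification $\hat{\tilde{M}} \cong M \vNtimes \hat{M}$. This follows because $\hat{W} \in \hat{M} \vNtimes M$ and $W \in M \vNtimes \hat{M}$, so $\hat{W}_{12}$ places $M$ in the second leg and $W_{13}$ places $\hat{M}$ in the third leg. The claim that $\Ad_{\tilde{\alpha}}$ really is a coaction for $\hat{\tilde{M}}$ and that the stated identification of $\hat{\tilde{M}}$ with $M \vNtimes \hat{M}$ is the correct one (matching $\tilde{\Delta}(x) = \varsigma_{23}(\hat{\Delta} \otimes \Delta)(x)$) is already guaranteed by the general result Theorem \ref{TheoDualMain} together with the lemma defining $\Ad_{\alpha}$. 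The main (mild) subtlety to address is bookkeeping for the leg labelling and the passage between the two forms of $\tilde{\Ww}$ recorded above the corollary; there is no genuine obstacle, as the result is a direct formal consequence of the preceding proposition.
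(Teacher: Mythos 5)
Your proposal is correct and matches the paper's (implicit) argument exactly: the corollary is obtained by substituting the expression $\tilde{\Ww} = W_{13}\hat{W}_{12}$, with first leg in ordinary position, into $\Ad_{\tilde{\alpha}}(x) = \tilde{\Ww}^*(x\otimes 1)\tilde{\Ww}$, with the leg/containment bookkeeping ($\hat{W}\in\hat{M}\vNtimes M$, $W\in M\vNtimes\hat{M}$) handled as you describe. The paper treats this as immediate and offers no further proof, so there is nothing to add.
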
 

Using that \[(\id\otimes \varsigma)\tilde{\alpha}(x) = V_{12}\hat{V}_{13}(x\otimes 1\otimes 1)\hat{V}_{13}^*V_{12}^*,\] together with the fact that \[\hat{W}_{12}^*W_{13}^* = (\hat{J}J\otimes 1\otimes 1)V_{12}\hat{V}_{13}(J\hat{J}\otimes 1\otimes 1),\] we deduce that \[\Ad_{\tilde{\alpha}}(x) = (\Ad(\hat{J}J)\otimes \varsigma_{23})\tilde{\alpha}(\Ad(J\hat{J})(x)),\] so that identifying $M\vNtimes \hat{M}\cong \hat{M}\vNtimes M$ via the flip map, $\Ad_{\tilde{\alpha}}$ is just an isomorphic copy of $\tilde{\alpha}$ itself. Note that this is compatible with Theorem \ref{TheoWeight} as, with $h$ as in Proposition \ref{PropWeightHeis}, we have $\hat{J}Jh J\hat{J} = h^{-1}$ by an easy verification using the known commutation relations between the modular data, see \cite[Proposition 2.4]{VV03}.

\section{Quantized universal enveloping algebras and their quantum function algebra duals}

\subsection{Quantized univeral enveloping algebras}

Let $\mfg$ be a semisimple complex Lie algebra of rank $l$, and let \[\mfg = \mfk\oplus \mfa\oplus \mfn\] be an Iwasawa decomposition of $\mfg$ as a real Lie algebra. Let $\mft = i\mfa$ be a maximal Cartan subalgebra of $\mfk$, and write \[\mfb = \mft \oplus \mfa \oplus \mfn = \mfh \oplus \mfn\] for the associated (complex) Borel subalgebra, where $\mfh = \mfa \oplus \mft$ is a complex Cartan subalgebra for $\mfg$. We let $(\,\cdot\,,\,\cdot\,)$ denote a fixed positive multiple of the form on $\mfh^*$ induced from the Killing form on $\mfg$, chosen such that the shortest roots have length $\sqrt{2}$. We denote by $\Delta$ the root system, by $\Delta^+$ the positive roots, by $\{\alpha_r\mid r\in I\}$ the simple roots in $\Delta^+$, and by $\check{\alpha} = \frac{2}{(\alpha,\alpha)}\alpha$ the associated coroots. We denote by  \[Q\subseteq P \subseteq \mfh^*\] respectively the root and weight lattice, and by $P^+$ the positive cone of the weight lattice. We denote by $\varpi_r$ the fundamental weights, and we set \[\rho = \frac{1}{2}\sum_{\alpha\in \Delta^+}\alpha = \sum_{r\in I} \varpi_r.\] We further denote by $W$ the Weyl group of $\mfg$, and by $w_0$ the longest element in $W$ with respect to the length function determined by the reflections across the simple roots. We consider the natural action of $W$ on $\mfh$ and $\mfh^*$, where the simple reflections $s_{\alpha}$ act by \[s_{\alpha}v = v - (\check{\alpha},v)\alpha.\] We also use as a shorthand \[s_i = s_{\alpha_i}.\]

Let us recall the definition of the Hopf algebra $U_q(\mfg)$. Here we will fix our deformation parameter $q$ to be a real number between 0 and 1, as this is what will be needed for the remainder of this paper. 
 
We will use the standard notation for $q$-numbers,  \[[n]_q! = [n]_q[n-1]_q\ldots [2]_q[1]_q,\qquad [n]_q = \frac{q^{-n}-q^n}{q^{-1}-q}.\] We will also use the following \emph{quantum Serre polynomials} $f_{rs}$ in two non-commuting variables $Y,Z$ for $r\neq s$ elements of $I$, \begin{equation}\label{EqQSerre}f_{rs}(Y,Z) = \sum_{p=0}^{1-(\check{\alpha}_r,\alpha_s)}(-1)^p \frac{[1-(\check{\alpha}_r,\alpha_s)]_{q_r}!}{[p]_{q_r}![1-(\check{\alpha}_r,\alpha_s)-p]_{q_r}!}Y^{1-(\check{\alpha}_r,\alpha_s)-p}ZY^{p} = 0,\end{equation} where \[q_r = q^{\frac{(\alpha_r,\alpha_r)}{2}}.\]

For the following definition and the ensuing elementary properties we refer for example to \cite[Section 6]{KS97}.

\begin{Def}\label{DefQUE} We define $U_q(\mfg)$ to be the universal algebra over $\C$ generated by elements $K_{\omega}$ for $\omega \in P$ and elements $E_r,F_r$ for $r\in I$ such that $K_0$ is the unit and
\begin{equation}\label{KE} K_{\omega}K_{\chi} = K_{\omega+\chi},\quad K_{\omega} E_r  = q^{(\omega,\alpha_r)}E_rK_{\omega},\quad K_{\omega}F_r = q^{-(\omega,\alpha_r)}F_rK_{\omega},\end{equation}
\begin{equation}\label{SE} f_{rs}(E_r,E_s) = 0,\quad  f_{rs}(F_r,F_s) = 0,\qquad r\neq s, \end{equation}
\begin{equation}\label{EF} E_rF_s -F_sE_r = \delta_{rs} \frac{K_{\alpha_r}- K_{\alpha_r}^{-1}}{q_r-q_r^{-1}}.
\end{equation}
We endow $U_q(\mfg)$ with the coproduct determined by $\hat{\Delta}(K_{\omega}) = K_{\omega}\otimes K_{\omega}$ and 
\begin{equation}\label{CoBPlus}\hat{\Delta}(E_r) = E_r\otimes K_{\alpha_r}+ 1\otimes E_r,\end{equation}\begin{equation}\label{CoBMin} \hat{\Delta}(F_r) = F_r\otimes 1 + K_{\alpha_r}^{-1}\otimes F_r.\end{equation}
We denote by $\check{U}_q(\mfg)$ the Hopf subalgebra generated by the $E_r,F_r$ and $K_{\omega}$ with $\omega \in Q$. 
\end{Def}

We will denote the associated counit by $\hat{\varepsilon}$ and the associated antipode by $\hat{S}$, so that \[\hat{\varepsilon}(K_{\omega}) = 1,\quad \hat{\varepsilon}(E_r)=\hat{\varepsilon}(F_r) = 0,\]\[ \hat{S}(K_{\omega}) = K_{\omega}^{-1},\quad \hat{S}(E_r) = -E_rK_{\alpha_r}^{-1},\quad \hat{S}(F_r) =- K_{\alpha_r}F_r.\]

We can associate to $U_q(\mfg)$ various related Hopf algebras. 
\begin{Def} We define \[U_q(\mfh) = \textrm{ linear span of }\{K_{\omega}\mid \omega\in P\} \subseteq U_q(\mfg),\]
\[U_q(\mfn) = \textrm{unital algebra generated by }\{E_r\mid r\in I\} \subseteq U_q(\mfg),\]
\[U_q(\mfb) =   \textrm{algebra generated by }U_q(\mfn)\textrm{ and }U_q(\mfh),\]
\[U_q(\mfn^-) = \textrm{unital algebra generated by }\{F_r\mid \omega\in r\in I\} \subseteq U_q(\mfg),\]
\[U_q(\mfb^-) =   \textrm{algebra generated by }U_q(\mfn^-)\textrm{ and }U_q(\mfh).\]
\end{Def}

The $U_q(\mfh)$, $U_q(\mfb)$ and $U_q(\mfb^-)$ are  Hopf subalgebras.  The algebras $U_q(\mfh)$, $U_q(\mfn)$, $U_q(\mfb)$, $U_q(\mfn^-)$ and $U_q(\mfb^-)$ are generated universally by the relations of $U_q(\mfg)$ involving only the generators entering their definition. Furthermore, we have the \emph{triangular decomposition}, i.e. the bijectivity of the multiplication map \[U_q(\mfn^-) \otimes U_q(\mfh) \otimes U_q(\mfn) \rightarrow U_q(\mfg).\]

The algebra $U_q(\mfg)$ can be endowed with various $*$-algebra structures  corresponding to the real forms of $\mfg$. We require here that $U_q(\mfg)$ becomes a Hopf $*$-algebra, i.e.~ that the comultiplication is a $*$-homomorphism. This will be satisfied for example by the \emph{compact} real form, which is the only real form we will be interested in. 

\begin{Def} We denote by $U_q(\mfk)$ the Hopf algebra $U_q(\mfg)$ endowed with the Hopf $*$-algebra structure \[K_{\omega}^* = K_{\omega},\quad E_r^* = F_rK_{\alpha_r},\quad F_r^* = K_{\alpha_r}^{-1}E_r.\]
\end{Def} 

We then denote \[U_q(\mft) \subseteq U_q(\mfk)\] for the Hopf $*$-algebra generated by the $K_{\omega}$, so that $U_q(\mft)$ is a particular $*$-structure on $U_q(\mfh)$.

We will at some point need to refer to the \emph{unitary antipode} of $U_q(\mfk)$, which is a `correction' of the antipode as to be compatible with the $*$-structure. This nomenclature is in analogy with the terminology for quantum group von Neumann algebras.

\begin{Def}\label{DefUnAnt} We define the \emph{unitary antipode} $\hat{R}$ of $U_q(\mfk)$ to be the unique anti-homomorphism such that \[\hat{R}(K_{\omega}) = K_{\omega}^{-1},\quad \hat{R}(E_r) = -q_r^{-1}E_rK_{\alpha_r}^{-1},\quad \hat{R}(F_r) = -q_rK_{\alpha_r}F_r.\]  
\end{Def}

It is easy to check that $\hat{R}$ is involutive and $*$-preserving with \[\hat{\Delta} \circ \hat{R} = (\hat{R}\otimes \hat{R})\circ \hat{\Delta}^{\opp}.\]

 Note also that $\hat{R}$ leaves $U_q(\mfb)$ and $U_q(\mfb^-)$ invariant, and we will hence also refer to the unitary antipode  of $U_q(\mfb)$ and $U_q(\mfb^-)$.

\subsection{Representation theory of quantized universal enveloping algebras}

We recall some facts concerning the representation theory of $U_q(\mfk)$, see again for example \cite{KS97}.  

\begin{Def} A \emph{type $I$-representation} of $U_q(\mfk)$ is a unital $*$-representation of $U_q(\mfk)$ on a finite-dimensional Hilbert space such that the $K_{\omega}$ have positive eigenvalues. 
\end{Def}

One can concretely describe type $I$-representations as follows. For $\omega \in P$, we denote by $\C_{\omega}$ the one-dimensional representation of $U_q(\mfb)$ obtained from the character \[K_{\chi}\mapsto q^{(\chi,\omega)},\qquad E_r\mapsto 0.\] For $\varpi \in P^+$, we denote by \[M_{\varpi} = U_q(\mfg) \underset{U_q(\mfb)}{\otimes}\C_{\varpi}\] the associated Verma module for $U_q(\mfg)$, and by $V_{\varpi}$ its unique irreducible quotient. We denote by $\xi_{\varpi}$ the highest weight vector in $V_{\varpi}$ obtained as the image of $1\otimes 1$. We further endow $V_{\varpi}$ with the unique Hilbert space structure for which $\xi_{\varpi}$ is a unit vector and for which $V_{\varpi}$ becomes a $*$-representation of $U_q(\mfk)$. If $\xi \in V_{\varpi}$ is a joint eigenvector for the $K_{\omega}$, we write $\wt(\xi)\in P$ for the unique weight with \[K_{\omega}\xi = q^{(\omega,\wt(\xi))}\xi,\qquad \forall \omega\in P.\] 

\begin{Theorem} The $V_{\varpi}$ form a complete set, up to unitary equivalence, of irreducible type $I$-representations of $U_q(\mfk)$, and any type $I$-representation of $U_q(\mfk)$ decomposes as a direct sum of $V_{\varpi}$'s. 
\end{Theorem}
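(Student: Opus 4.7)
The plan is to reduce the statement to the classical analysis via quantum $\mathfrak{sl}_2$-triples, combined with the semisimplicity that follows automatically from the existence of a $U_q(\mfk)$-invariant inner product.

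\textbf{Step 1: the modules $V_{\varpi}$ are genuine $*$-representations.} I would first check that the Hermitian form on $V_{\varpi}$ for which $\xi_{\varpi}$ is a unit vector and which is compatible with the $*$-structure ($K_{\omega}^*=K_{\omega}$, $E_r^*=F_rK_{\alpha_r}$) is \emph{positive definite}. The form is well defined because distinct weight spaces are orthogonal (the $K_{\omega}$ are self-adjoint with real eigenvalues), and positivity is the standard Shapovalov-type calculation on each weight space: working along a dominant integral highest weight $\varpi$, the norm of a monomial $F_{r_1}\cdots F_{r_k}\xi_{\varpi}$ is strictly positive as long as it is nonzero, because on each $\mfsl_2$-string the classical computation $\|F^n\xi\|^2=[n]_{q_r}!\prod_{j=0}^{n-1}[(\check{\alpha}_r,\varpi)-j]_{q_r}$ stays positive up to the length dictated by dominance. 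Irreducibility of $V_{\varpi}$ as a $U_q(\mfk)$-module is immediate from its construction as the unique irreducible quotient of $M_{\varpi}$.

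\textbf{Step 2: weight decomposition of an arbitrary type $I$-representation.} Let $V$ be a type $I$-representation. Since the $K_{\omega}$ are commuting positive self-adjoint operators on the finite-dimensional space $V$, they are simultaneously diagonalizable, yielding a decomposition $V=\bigoplus_{\mu}V_{\mu}$ with $\mu$ \emph{a priori} ranging over $\mathrm{Hom}(P,\R)\cong\mfh^{*}_{\R}$. The defining relations \eqref{KE} force $E_r\cdot V_{\mu}\subseteq V_{\mu+\alpha_r}$ and $F_r\cdot V_{\mu}\subseteq V_{\mu-\alpha_r}$. Finite-dimensionality then gives a nonzero vector $\xi\in V_{\mu}$ with $E_r\xi=0$ for all $r\in I$; such a vector is a highest weight vector.

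\textbf{Step 3: integrality and dominance of the highest weight.} Restricting to the subalgebra generated by $E_r$, $F_r$, $K_{\alpha_r}^{\pm 1}$, which is a copy of $U_{q_r}(\mfsl_2)$, and using the $*$-structure together with $F_r^n\xi\neq0$ only for finitely many $n$, the standard quantum $\mfsl_2$-argument via the commutation relation \eqref{EF} shows $(\check{\alpha}_r,\mu)\in\Z_{\geq 0}$ for every simple root $\alpha_r$. Hence $\mu\in P^+$; write $\mu=\varpi$. The cyclic submodule $U_q(\mfk)\xi$ is then a quotient of $M_{\varpi}$, and being a $*$-subrepresentation of the finite-dimensional space $V$ it has no proper nonzero $*$-invariant submodule (pass to orthogonal complements), so it coincides with the irreducible quotient $V_{\varpi}$, with $\xi$ playing the role of $\xi_{\varpi}$ (up to a phase).

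\textbf{Step 4: complete reducibility and uniqueness.} The orthogonal complement $(U_q(\mfk)\xi)^{\perp}$ is again a $*$-subrepresentation of $V$, so by induction on $\dim V$, the space $V$ decomposes as a Hilbert direct sum $\bigoplus_i V_{\varpi_i}$. Distinct highest weights give non-isomorphic irreducible modules because a $U_q(\mfk)$-intertwiner intertwines the action of the commutative subalgebra $U_q(\mft)$, hence matches weight spaces; in particular it matches the highest weights. This proves the classification and complete reducibility simultaneously.

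\textbf{Expected main obstacle.} Steps 2--4 are the standard representation-theoretic skeleton and pose no real difficulty; the only technical point is the positivity check in Step 1, which must be done along an ordering of the PBW monomials adapted to a reduced expression of $w_0$, but this is well documented in \cite{KS97}, so I would simply cite it rather than reproduce the calculation.
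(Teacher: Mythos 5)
Your proposal is correct and is essentially the standard argument: the paper itself states this theorem without proof, simply recalling it from \cite{KS97}, and your four steps (positivity of the contravariant form on $V_{\varpi}$, simultaneous diagonalization of the positive commuting $K_{\omega}$, the rank-one reduction forcing dominance and integrality of the highest weight, and complete reducibility via orthogonal complements) reproduce exactly the proof given there. No gaps; the only point worth spelling out for genuine \emph{unitary} equivalence is that an invariant inner product on an irreducible module is unique up to a positive scalar, which your normalization of $\xi_{\varpi}$ handles.
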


For $\mfk =\mfsu(2)$ with single positive root $\alpha$ and single  fundamental weight $\varpi$, we write the generators of $\check{U}_q(\mfsu(2))$ as $E,F,K = K_{\alpha}$. In this case we identify \[Q \cong 2\Z,\quad P = \Z,\quad P^+ = \N,\] with $P$ endowed with the inner product \[(r,s) = rs/2.\] Then the highest weight module $V_{N}$ for $N\in \N$ has an orthonormal basis \[\{\xi_{N},\xi_{N-2},\ldots, \xi_{-N+2},\xi_{-N}\}\] on which $K \xi_n = q^{n} \xi_n$ and \[(q^{-1}-q)E\xi_n = q^{-\frac{N-n-1}{2}} \left((1-q^{N-n})(1-q^{N+n+2)})\right)^{1/2} \xi_{n+2},\]\[(q^{-1}-q)F \xi_n = q^{-\frac{N+n+1}{2}}  \left((1-q^{N-n+2)})(1-q^{N+n})\right)^{1/2} \xi_{n-2}.\]

Returning to the general case, we denote for $r\in I$ \[T_r\xi = \underset{-a+b-c = (\wt(\xi),\check{\alpha}_r)}{\sum_{a,b,c\in \N}} \frac{(-1)^bq_r^{b-ac}}{[a]_{q_r}! [b]_{q_r}![c]_{q_r}!}E_r^{a}F_r^bE_r^c\xi \]  the Lusztig braid operators on the module $V_{\varpi}$, see for example \cite[Section 8]{Jan96}. For $\mathbf{r} = (r_1,\ldots,r_n)$ a sequence in $I$, we denote $T_{\mathbf{r}} = T_{r_1}\ldots T_{r_n}$. Writing again \[w_0 = s_{r_1}\ldots s_{r_M},\] let then \[T_{w_0} = T_{(r_1,\ldots,r_M)}.\] This operator is independent of the chosen decomposition of $w_0$.

For $\mfk = \mfsu(2)$, it can be computed \cite[Section 8.3]{Jan96} that in $V_N$ one has \[T\xi_N = (-1)^{N} q^{N/2} \xi_{-N}.\] Hence using that \[\Delta^+  = \{s_{r_M}\ldots s_{r_{n+1}}\alpha_{r_n}\mid 1\leq n \leq M\}\] is in one-to-one correspondence with $\Delta^+$, we obtain for general $\mfk$ that \[\eta_{w_0\varpi} := q^{-(\rho,\varpi)}T_{w_0}\xi_{\varpi},\] is a unit lowest weight vector of $V_{\varpi}$ at weight $w_0\varpi$.

\subsection{Dual Hopf $*$-algebra}

We assume still that $\mfg$ is a complex semisimple Lie algebra with compact real form $\mfk$, and we use notation as before. Let $G$ be the connected, simply connected complex Lie group with Lie algebra $\mfg$, and let $K,T,A,H,N,\overline{N}$ be the connected Lie groups inside $G$ integrating respectively $\mfk,\mft,\mfa,\mfh,\mfn,\mfn^-$. 

\begin{Def} We define $\Pol_q(G)$ to be the Hopf subalgebra of the restricted Hopf dual of $U_q(\mfg)$ consisting of the matrix coefficients $U_{\pi}(\xi,\eta)$ for $\xi,\eta \in V_{\pi}$, where $(V,\pi)$ are type $I$-representations of $U_q(\mfk)$ and where \[U_{\pi}(\xi,\eta)(x) = \langle \xi,\pi(x)\eta\rangle,\qquad x \in U_q(\mfg).\] 
\end{Def}

For $\varpi \in P^+$, we will denote the associated matrix coefficients by $U_{\varpi}(\xi,\eta)$. It is easily seen by semisimplicity that the $U_{\varpi,ij} = U_{\varpi}(e_i,e_j)$ form a basis of $\Pol_q(G)$, with $\varpi$ running over $P^+$ and $e_i,e_j$ over an orthonormal basis of $V_{\varpi}$. The coproduct on $\Pol_q(G)$ is given by \[\Delta(U_{\varpi}(\xi,\eta)) = \sum_i U_{\varpi}(\xi,e_i)\otimes U_{\varpi}(e_i,\eta).\]

It will be convenient to extend the $P$-valued grading on the $V_{\varpi}$ to $\Pol_q(G)$. 
 For $x  = U_{\varpi}(\xi,\eta)$, we denote \[\lwt(x) = \wt(\xi),\qquad \rwt(x) = \wt(\eta).\] In this way, $\Pol_q(G)$ becomes a $P$-bigraded algebra.

The Hopf algebra $\Pol_q(G)$ can be made into a Hopf $*$-algebra as follows. Let the \emph{contragredient} representation of $V_{\varpi}$ be defined as the conjugate linear Hilbert space $\overline{V_{\varpi}}$, equipped with the $*$-representation \[x\overline{\xi} = \overline{\hat{R}(x)^*\xi},\quad a\in U_q(\mfk),\] where $\hat{R}$ is the unitary antipode defined in Definition \ref{DefUnAnt}. There is then a natural identification of $*$-representations \[\overline{V_{\varpi}} \cong V_{-w_0\varpi},\quad x\overline{\xi_{\varpi}}\mapsto x\eta_{-\varpi},\quad x\in U_q(\mfk).\] We can now endow $\Pol_q(G)$ with the $*$-structure  \[U_{\varpi}(\xi,\eta)^* = q^{-(\rho,\wt(\xi)-\wt(\eta))} U_{-w_0\varpi}(\overline{\xi},\overline{\eta}).\] It is easily seen that this is well-defined, and that  we have the formula \[U_{\varpi}(\xi,\eta)^*(x) = \langle \eta,\hat{S}(x)\xi\rangle,\quad x\in \Pol_q(G),\xi,\eta\in V_{\varpi}.\] In particular, the last formula easily implies that we obtain a Hopf $*$-algebra.

\begin{Def} We define $\Pol_q(K)$ to be the Hopf algebra $\Pol_q(G)$ endowed with the above $*$-algebra structure.
\end{Def}

One then immediately gets also a formula for the antipode by \[S(U_{\varpi}(\xi,\eta)) = U_{\varpi}(\eta,\xi)^*.\] 

We will need certain special elements in $\Pol_q(K)$.

\begin{Def} We define for $\varpi\in P^+$ the following elements in $\Pol_q(K)$, \begin{equation}\label{DefEla} b_{\varpi} := U_{\varpi}(\xi_{\varpi},\eta_{w_0\varpi}),\qquad \varpi\in P_+.\end{equation}
\end{Def}

By our convention for the lowest weight vectors $\eta_{w_0\varpi}$, they satisfy \[b_{\varpi}b_{\varpi'} = b_{\varpi+\varpi'}.\] One can moreover show, using the specific properties of the universal $R$-matrix, that these elements are normal, \[b_{\varpi}b_{\varpi'}^* = b_{\varpi'}^*b_{\varpi},\] and that more generally one has for $x\in \Pol_q(K)$ the commutation relations \[b_{\varpi}x = q^{(\varpi,\lwt(x)-w_0\rwt(x))} x b_{\varpi}\]\[b_{\varpi}^*x = q^{(\varpi,\lwt(x)-w_0\rwt(x))} x b_{\varpi}^*,\] cf. \cite[Proposition 3.4.1]{LS91}.

We will need to localize our algebra at these elements $b_{\varpi}$, at the same time including also a polar decomposition for them. 

\begin{Def} We define $\Pol_q(K)_{\loc}$ to be the $*$-algebra generated by $\Pol_q(K)$ and elements $|b|_{\omega},u_{\omega}$ for $\omega\in P$ such that the following relations hold: the $u_{\omega}$ are central with $u_{\omega}^* = u_{-\omega}$ and \[u_0 = 1,\quad u_{\omega}u_{\chi} = u_{\omega+\chi},\qquad \omega,\chi\in P,\] while the $|b|_{\omega}$ satisfy $|b|_{\omega}^* = |b|_{\omega}$ and \[|b|_0 = 1,\quad |b|_{\omega}|b|_{\chi} = |b|_{\omega+\chi},\qquad \omega,\chi\in P\] and \[|b|_{\omega}x = q^{(\omega,\lwt(x)-w_0\rwt(x))} x |b|_{\omega},\qquad x\in \Pol_q(K),\omega\in P.\] Moreover, for all $\varpi\in P^+$ we impose \[b_{\varpi} = u_{\varpi}|b|_{\varpi}.\] 
\end{Def} 

We then write of course also $b_{\omega} = u_{\omega}|b|_{\omega}$ for a general $\omega\in P$. It is easy to see that we then have an embedding of unital $*$-algebras $\Pol_q(K) \subseteq \Pol_q(K)_{\loc}$, as $\Pol_q(K)$ is a domain \cite[Lemma 9.1.9]{Jos95}. Moreover, we can extend the $P$-bigradation on $\Pol_q(K)$ to a unique $P$-bigraded algebra structure on $\Pol_q(K)_{\loc}$ such that \[\lwt(|b|_{\omega}) = \rwt(|b|_{\omega}) = 0,\quad \lwt(u_{\omega}) = \omega,\quad   \rwt(u_{\omega}) = w_0\omega.\]

\section{Amplified quantized enveloping algebras and their quantum function algebra duals}

\subsection{Amplified quantized enveloping algebras}

It is well-known that quantized universal enveloping algebras are a quotient by killing one copy of the Cartan part in the Drinfeld double of $U_q(\mfb)$ with $U_q(\mfb^-)$ with respect to a natural skew pairing. For our purposes, we will need to work with the full Drinfeld double. Let us recall some general formalism, as it will be convenient for later purposes. 

\begin{Def}\label{DefTwoCocy} Let $H = (H,\Delta,\varepsilon,S)$ be a Hopf algebra. A \emph{(normalized) 2-cocycle functional} for $H$ is a convolution invertible  functional \[\omega: H\times H \rightarrow \C\] such that for all $g,h,k\in H$ \[\omega(g_{(1)}, h_{(1)})\omega(g_{(2)}h_{(2)}, k) = \omega(h_{(1)},k_{(1)}) \omega(g, h_{(2)}k_{(2)}),\]  and with $\omega(g,1) = \varepsilon(g) = \omega(1,g)$ for $g\in H$. 

For $\omega,\chi$ two $2$-cocycle functionals on $H$, we define ${}_{\omega}H_{\chi^{-1}}$ to be the vector space $H$ with the new product \[h\cdot g = \omega(h_{(1)}, g_{(1)})h_{(2)}g_{(2)} \chi^{-1}(h_{(3)}, g_{(3)}).\]
\end{Def} 

It is well-known and easy to see that ${}_{\omega}H_{\chi^{-1}}$ is again a unital associative algebra (with the same unit as $H$). For $\omega = \chi$, it can be proven that ${}_{\omega}H_{\omega^{-1}}$ is a Hopf algebra for the original coproduct on $H$, see e.g.~ \cite[Section 2.3]{Maj95}.

Examples of $2$-cocycles can be obtained from \emph{skew pairings} between Hopf algebras. For $H,K$ Hopf algebras, let us call a bilinear pairing \[(\,\cdot\,,\,\cdot\,): H\times K \rightarrow \C\] a \emph{skew pairing} if \[(1,x) = \varepsilon(x),\quad (x,1) = \varepsilon(x),\]\[ (xy,z) = (x,z_{(1)})(y,z_{(2)}),\quad (x,yz) = (x_{(2)},y)(x_{(1)},z).\] Then the tensor product Hopf algebra $L=H\otimes K$ can be endowed with the two-cocycle \[\omega: L\times L \rightarrow \C,\quad (x\otimes y,w\otimes z) \mapsto (w,y).\] The resulting Hopf algebra ${}_{\omega}L_{\omega^{-1}}$ is known as the \emph{Drinfeld double}. It is universally generated by copies of the Hopf algebras $H$ and of $K$ such that the following \emph{interchange relation} holds: \[yx = (x_{(1)},y_{(1)}) x_{(2)}y_{(2)} (S(x_{(3)}),y_{(3)}),\qquad x\in H,y\in K.\] 

Note now that $U_q(\mfb)$ and $U_q(\mfb^-)$, with the generator $K_{\omega}$ written as $L_{\omega}$, can be skew-paired by means of $(K_{\omega},F_r) = 0 = (E_r,L_{\omega})$ and \begin{equation}\label{EqPairing} (K_{\omega},L_{\chi}) = q^{-(\omega,\chi)},\quad (E_r,F_s) = \frac{\delta_{rs}}{q_r^{-1}-q_r},\end{equation} cf. \cite[Proposition 6.34]{KS97}. We then obtain the following concrete description of the Drinfeld double, see e.g.~ \cite[Proposition 8.13]{KS97}. We will at the same time endow this Hopf algebra with a particular $*$-structure.

\begin{Def} We define $U_q^{+}(\mfb_{\R})$ to be the Hopf $*$-algebra generated by a copy of the Hopf algebras $U_q(\mfb)$ and  $U_q(\mfb^-)$  such that $E_r^* = F_rL_{\alpha_r}$, $K_{\omega}^* = L_{\omega}$ and such that the following interchange relations holds: 
\[K_{\omega}L_{\chi} =  L_{\chi}K_{\omega},\]
\[K_{\omega} F_r = q^{-(\omega,\alpha_r)}F_rK_{\omega},\qquad L_{\omega}E_r = q^{(\omega,\alpha_r)}E_rL_{\omega}\]\[E_rF_s - F_sE_r = \delta_{rs} \frac{K_{\alpha_r} - L_{\alpha_r}^{-1}}{q_r-q_r^{-1}}.\] 
\end{Def} 

It is easy to see that the above $*$-structure is compatible with the coproduct and with the interchange relations between $U_q(\mfb)$ and $U_q(\mfb^-)$. 

Let us denote \[U_q(\mfa) := \textrm{ group }*\textrm{-algebra of }P,\] and let us write the generators of $U_q(\mfa)$ as $U_{\omega}$. Then we have natural Hopf $*$-algebra homomorphisms \[\pi_{\mfk}: U_q^{+}(\mfb_{\R}) \rightarrow U_q(\mfk),\quad E_r\mapsto E_r, \quad F_r\mapsto F_r,\quad K_{\omega}\mapsto K_{\omega},\quad L_{\omega}\rightarrow K_{\omega}\] \[\pi_{\mfa}:U_q^+(\mfb_{\R})\rightarrow U_q(\mfa),\quad E_r,F_r\mapsto 0,\quad K_{\omega}\mapsto U_{\omega},\quad L_{\omega}\rightarrow U_{-\omega}.\] 

This allows us to make an embedding of $*$-algebras \begin{equation}\label{EqEmbbR} \pi_{\mfk\oplus \mfa}: U_q^+(\mfb_{\R})\rightarrow U_q(\mfk)\otimes U_q(\mfa),\quad X\mapsto \pi_{\mfk}(X_{(1)})\otimes \pi_{\mfa}(X_{(2)}),\end{equation}  which is however not a Hopf algebra homomorphism.

\subsection{Amplified quantum function algebras}

We now treat the corresponding extension on the level of $\Pol_q(K)$, which will result in a \emph{multiplier} Hopf $*$-algebra \cite{VDae94}. Let us write $\Pol_q(K^n) = \Pol_q(K)^{\otimes n}$, and extend the left and right $P$-gradings on $\Pol_q(K)$ to left and right $P^n$-gradings on $\Pol_q(K^n)$. Let further $\Func(P^n,\C)$ be the $*$-algebra of complex-valued functions on $P^n$ with \[f^*(\omega) = \overline{f(\omega)},\] and let $\Func_{\compact}(P^n,\C)$ be the $*$-subalgebra of finitely supported functions (the symbol $\compact$ stands for `compact'). For $f\in  \Func(P^n,\C)$ and $\omega\in P^n$, we will write $f_{\omega}$ for the shift of $f$ by $\omega$, \[f_{\omega}(\chi) = f(\chi-\omega).\]

\begin{Def}  We define $\Fun_{q}^+((AK)^n)$ to be the unital $*$-algebra generated by a copy of $\Pol_q(K^n)$ and $\Func(P^n,\C)$ such that we have the exchange relations \[fx = xf_{\lwt(x)-\rwt(x)}.\] We define $\Fun_{q,\compact}^+((AK)^n)$ to be the (non-unital) $*$-subalgebra spanned linearly by elements of the form $xf$ for $f\in \Func_{q,\compact}(P^n,\C)$ and $x\in \Pol_q(K^n)$.  
\end{Def}

For $n= 1$, we simply write $\Fun_q^+(AK)$ and $\Fun_{q,\compact}^+(AK)$. Note that as a vector space, we can also identify \[\Fun_q^+((AK)^n) =   \Pol_q(K^n)\otimes \Func(P^n,\C),\]\[ \Fun_{q,\compact}^+((AK)^n) =   \Pol_q(K^n)\otimes \Func_{\compact}(P^n,\C)\] by the multiplication map, and that \[\Fun_{q,\compact}^+((AK)^n) = \Fun_{q,\compact}^+(AK)^{\otimes n}.\]

Note that $\Fun_{q,\compact}^+((AK)^n)$ is a faithful left and right $\Fun_q^+((AK)^n)$-module for the multiplication. Hence we can embed \[\Fun_q^+((AK)^n)\subseteq M(\Fun_{q,\compact}^+((AK)^n)),\] where the right hand side denotes the \emph{multiplier $*$-algebra} of $\Fun_{q,\compact}^+((AK)^n)$.  

We can now introduce on $\Fun_{q,\compact}^+(AK)$ the structure of a multiplier Hopf $*$-algebra  \cite{VDae94}. Using the formulation as in \cite[Proposition 2.9]{VDae98}, a multiplier Hopf $*$-algebra $(H,\Delta)$ is a not necessarily unital $*$-algebra $H$, together with a (non-degenerate) coassociative $*$-homomorphism \[\Delta: H\rightarrow M(H\otimes H)\] satisfying the condition that \[\Delta(H)(1\otimes H)\cup (H\otimes 1)\Delta(H) \subseteq H\otimes H,\] and admitting a counit $\varepsilon$ and antipode $S$. By applying the $*$-structure, also \[\Delta(H)(H\otimes 1) \cup (1\otimes H)\Delta(H) \subseteq H\otimes H,\] and $S$ is an invertible map $H \rightarrow H$. For multiplier Hopf $*$-algebras, we will also use the Sweedler notation \[\Delta(x) = x_{(1)}\otimes x_{(2)},\] but one has to be more careful with this notation as this element is now not necessarily a finite sum of simple tensors -- it is only such after multiplying one of the legs with an element from $H$. 

For example, using the identification \[\Func(P^2,\C) = M(\Func_{\compact}(P^2,\C)),\] we can interpret $\Func_{\compact}(P,\C)$ as a multiplier Hopf $*$-algebra by \[\Delta(f)(\omega,\chi) = f(\omega+ \chi),\quad \varepsilon(f) = f(0),\quad S(f)(\omega) = f(-\omega).\] Note that these formulas also makes sense for a general $f\in \Func(P,\C)$.

\begin{Lem} There exists a unique $*$-homomorphism
\[\Delta: \Fun_q^+(AK) \rightarrow \Fun_q^+((AK)^2)\subseteq M(\Fun_{q,\compact}^+(AK)\otimes \Fun_{q,\compact}^+(AK))\] such that $\Delta$ coincides with the respective comultiplications on $\Pol_q(K)$ and $\Func(P)$. Moreover, the restriction of $\Delta$ to $\Fun_{q,\compact}^+(AK)$ turns the latter into multiplier Hopf $*$-algebra with counit and antipode determined by \[\varepsilon(xf) = \varepsilon(x)\varepsilon(f),\quad S(xf) = S(f)S(x).\] 
\end{Lem}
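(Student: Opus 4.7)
\emph{Proof plan.} Uniqueness of $\Delta$ is immediate, as $\Fun_q^+(AK)$ is generated as a unital $*$-algebra by $\Pol_q(K)$ and $\Func(P,\C)$ on which $\Delta$ has been prescribed. For existence, I would define $\Delta$ on a normal-form element $xf$ by $\Delta(xf) := \Delta(x)\Delta(f)\in \Fun_q^+((AK)^2)$ and verify consistency with the defining exchange relation $fx = xf_{\lwt(x)-\rwt(x)}$. The heart of this check is a telescoping identity: for $x = U_\varpi(\xi,\eta)$ bihomogeneous, writing $\Delta(x) = \sum_i x_{(1)}^i \otimes x_{(2)}^i = \sum_i U_\varpi(\xi,e_i)\otimes U_\varpi(e_i,\eta)$ in a weight basis of $V_\varpi$, one has
\[
(\lwt(x_{(1)}^i) - \rwt(x_{(1)}^i)) + (\lwt(x_{(2)}^i) - \rwt(x_{(2)}^i)) = \lwt(x) - \rwt(x),
\]
because the intermediate weights $\rwt(x_{(1)}^i) = \lwt(x_{(2)}^i) = \wt(e_i)$ cancel. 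Since $\Delta(f)(\omega,\chi) = f(\omega+\chi)$ depends only on $\omega+\chi$, its shift $\Delta(f)_{(a,b)}$ depends only on $a+b$ and coincides with $\Delta(f_{a+b})$. Applying the exchange relation of $\Fun_q^+((AK)^2)$ componentwise and summing then yields $\Delta(f)\Delta(x) = \Delta(x)\Delta(f_{\lwt(x)-\rwt(x)})$. This shows $\Delta$ is a well-defined $*$-homomorphism with image in $\Fun_q^+((AK)^2)$; coassociativity and the $*$-property then descend from the corresponding properties on the two generating subalgebras by checking on generators.

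For the multiplier Hopf $*$-algebra structure on $H := \Fun_{q,\compact}^+(AK)$, the essential verification is $\Delta(H)(1\otimes H),\,(H\otimes 1)\Delta(H)\subseteq H\otimes H$. For $a = xf$ and $b = yg$ in $H$, I would push $\Delta(f)$ past $1\otimes y$ using the exchange relation, writing
\[
\Delta(xf)(1\otimes yg) = \Delta(x)(1\otimes y)\cdot \Delta(f)_{(0,\lwt(y)-\rwt(y))}\cdot (1\otimes g).
\]
The first factor lies in the finite-dimensional space $\Pol_q(K)^{\otimes 2}$, while the product of the two subsequent function-factors is $(\omega,\chi)\mapsto f(\omega+\chi-(\lwt(y)-\rwt(y)))\,g(\chi)$, whose support is contained in the compact set $(\mathrm{supp}(f) + (\lwt(y)-\rwt(y)) - \mathrm{supp}(g))\times \mathrm{supp}(g)$, and therefore lies in $\Func_{\compact}(P,\C)^{\otimes 2}$. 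Together this puts the product in $H\otimes H$; the mirror inclusion is handled symmetrically (or by applying the $*$-operation).

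Finally, the counit $\varepsilon(xf) := \varepsilon(x)f(0)$ is consistent with the exchange relation because $\varepsilon$ on $\Pol_q(K)$ annihilates any bihomogeneous component with nontrivial weight difference, so the shift of $f$ is harmless whenever $\varepsilon(x)\neq 0$. For the antipode $S(xf) := S(f)S(x)$, a direct computation using $S(U_\varpi(\xi,\eta)) = U_\varpi(\eta,\xi)^*$ gives $\lwt(S(x)) - \rwt(S(x)) = \lwt(x) - \rwt(x)$, which together with $S(f_\mu) = S(f)_{-\mu}$ ensures that $S$ respects the exchange relation and is well-defined. The counit and antipode axioms hold on each of $\Pol_q(K)$ and $\Func_{\compact}(P,\C)$, and propagate to products via the standard Hopf-algebraic manipulation $\sum (xy)_{(1)}\,S((xy)_{(2)}) = \varepsilon(y)\sum x_{(1)}\,S(x_{(2)})$, interpreted in the multiplier-algebra framework by testing against $1\otimes h$ for $h\in H$. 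The main obstacle throughout is the first paragraph: once the telescoping identity and the dependence of $\Delta(f)_{(a,b)}$ on $a+b$ alone are in place, every remaining verification is routine bookkeeping between the two tensor factors.
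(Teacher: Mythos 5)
Your proposal is correct and takes essentially the same route as the paper, whose proof simply asserts that the existence of $\Delta$, $\varepsilon$, $S$ and the multiplier Hopf $*$-algebra conditions are "immediately checked from the defining relations"; your telescoping identity for the intermediate weights and the observation that $\Delta(f)_{(a,b)}$ depends only on $a+b$ are precisely the computations that this verification amounts to. One small caveat: applying the $*$-operation to $\Delta(H)(1\otimes H)\subseteq H\otimes H$ yields $(1\otimes H)\Delta(H)\subseteq H\otimes H$ rather than the required $(H\otimes 1)\Delta(H)\subseteq H\otimes H$, so for the latter you genuinely need the symmetric direct computation you mention (there the shift of $f\otimes 1$ depends on the intermediate weight and does not collapse to a single term, but finiteness of the supports still gives the inclusion term by term).
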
 

\begin{proof} The existence of $\Delta,\varepsilon,S$ as (anti-)$*$-homomorphisms is immediately checked from the definining relations for $\Fun_q^+(AK)$. Verifying the multiplier Hopf $*$-algebra conditions is then also straightforward.
\end{proof} 

\begin{Rem} The multiplier Hopf $*$-algebra $\Fun_{q,\compact}^+(AK)$ can also be seen abstractly as the Drinfel'd double of $\Pol_q(K)$ with $\Func_{\compact}(P)$ in the sense of \cite{DrD01} with respect to the degenerate pairing \[(x,f) = (\pi_{\mfa}(x),f),\qquad x\in \Pol_q(K),f\in \Func_{\compact}(P),\] where $(U_{\omega},f) = f(\omega)$. 
\end{Rem} 

For $X\in U_q(\mfk)^{\otimes n}$ and $\omega\in P^n$, let us denote by $X_{\omega}$ the functionals \[X_{\omega}: \Fun_q^+((AK)^n)\rightarrow \C,\quad xf \mapsto (X_{\omega},xf) = (X,x)f(\omega),\]  where the latter $(\,\cdot\,,\,\cdot\,)$ denotes the natural pairing between $\Pol_q(K^n)$ and $U_q(\mfk)^{\otimes n}$ given by \[(\otimes X_i, \otimes U_{\pi_i}(\xi_i,\eta_i)) = \prod \langle \xi_i,\pi_i(X_i)\eta_i\rangle.\] Let $\widetilde{U}_q(\mfk\oplus \mfa)^{\otimes n}$ be the vector space spanned by the $X_{\omega}$, which is indeed identifiable in the obvious way with the $n$-fold tensor product of $\widetilde{U}_q(\mfk\oplus \mfa) = \widetilde{U}_q(\mfk\oplus \mfa)^{\otimes 1}$ by \[\otimes_i X_{i,\omega_i} = (\otimes X_i)_{(\omega_1,\ldots,\omega_n)}.\]  Then $\widetilde{U}_q(\mfk\oplus \mfa)$ comes equipped with a unital $*$-algebra structure determined by \[(XY,x) = (X\otimes Y,\Delta(x)),\quad (X^*,x) = \overline{(X,S(x)^*)},\] where $X,Y\in \widetilde{U}_q(\mfk\oplus \mfa),x\in \Fun_q^+(AK)$.

\begin{Prop} The assignment  \[X_{\omega}\mapsto X\otimes U_{\omega}\] is an isomorphism of $*$-algebras between $\widetilde{U}_q(\mfk\oplus \mfa)$ and $U_q(\mfk)\otimes U_q(\mfa)$. 
\end{Prop}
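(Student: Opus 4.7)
The plan is to verify directly that $\Phi\colon X_{\omega}\mapsto X\otimes U_{\omega}$ is a $*$-algebra isomorphism by checking three points: (i) $\Phi$ is a linear bijection, (ii) $\Phi$ is multiplicative, and (iii) $\Phi$ respects the $*$-operation. Unitality $\Phi(1_0)=1\otimes 1$ is immediate from the definitions. A useful byproduct of (ii) is that the product on $\widetilde{U}_q(\mfk\oplus\mfa)$ defined through duality with $\Delta$ in fact stays inside $\widetilde{U}_q(\mfk\oplus\mfa)$ rather than escaping into the full linear dual of $\Fun_q^+(AK)$.

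Linear bijectivity rests on non-degeneracy of the standard pairing $U_q(\mfk)\times \Pol_q(K)\to\C$, which holds because $\Pol_q(K)$ is generated by matrix coefficients of a separating family of $*$-representations of $U_q(\mfk)$. Surjectivity is by the very definition of $\widetilde{U}_q(\mfk\oplus\mfa)$ as the linear span of the $X_{\omega}$. For injectivity, if $\sum_i X_i\otimes U_{\omega_i}$ with distinct $\omega_i$ is sent to the zero functional, then evaluating $\sum_i (X_i)_{\omega_i}$ on an element $x\cdot \mathbb{1}_{\omega_j}$ with $x\in\Pol_q(K)$ isolates $(X_j,x)=0$ for all such $x$, so $X_j=0$ by non-degeneracy.

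The heart of the argument is multiplicativity, i.e.\ the identity $X_{\mu}Y_{\nu}=(XY)_{\mu+\nu}$ as functionals on $\Fun_q^+(AK)$. Starting from $(X_{\mu}Y_{\nu},xf)=(X_{\mu}\otimes Y_{\nu})(\Delta(xf))$ with $\Delta(xf)=\Delta(x)\Delta(f)$, one uses that $\Delta(x)=x_{(1)}\otimes x_{(2)}$ is a finite sum in $\Pol_q(K)\otimes \Pol_q(K)$, while $\Delta(f)$, viewed as a multiplier of $\Func_{\compact}(P^2)$, is the function $(\omega,\chi)\mapsto f(\omega+\chi)$. The pairing then unwinds to $(X,x_{(1)})(Y,x_{(2)})\,f(\mu+\nu)=(XY,x)\,f(\mu+\nu)=((XY)_{\mu+\nu},xf)$. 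Under $\Phi$ this becomes $(X\otimes U_{\mu})(Y\otimes U_{\nu})=XY\otimes U_{\mu+\nu}$, which is the tensor-product algebra multiplication on $U_q(\mfk)\otimes U_q(\mfa)$.

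For the $*$-structure, $S(xf)=S(f)S(x)$, and the exchange relation $fx=xf_{\lwt(x)-\rwt(x)}$ returns this to the normal form $S(x)S(f)_{\lwt(x)-\rwt(x)}$; taking $*$ then yields $S(xf)^*=S(x)^*S(f)^*$ with $S(f)^*(\omega)=\overline{f(-\omega)}$. Substituting into $(X_{\omega}^*,xf)=\overline{(X_{\omega},S(xf)^*)}$ collapses to $(X^*,x)f(-\omega)=((X^*)_{-\omega},xf)$, giving $X_{\omega}^*=(X^*)_{-\omega}$, which maps under $\Phi$ to $X^*\otimes U_{-\omega}=(X\otimes U_{\omega})^*$ in $U_q(\mfk)\otimes U_q(\mfa)$. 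The main technical care required is in handling $\Delta(f)$, which is only a multiplier and not a finite tensor; the cleanest route is to verify the identity first for characteristic functions $f=\mathbb{1}_{\omega}$, where $\Delta(\mathbb{1}_{\omega})=\sum_{\chi}\mathbb{1}_{\chi}\otimes \mathbb{1}_{\omega-\chi}$ makes sense as a formal sum evaluated leg-by-leg by the finitary functional $X_{\mu}\otimes Y_{\nu}$, and then extend by linearity.
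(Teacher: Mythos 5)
Your proposal is correct and follows essentially the same route as the paper: the key computations $(X_{\mu}Y_{\nu},xf)=(X\otimes Y,\Delta(x))f(\mu+\nu)=((XY)_{\mu+\nu},xf)$ and $(X_{\omega}^*,xf)=\overline{(X,S(x)^*)}f(-\omega)=((X^*)_{-\omega},xf)$ are exactly the ones given there. The paper simply declares linear bijectivity ``obvious'' and does not fuss over $\Delta(f)$, since the functionals $X_{(\mu,\nu)}$ are defined on all of $\Fun_q^+((AK)^2)$, which already contains $\Func(P^2,\C)$; your extra detail on both points is harmless but not needed.
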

\begin{proof}The fact that this is a vector space isomorphism is obvious. To see that we have a $*$-algebra isomorphism, it is enough to prove that $X_{\omega}Y_{\chi} = (XY)_{\omega+\chi}$. But \[(X_{\omega}Y_{\chi})(xf) = (X\otimes Y,\Delta(x))f(\omega+\chi) = (XY,x)f(\omega+\chi) = ((XY)_{\omega+\chi},xf).\] Similarly, to have that the above preserves the $*$-operation, we have to show that $(X_{\omega})^* = (X^*)_{-\omega}$, but \[((X_{\omega})^*,xf) = \overline{(X,S(x)^*)}f(-\omega) = (X^*,x)f(-\omega) = ((X^*)_{-\omega},xf).\]
\end{proof}

By means of \eqref{EqEmbbR}, we hence obtain an embedding \begin{equation}\label{EqPairUP} U_q^+(\mfb_{\R}) \rightarrow \widetilde{U}_q(\mfk\oplus \mfa),\end{equation}\[E_r \mapsto (E_r)_{\alpha_r},\quad F_r \mapsto (F_r)_0,\quad K_{\omega}\mapsto (K_{\omega})_{\omega},\quad L_{\omega}\mapsto (K_{\omega})_{-\omega},\] where the right hand side parenthesized elements lie in $U_q(\mfk)$. By this embedding, we obtain a pairing between $U_q^+(\mfb_{\R})$ and $\Fun_{q,\compact}^+(AK)$.

\begin{Prop} Under the above pairing, we have that \[(\hat{\Delta}(X),x\otimes y) = (X,xy),\qquad X\in U_q^+(\mfb_{\R}), x,y \in \Fun_q^+(AK).\]
\end{Prop}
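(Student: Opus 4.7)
The strategy is to first reduce to the case where $X$ runs over the algebra generators of $U_q^+(\mfb_\R)$, and then to verify the identity directly for each generator by exploiting the exchange relation $fx = x f_{\lwt(x)-\rwt(x)}$ in $\Fun_q^+(AK)$ together with the weight-shifting behaviour of the pairing.

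\emph{Reduction to generators.} Suppose $(\hat\Delta(X),x\otimes y) = (X,xy)$ and $(\hat\Delta(Y),x\otimes y) = (Y,xy)$ hold for all $x,y\in\Fun_q^+(AK)$. Writing $\hat\Delta(X) = X_{(1)}\otimes X_{(2)}$ and $\hat\Delta(Y) = Y_{(1)}\otimes Y_{(2)}$, and using that $\hat\Delta$ is an algebra homomorphism, we compute
\[
(\hat\Delta(XY), x\otimes y) = (X_{(1)}Y_{(1)},x)(X_{(2)}Y_{(2)},y).
\]
The previous proposition gives $(X_{(i)}Y_{(i)}, z) = (X_{(i)}\otimes Y_{(i)}, \Delta z)$, and after regrouping this reads
\[
\sum (\hat\Delta(X), x_{(1)}\otimes y_{(1)})(\hat\Delta(Y), x_{(2)}\otimes y_{(2)}).
\]
Applying the hypotheses on $X$ and $Y$ and collapsing via the previous proposition once more yields $(X\otimes Y, \Delta(xy)) = (XY,xy)$, where we use that $\Delta$ is multiplicative on $\Fun_q^+(AK)$ by the lemma establishing its Hopf-algebra structure. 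Since both sides of the identity factor through $U_q^+(\mfb_\R)$ (the LHS via the multiplicativity of $\hat\Delta$), it suffices to check the identity on the generators $K_\omega, L_\omega, E_r, F_r$.

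\emph{Key weight observation.} For $x'\in\Pol_q(K)$ bihomogeneous and $X\in U_q(\mfk)$ of weight $\alpha$ (meaning $K_\omega X K_\omega^{-1} = q^{(\omega,\alpha)}X$), the scalar $(X,x')$ vanishes unless $\lwt(x')-\rwt(x') = \alpha$, because $X$ shifts weights in any type $I$ representation by $\alpha$. In particular $K_\omega$ and $L_\omega$ have weight $0$, while $E_r$ has weight $\alpha_r$ and $F_r$ has weight $-\alpha_r$.

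\emph{Verification on generators.} Write $x=x'f$, $y=y'g$ with $x',y'\in \Pol_q(K)$ bihomogeneous and $f,g\in\Func(P,\C)$. The exchange relation gives $xy = x'y'\, f_{\lwt(y')-\rwt(y')}\,g$, hence the right-hand side pairs this against the image of the generator in $\widetilde{U}_q(\mfk\oplus\mfa)$. For $X = K_\omega$, both $\hat\Delta(K_\omega) = K_\omega\otimes K_\omega$ on the left and $(K_\omega, x'y') = (K_\omega,x')(K_\omega,y')$ on the right contribute the same scalar, and the function-algebra discrepancy $f(\omega)$ vs.\ $f(\omega-\lwt(y')+\rwt(y'))$ is irrelevant because $(K_\omega,y')\neq0$ forces $\lwt(y')=\rwt(y')$. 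The case $L_\omega\mapsto(K_\omega)_{-\omega}$ is analogous. For $X = E_r$, using $\hat\Delta(E_r)=E_r\otimes K_{\alpha_r}+1\otimes E_r$ and $E_r\mapsto(E_r)_{\alpha_r}$, one expands both sides and matches term by term: the $E_r\otimes K_{\alpha_r}$ summand survives only when $\lwt(y')=\rwt(y')$, reducing the shifted argument $\alpha_r-\lwt(y')+\rwt(y')$ to $\alpha_r$; the $1\otimes E_r$ summand survives only when $\lwt(y')-\rwt(y')=\alpha_r$, reducing the shifted argument to $0$. The case $F_r\mapsto(F_r)_0$ is parallel.

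The main (only) subtlety is keeping the bookkeeping of the two different weight shifts straight — the one in the argument of the function $f$ after commuting past $y'$, and the one forced on $y'$ itself by the non-vanishing of the $U_q(\mfk)$-pairing. The identity is precisely the compatibility between these two shifts, which is built into the definition of the exchange relation in $\Fun_q^+(AK)$.
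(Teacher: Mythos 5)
Your argument follows the same route as the paper's: reduce to generators using that both $X\mapsto(\hat{\Delta}(X),\cdot\otimes\cdot)$ and $X\mapsto(X,m(\cdot\otimes\cdot))$ are multiplicative for the relevant convolution products (this is what the preceding proposition supplies), then verify on generators by playing the shift in the argument of $f$ coming from the exchange relation against the weight constraint forced by non-vanishing of the $U_q(\mfk)$-pairing; your computations for $K_{\omega}$ and $E_r$ coincide with the paper's. The one caveat concerns the case you dismiss as ``parallel'': in $U_q^{+}(\mfb_{\R})$ the Cartan part of $\hat{\Delta}(F_r)$ is $L_{\alpha_r}^{-1}\otimes F_r$, not $K_{\alpha_r}^{-1}\otimes F_r$, and since $L_{\omega}\mapsto (K_{\omega})_{-\omega}$ while $K_{\omega}\mapsto (K_{\omega})_{\omega}$ under the embedding into $\widetilde{U}_q(\mfk\oplus\mfa)$, the two choices produce function-algebra factors $f(\alpha_r)$ versus $f(-\alpha_r)$; only the first matches the right-hand side, where the constraint $\lwt(y')-\rwt(y')=-\alpha_r$ from $(F_r,y')\neq 0$ turns the shifted argument into $+\alpha_r$. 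The paper sidesteps this pitfall by checking only $E_r$ and $K_{\omega}$ and deducing the remaining generators from the fact that the embedding is a $*$-homomorphism (using $E_r^*=F_rL_{\alpha_r}$ and $K_{\omega}^*=L_{\omega}$); if you check $F_r$ directly, you must use the $L$-form of its coproduct for the bookkeeping to close.
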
 

\begin{proof} As we already know that $U_q^{+}(\mfb_{\R})$ is embedded inside the dual of $\Fun_{q,\compact}^+(AK)$ as a $*$-algebra, it is enough to verify the above formula for the generators $E_r,K_{\omega}$. For $K_{\omega}$ we compute, using that $K_{\omega}$ (as an element of $U_q(\mfk)$) vanishes on bi-homogeneous elements $x\in \Pol_q(K)$ with $\lwt(x)\neq \rwt(x)$, \begin{multline*} (K_{\omega},xfyg) = ((K_{\omega})_{\omega},xyf_{\lwt(y)-\rwt(y)}g)\\ =  f(\omega-\lwt(y)+\rwt(y))g(\omega) (K_{\omega},xy) \\  =  f(\omega)g(\omega)(K_{\omega},x)(K_{\omega},y) = (K_{\omega},xf)(K_{\omega},yg).\end{multline*} 

For $E_r$, we use that $E_{r}$ (as an element of $U_q(\mfk)$) vanishes on bi-homogeneous elements $x\in \Pol_q(K)$ with $\lwt(x)\neq \rwt(x)+\alpha_r$ to find 
 \begin{eqnarray*} (E_r, xfyg) &=& ((E_r)_{\alpha_r},xyf_{\lwt(y)-\rwt(y)}g) \\
 &=& f(\alpha_r-\lwt(y)+\rwt(y))g(\alpha_r) (E_r,xy) \\ &=&   f(\alpha_r-\lwt(y)+\rwt(y))g(\alpha_r)  (E_r,x)(K_{\alpha_r},y) \\ &&\qquad +  f(\alpha_r-\lwt(y)+\rwt(y))g(\alpha_r) (1,x)(E_r,y) \\ &=& f(\alpha_r)g(\alpha_r) (E_r,x)(K_{\alpha_r},y) +  f(0)g(\alpha_r)  (1,x)(E_r,y) \\ &=& (E_r\otimes K_{\alpha_r} + 1\otimes E_r,xf\otimes yg).
\end{eqnarray*}
\end{proof} 

Note that we can also slice elements in $\Fun_q^+((AK)^2)$ from the left and right by elements in $U_q^+(\mfb_{\R})$: there exist for $X\in U_q^+(\mfb_{\R})$ and $x \in \Fun_q^+((AK)^2)$ unique elements $(\id\otimes X)\Delta(x)$ and $(X\otimes \id)\Delta(x)$ in $\Fun_q^+(AK)$ such that for all $Y\in U_q^+(\mfb_{\R})$ one has \[(Y, (X\otimes \id)\Delta(x)) = ((X\otimes Y),\Delta(x)),\quad (Y,(\id\otimes X)\Delta(x)) = (Y\otimes X,\Delta(x)).\] We can then make $\Fun_q^+(AK)$ into a left and right $U_q^+(\mfb_{\R})$-module $*$-algebra by \[X\rhd x = (\id\otimes X)\Delta(x),\qquad x\lhd X = (X\otimes \id)\Delta(x),\] by which is meant that \[X\rhd (xy) = (X_{(1)}\rhd x)(X_{(2)} \rhd y),\qquad (X\rhd x)^* = \hat{S}(X)^* \rhd x^*,\]\[xy \lhd X = (x\lhd X_{(1)})(y\lhd X_{(2)}),\qquad (x\lhd X)^* = x^*\lhd \hat{S}(X)^*.\]
This is easily seen to restrict to $U_q^+(\mfb_{\R})$-module $*$-algebra structures on $\Fun_{q,\compact}^+(AK)$.

\section{Galois objects for amplified quantized enveloping algebras and their duals}

\subsection{Hopf algebraic Galois objects}

We introduce first the notion of Galois object within the setting of Hopf algebras. This is the purely algebraic analogue of Definition \ref{DefGalObvN}.

\begin{Def}\cite[Definition 2.1.1]{Sch04} Let $(H,\Delta)$ be a Hopf algebra. We call a right $(H,\Delta)$-comodule algebra $(A,\alpha)$ a \emph{(right) Galois object} if the map \[\mathrm{\can}:A\otimes A \rightarrow A\otimes H,\quad x\otimes y \mapsto (x\otimes 1)\alpha(y)\] is a linear bijection. If $(H,\Delta)$ is a Hopf $*$-algebra, we also assume that $(A,\alpha)$ is a right comodule $*$-algebra.
\end{Def}

We then use the following Sweedler notation for the inverse map $\can^{-1}$ restricted to $H$: \[\can^{-1}(1\otimes h) = h^{[1]}\otimes h^{[2]}.\] The following lemma introduces an important module structure associated with a Galois object. It corresponds to the adjoint coaction of Definition \ref{DefAdj} in the analytic setting. 

\begin{Lem} Let $(A,\alpha)$ be a right $(H,\Delta)$-Galois object. Then $A$ is a right $H$-module algebra with respect to the \emph{adjoint action} (also known as \emph{Miyashita-Ulbrich action}) \[a \blhd h = h^{[1]}ah^{[2]}.\] If moreover $(H,\Delta)$ is a Hopf $*$-algebra, then $\blhd$ defines a module $*$-algebra. 
\end{Lem}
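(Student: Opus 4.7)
The proof turns on properties of the translation map $\tau\colon H \to A\otimes A$, $\tau(h) = h^{[1]} \otimes h^{[2]} := \can^{-1}(1\otimes h)$. Since $\can$ is left $A$-linear (from $\can((ax)\otimes y) = a\cdot\can(x\otimes y)$), so is its inverse, giving $\can^{-1}(a\otimes h) = a h^{[1]}\otimes h^{[2]}$. The relation $\can\circ\can^{-1} = \id$ applied to $1\otimes h$ yields the fundamental identity $h^{[1]} h^{[2]}{}_{(0)}\otimes h^{[2]}{}_{(1)} = 1\otimes h$ in $A\otimes H$, and applying $\id\otimes\varepsilon$ gives $h^{[1]} h^{[2]} = \varepsilon(h)\cdot 1$. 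In particular $a\blhd 1 = a$.

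For the right module axiom $(a\blhd h)\blhd k = a\blhd(hk)$, the key fact is the anti-multiplicativity $\tau(hk) = k^{[1]} h^{[1]}\otimes h^{[2]} k^{[2]}$ in $A\otimes A$. Applying $\can$ to the right-hand side produces $k^{[1]} h^{[1]} h^{[2]}{}_{(0)} k^{[2]}{}_{(0)}\otimes h^{[2]}{}_{(1)} k^{[2]}{}_{(1)}$, and twice invoking the fundamental identity (first for $h$, then for $k$) collapses it to $1\otimes hk$; injectivity of $\can$ then gives the result.

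For the module algebra axiom $(ab)\blhd h = (a\blhd h_{(1)})(b\blhd h_{(2)})$, the key is the three-tensor identity
\[
h_{(1)}{}^{[1]} \otimes h_{(1)}{}^{[2]} h_{(2)}{}^{[1]} \otimes h_{(2)}{}^{[2]} = h^{[1]} \otimes 1 \otimes h^{[2]}
\]
in $A\otimes A\otimes A$, since inserting $a$ and $b$ into the outer $A$-slots and multiplying gives the required equation directly. To derive this I first establish the colinearity identity
\[
h^{[1]} \otimes h^{[2]}{}_{(0)} \otimes h^{[2]}{}_{(1)} = h_{(1)}{}^{[1]} \otimes h_{(1)}{}^{[2]} \otimes h_{(2)},
\]
by intertwining $\can^{-1}$ with $\id\otimes\alpha$ and $\id\otimes\Delta$ using coassociativity of $\alpha$. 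Applying $\tau$ to the third leg of this identity and multiplying the middle two $A$-components, the relation $a_{(0)} a_{(1)}{}^{[1]}\otimes a_{(1)}{}^{[2]} = 1\otimes a$ (obtained from $\can^{-1}\circ\can(1\otimes a) = 1\otimes a$) collapses the left-hand side to $h^{[1]}\otimes 1\otimes h^{[2]}$, yielding the three-tensor identity.

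The $*$-case amounts to verifying $\tau(S(h)^*) = (h^{[2]})^* \otimes (h^{[1]})^*$. Applying $\can$ to the right-hand side and invoking the $*$-compatibility $\alpha(x^*) = x_{(0)}{}^*\otimes x_{(1)}{}^*$, the claim reduces to
\[
h^{[1]}{}_{(0)} h^{[2]} \otimes h^{[1]}{}_{(1)} = 1\otimes S(h) \quad\text{in } A\otimes H.
\]
This is the expected main obstacle: unlike the second-leg colinearity used above, it ties the coaction on the \emph{first} leg of $\tau(h)$ to the antipode, and does not follow directly from the previously derived identities. The left-hand side is $\widetilde{\can}(\tau(h))$, where $\widetilde{\can}(x\otimes y) = x_{(0)} y\otimes x_{(1)}$ is the second canonical map; the plan is to exploit the bijectivity of $S$ on $H$ to show that $\widetilde{\can}$ is also bijective, with translation map $\widetilde{\tau}$ satisfying $\tau(h) = \widetilde{\tau}(S(h))$, whence $\widetilde{\can}(\tau(h)) = \widetilde{\can}(\widetilde{\tau}(S(h))) = 1\otimes S(h)$, as required.
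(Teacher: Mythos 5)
Your derivation of the module-algebra structure is complete and correct, and is in fact more self-contained than the paper's, which disposes of that part by citing the relations of \cite[Lemma 2.1.7]{Sch04}: the left $A$-linearity of $\can^{-1}$, the colinearity identity, the anti-multiplicativity of $\tau$, and the three-tensor identity are all established correctly and do assemble into the two module-algebra axioms exactly as you say. (One cosmetic slip: $a\blhd 1=a$ does not follow from $h^{[1]}h^{[2]}=\varepsilon(h)1$ alone, since $a$ sits between the two legs; what you need is $\tau(1)=1\otimes 1$, which is immediate from $\can(1\otimes 1)=1\otimes 1$.) Your reduction of the $*$-compatibility to the identity $h^{[1]}{}_{(0)}h^{[2]}\otimes h^{[1]}{}_{(1)}=1\otimes S(h)$ is also precisely the paper's reduction.

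The gap is in the final step, which you yourself flag as unfinished, and the plan you sketch is circular as stated: once $\widetilde{\can}$ is known to be bijective, the assertion $\tau(h)=\widetilde{\tau}(S(h))$ is \emph{equivalent} to $\widetilde{\can}(\tau(h))=1\otimes S(h)$, which is the identity to be proven, so nothing has been gained by introducing $\widetilde{\tau}$ unless you can compute it independently. The fix is short and needs neither bijectivity of $S$ nor of $\widetilde{\can}$. Define $\Theta:A\otimes H\to A\otimes H$ by $\Theta(a\otimes k)=a_{(0)}\otimes a_{(1)}S(k)$. Then $\Theta(\can(x\otimes y))=x_{(0)}y_{(0)}\otimes x_{(1)}y_{(1)}S(y_{(2)})=x_{(0)}y\otimes x_{(1)}=\widetilde{\can}(x\otimes y)$ by $k_{(1)}S(k_{(2)})=\varepsilon(k)1$ and coassociativity of $\alpha$, while $\Theta(1\otimes h)=1\otimes S(h)$ since $\alpha(1)=1\otimes 1$. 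Hence $\widetilde{\can}(\tau(h))=\Theta(\can(\tau(h)))=\Theta(1\otimes h)=1\otimes S(h)$, which is what you need. (Equivalently: apply $\alpha$ to the first leg of the fundamental identity $h^{[1]}h^{[2]}{}_{(0)}\otimes h^{[2]}{}_{(1)}=1\otimes h$, use coassociativity, and then collapse the two $H$-legs by $g\otimes k\mapsto gS(k)$.) The paper itself handles this identity by citing (2.1.3) and (2.1.4) of \cite[Lemma 2.1.7]{Sch04}, so with this one computation supplied your argument becomes a fully self-contained proof of the lemma.
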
 
\begin{proof} The fact that $A$ is a right $H$-module algebra follows from the relations in \cite[Lemma 2.1.7]{Sch04}. To see that it interacts correctly with the $*$-structure, it is sufficient to prove that, for $h\in H$, \[h^{[2]*} \otimes h^{[1]*} = (S(h)^*)^{[1]}\otimes (S(h)^*)^{[2]}.\] But applying $\can$ and the $*$-operation to both sides, this becomes \[\alpha(h^{[1]})(h^{[2]}\otimes 1) = 1\otimes S(h),\] which follows from (2.1.3) and (2.1.4) in  \cite[Lemma 2.1.7]{Sch04}.
\end{proof}

\begin{Rem} Since the adjoint module is inner, it descends to any $*$-algebra into which $A$ admits a $*$-homomorphism: if \[\pi: A\rightarrow B,\] then we have on $B$ the right $H$-module $*$-algebra structure \[x \blhd h = \pi(h^{[1]})x\pi(h^{[2]}).\]
\end{Rem}

Specific examples of Galois objects can be obtained from $2$-cocycle functionals. The following lemma is well-known. 

\begin{Lem} If $(H,\Delta)$ is a Hopf algebra, and $\omega,\chi$ are $2$-cocycle functionals on $H$, then ${}_{\chi}H_{\omega^{-1}}$ is a Galois object for ${}_{\omega}H_{\omega^{-1}}$, where the coaction $\alpha$ is given by  the original comultiplication on $H$. 
\end{Lem}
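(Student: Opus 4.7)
The plan is to verify three things: first, that $\alpha = \Delta$ genuinely makes $A := {}_{\chi}H_{\omega^{-1}}$ into a right comodule algebra over $H^{\omega} := {}_{\omega}H_{\omega^{-1}}$; second, that the coinvariant subalgebra is $\mathbb{C}\cdot 1$; and third, that the canonical map $\can$ is bijective. Points two and three together give the Galois object property as stated.

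For the first point, coassociativity of $\alpha$ is inherited directly from the coassociativity of $\Delta$ on $H$, since $\alpha$ is $\Delta$ as a linear map. Multiplicativity amounts to checking
\[
\alpha(a \cdot_A b) \;=\; \alpha(a) \cdot_{A \otimes H^{\omega}} \alpha(b) ,
\]
which, after expanding both products in Sweedler notation, reduces to the observation that the interior factor $\omega^{-1}(a_{(i)},b_{(i)})\,\omega(a_{(i+1)},b_{(i+1)})$ collapses to $\varepsilon(a_{(i)})\varepsilon(b_{(i)})$ by the convolution-inverse identity $\omega * \omega^{-1} = \varepsilon \otimes \varepsilon$, leaving exactly the right-hand Sweedler expression. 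The coinvariance condition $\Delta(x) = x \otimes 1$ forces $x \in \mathbb{C} \cdot 1$ by the usual application of $\varepsilon \otimes \id$, so the coinvariants are as required.

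For the bijectivity of $\can(x\otimes y) = x \cdot_A y_{(1)} \otimes y_{(2)}$, the approach is to produce a translation map $\tau: H \to H \otimes H$ so that $a \otimes h \mapsto (a\otimes 1)\cdot_{A\otimes A}\tau(h)$ is the inverse. Writing $\tau(h) = \gamma^{-1}(h_{(1)}) \otimes h_{(2)}$, what is required is that $\gamma^{-1} \in \Hom(H,A)$ be a convolution inverse of $\id: H \to A$, with respect to the convolution product $(f*g)(h) = f(h_{(1)})\cdot_A g(h_{(2)})$ built from $\Delta$ on $H$ and the twisted product on $A$. The existence of such a $\gamma^{-1}$ follows from the fact that $\chi, \omega^{-1}$ are convolution invertible and $H$ has an antipode $S$; indeed, one can write down an explicit formula of the form $\gamma^{-1}(h) = \chi^{-1}(S(h_{(1)}), h_{(2)})\, S(h_{(3)})\, \omega(h_{(4)}, S(h_{(5)}))$ (up to reindexing dictated by the 2-cocycle axioms) whose verification of $\gamma^{-1}*\id = \varepsilon\cdot 1$ is a finite Sweedler computation. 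Once this is in place, $\can \circ \can^{-1} = \id$ and $\can^{-1}\circ\can = \id$ both reduce to the same identity $\gamma^{-1}(h_{(1)}) \cdot_A h_{(2)} = \varepsilon(h)\cdot 1$ together with coassociativity.

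The main obstacle is the explicit verification that $\gamma^{-1}$ has the correct form and that it indeed convolution-inverts $\id$; the bookkeeping is sensitive to the order in which $\chi$ and $\omega^{-1}$ appear in the twisted product. In practice the cleanest way to avoid the combinatorics is to appeal to the Doi--Takeuchi theory of cleft extensions, as reviewed in the cited survey \cite{Sch04}: here the identity map $H \to A$ is a cleaving, and every cleft extension is a Hopf--Galois extension, which delivers the bijectivity of $\can$ at once. In the Hopf $*$-algebra case one additionally checks that $\alpha$ is $*$-preserving, which amounts to the standard compatibility of the cocycles with the $*$-structure.
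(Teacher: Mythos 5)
Your argument is correct and ultimately rests on the same mechanism as the paper's proof — an explicit translation map built from the antipode and the inverse cocycles — but it is organized differently. The paper first observes that, with $K={}_{\omega}H_{\omega^{-1}}$, one has ${}_{\chi}H_{\omega^{-1}}={}_{\chi\omega^{-1}}K$, thereby reducing to a one-sided twist of the Hopf algebra $K$ by the single cocycle $\chi*\omega^{-1}$; in that normalized situation $\can^{-1}$ has the clean closed form $F_2\circ F_1$ with $F_1(x\otimes y)=xS(y_{(1)})\otimes y_{(2)}$ (original multiplication) and $F_2(x\otimes y)=\sigma^{-1}(x_{(1)},y_{(1)})x_{(2)}\otimes y_{(2)}$. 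You work with the two-sided twist directly, which is why your candidate for $\gamma^{-1}$ must carry both $\chi^{-1}$ and $\omega$ and why the Sweedler bookkeeping becomes unpleasant. Two caveats about your fallback. First, a cleaving map is \emph{by definition} a convolution-invertible colinear map, so asserting that $\id$ ``is a cleaving'' presupposes exactly the convolution invertibility you are trying to avoid verifying; the appeal to Doi--Takeuchi becomes non-circular only once you identify $A$ with the crossed product $\C\#_{\chi*\omega^{-1}}H^{\omega}$ (equivalently ${}_{\chi*\omega^{-1}}K$), where the standard inverse formula applies --- and that identification is precisely the paper's reduction step, so you may as well make it explicit. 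Second, $\can\circ\can^{-1}=\id$ and $\can^{-1}\circ\can=\id$ reduce respectively to $\gamma^{-1}*\gamma=\varepsilon\cdot 1$ and $\gamma*\gamma^{-1}=\varepsilon\cdot 1$, i.e.\ to the left and right inverse conditions, not to one and the same identity; this is harmless because the standard construction yields a two-sided convolution inverse, but it should be stated correctly. Your verification of the comodule-algebra property via the collapse of $\omega^{-1}*\omega$ on the middle Sweedler legs, and of the triviality of the coinvariants, is fine.
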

\begin{proof} It is checked directly that $\alpha$ indeed defines a comodule algebra structure. With $K= {}_{\omega}H_{\omega^{-1}}$, we then have ${}_{\chi}H_{\omega^{-1}} = {}_{\chi\omega^{-1}}K$. We may hence reduce to the case of $\omega = \varepsilon\otimes \varepsilon$. But in this case, one verifies easily that $\can$ is bijective, with $\can^{-1} = F_2\circ F_1$ where \[F_1(x\otimes y) = xS(y_{(1)})\otimes y_{(2)},\quad F_2(x\otimes y) = \chi^{-1}(x_{(1)},y_{(1)})x_{(2)}\otimes y_{(2)},\] where in the formula for $F_1$ the original multiplication in $H$ is used. Cf.~ also \cite[Theorem 2.2.7]{Sch04} and the discussion preceding it. 
\end{proof}

In particular, if $\omega$ is induced from a skew pairing between Hopf algebras $H$ and $K$ as in the discussion following Definition \ref{DefTwoCocy}, the associated cocycle twist ${}_{\omega}L$ of $L = H\otimes K$ is called the \emph{Heisenberg double} of $H$ and $K$. On the other hand, we also have that $L_{\omega^{-1}}$ is a right Galois object for the Drinfeld double ${}_{\omega}L_{\omega^{-1}}$, which we will call the \emph{opposite Heisenberg double}. We can hence use the skew pairing \eqref{EqPairing} to construct the opposite Heisenberg double for $U_q(\mfb)$ and $U_q(\mfb^-)$. Unfortunately, this algebra will not have the right spectral properties for us, and we will rather need to consider ${}_{\chi}(U_q(\mfb)\otimes U_q(\mfb^-))_{\omega^{-1}}$ where $\chi$ is the 2-cocycle obtained from the (degenerate) pairing \begin{equation}\label{EqPairingDeg} (K_{\omega},L_{\chi})_0 = q^{(\omega,\chi)},\quad (K_{\omega},F_r)_0 = (E_r,L_{\omega})_0  = (E_r,F_s)_0 = 0.\end{equation}

This results in the following algebra, which we can again endow with an appropriate $*$-structure.

\begin{Def}\label{DefHeisOmChi} We define $U_q^{0}(\mfb_{\R})$ to be the $*$-algebra generated by a copy of $U_q(\mfb)$  and a copy of $U_q(\mfb^-)$ such that $K_{\omega}^* = L_{\omega}$, $E_r^* = F_rL_{\alpha_r}$ and \[K_{\omega}L_{\chi} = q^{-2(\omega,\chi)} L_{\chi}K_{\omega},\]
\[L_{\omega} E_r  L_{\omega}^{-1}= q^{(\omega,\alpha_r)}E_r,\quad K_{\omega} F_r K_{\omega}^{-1}= q^{(\omega,\alpha_r)}F_r,\]\[E_r F_s - F_s E_r = \delta_{rs} \frac{- L_{\alpha_r}^{-1}}{q_r-q_r^{-1}}.\] We endow it with the comodule $*$-algebra structure $\alpha$ which restricts to $\hat{\Delta}$ on the components $U_q(\mfb)$ and $U_q(\mfb^-)$.
\end{Def} 

It is easy to see that for the Galois object $(U_q^0(\mfb_{\R}),\alpha)$, the adjoint action is simply given by \[X \blhd Y = S(Y_{(1)})XY_{(2)},\qquad X\in U_q^0(\mfb_{\R}), Y \in U_q(\mfb) \cup U_q(\mfb^-) \subseteq U_q^{+}(\mfb_{\R}).\] 

Related to $U_q^{0}(\mfb_{\R})$ we have the following versions involving only $\mfn$ or $\mfh$.

\begin{Def}\label{DefUqn}  We define $U_q^{0}(\mfn_{\R})$ to be the universal $*$-algebra generated by elements $X_{r},X_{r}^*$ such that the $X_r$ and $X_r^*$ satisfy the quantum Serre relations and such that \[X_rX_s^* - q^{-(\alpha_r,\alpha_s)}X_s^*X_r = \delta_{rs}\frac{-1}{q_r-q_r^{-1}}.\] We define $U_q^0(\mfh_{\R})$ to be the universal $*$-algebra generated by elements $K_{\omega}$ and $L_{\omega} = K_{\omega}^*$ such that $K_0 = 1$, $K_{\omega}K_{\chi} = K_{\omega+ \chi}$ and \[K_{\omega}L_{\chi} = q^{-2(\omega,\chi)}L_{\chi}K_{\omega}.\] 
\end{Def}

The following lemma follows immediately from the fact that \[U_q^0(\mfb_{\R}) \cong U_q(\mfh) \otimes U_q(\mfn) \otimes U_q(\mfn^-) \otimes U_q(\mfh^-)\] as vector spaces by the multiplication map, where we denote $U_q(\mfh^-)$ for the copy  of $U_q(\mfh)$ generated by the $L_{\omega}$.  

\begin{Lem} We have  embeddings of unital $*$-algebras \[U_q^{0}(\mfn_{\R})  \rightarrow U_q^{0}(\mfb_{\R}),\quad X_r \mapsto E_r,\]\[U_q^{0}(\mfh_{\R}) \rightarrow U_q^0(\mfb_{\R}),\quad K_{\omega}\mapsto K_{\omega}.\]
\end{Lem}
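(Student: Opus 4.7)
The plan is to verify in each case that the defining relations of the source algebra hold for the proposed images in $U_q^0(\mfb_{\R})$, making the maps well-defined unital $*$-algebra homomorphisms, and then to use the stated triangular decomposition $U_q^0(\mfb_{\R}) \cong U_q(\mfh) \otimes U_q(\mfn) \otimes U_q(\mfn^-) \otimes U_q(\mfh^-)$ to deduce injectivity.

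The Cartan embedding $U_q^0(\mfh_{\R}) \to U_q^0(\mfb_{\R})$ is essentially immediate: the relations $K_0 = 1$ and $K_\omega K_\chi = K_{\omega+\chi}$ hold inside $U_q(\mfh) \subseteq U_q(\mfb) \subseteq U_q^0(\mfb_{\R})$; the relation $K_\omega^* = L_\omega$ is built into the $*$-structure of $U_q^0(\mfb_{\R})$ by Definition \ref{DefHeisOmChi}; and the Heisenberg relation $K_\omega L_\chi = q^{-2(\omega,\chi)}L_\chi K_\omega$ is one of the defining cross relations there. Injectivity follows because the image consists of the elements $K_\omega L_\chi$, and the triangular decomposition identifies their span with $U_q(\mfh) \otimes U_q(\mfh^-)$ inside $U_q^0(\mfb_{\R})$.

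For the nilpotent embedding $U_q^0(\mfn_{\R}) \to U_q^0(\mfb_{\R})$ I send $X_r \mapsto E_r$; compatibility with $*$ then forces $X_r^* \mapsto F_rL_{\alpha_r}$. The Serre relations for the $E_r$ hold already in $U_q(\mfb)$. For those of the $F_rL_{\alpha_r}$ the key input is the identity
\[
(F_r L_{\alpha_r})^n \;=\; q^{-\binom{n}{2}(\alpha_r,\alpha_r)}F_r^n L_{n\alpha_r},
\]
which combined with $L_\omega F_r = q^{-(\omega,\alpha_r)}F_rL_\omega$ allows each summand of $f_{rs}(F_rL_{\alpha_r},F_sL_{\alpha_s})$ to be rewritten in the form $c_p\cdot F_r^{n-p}F_sF_r^p\cdot L_{n\alpha_r+\alpha_s}$; a brief calculation collapsing the three quadratic contributions in $p$ shows that $c_p$ is in fact \emph{independent of} $p$. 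Pulling out this common scalar together with the common $L$-factor, the residual sum is $f_{rs}(F_r,F_s)=0$. The cross relation is then a one-line verification using $L_{\alpha_s}E_r = q^{(\alpha_s,\alpha_r)}E_rL_{\alpha_s}$ and the $E$--$F$ commutation of Definition \ref{DefHeisOmChi}:
\[
E_r F_sL_{\alpha_s}-q^{-(\alpha_r,\alpha_s)}F_sL_{\alpha_s}E_r \;=\; (E_rF_s-F_sE_r)L_{\alpha_s} \;=\; \delta_{rs}\frac{-1}{q_r-q_r^{-1}}.
\]

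Injectivity of the nilpotent embedding is again handled via the triangular decomposition: using the cross relation, $U_q^0(\mfn_{\R})$ is spanned by ordered monomials with all $X_r^*$'s to the left of all $X_s$'s, and their images in $U_q^0(\mfb_{\R})$ can be brought, by pushing the Cartan factors $L_{\alpha_r}$ through the $E_s$'s, into the shape $E_{s_1}\cdots E_{s_\ell}\cdot F_{r_1}\cdots F_{r_k}\cdot L_{\alpha_{r_1}+\cdots+\alpha_{r_k}}$ modulo strictly lower-degree correction terms (since the $E$--$F$ commutator lies in $\C L_{\alpha_r}^{-1}$ and so reduces total $E$-$F$ degree); these leading terms occupy distinct weight components of the triangular decomposition and are therefore linearly independent. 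The main obstacle is the twisted Serre verification for the $F_rL_{\alpha_r}$, which hinges on the slightly surprising $p$-independence of the collected $q$-power noted above; the remaining steps are routine bookkeeping.
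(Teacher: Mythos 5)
Your overall route --- verify the defining relations to obtain a well-defined unital $*$-homomorphism, then use the triangular decomposition $U_q^0(\mfb_{\R})\cong U_q(\mfh)\otimes U_q(\mfn)\otimes U_q(\mfn^-)\otimes U_q(\mfh^-)$ for injectivity --- is exactly the paper's, which disposes of the lemma in one line by citing that decomposition. Your relation checks are correct (I verified the $p$-independence of the collected $q$-power and the cross-relation computation). One remark: the twisted Serre relation for the $F_rL_{\alpha_r}$, which you single out as the main obstacle, comes for free once one accepts the $*$-structure asserted in the definition of $U_q^0(\mfb_{\R})$: applying the anti-linear anti-automorphism $*$ to $f_{rs}(E_r,E_s)=0$, reindexing $p\mapsto 1-(\check{\alpha}_r,\alpha_s)-p$ and using the symmetry of the $q$-binomial coefficients gives $f_{rs}(E_r^*,E_s^*)=(-1)^{1-(\check{\alpha}_r,\alpha_s)}\,f_{rs}(E_r,E_s)^*=0$ directly. (Your direct computation is, in effect, the verification that that $*$-structure is consistent, so it is not wasted --- it just belongs to the well-definedness of $U_q^0(\mfb_{\R})$ rather than to this lemma.)

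The one step that needs repair is the injectivity of the nilpotent embedding. As written, you argue that the images of \emph{all} ordered monomials $X_{r_1}^*\cdots X_{r_k}^*X_{s_1}\cdots X_{s_\ell}$ are linearly independent; this cannot be right, because these monomials already satisfy nontrivial linear relations in $U_q^{0}(\mfn_{\R})$ itself, namely the Serre relations separately among the starred and among the unstarred letters. Moreover, distinct ordered monomials can share the same pair of weights (e.g.\ $X_1^*X_2^*X_1X_2$ and $X_2^*X_1^*X_2X_1$), so ``distinct weight components'' is not the correct reason for independence. The repair is standard: first shrink the spanning set to the products $b_i^*b_j$, where $b_i,b_j$ run over a PBW basis of $U_q(\mfn)$ (possible by combining the cross relation with the Serre relations on each side); the image of $b_i^*b_j$ then equals a nonzero scalar times $F(b_i)E(b_j)L_{\beta_i}$ with $\beta_i$ the $Q^+$-weight of $b_i$, and, after commuting the $E$'s to the left, has leading term a nonzero multiple of the triangular-decomposition basis monomial $E(b_j)F(b_i)L_{\beta_i}$ with respect to the filtration by total $E$,$F$-degree (the commutator $\lbrack E_r,F_s\rbrack\in\C L_{\alpha_r}^{-1}$ strictly lowers that degree). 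These leading monomials are pairwise distinct basis elements of $U_q(\mfn)\otimes U_q(\mfn^-)\otimes U_q(\mfh^-)$, and since the filtration is compatible with the triangular decomposition, a vanishing linear combination of the images forces all coefficients to vanish. This shows the $b_i^*b_j$ form a basis of $U_q^0(\mfn_{\R})$ and the map is injective.
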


It will be convenient to have a slightly more elaborate version of this last lemma. For this, we first enlarge $U_q^0(\mfb_{\R})$. 

\begin{Def}\label{DefUtildb} We let $\widetilde{U}_q^0(\mfh_{\R})$ be the unital $*$-algebra generated by unitary elements $U_{\omega}$ for $\omega\in P$ with $U_0 = 1$, $U_{\omega}U_{\chi} = U_{\omega+\chi}$ and self-adjoint elements $Z_{\omega}$ for $\omega \in P$ with $Z_0 = 1$, $Z_{\omega}Z_{\chi} = Z_{\omega+\chi}$ such that \[U_{\omega}Z_{\chi}U_{\omega}^* = q^{(\omega,\chi)}Z_{\chi}.\]

We let $\widetilde{U}_q^0(\mfb_{\R})$ be the unital $*$-algebra generated by a copy of $U_q^0(\mfb_{\R})$ and $\widetilde{U}_q(\mfh_{\R})$ such that  
\[K_{\omega} = q^{-\frac{1}{2}(\omega,\omega)}Z_{\omega}U_{-\omega},\quad L_{\omega} = q^{-\frac{1}{2}(\omega,\omega)}U_{\omega}Z_{\omega},\] \[U_{\omega}E_r U_{\omega}^* = E_r,\quad U_{\omega}F_rU_{\omega}^* = q^{-(\omega,\alpha_r)}F_r,\]\[Z_{\omega} E_r = q^{(\omega,\alpha_r)}E_rZ_{\omega},\quad Z_{\omega}F_r = F_rZ_{\omega}.\]
\end{Def}

It is easily seen that this `adjoining of polar decompositions of the $K_{\omega}$' is compatible with the other relations, and that we have an embedding of $*$-algebras \[U_q^0(\mfb_{\R}) \rightarrow \widetilde{U}_q^0(\mfb_{\R}).\] 

\begin{Lem}\label{Lemtildb} The following map is a well-defined isomorphism of $*$-algebras: \begin{equation}\label{EqIsotildExt} \widetilde{U}_q(\mfh_{\R})\otimes U_q(\mfn_{\R}) \rightarrow \widetilde{U}_q(\mfb_{\R}),\quad U_{\omega}\mapsto U_{\omega},\quad Z_{\omega}\mapsto Z_{\omega},\quad X_r \mapsto U_{\alpha_r}E_r.\end{equation}
\end{Lem}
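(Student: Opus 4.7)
\emph{Proof plan.}

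I would begin with well-definedness, which amounts to checking that the proposed images satisfy the defining relations of the tensor product algebra $\widetilde{U}_q^0(\mfh_{\R})\otimes U_q^0(\mfn_{\R})$. The relations internal to $\widetilde{U}_q^0(\mfh_{\R})$ are preserved by construction. For the generators $X_r \mapsto U_{\alpha_r}E_r$, I first verify the quantum Serre relation: since $U_{\omega}$ commutes with each $E_s$, any monomial $(U_{\alpha_r}E_r)^a(U_{\alpha_s}E_s)(U_{\alpha_r}E_r)^b$ equals $U_{(a+b)\alpha_r+\alpha_s}E_r^a E_s E_r^b$, so $f_{rs}(U_{\alpha_r}E_r,U_{\alpha_s}E_s)$ factors as a single $U$-monomial times $f_{rs}(E_r,E_s) = 0$. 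Applying the $*$-operation, $X_r^* = F_r L_{\alpha_r}U_{-\alpha_r} = q^{\frac{1}{2}(\alpha_r,\alpha_r)}F_r Z_{\alpha_r}$, where the identity $L_{\alpha_r}U_{-\alpha_r} = q^{\frac{1}{2}(\alpha_r,\alpha_r)}Z_{\alpha_r}$ follows from the polar definitions of Definition \ref{DefUtildb}; since the $Z_{\omega}$ commute with the $F_r$, the same argument gives $f_{rs}(X_r^*,X_s^*) = 0$. Finally the commutation relation $X_rX_s^* - q^{-(\alpha_r,\alpha_s)}X_s^*X_r = \delta_{rs}\frac{-1}{q_r-q_r^{-1}}$ reduces, after systematically moving all $U$'s and $Z$'s to one side using the braiding relations in Definition \ref{DefUtildb}, to $U_{\alpha_r}\cdot(E_rF_s-F_sE_r)\cdot Z_{\alpha_s}$ times a suitable power of $q$, and the identity $U_{\alpha_r}L_{\alpha_r}^{-1}Z_{\alpha_r} = q^{-\frac{1}{2}(\alpha_r,\alpha_r)}$ then produces the correct right-hand side.

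The remaining part of well-definedness is to check that the image of $\widetilde{U}_q^0(\mfh_{\R})$ commutes with the image of $U_q^0(\mfn_{\R})$, i.e.\ that $U_{\omega}$ and $Z_{\omega}$ commute both with $U_{\alpha_r}E_r$ and with $F_rZ_{\alpha_r}$ (up to the known scalar factors that cancel). The first is immediate; the second is a short computation using $U_{\omega}F_r = q^{-(\omega,\alpha_r)}F_rU_{\omega}$ combined with $U_{\omega}L_{\chi} = q^{(\omega,\chi)}L_{\chi}U_{\omega}$, whose cancellation is forced precisely by the shift $\alpha_r$ chosen in the definition $X_r = U_{\alpha_r}E_r$.

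Surjectivity is then straightforward: $U_{\omega}$ and $Z_{\omega}$ are in the image by definition, $E_r = U_{-\alpha_r}X_r$, and $F_r = q^{-\frac{1}{2}(\alpha_r,\alpha_r)}X_r^*Z_{-\alpha_r}$, while $K_{\omega}, L_{\omega}$ are expressed through $U$ and $Z$ by the polar relations in Definition \ref{DefUtildb}, so every generator of $\widetilde{U}_q^0(\mfb_{\R})$ lies in the image.

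The main obstacle is injectivity. My plan is to use a PBW-type argument. On the codomain side, the triangular decomposition of $U_q^0(\mfb_{\R}) \cong U_q(\mfn^-) \otimes U_q^0(\mfh_{\R}) \otimes U_q(\mfn)$ as vector spaces (valid because $U_q^0(\mfb_{\R})$ is a 2-cocycle twist of $U_q(\mfb)\otimes U_q(\mfb^-)$, preserving the underlying vector space) extends, after adjoining the polar parts, to $\widetilde{U}_q^0(\mfb_{\R}) \cong U_q(\mfn^-) \otimes \widetilde{U}_q^0(\mfh_{\R}) \otimes U_q(\mfn)$. On the domain side, the commutation relation in $U_q^0(\mfn_{\R})$ is a Heisenberg-type exchange relation, so that iterated reordering establishes a surjection $U_q(\mfn)\otimes U_q(\mfn) \to U_q^0(\mfn_{\R})$ sending the first tensor factor to ordered monomials in the $X_r^*$'s and the second to ordered monomials in the $X_r$'s. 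Under our map, the $X_r$-leg lands in the $U_q(\mfn)$-factor of $\widetilde{U}_q^0(\mfb_{\R})$ (up to a polar correction in $\widetilde{U}_q^0(\mfh_{\R})$), and the $X_r^*$-leg similarly lands in the $U_q(\mfn^-)$-factor. Since the codomain triangular decomposition exhibits $U_q(\mfn)$ and $U_q(\mfn^-)$ as linearly independent from each other and from $\widetilde{U}_q^0(\mfh_{\R})$, the two cross-leg maps are each injective, forcing the surjection $U_q(\mfn)\otimes U_q(\mfn) \to U_q^0(\mfn_{\R})$ to be bijective (a PBW theorem for $U_q^0(\mfn_{\R})$) and simultaneously forcing our total map to be injective. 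The delicate point is to align the filtrations on both sides carefully so that the leading-term argument is rigorous; the cleanest implementation is to fix the $Q$-grading by total weight of monomials in the $E_r$, $F_r$ (and the corresponding $X_r$, $X_r^*$), under which the map is graded, and then to check injectivity grade by grade against the known bases of $U_q(\mfn)$ and $U_q(\mfn^-)$.
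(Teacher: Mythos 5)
Your verification of the relations is correct and is essentially all the paper's proof consists of (it is the single line ``by direct computation using the defining relations''): the Serre relations for the images of $X_r$ and $X_r^*$ factor through those for $E_r$ and $F_r$ because the $U_{\omega}$ commute with the $E_r$ and the $Z_{\omega}$ with the $F_r$; the identity $X_r^*\mapsto q_r Z_{\alpha_r}F_r$ and the reduction of the cross relation via $U_{\alpha_r}Z_{\alpha_r}L_{\alpha_r}^{-1}=q_r$ are right; and the images of the two tensor factors commute. Your surjectivity argument is also fine.

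The gap is in your treatment of injectivity. Your plan rests on the vector-space decomposition $\widetilde{U}_q^0(\mfb_{\R})\cong U_q(\mfn^-)\otimes\widetilde{U}_q^0(\mfh_{\R})\otimes U_q(\mfn)$, which you assert ``extends, after adjoining the polar parts'' from the cocycle-twist decomposition of $U_q^0(\mfb_{\R})$. That extension is established nowhere (the paper only asserts, without proof, that $U_q^0(\mfb_{\R})$ embeds into $\widetilde{U}_q^0(\mfb_{\R})$), and proving it has essentially the same content as the lemma itself: $\widetilde{U}_q^0(\mfb_{\R})$ is defined by generators and relations, so a priori the reordered monomials only \emph{span}, and their linear independence is exactly the PBW-type statement you are trying to deduce. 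As written, the argument is circular. The repair is far simpler than a filtration or leading-term analysis: all four algebras in play are universal on generators and relations, so the assignments $U_{\omega}\mapsto U_{\omega}\otimes 1$, $Z_{\omega}\mapsto Z_{\omega}\otimes 1$, $E_r\mapsto U_{-\alpha_r}\otimes X_r$, $F_r\mapsto q_r^{-1}Z_{-\alpha_r}\otimes X_r^*$, $K_{\omega}\mapsto q^{-\frac{1}{2}(\omega,\omega)}Z_{\omega}U_{-\omega}\otimes 1$, $L_{\omega}\mapsto q^{-\frac{1}{2}(\omega,\omega)}U_{\omega}Z_{\omega}\otimes 1$ define, after checking the same finite list of relations (for instance $E_rF_s-F_sE_r\mapsto q_s^{-1}\,U_{-\alpha_r}Z_{-\alpha_s}\otimes\bigl(X_rX_s^*-q^{-(\alpha_r,\alpha_s)}X_s^*X_r\bigr)$, which for $r=s$ gives exactly the image of $-L_{\alpha_r}^{-1}/(q_r-q_r^{-1})$), a $*$-homomorphism $\widetilde{U}_q^0(\mfb_{\R})\to\widetilde{U}_q^0(\mfh_{\R})\otimes U_q^0(\mfn_{\R})$ inverse to your map on generators. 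Injectivity is then automatic, with no PBW input needed.
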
 
\begin{proof} By direct computation using the defining relations. 
\end{proof}

Note that as we have an inclusion $U_q^0(\mfb_{\R}) \subseteq \widetilde{U}_q^0(\mfb_{\R})$, we can extend the adjoint $U_q^+(\mfb_{\R})$-action $\blhd$ to $\widetilde{U}_q^0(\mfb_{\R})$.

\subsection{Galois objects for multiplier Hopf algebras}

We will also need the notion of Galois object for multiplier Hopf $*$-algebras. Recall first that for $(H,\Delta)$ a multiplier Hopf $*$-algebra, a \emph{(left) coaction} of $(H,\Delta)$ on a (not necessarily unital) $*$-algebra $A$ is a $*$-homomorphism \[\gamma: A \rightarrow M(H\otimes A)\] such that \begin{equation}\label{EqGamNotDeg}(H\otimes 1)\gamma(A) = H\otimes A\end{equation}  and such that \[(\Delta\otimes \id)\gamma = (\id\otimes \gamma)\gamma.\] This last condition can be made sense of by the former condition. In fact, as our algebras are assumed to be idempotent with non-degenerate product, the maps $\Delta\otimes \id$ and $\id\otimes \gamma$ can be uniquely extended to $M(H\otimes A)$, giving another way to interpret the above coaction property.

\begin{Def} Let $(H,\Delta)$ be a multiplier Hopf $*$-algebra. A \emph{(left) Galois object} for $(H,\Delta)$ consists of a (non-unital) $*$-algebra $A$, together with a coaction such that $\gamma(x)(1\otimes y) \in H\otimes A$ for all $x,y\in A$ and such that the resulting map \[A\otimes A \rightarrow H\otimes A,\quad x\otimes y \mapsto \gamma(x)(1\otimes y)\] is a linear bijection.
\end{Def}

As an example, we construct a Galois object for $\Fun_{q,\compact}^+(AK)$.

\begin{Def} We define $\Fun_{q}^{0}(AK)$ to be the $*$-algebra generated by a copy of $\Pol_q(K)$ and $\Func(P,\C)$ with the interchange relation \[fy = yf_{\lwt(y)}.\] More generally, we define $\Fun_{q}^{0}((AK)^n)$ to be the $*$-algebra generated by a copy of $\Pol_q(K^n)$ and $\Func(P^n,\C)$ with the interchange relation \[fy = yf_{\lwt(y)-\eta\rwt(y)},\] where $\eta\omega : =(\omega_1,\ldots,\omega_{n-1},0)$.

We denote $\Fun_{q,\compact}^{0}((AK)^n)$ for the $*$-subalgebra generated by the elements of the form $xf$ with $x\in \Pol_q(K^n)$ and $f\in \Func_{\compact}(P^n,\C)$. 
\end{Def}

It is again easy to see that we have an embedding \[\Fun_{q}^{0}((AK)^n)\subseteq M(\Fun_{q,\compact}^{0}((AK)^n)),\] and that \[\Fun_{q,\compact}^{0}((AK)^n) =  \left(\otimes_{i=1}^{n-1} \Fun_{q,\compact}^+(AK)\right) \otimes \Fun_{q,\compact}^0(AK).\]

\begin{Lem} There exists a unital $*$-homomorphism \[\gamma: \Fun_q^{0}(AK) \rightarrow \Fun_q^{0}((AK)^2)\subseteq M(\Fun_{q,\compact}^+(AK)\otimes \Fun_{q,\compact}^0(AK)),\]\[ xf \mapsto \Delta(x)\Delta(f).\] Moreover, the restriction of $\gamma$ to $\Fun_{q,\compact}^{0}(AK)$ turns $\Fun_{q,\compact}^{0}(AK)$ into a left Galois object for $\Fun_{q,\compact}^+(AK)$. 
\end{Lem}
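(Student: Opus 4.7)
The plan is to verify that the specified formula $\gamma(xf) = \Delta(x)\Delta(f)$ extends to a well-defined unital $*$-homomorphism on $\Fun_q^{0}(AK)$, and then to check the Galois conditions on the compactly supported part by exhibiting an explicit inverse to the canonical map.

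For well-definedness, the restrictions of $\gamma$ to $\Pol_q(K)$ and $\Func(P,\C)$ are the respective coproducts, each a $*$-homomorphism with image already contained in $\Fun_q^{0}((AK)^2)$. So it suffices to check that $\gamma$ preserves the cross relation $fx = xf_{\lwt(x)}$ of $\Fun_q^{0}(AK)$. Take $x \in \Pol_q(K)$ bi-homogeneous with $\lwt(x) = \lambda$, $\rwt(x) = \rho$; each bi-homogeneous summand $x_{(1)} \otimes x_{(2)}$ of $\Delta(x)$ has $\lwt = (\lambda, \mu)$ and $\rwt = (\mu, \rho)$ for some weight $\mu$, and the cross relation in $\Fun_q^{0}((AK)^2)$ applied with shift $\lwt-\eta\rwt=(\lambda-\mu,\mu)$ gives
\[
\Delta(f)(x_{(1)}\otimes x_{(2)}) = (x_{(1)}\otimes x_{(2)})\,\Delta(f)_{(\lambda-\mu,\mu)}.
\]
A direct evaluation then shows $\Delta(f)_{(\lambda-\mu,\mu)}(\omega_1,\omega_2) = f(\omega_1+\omega_2-\lambda) = \Delta(f_\lambda)(\omega_1,\omega_2)$, so $\gamma(fx) = \gamma(xf_\lambda)$. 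Coassociativity on a generator $xf$ then reduces at once to coassociativity of each coproduct separately.

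For the Galois conditions on $A := \Fun_{q,\compact}^{0}(AK)$ valued in $M(H \otimes A)$ with $H := \Fun_{q,\compact}^{+}(AK)$, take $a = xf$ and $b = yg$ with $f, g$ compactly supported. Pushing $\Delta(f)$ past $1 \otimes y$ by the cross relation yields
\[
\gamma(a)(1 \otimes b) = \Delta(x)(1 \otimes y)\cdot\Delta(f)_{(0,\lwt(y))}(1 \otimes g),
\]
whose $\Pol_q(K^2)$-part is clear and whose function part $(\omega_1,\omega_2) \mapsto f(\omega_1+\omega_2-\lwt(y))g(\omega_2)$ has finite support since $g$ does. Hence $\gamma(a)(1 \otimes b) \in \Fun_{q,\compact}^{0}((AK)^2) = H \otimes A$, and the same computation with the factors in the opposite order verifies the coaction non-degeneracy $(H \otimes 1)\gamma(A) = H \otimes A$.

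Bijectivity of the canonical map $\can \colon a \otimes b \mapsto \gamma(a)(1 \otimes b)$ from $A \otimes A$ to $H \otimes A$ is produced by an explicit inverse. Writing a general element of $H \otimes A \cong \Pol_q(K^2) \otimes \Func_{\compact}(P^2,\C)$ as $Y \cdot F$ with $Y = y_1 \otimes y_2 \in \Pol_q(K^2)$ bi-homogeneous, first apply the Hopf-algebraic inverse of the trivial right Galois map of $\Pol_q(K)$ over itself: set $x_1 := y_{1,(1)}$ and $x_2 := S(y_{1,(2)})\,y_2$, so that $\Delta(x_1)(1 \otimes x_2) = y_1 \otimes y_2$. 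Let $\mu := \lwt(x_2)$, apply the affine substitution $(\omega_1,\omega_2) \mapsto (\omega_1 - \omega_2 + \mu, \omega_2)$ on $P^2$, and use the standard isomorphism $\Func_{\compact}(P,\C)^{\otimes 2} \cong \Func_{\compact}(P^2,\C)$ to uniquely write $F(\omega_1 - \omega_2 + \mu, \omega_2) = \sum_i f_1^{(i)}(\omega_1)\,f_2^{(i)}(\omega_2)$; then output $\sum_i (x_1 f_1^{(i)}) \otimes (x_2 f_2^{(i)})$. Extending linearly and substituting back into the formula for $\can$ verifies this is a two-sided inverse. The main technical burden is the bookkeeping of weights and of shift vectors arising from the cross relation in $\Fun_q^{0}((AK)^2)$; once that is under control, the proof reduces to the fact that both $\Pol_q(K)$ (as a Hopf algebra) and $\Func_{\compact}(P,\C)$ (as a commutative multiplier Hopf $*$-algebra) are trivial Galois objects over themselves.
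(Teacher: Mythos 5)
Your proposal is correct and follows essentially the same route as the paper: the paper's (much terser) proof likewise reduces everything to the two tensor factors, using that multiplication gives vector space isomorphisms $\Pol_q(K)\otimes\Func_{\compact}(P)\cong\Fun_{q,\compact}^{\epsilon}(AK)$ and that the maps $x\otimes y\mapsto\Delta(x)(1\otimes y)$ are bijective separately on $\Pol_q(K)^{\otimes 2}$ and on $\Func_{\compact}(P)^{\otimes 2}$. Your explicit inverse and the weight/shift bookkeeping are just a fleshed-out version of what the paper leaves implicit, and they check out.
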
 

\begin{proof} The fact that $\gamma$ is a well-defined $*$-homomorphism is immediate, as is the coaction property. As the multiplication maps give vector space isomorphisms \[\Pol_q(K)\otimes \Func_{\compact}(P) \cong \Fun_{q,\compact}^{\epsilon}(AK)\cong \Func_{\compact}(P)\otimes \Pol_q(K)\] for $\epsilon \in \{0,+\}$, and as the maps \[x\otimes y \mapsto (x\otimes 1)\Delta(y),\quad x\otimes y  \mapsto \Delta(x)(1\otimes y)\] are linear isomorphisms as maps $\Pol_q(K)^{\otimes 2} \rightarrow \Pol_q(K)^{\otimes 2}$ or $\Func_{\compact}(P)^{\otimes 2}\rightarrow \Func_{\compact}(P)^{\otimes 2}$, it follows that $\gamma$ satisfies \eqref{EqGamNotDeg} and that the associated Galois map is bijective. 

\end{proof}

By means of the coaction $\gamma$ and the pairing \eqref{EqPairUP}, the $*$-algebra $\Fun_{q}^{0}(AK)$ can be endowed with a right $U_q^+(\mfb_{\R})$-module $*$-algebra structure by \[xf \lhd F  = (F\otimes \id)\gamma(xf),\] which restricts to a module $*$-algebra structure on $\Fun_{q,\compact}^{0}(AK)$.

For further reference, we also build the associated amplifications of $\Pol_q(K)_{\loc}$ we will need. 

\begin{Def} We define $\Fun_q^{0}(AK)_{\loc}$ to be the $*$-algebra generated by a copy of $\Pol_q(K)_{\loc}$ and $\Func(P,\C)$ with the interchange relation \[fx = xf_{\lwt(x)}.\]
\end{Def}

It is clear that we then have an embedding of $*$-algebras 
\[\Fun_{q,\compact}^{0}(AK)\subseteq \Fun_q^{0}(AK) \subseteq \Fun_q^{0}(AK)_{\loc}.\]

\subsection{Correspondence between translation and adjoint actions}

In this section, we will show that the right $U_q^{+}(\mfb_{\R})$-modules $(\Fun_{q,\compact}^{0}(AK),\lhd)$ (equipped with the translation action) and $(U_q^{0}(\mfb_{\R}),\blhd)$ (equipped with the adjoint action) are closely related. 

Let us first introduce the following elements in $\Func(P,\C)\subseteq \Fun_q^{0}(AK)$.

\begin{Def} We denote by $z_{\omega} \in \Func(P,\C)$ the function \[z_{\omega}(\chi)  = q^{(\omega,\chi)}.\]
\end{Def}

Note that $z_0 = 1$, $z_{\omega}z_{\chi} = z_{\omega+\chi}$, $z_{\omega}^* = z_{\omega}$. Moreover, inside $\Fun_q^{0}(AK)_{\loc}$ we then have the commutation relations \[xz_{\chi} = q^{(\chi,\lwt(x))} z_{\chi}x,\quad x\in \Pol_q(K)_{\loc},\chi\in P,\] and hence \[|b|_{\omega}z_{\chi} = z_{\chi}|b|_{\omega},\qquad u_{\omega}z_{\chi} = q^{(\chi,\omega)} z_{\chi}u_{\omega}.\]

\begin{Lem}\label{LemInPol}
Put \[e_r  =(1-q_r^2)^{-1}  b_{\varpi_r}^{-1}(b_{\varpi_r}\lhd E_r) \in  \Pol_q(K)_{\loc}.\]  Then for all $x\in \Pol_q(K)$, we have \[x \lhd E_r = xe_r -q^{(\alpha_r,\lwt(x))}e_rx\] as an identity inside $\Pol_q(K)_{\loc}$.
\end{Lem}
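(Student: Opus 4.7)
My plan is to recognise both $x \mapsto x\lhd E_r$ and $D_r(x) := xe_r - q^{(\alpha_r,\lwt(x))} e_r x$ (extended $\C$-linearly via the bi-weight decomposition of $\Pol_q(K)$) as twisted derivations satisfying the same Leibniz rule, and then to verify agreement on an algebra-generating family. Combined, these would force their difference $\phi := (\lhd E_r) - D_r$ to vanish identically on $\Pol_q(K)$.

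First, the coproduct $\hat{\Delta}(E_r) = E_r\otimes K_{\alpha_r} + 1\otimes E_r$, combined with the right module-algebra property and $y\lhd K_{\alpha_r} = q^{(\alpha_r,\lwt(y))}y$ for bi-homogeneous $y$, yields
\[
(xy)\lhd E_r \;=\; q^{(\alpha_r,\lwt(y))}(x\lhd E_r)\,y + x(y\lhd E_r).
\]
A direct calculation using additivity of $\lwt$ on products shows $D_r$ obeys exactly the same rule, whence $\phi$ is a twisted derivation in the same sense. Second, from the construction $e_r$ has bi-weight $(\lwt,\rwt) = (-\alpha_r, 0)$, so the commutation relations in $\Pol_q(K)_{\loc}$ yield $b_{\varpi_s} e_r = q^{-(\varpi_s,\alpha_r)} e_r b_{\varpi_s}$. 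For $s=r$ this reads $b_{\varpi_r} e_r = q_r^{-1} e_r b_{\varpi_r}$, whence $D_r(b_{\varpi_r}) = b_{\varpi_r}e_r - q_r e_r b_{\varpi_r} = (1-q_r^2) b_{\varpi_r} e_r$, which matches $b_{\varpi_r}\lhd E_r$ by the very definition of $e_r$. For $s\neq r$ one has $(\varpi_s,\alpha_r) = 0$, so $b_{\varpi_s}$ and $e_r$ commute and $D_r(b_{\varpi_s}) = 0$; correspondingly $b_{\varpi_s}\lhd E_r = 0$, because the weight space $V_{\varpi_s}(\varpi_s - \alpha_r)$ vanishes (as $(\check{\alpha}_r,\varpi_s) = \delta_{rs}$ forces $F_r\xi_{\varpi_s} = 0$ for $s\neq r$).

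The remaining and principal difficulty is to extend the identity from the family $\{b_{\varpi_s}\}_{s\in I}$ to all of $\Pol_q(K)$. The twisted Leibniz rule reduces this to agreement on an algebra-generating set, but the $b_{\varpi_s}$'s alone do not generate $\Pol_q(K)$ (for $\mfg = \mfsl_2$ they generate merely a commutative subalgebra). The natural route is to use that $\Pol_q(K)$ is generated as an algebra by the matrix coefficients of the fundamental representations $V_{\varpi_s}$, each of which can be produced from $b_{\varpi_s}$ by suitable combinations of the left and right $U_q(\mfk)$-actions; one then has to analyse how $\phi$ transforms under these translations, exploiting that $\rhd L$ commutes with $\lhd E_r$ and interacts with $D_r$ in a controlled way through $\hat{\Delta}(E_r)$ and the explicit construction of $e_r$. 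Alternatively, one can perform a direct computation on a generic matrix coefficient $U_\pi(\xi,\eta)$, using the structure of the universal $R$-matrix to express $xe_r - q^{(\alpha_r,\lwt(x))} e_r x$ in closed form and match it against $x\lhd E_r = \sum_i \langle \xi, E_r e_i\rangle U_\pi(e_i,\eta)$.
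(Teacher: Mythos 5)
Your setup is fine as far as it goes: the twisted Leibniz rule $(xy)\lhd E_r = q^{(\alpha_r,\lwt(y))}(x\lhd E_r)y + x(y\lhd E_r)$ is correct, $D_r$ does satisfy the same rule, and your verifications that $\lhd E_r$ and $D_r$ agree on each $b_{\varpi_s}$ (including the vanishing of $b_{\varpi_s}\lhd E_r$ for $s\neq r$) are sound. But the proof stops exactly where you say it does. The elements $b_{\varpi_s}$ do not generate $\Pol_q(K)$, and neither of the two completion strategies you sketch is actually carried out; each would require substantive further work (tracking how $\phi=(\lhd E_r)-D_r$ transforms under the left translation action, or an $R$-matrix computation on a general matrix coefficient). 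As written this is a genuine gap, not a routine omission.

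The paper sidesteps the generation problem entirely. The Levendorskii--Soibelman relation quoted just after the definition of $b_{\varpi}$, namely $b_{\varpi_r}x = q^{(\varpi_r,\lwt(x)-w_0\rwt(x))}\,x\,b_{\varpi_r}$, holds for \emph{every} $x\in\Pol_q(K)$. Applying $\lhd E_r$ to the identity $b_{\varpi_r}x - q^{(\varpi_r,\lwt(x)-w_0\rwt(x))}xb_{\varpi_r}=0$ via the very Leibniz rule you derived, and substituting $x\lhd K_{\alpha_r}=q^{(\alpha_r,\lwt(x))}x$ and $b_{\varpi_r}\lhd K_{\alpha_r}=q_rb_{\varpi_r}$, produces an equation in which $x\lhd E_r$ appears linearly with coefficient $(1-q_r^2)b_{\varpi_r}$; left-multiplying by $b_{\varpi_r}^{-1}$ in $\Pol_q(K)_{\loc}$ and using the same commutation relation once more yields the stated formula for arbitrary $x$ in a single computation. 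In other words, the $q$-commutation of $b_{\varpi_r}$ with all of $\Pol_q(K)$ is precisely the ingredient that lets a statement anchored at $b_{\varpi_r}$ propagate to every $x$ at once; without invoking it (or completing one of your sketched alternatives in detail), the argument does not close.
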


\begin{proof} Using that $\lwt(x\lhd E_r) = \lwt(x) - \alpha_r$, we find by the $U_q(\mfk)$-module algebra structure that
\begin{eqnarray*} 0 &=& (b_{\varpi_r}x - q^{(\varpi_r,\lwt(x)-w_0\rwt(x))}xb_{\varpi_r})\lhd E_r \\ &=& (b_{\varpi_r}\lhd E_r) (x\lhd K_{\alpha_r})  + b_{\varpi_r}(x\lhd E_r) \\ &&\qquad 
- q^{(\varpi_r,\lwt(x)-w_0\rwt(x))}((x \lhd E_r)(b_{\varpi_r}\lhd K_{\alpha_r}) + x (b_{\varpi_r}\lhd E_r))\\ &=& q^{(\lwt(x),\alpha_r)} (b_{\varpi_r}\lhd E_r)x  + b_{\varpi_r}(x\lhd E_r) \\ &&\qquad 
- q^{(\varpi_r,\lwt(x)-w_0\rwt(x))}(q_r(x \lhd E_r)b_{\varpi_r} + x (b_{\varpi_r}\lhd E_r))\\
&=& (1-q_r^2)b_{\varpi_r}(x\lhd E_r)  + q^{(\lwt(x),\alpha_r)} (b_{\varpi_r}\lhd E_r)x 
\\ &&\qquad - q^{(\varpi_r,\lwt(x)-w_0\rwt(x))}x (b_{\varpi_r}\lhd E_r).
\end{eqnarray*} 
Upon simplifying, we find the desired formula. 
\end{proof}

Note that \[z_{\chi}e_r = q^{(\chi,\alpha_r)}e_rz_{\chi}.\]

\begin{Def} For $\omega \in P$ and $r\in I$, we define inside $\Fun_q^{0}(AK)_{\loc}$ the elements \[k_{\omega} = q^{-\frac{1}{2}(\omega,\omega)} z_{\omega}u_{-\omega},\qquad l_{\omega} =q^{-\frac{1}{2}(\omega,\omega)} u_{\omega}z_{\omega}\] and \[f_r = e_r^*l_{\alpha_r}^{-1}.\] 
\end{Def}

\begin{Prop}\label{PropImplAct} For all $x \in \Fun_q^0(AK)$, one has \[x \lhd K_{\omega} = k_{\omega}^{-1}xk_{\omega},\qquad x\lhd L_{\omega} = l_{\omega}^{-1}xl_{\omega},\]\[x \lhd E_r = -e_rk_{\alpha_r}^{-1} x k_{\alpha_r} + x e_r,\qquad x \lhd F_r = l_{\alpha_r} x f_r - l_{\alpha_r} f_rx.\]  \end{Prop}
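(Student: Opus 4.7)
My plan is to verify each identity on the generators $\Pol_q(K) \cup \Func(P,\C)$ of $\Fun_q^0(AK)$ and then extend to all of $\Fun_q^0(AK)$ by a twisted-Leibniz argument. For $K_\omega$ (and likewise $L_\omega$) both sides are algebra homomorphisms in $x$: the left side since $\hat{\Delta}(K_\omega) = K_\omega \otimes K_\omega$, the right side since conjugation by an invertible element is always a homomorphism. For $E_r$, using $\hat{\Delta}(E_r) = E_r \otimes K_{\alpha_r} + 1 \otimes E_r$, the action satisfies $(xy)\lhd E_r = (x\lhd E_r)(y\lhd K_{\alpha_r}) + x(y\lhd E_r)$; a short expansion shows the candidate expression $-e_r k_{\alpha_r}^{-1}(\cdot) k_{\alpha_r} + (\cdot) e_r$ satisfies the same identity, provided the $K_{\alpha_r}$-case is known. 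An analogous twisted Leibniz rule holds for $F_r$ with twist by $L_{\alpha_r}^{-1}$. So it suffices to check each identity on $\Pol_q(K)$ and on $\Func(P,\C)$ separately.

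For $x \in \Pol_q(K)$, centrality of $u_\omega$ in $\Pol_q(K)_{\loc}$ together with $x z_\chi = q^{(\chi,\lwt x)} z_\chi x$ give $k_\omega^{-1} x k_\omega = l_\omega^{-1} x l_\omega = q^{(\omega,\lwt x)} x$; on the other hand, a direct pairing computation yields $x \lhd K_\omega = x \lhd L_\omega = q^{(\omega,\lwt x)} x$, settling those two formulas. For $E_r$, Lemma~\ref{LemInPol} reads $x \lhd E_r = x e_r - q^{(\alpha_r,\lwt x)} e_r x$, which upon substituting $q^{(\alpha_r,\lwt x)} x = k_{\alpha_r}^{-1} x k_{\alpha_r}$ is exactly $-e_r k_{\alpha_r}^{-1} x k_{\alpha_r} + x e_r$. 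For $F_r$, I plan to derive the formula from the $E_r$ case via the module $*$-algebra property $(x \lhd X)^* = x^* \lhd \hat{S}(X)^*$. A short computation in $U_q^+(\mfb_{\R})$ using $E_r^* = F_r L_{\alpha_r}$ and $L_{\alpha_r}^{-1} F_r L_{\alpha_r} = q_r^2 F_r$ gives $\hat{S}(E_r)^* = -q_r^2 F_r$, hence $y \lhd F_r = -q_r^{-2} (y^* \lhd E_r)^*$. Expanding this using $k_\omega^* = l_\omega$ and the commutation $l_{\alpha_r} e_r^* l_{\alpha_r}^{-1} = q_r^{-2} e_r^*$ (which follows from $\lwt(e_r^*) = \alpha_r$ and the rule $l_\omega x = q^{-(\omega,\lwt x)} x l_\omega$ when $x\in\Pol_q(K)_{\loc}$), the two factors of $q_r^{\pm 2}$ cancel and, with $f_r = e_r^* l_{\alpha_r}^{-1}$, one obtains exactly $l_{\alpha_r} y f_r - l_{\alpha_r} f_r y$.

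For $f \in \Func(P,\C)$, $\gamma(f) = \Delta(f)$ is the function $(\omega_1,\omega_2) \mapsto f(\omega_1 + \omega_2)$. Applying the functional $(K_\omega)_\omega \in U_q^+(\mfb_{\R})$ to the first leg evaluates $\omega_1 = \omega$, giving $f \lhd K_\omega = f_{-\omega}$; similarly $f \lhd L_\omega = f_{\omega}$. These match the conjugation formulas via $u_\omega f = f_{-\omega} u_\omega$ (an immediate consequence of the exchange rule $fx = xf_{\lwt x}$ at $x = u_\omega$) and the commutativity of $\Func(P,\C)$. For $E_r$ and $F_r$, the functional $(E_r)_{\alpha_r}$ resp.\ $(F_r)_0$ vanishes on $1 \in \Pol_q(K)$, so $f \lhd E_r = 0 = f \lhd F_r$. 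On the right-hand sides, $\lwt(e_r) = -\alpha_r$ and $\lwt(f_r) = \lwt(e_r^*) + \lwt(l_{\alpha_r}^{-1}) = 0$ give $f e_r = e_r f_{-\alpha_r} = e_r (k_{\alpha_r}^{-1} f k_{\alpha_r})$ and $f f_r = f_r f$, so both candidate expressions vanish identically.

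The main obstacle is the bookkeeping in the $F_r$ case: the $*$-transfer produces a factor $-q_r^{-2}$ coming from $\hat{S}(E_r)^*$, which has to be absorbed precisely by the commutation of $l_{\alpha_r}$ past $e_r^*$ and by the placement of $l_{\alpha_r}^{-1}$ inside $f_r$. Once this is arranged, everything else reduces to Lemma~\ref{LemInPol} and the elementary weight-shift calculus among $u_\omega$, $z_\omega$, $k_\omega$, $l_\omega$.
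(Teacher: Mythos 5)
Your proof is correct and follows essentially the same route as the paper's (much terser) argument: the $K_\omega$ and $E_r$ formulas from the commutation relations and Lemma~\ref{LemInPol}, and the $L_\omega$ and $F_r$ formulas transferred via $(x\lhd X)^* = x^*\lhd \hat{S}(X)^*$. The extra details you supply — the twisted-Leibniz reduction to generators, the separate check on $\Func(P,\C)$, and the explicit bookkeeping of the factor $-q_r^2$ from $\hat{S}(E_r)^*$ against $l_{\alpha_r}e_r^*l_{\alpha_r}^{-1} = q_r^{-2}e_r^*$ — all check out.
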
 
\begin{proof} The formulas for $\lhd K_{\omega}$ and $\lhd E_r$ follow straightforwardly from the commutation relations and Lemma \ref{LemInPol} above. The formulas for $\lhd L_{\omega}$ and $\lhd F_r$ then follow from the fact that $(x\lhd X)^* = x^*\lhd S(X)^*$. 
\end{proof}

Recall now the $*$-algebras $\widetilde{U}_q^0(\mfh_{\R})$ and $\widetilde{U}_q^0(\mfb_{\R})$ from Definition \ref{DefUtildb}.

\begin{Lem}\label{LemComHeis} There is a unital $*$-homomorphism \[\pi:\widetilde{U}_q^{0}(\mfb_{\R}) \rightarrow \Fun_q^{0}(AK)_{\loc},\]\[U_{\omega}\mapsto u_{\omega},\quad Z_{\omega}\mapsto z_{\omega},\quad E_r \mapsto e_r,\quad F_r\mapsto f_r.\]
\end{Lem}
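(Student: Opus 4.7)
The plan is to define $\pi$ on generators as stated and verify that every defining relation of $\widetilde{U}_q^{0}(\mfb_{\R})$ is satisfied by the proposed images inside $\Fun_q^{0}(AK)_{\loc}$, whence $\pi$ exists and is uniquely determined by universality. The verification splits naturally into three tiers of increasing difficulty.

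First I would handle the relations internal to $\widetilde{U}_q^{0}(\mfh_{\R})$ together with the polar-decomposition identities $K_\omega = q^{-\frac{1}{2}(\omega,\omega)} Z_\omega U_{-\omega}$ and $L_\omega = q^{-\frac{1}{2}(\omega,\omega)} U_\omega Z_\omega$. These are immediate: the $u_\omega$ are central unitaries in $\Pol_q(K)_{\loc}$ with $u_\omega u_\chi = u_{\omega+\chi}$, the $z_\omega$ are self-adjoint and grouplike in $\Func(P,\C)$, their mutual commutation $u_\omega z_\chi = q^{(\chi,\omega)} z_\chi u_\omega$ is recorded immediately before Lemma \ref{LemInPol}, and the polar decompositions are built into the very definitions of $k_\omega$ and $l_\omega$.

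Next come the Cartan-root commutations. The key combinatorial input is $\lwt(e_r) = -\alpha_r$, which follows from $\lwt(b_{\varpi_r}) = \varpi_r$ and the fact that $\lhd E_r$ lowers the left weight by $\alpha_r$. Combined with the interchange rule $fx = xf_{\lwt(x)}$ in $\Fun_q^{0}(AK)_{\loc}$ and the centrality of $u_\omega$, this immediately gives $u_\omega e_r u_\omega^* = e_r$ and $z_\omega e_r = q^{(\omega,\alpha_r)} e_r z_\omega$. Taking adjoints and commuting $l_{\alpha_r}^{-1}$ appropriately yields the analogous identities for $f_r$. From here, the mixed relations $K_\omega L_\chi = q^{-2(\omega,\chi)} L_\chi K_\omega$, $L_\omega E_r L_\omega^{-1} = q^{(\omega,\alpha_r)} E_r$, $K_\omega F_r K_\omega^{-1} = q^{(\omega,\alpha_r)} F_r$, and the $*$-identities $K_\omega^* = L_\omega$ and $E_r^* = F_r L_{\alpha_r}$ all follow by a routine expansion via $k_\omega = q^{-\frac{1}{2}(\omega,\omega)} z_\omega u_{-\omega}$, $l_\omega = q^{-\frac{1}{2}(\omega,\omega)} u_\omega z_\omega$ and $f_r = e_r^* l_{\alpha_r}^{-1}$.

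The hard third tier consists of the root-root relations: the quantum Serre identities $f_{rs}(E_r, E_s) = 0 = f_{rs}(F_r, F_s)$ and the commutator $E_r F_s - F_s E_r = \delta_{rs}(-L_{\alpha_r}^{-1})/(q_r - q_r^{-1})$. A direct manipulation using $l_\omega e_r = q^{(\omega,\alpha_r)} e_r l_\omega$ reduces this last identity to
\[
e_r e_s^* - q^{-(\alpha_r,\alpha_s)} e_s^* e_r = -\delta_{rs}/(q_r - q_r^{-1})
\]
as an identity inside $\Pol_q(K)_{\loc}$. Together with $f_{rs}(e_r, e_s) = 0$, this says exactly that the $*$-subalgebra of $\Pol_q(K)_{\loc}$ generated by $\{e_r, e_r^*\}$ is a homomorphic image of the deformed Heisenberg $*$-algebra $U_q^{0}(\mfn_{\R})$ of Definition \ref{DefUqn}. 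This is the main obstacle of the argument: it is essentially the content of the well-known realization of the big open quantum Schubert cell as a subalgebra of an appropriate localization of $\Pol_q(G)$ (cf.\ \cite[Section 10]{Jos95} and \cite{Yak10,KOY13,DCN15}). The Serre part expresses that the $e_r$ generate a copy of $U_q(\mfn)$, while the deformed Heisenberg commutation is a $*$-algebraic refinement that one can derive by exploiting the $R$-matrix commutation $b_{\varpi_r} x = q^{(\varpi_r,\lwt(x) - w_0 \rwt(x))} x b_{\varpi_r}$ recalled in Section 6, applied with $x = e_s^*$ and using $\lwt(e_s^*) = \alpha_s$, $\rwt(e_s^*) = -w_0\alpha_s$ together with the explicit formula $e_r = (1-q_r^2)^{-1} b_{\varpi_r}^{-1}(b_{\varpi_r}\lhd E_r)$. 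Once these two families of identities are in hand, every defining relation of $\widetilde{U}_q^{0}(\mfb_{\R})$ holds in $\Fun_q^{0}(AK)_{\loc}$, and the universal property delivers the desired $*$-homomorphism $\pi$.
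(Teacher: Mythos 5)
Your architecture is the same as the paper's: check the Cartan and Cartan--root relations directly, and reduce the cross relation $E_rF_s-F_sE_r=-\delta_{rs}L_{\alpha_r}^{-1}/(q_r-q_r^{-1})$ to the identity $e_re_s^*-q^{-(\alpha_s,\alpha_r)}e_s^*e_r=-\delta_{rs}/(q_r-q_r^{-1})$ in $\Pol_q(K)_{\loc}$ (and, like the paper, you outsource the quantum Serre relations for the $e_r$, which the paper handles by citing \cite[Lemma 2.11]{DCN15}). The gap is in your third tier: the relation $b_{\varpi_r}x=q^{(\varpi_r,\lwt(x)-w_0\rwt(x))}xb_{\varpi_r}$ is purely \emph{homogeneous}, so conjugating $e_s^*$ (or any product of $b_{\varpi}^{\pm1}$ and $b_{\varpi}\lhd E_r$) by $b_{\varpi_r}$ can only produce powers of $q$ and can never generate the inhomogeneous constant $-\delta_{rs}/(q_r-q_r^{-1})$. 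That constant has to be imported from $U_q(\mfk)$ itself. The paper's device, absent from your outline, is to evaluate $(b_{\varpi_r}\lhd E_r)\lhd E_s^*$ in two ways: once directly as the matrix coefficient $U_{\varpi_r}(E_sE_r^*\xi_{\varpi_r},\eta_{w_0\varpi_r})=\delta_{rs}q_r\,b_{\varpi_r}$, using the relation $E_rE_s^*-q^{-(\alpha_s,\alpha_r)}E_s^*E_r=\delta_{rs}(K_{\alpha_r}^2-1)/(q_r-q_r^{-1})$ in $U_q(\mfk)$ together with $E_s\xi_{\varpi_r}=0$; and once by expanding $\lhd E_s^*$ through the inner implementation of Proposition \ref{PropImplAct} and the homogeneous commutation rules. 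Comparing the two outputs yields the required identity; without this double evaluation (or a genuinely inhomogeneous $R$-matrix commutation relation between arbitrary matrix coefficients, which the paper never sets up) your argument does not close.

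A secondary error that would corrupt the constants even if the computation were set up: $\rwt(e_s^*)=0$, not $-w_0\alpha_s$. Indeed $\rwt(b_{\varpi_s}\lhd E_s)=w_0\varpi_s$ (the action $\lhd$ only touches the left leg) and $\rwt(b_{\varpi_s}^{-1})=-w_0\varpi_s$, so $\rwt(e_s)=0$ and $\rwt(e_s^*)=-\rwt(e_s)=0$. With your assignment the exponent in $b_{\varpi_r}e_s^*=q^{(\varpi_r,\lwt(e_s^*)-w_0\rwt(e_s^*))}e_s^*b_{\varpi_r}$ would come out as $2(\varpi_r,\alpha_s)$ instead of the correct $(\varpi_r,\alpha_s)$.
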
 

Note that under this morphism, we have $K_{\omega}\mapsto k_{\omega}, L_{\omega}\mapsto l_{\omega}$. 

\begin{proof} To see that we get an algebra homomorphism from $U_q(\mfb)$, we can apply the same argument as in \cite[Lemma 2.11]{DCN15}. It then follows immediately that we have also an algebra homomorphism from $U_q(\mfb^-)$ by applying the $*$-operation. The rest of the argument is now similar to the proof of \cite[Theorem 2.8]{DCN15}, but let us give a direct argument as we are in a simpler situation. 

It is easily seen that the $u_{\omega}, z_{\omega}$ generate a copy of $\widetilde{U}_q^0(\mfh_{\R})$. It is also easy to see that we have the proper interchange relations between respectively the $u_{\omega}$, $z_{\omega}$ and the $e_r$, $f_r$. We thus only have to check that \[e_rf_s - f_se_r = \delta_{rs} \frac{-l_{\alpha_r}^{-1}}{q_r-q_r^{-1}},\] which can be reduced to the relation \[e_re_s^* - q^{-(\alpha_s,\alpha_r)}e_s^*e_r = \delta_{rs} \frac{-1}{q_r-q_r^{-1}}.\] For this, we compute $(b_{\varpi_r}\lhd E_r) \lhd E_s^*$ in two ways. First, we have \[(b_{\varpi_r}\lhd E_r)\lhd E_s^* = U_{\varpi}(E_sE_r^*\xi_{\varpi_r},\eta_{w_0\varpi_r}) = \delta_{rs} q_r b_{\varpi_r},\] where we used that $E_rE_s^* - q^{-(\alpha_s,\alpha_r)}E_s^*E_r = \delta_{rs}\frac{K_{\alpha_r}^2  -1}{q_r-q_r^{-1}}$ in $U_q(\mfk)$. On the other hand, using the above commutation relations together with $b_{\omega} e_r = q^{-(\omega,\alpha_r)} e_r b_{\omega}$, we have \begin{eqnarray*} (b_{\varpi_r}\lhd E_r) \lhd E_s^* &=& -e_s^* k_{-\alpha_s}^*(b_{\varpi_r}\lhd E_r)k_{\alpha_s}^* + (b_{\varpi_r}\lhd E_r)e_s^* \\ &=&e_s^* k_{-\alpha_s}^*e_r k_{-\alpha_r}b_{\varpi_r}k_{\alpha_r}k_{\alpha_s}^* - e_s^*k_{-\alpha_s}^* b_{\varpi_r}e_r k_{\alpha_s}^*  \\&& \quad -e_rk_{-\alpha_r}b_{\varpi_r}k_{\alpha_r}e_s^* + b_{\varpi_r}e_re_s^* \\ &=& (q^{(\varpi_s+\varpi_r- \alpha_s,\alpha_r)} - q^{(\varpi_r,\alpha_s-\alpha_r)-(\alpha_s,\alpha_r)})e_s^*e_rb_{\varpi_r} \\ && \quad - (q^{(\varpi_r,\alpha_r+\alpha_s)} - q^{(\varpi_r,\alpha_s-\alpha_r)})  e_re_s^*b_{\varpi_r}. 
\end{eqnarray*} 
Comparing and simplifying, we find the required relation. 
\end{proof} 

As $\pi$ is a $*$-homomorphism, we can descend $\blhd$ to an inner action of $U_q^+(\mfb_{\R})$ on $\Fun_{q}^0(AK)_{\ext}$. We can then strengthen Proposition \ref{PropImplAct} as follows. 
\begin{Cor}\label{CorlhdBlhd} For all $x \in \Fun_{q,\compact}^{0}(AK)$ and $X \in U_q^{+}(\mfb_{\R})$, we have \[ x \lhd X =  x \blhd X,\] where a priori the right hand side lies in $\Fun_q^{0}(AK)_{\loc} \supseteq \Fun_{q,\compact}^{0}(AK)$.
\end{Cor}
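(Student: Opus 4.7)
The plan is to reduce this identity to a verification on algebra generators of $U_q^+(\mfb_{\R})$. First I would observe that both $\lhd$ and $\blhd$ define right module algebra actions of $U_q^+(\mfb_{\R})$ on a common target: $\lhd$ acts on $\Fun_{q,\compact}^{0}(AK)$ as a module $*$-algebra by construction, and $\blhd$ extends to a right module algebra action on all of $\Fun_q^{0}(AK)_{\loc}$ by descending the inner adjoint action on $U_q^0(\mfb_{\R})$ along the $*$-homomorphism $\pi$ of Lemma \ref{LemComHeis}, exactly as in the general remark about inner adjoint actions preceding this section. Since two right module algebra actions of a bialgebra that agree on a generating set must coincide throughout, it suffices to check $x\lhd Y = x\blhd Y$ for $Y$ ranging over the generators $K_\omega, L_\omega, E_r, F_r$ and for $x \in \Fun_{q,\compact}^{0}(AK)$.

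Next, for such $Y \in U_q(\mfb)\cup U_q(\mfb^-)$ the adjoint action on the Galois object reads $X \blhd Y = \hat{S}(Y_{(1)}) X Y_{(2)}$, which after applying the $*$-homomorphism $\pi$ becomes $x \blhd Y = \pi(\hat{S}(Y_{(1)})) x \pi(Y_{(2)})$. Plugging in each generator is then a direct computation. For the grouplike $K_\omega$ and $L_\omega$ one obtains $k_\omega^{-1} x k_\omega$ and $l_\omega^{-1} x l_\omega$, respectively. For $E_r$, using $\hat{\Delta}(E_r) = E_r\otimes K_{\alpha_r} + 1\otimes E_r$ and $\hat{S}(E_r) = -E_r K_{\alpha_r}^{-1}$, one gets $-e_r k_{\alpha_r}^{-1} x k_{\alpha_r} + x e_r$. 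For $F_r$, the Hopf structure of $U_q^+(\mfb_{\R})$ forces $\hat{\Delta}(F_r) = F_r\otimes 1 + L_{\alpha_r}^{-1}\otimes F_r$ (which one reads off by applying the $*$-operation to $\hat{\Delta}(E_r)$ and using $E_r^* = F_r L_{\alpha_r}$), hence $\hat{S}(F_r) = -L_{\alpha_r}F_r$, and one gets $l_{\alpha_r} x f_r - l_{\alpha_r} f_r x$. In each case the outcome agrees verbatim with the corresponding line of Proposition \ref{PropImplAct}, which closes the argument.

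The only point that requires some care is bookkeeping: one must correctly identify the coproduct and antipode of $F_r$ inside $U_q^+(\mfb_{\R})$ (which involves $L_{\alpha_r}^{-1}$, not $K_{\alpha_r}^{-1}$, as forced by compatibility with the $*$-structure), and one must confirm that the descent through $\pi$ of the inner adjoint action does yield a genuine right module algebra action on the whole of $\Fun_q^{0}(AK)_{\loc}$ rather than merely on the image of $\pi$. The latter follows formally from $\pi$ being an algebra homomorphism, exactly as noted in the general remark. Once these two points are in place, the corollary reduces to the four elementary comparisons sketched above.
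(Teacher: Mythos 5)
Your proof is correct and follows essentially the same route as the paper: reduce to algebra generators of $U_q^+(\mfb_{\R})$ using that both $\lhd$ and the (inner, descended through $\pi$) adjoint action $\blhd$ are right module $*$-algebra structures, and then match each generator against Proposition \ref{PropImplAct}. The only cosmetic difference is that the paper verifies just $K_{\omega}$ and $E_r$ and lets the compatibility $(x\lhd X)^* = x^*\lhd \hat{S}(X)^*$ handle $L_{\omega}$ and $F_r$, whereas you compute all four explicitly (with the correct coproduct $\hat{\Delta}(F_r) = F_r\otimes 1 + L_{\alpha_r}^{-1}\otimes F_r$ in the double).
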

\begin{proof} As both sides define module $*$-algebra structures, it is enough to verify that they agree for $X \in \{K_{\omega},E_r\}$. However, as the adjoint action in this case is given by $x\blhd X = \pi(S(X_{(1)}))x\pi(X_{(2)})$, the equality $x\blhd X = x\lhd X$ follows immediately from Proposition \ref{PropImplAct}.
\end{proof}

We now want to show that the image of $\pi$ is large enough. Let us first make the following formal definition.

\begin{Def} We define $\Pol_q^0(AK)_{\loc}$ as the $*$-subalgebra of $\Fun_{q}^0(AK)_{\loc}$ generated by $\Pol_q(K)_{\loc}$ and the $z_{\omega}$.
\end{Def}

It is clear that $\Pol_q^0(AK)_{\loc}$ can be realized abstractly as a crossed product of $\Pol_q(K)_{\loc}$ with the group algebra of $P$. It is also clear that the image of $\pi$ lands in $\Pol_q^0(AK)_{\loc}$. 

\begin{Prop}\label{PropGenAK} The $*$-algebra $\Pol_q^0(AK)_{\loc}$ is generated by $\pi(\widetilde{U}_q^0(\mfb_{\R}))$ and the $|b|_{\omega}$.   
\end{Prop}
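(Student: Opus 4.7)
The plan is to show $A = \Pol_q^0(AK)_{\loc}$, where $A$ denotes the $*$-subalgebra generated by $\pi(\widetilde{U}_q^0(\mfb_{\R}))$ and the $|b|_{\omega}$. Since $u_{\omega} = \pi(U_{\omega})$ and $z_{\omega} = \pi(Z_{\omega})$ lie in the image of $\pi$, the elements $b_{\omega} = u_{\omega}|b|_{\omega}$ and their $*$-adjoints $b_{\omega}^{*}$ belong to $A$ for every $\omega \in P$. As $\Pol_{q}^{0}(AK)_{\loc}$ is generated, by its very definition, by $\Pol_{q}(K)$ together with the $u_{\omega}, z_{\omega}, |b|_{\omega}$, the statement reduces to proving $\Pol_{q}(K) \subseteq A$.

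The key tool is Proposition \ref{PropImplAct}, which realises the right action $\lhd$ of $U_{q}^{+}(\mfb_{\R})$ as an inner implementation via $\pi$: on each generator $X \in \{K_{\omega}, L_{\omega}, E_{r}, F_{r}\}$ one verifies directly that $x \lhd X = \pi(\hat{S}(X_{(1)}))\,x\,\pi(X_{(2)})$, and since both sides are multiplicative in $X$ (as anti-homomorphisms of $U_{q}^{+}(\mfb_{\R})$ into operators on $\Pol_{q}^{0}(AK)_{\loc}$), this identity persists on all of $U_{q}^{+}(\mfb_{\R})$. Consequently $A$ is stable under $\lhd$. Iterating $\lhd E_{r}$ on $b_{\varpi}$ and using the identity $b_{\varpi}\lhd Y = U_{\varpi}(\pi_{\mfk}(Y)^{*}\xi_{\varpi}, \eta_{w_{0}\varpi})$ combined with the fact that $\xi_{\varpi}$ generates $V_{\varpi}$ as a $U_{q}(\mfn^{-})$-module, we obtain $U_{\varpi}(\xi, \eta_{w_{0}\varpi}) \in A$ for all $\varpi \in P^{+}$ and $\xi \in V_{\varpi}$. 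Applying the analogous procedure to $b_{\varpi}^{*}$, which up to a scalar equals $U_{-w_{0}\varpi}(\eta_{-\varpi}, \xi_{-w_{0}\varpi})$, and using $\lhd F_{r}$'s to vary the first slot, yields every $U_{\varpi'}(\eta, \xi_{\varpi'}) \in A$ for $\varpi' \in P^{+}$ and $\eta \in V_{\varpi'}$.

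It remains to verify that the $*$-subalgebra of $\Pol_{q}(K)$ generated by these two families of matrix coefficients coincides with the whole of $\Pol_{q}(K)$. Combining the product formula $U_{\varpi}(\xi,\eta)\,U_{\varpi'}(\xi',\eta') = U_{V_{\varpi}\otimes V_{\varpi'}}(\xi\otimes \xi', \eta\otimes \eta')$ with the irreducible decomposition $V_{\varpi}\otimes V_{\varpi'} = \bigoplus V_{\lambda}^{\oplus m_{\lambda}}$, products of the form $U_{\varpi}(\xi, \eta_{w_{0}\varpi})\cdot U_{\varpi'}(\xi', \xi_{\varpi'})$ produce matrix coefficients whose right vector is the projection onto each irreducible summand of the specific weight vector $\eta_{w_{0}\varpi}\otimes \xi_{\varpi'}$; as $\varpi, \varpi'$ range over $P^{+}$, its weight $w_{0}\varpi + \varpi'$ covers all of $P$. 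The principal obstacle is this final representation-theoretic step, namely verifying that for each $\lambda \in P^{+}$ and each weight $\mu$ of $V_{\lambda}$, the projections of the vectors $\eta_{w_{0}\varpi}\otimes \xi_{\varpi'}$ (with $w_{0}\varpi + \varpi' = \mu$) onto $V_{\lambda}$ span the weight space $(V_{\lambda})_{\mu}$; this can be settled by induction on the dominance partial order on $P^{+}$, starting from matrix coefficients of the fundamental representations (which generate $\Pol_{q}(K)$ as an algebra) and exploiting the non-vanishing of projections of extremal-tensor-extremal weight vectors into each irreducible summand whenever the target weight space is non-trivial.
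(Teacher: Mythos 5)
Your argument follows the paper's route almost exactly: reduce to showing $\Pol_q(K)\subseteq A$, observe that $A$ is stable under the translation action $\lhd$ because the latter is implemented innerly through $\pi$ (this is Proposition \ref{PropImplAct} together with Corollary \ref{CorlhdBlhd}), and then generate from $b_{\varpi}=u_{\varpi}|b|_{\varpi}\in A$ the matrix coefficients $U_{\varpi}(\xi,\eta_{w_0\varpi})$, $\xi\in V_{\varpi}$, together with their adjoints. Up to that point everything is correct and coincides with the paper's proof.

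The divergence is in the final step. The paper simply invokes \cite[Proposition 3.2.1]{LS91} for the fact that the elements $U_{\varpi}(\xi,\eta_{w_0\varpi})$ and their adjoints generate $\Pol_q(K)$ as a $*$-algebra, whereas you attempt to prove this directly. Your reduction is sensible, but the decisive claim --- that for each $\lambda\in P^{+}$ and each weight $\mu$ of $V_{\lambda}$, the projections of the vectors $\eta_{w_0\varpi}\otimes\xi_{\varpi'}$ (over all $\varpi,\varpi'\in P^{+}$ with $w_0\varpi+\varpi'=\mu$ and all embeddings $V_{\lambda}\hookrightarrow V_{\varpi}\otimes V_{\varpi'}$) span the weight space $(V_{\lambda})_{\mu}$ --- is precisely the nontrivial content of the Levendorskii--Soibelman result, and you only gesture at it (``can be settled by induction on the dominance partial order\dots'') without carrying it out. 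Non-vanishing of individual projections, which is what your sketch appeals to, does not suffice when $\dim (V_{\lambda})_{\mu}>1$ or when $V_{\lambda}$ occurs with multiplicity in $V_{\varpi}\otimes V_{\varpi'}$: one genuinely needs a spanning argument there. So either cite \cite[Proposition 3.2.1]{LS91} at this point, as the paper does, or supply the missing induction in full; as written, this last step is a gap.
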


\begin{proof} Let $B$ be the algebra generated by $\pi(\widetilde{U}_q^0(\mfb_{\R}))$ and the $|b|_{\omega}$. It is clear that $B$ is a unital $*$-algebra containing the $b_{\varpi} = U_{\varpi}(\xi_{\varpi},\eta_{w_0\varpi})$. Since $B$ will be closed under the adjoint action $\blhd$, we obtain by Corollary \ref{CorlhdBlhd} that $B$ will as well contain $b_{\varpi}\lhd U_q^+(\mfb_{\R})$ as a subspace, consisting of elements of the form $U_{\varpi}(\xi,\eta_{w_0\varpi})$ for $\xi \in V_{\varpi}$. However, from \cite[Proposition 3.2.1]{LS91} it follows that these elements and their adjoints generate $\Pol_q(K)$. The proposition now follows. 
\end{proof} 

Our following objective is to provide an implementation of the right coaction of $U_q^{+}(\mfb_{\R})$ on $U_q^0(\mfb_{\R})$ using the left coaction of $\Fun_{q,\compact}^+(AK)$ on $\Fun_{q,\compact}^0(AK)$. This implementation will be crucial to lift our algebraic constructions to the operator algebraic setting. 

Assume that $\mathscr{M}$ is a $\Fun_q^{0}(AK)_{\loc}$-module, and assume that $\mathscr{M}$ is non-degenerate as a left $\Fun_{q,\compact}^{0}(AK)$-module, i.e. \[\Fun_{q,\compact}^0(AK) \mathscr{M} = \mathscr{M}.\] Then we can define a linear map \[\Vv_{0}: \Fun_{q,\compact}^{0}(AK)\otimes \mathscr{M} \rightarrow \Fun_{q,\compact}^+(AK) \otimes\mathscr{M} ,\quad x\otimes v  \mapsto x_{(-1)}\otimes x_{(0)}v ,\] where we have used the Sweedler notation \[\gamma(x) = x_{(-1)}\otimes x_{(0)},\qquad x\in \Fun_{q,\compact}^0(AK).\]  Further consider $\Fun_{q,\compact}^+(AK)$ as a left $U_q^{+}(\mfb_{\R})$-module by the translation action, \[\pi_{\rhd}(F)x = F\rhd x =  (\id\otimes F)\Delta(x),\qquad x\in \Fun_{q,\compact}^+(AK),F \in U_q^+(\mfb_{\R}).\] Finally, recall that $\alpha$ is the right $U_q^+(\mfb_{\R})$-Galois object structure on $U_q^0(\mfb_{\R})$, and that $\pi$ is the map introduced in Lemma \ref{LemComHeis}.

\begin{Theorem}\label{TheoAdjTran} For all $X \in  U_q^{0}(\mfb_{\R})$, one has \[\Sigma \Vv_0 (1\otimes \pi(X)) = (\pi\otimes \pi_{\rhd})(\alpha(X))\Sigma\Vv_0 .\]
\end{Theorem}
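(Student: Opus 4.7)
The plan is to reduce the statement to a single algebraic identity in $\Fun_q^0(AK)_{\loc}$: for every $x\in \Fun_{q,\compact}^0(AK)$ and every $X\in U_q^0(\mfb_{\R})$,
\[
x\,\pi(X) \;=\; \pi(X_{(1)})\,(X_{(2)},x_{(-1)})\,x_{(0)},
\]
where $\alpha(X) = X_{(1)}\otimes X_{(2)}\in U_q^0(\mfb_{\R})\otimes U_q^+(\mfb_{\R})$ and the pairing is the one between $U_q^+(\mfb_\R)$ and $\Fun_{q,\compact}^+(AK)$ evaluated on $x_{(-1)}$.

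To verify this identity, I would first check that both sides behave multiplicatively in $X$: since $\alpha$ is an algebra homomorphism and the pairing is a bialgebra pairing, a short Sweedler calculation using the coassociativity $(\id\otimes\gamma)\gamma=(\Delta\otimes\id)\gamma$ shows that if the identity holds for $X$ and $Y$ then it holds for $XY$, and it holds trivially for $X=1$. It therefore suffices to verify it on the four generators $K_\omega,L_\omega,E_r,F_r$ of $U_q^0(\mfb_{\R})$. For each generator one substitutes the explicit formula for $\alpha$ (e.g.\ $\alpha(E_r)=E_r\otimes K_{\alpha_r}+1\otimes E_r$) and uses the defining relation $(X,x_{(-1)})\,x_{(0)} = x\lhd X$ together with Proposition \ref{PropImplAct}. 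For example, the case $X=E_r$ reduces to $x\,e_r = e_r(x\lhd K_{\alpha_r}) + (x\lhd E_r)$, which is precisely the formula of Proposition \ref{PropImplAct} for $\lhd E_r$ rewritten using $x\lhd K_{\alpha_r}=k_{\alpha_r}^{-1}xk_{\alpha_r}$; the other three generators are handled in the same way.

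Once the key identity is in hand, the intertwining follows from a short Sweedler manipulation. Applying the identity with $x$ replaced by $x_{(0)}$ yields
\[
x_{(0)}\pi(X)v\otimes x_{(-1)} \;=\; \pi(X_{(1)})\,(X_{(2)},(x_{(0)})_{(-1)})\,(x_{(0)})_{(0)}v\otimes x_{(-1)},
\]
and coassociativity of $\gamma$ rewrites the right-hand side as $\pi(X_{(1)})\,(X_{(2)},(x_{(-1)})_{(2)})\,x_{(0)}v\otimes (x_{(-1)})_{(1)}$. Contracting the second tensor factor via the standard translation-pairing identity $F\rhd h = (F,h_{(2)})\,h_{(1)}$ (which is just the definition of $\pi_{\rhd}$) gives $\pi(X_{(1)})x_{(0)}v\otimes(X_{(2)}\rhd x_{(-1)})$, which is exactly $(\pi\otimes\pi_{\rhd})(\alpha(X))\,\Sigma\Vv_0(x\otimes v)$.

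The main obstacle is essentially bookkeeping: keeping the various Sweedler conventions straight (for $\hat\Delta$, for $\alpha$, for the left coaction $\gamma$, and for $\Delta$ on $\Fun_{q,\compact}^+(AK)$) and ensuring that the pairing contracts against the intended leg at each step. Structurally, however, everything is formal once the generator-level identity has been verified, and the argument never leaves the inclusion $\Fun_{q,\compact}^0(AK)\subseteq \Fun_q^0(AK)_{\loc}$ where $\pi$ takes its values.
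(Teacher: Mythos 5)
Your proposal is correct and follows essentially the same route as the paper: the key identity $x\,\pi(X)=\pi(X_{(1)})\,(x\lhd X_{(2)})$ that you verify on generators via Proposition \ref{PropImplAct} is exactly the content of Corollary \ref{CorlhdBlhd} (in convolved form), which the paper invokes in the middle of the same Sweedler computation you carry out at the end. The only cosmetic difference is that the paper covers the multiplier-algebra legs explicitly by inserting elements $g,h$, a bookkeeping point you correctly flag but do not write out.
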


\begin{proof} For $X\in U_q(\mfb)\cup U_q(\mfb^-)$, we will simply write $\alpha(X) = X_{(1)}\otimes X_{(2)}$. Then we compute for $X\in U_q(\mfb) \cup U_q(\mfb^-)$, $v\in \mathscr{M}$, $x\in \Fun_{q,\compact}^+(AK)$ and $g,h\in \Func_{\compact}(P,\C)$ that
\begin{eqnarray*}  && \hspace{-1cm} (1\otimes g)(\pi\otimes \pi_{\rhd})(\alpha(X))\Sigma\Vv_0  (x\otimes hv) \\ && =\pi(X_{(1)})x_{(0)}hv \otimes g(X_{(2)}\rhd x_{(-1)}) \\ && =  \pi(X_{(1)})x_{(0)}hv \otimes (X_{(2)},x_{(-1)})gx_{(-2)} \\ &&=  \pi(X_{(1)})(x_{(0)} \lhd X_{(2)}) hv \otimes gx_{(-1)} \\ &&= \pi(X_{(1)})\pi(S(X_{(2)}))x_{(0)}\pi( X_{(3)}) hv \otimes gx_{(-1)} \\ &&= x_{(0)}\pi(X) hv \otimes gx_{(-1)} \\ &&= (1\otimes g)\Sigma\Vv_0 (1\otimes\pi(X)) (x\otimes hv).
\end{eqnarray*}
Note that all expressions with multipliers are meaningful as they are covered by the elements $g,h$. As $g,h$ were arbitrary, the theorem follows. 
\end{proof}

\section{Representation theory}

\subsection{Representation theory of $\Fun_q^{0}(AK)$.}

Let us recall first some elements from the representation theory of the unital $*$-algebra $\Pol_q(K)$ from \cite{LS91}.  

Let us consider the case $K = SU(2)$. Then $\Pol_q(SU(2))$ is the Hopf $*$-algebra defined by 2 generators $a,b$ with the universal relation that \[ U = \begin{pmatrix} a & -qb^* \\ b & a^*\end{pmatrix}\] is a unitary corepresentation. It coincides with the construction as obtained from $U_q(\mfsu(2))$ by the pairing \[U(K) = \begin{pmatrix} q^{-1} &0 \\ 0 & q\end{pmatrix},\quad U(E) =\begin{pmatrix} 0 & 0 \\ q^{1/2}  & 0\end{pmatrix},\quad U(F) = \begin{pmatrix} 0 &  q^{-1/2}\\ 0 & 0 \end{pmatrix}.\] It has the following irreducible $*$-representation on $\C[\N]$, the latter being equipped with the usual pre-Hilbert space structure making the basis vectors $e_n$ an orthonormal basis:  \[\theta: \Pol_q(SU(2)) \rightarrow \End(\C[\N]),\quad a e_n = (1-q^{2n})^{1/2}e_{n-1},\quad be_n = q^{n}e_n.\] This representation is bounded, and can hence be extended to a $*$-representation on $l^2(\N)$. 

Returning to a general $\mfk$, note that the inclusion $\check{U}_{q_r}(\mfsu(2)) \subseteq U_q(\mfk)$ at the $r$th simple root induces a surjective $*$-homomorphism \[\pi_r: \Pol_q(K) \rightarrow \Pol_{q_r}(SU(2)).\] 
We write $\theta_r$ for the composition \[\theta_r = \theta\circ \pi_r: \Pol_q(K)\overset{\!\!\pi_r}{\rightarrow} \Pol_{q_r}(SU(2))\overset{\theta}{\rightarrow} \End(\C[\N]).\] For $\mathbf{r} = (r_1,\ldots,r_n)$ any sequence of simple roots, we denote  \[\theta_{\mathbf{r}} = \theta_{r_1}*\theta_{r_2}*\ldots * \theta_{r_n},\] which is a $*$-representation on $\C[\N]^{\otimes n}$. 

In particular, choosing the sequence belonging to a reduced decomposition \[w_0 = s_{r_1}\ldots s_{r_M},\] we obtain the `big cell' $*$-representation \[\theta_{w_0} = \theta_{(r_1,\ldots,r_M)}: \Pol_q(K) \rightarrow \End(\C[\N]^{\otimes M}),\] which is irreducible and independent of the choice of decomposition of $w_0$ up to unitary equivalence \cite[Theorem 3.6.7]{LS91}. In the following, we denote \[L^2_{\hol,q}(N)_0 = \C[\N]^{\otimes M},\quad L^2_{\hol,q}(N) = l^2(\N)^{\otimes M},\] where $\hol$ stands for `holomorphic'. The notation evokes the following: we view the above representation  $\theta_{w_0}$ as a quantum (and Hilbert space) analogue of the representation of the regular function algebra $\Pol(K)$ as multiplication by holomorphic (regular) functions on $N$ under the homomorphism \[\Pol(K)  = \Pol_{\hol}(G) \overset{\Res}{\rightarrow}\Pol_{\hol}(N),\]  where the first equality is an equality of algebras equating matrix coefficients of unitary representations of $K$ with matrix coefficients of holomorphic representations of $G$.  Note that the representation $\theta_{w_0}$ will again be bounded, and can hence be extended to a representation on $L^2_{\hol,q}(N)$. 

\begin{Lem}\label{LemCycKq} The vector $e_0^{\otimes M}$ is cyclic for $\theta_{w_0}$ as a representation on $L^2_{\hol,q}(N)_0$. 
\end{Lem}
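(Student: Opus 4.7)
My plan is to establish cyclicity by induction on $M$, strengthened so that the induction hypothesis applies to any reduced word (not just reduced words for $w_0$): for every reduced expression $\mathbf{r}=(r_1,\ldots,r_k)$ of an element of $W$, the vacuum vector $e_0^{\otimes k}$ is cyclic for $\theta_{\mathbf{r}}(\Pol_q(K))$ on $\C[\N]^{\otimes k}$. The base case $k=1$ is forced by the $SU(2)$ formulas of the excerpt: $\theta(a^*)e_n=(1-q^{2(n+1)})^{1/2}e_{n+1}$, so that $(a^*)^n e_0=\bigl(\prod_{j=1}^n(1-q^{2j})\bigr)^{1/2}e_n$, which is a nonzero multiple of $e_n$ since $0<q<1$. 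Hence every basis vector lies in $\theta(\Pol_q(SU(2)))\cdot e_0$.

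For the inductive step, write $\mathbf{r}=(r_1,\mathbf{r}')$ with $\mathbf{r}'=(r_2,\ldots,r_k)$, so that $\theta_{\mathbf{r}}=\theta_{r_1}*\theta_{\mathbf{r}'}$, and set $\tilde e_0=e_0^{\otimes(k-1)}$. By the induction hypothesis, every vector of $\C[\N]^{\otimes(k-1)}$ lies in $\theta_{\mathbf{r}'}(\Pol_q(K))\tilde e_0$. The central step is to produce a ``raising operator on the first tensor factor'' from inside $\Pol_q(K)$. The $SU(2)$ model is the matrix coefficient $b\in \Pol_q(SU(2))$, for which $\Delta(b)=a^*\otimes b+b\otimes a$ combined with $\theta(a)e_0=0$ gives $(\theta*\theta)(b)(e_0\otimes e_0)=(1-q^2)^{1/2}\,e_1\otimes e_0$. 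For the general situation I would take the element $b_{\varpi_{r_1}}^*\in \Pol_q(K)$ (which under $\pi_{r_1}$ is a nonzero multiple of $b$), expand its coproduct in a weight basis of $V_{\varpi_{r_1}}$, and use that $\theta_{\mathbf{r}'}(y)\tilde e_0=0$ for all matrix coefficients $y=U_{\varpi_{r_1}}(e_j,\xi_{\varpi_{r_1}})^*$ whose weight on the right differs from that of $\xi_{\varpi_{r_1}}$ in a specified way (i.e.\ $\tilde e_0$ is a suitable ``highest-weight-type'' vector for $\theta_{\mathbf{r}'}$); only one term survives, yielding $\theta_{\mathbf{r}}(b_{\varpi_{r_1}}^*)(e_0\otimes\tilde e_0)=\lambda_1\,e_1\otimes\tilde e_0$ with $\lambda_1\neq 0$, and iteration gives $e_n\otimes\tilde e_0$ for every $n\ge 0$.

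Having the first-factor raising, I would then apply elements from the induction hypothesis to promote $e_n\otimes\tilde e_0$ to $e_n\otimes\eta$ for arbitrary $\eta\in\C[\N]^{\otimes(k-1)}$; the coproduct mixes factors, so an arbitrary $y$ with $\theta_{\mathbf{r}'}(y)\tilde e_0=\eta$ will not give $e_n\otimes\eta$ cleanly, but the unwanted cross-terms $\theta_{r_1}(y_{(1)})e_n\otimes\theta_{\mathbf{r}'}(y_{(2)})\tilde e_0$ involve on the second factor vectors of strictly ``lower order'' than $\eta$ (with respect to a suitable grading, e.g.\ the total degree $\sum n_i$), which can be removed by a subtraction argument using basis vectors already known to lie in the cyclic span. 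Running the subtraction recursively on this grading shows that every $e_{n_1}\otimes\cdots\otimes e_{n_k}$ lies in $\theta_{\mathbf{r}}(\Pol_q(K))\cdot e_0^{\otimes k}$, completing the induction. The main obstacle is the weight-theoretic verification that only the single ``top'' term in the coproduct of $b_{\varpi_{r_1}}^*$ (and its powers) contributes when evaluated on $e_0^{\otimes k}$, together with the careful bookkeeping of the triangularity that makes the subtraction argument go through; if one prefers to avoid this direct combinatorial analysis, one can invoke the inductive construction of the Soibelman representations in \cite{LS91}, where the cyclicity of the vacuum vector is already built into the definition of $\theta_{w_0}$ as an iteratively induced representation.
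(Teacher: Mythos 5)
Your overall strategy (induction on the length of a reduced word, a raising operator on the first tensor factor, then a triangular subtraction argument) is the standard Levendorskii--Soibelman approach and is very different from the paper's proof, which is a one-line deduction: cyclicity of $e_0^{\otimes M}$ follows at once from the irreducibility of $\theta_{w_0}$ on $L^2_{\hol,q}(N)$, which is quoted from \cite[Theorem 3.6.7]{LS91} just above the lemma. Your closing fallback --- invoking the inductive construction in \cite{LS91} --- is therefore legitimate and essentially reproduces the paper's argument. The hands-on version, however, has a concrete error at its central step: the element $b_{\varpi_{r_1}}^*$ does \emph{not} act as a raising operator in $\theta_{\mathbf{r}}$. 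By the identity $(\pi_{r_1}*\cdots*\pi_{r_M})(b_{\varpi}) = \otimes_i b_{r_i}^{(\varpi,\check{\beta}_i)}$ used in the proof of Lemma \ref{LemRepAK} (equivalently \eqref{EqActa}), the operators $\theta_{w_0}(b_{\varpi})$ and $\theta_{w_0}(b_{\varpi}^*)$ are \emph{diagonal} in the basis $e_{n_1,\ldots,n_M}$, so they fix $e_0^{\otimes M}$ up to a scalar and cannot produce $e_1\otimes \tilde e_0$. (A rank-two check, e.g. $\mathbf{r}=(1,2)$ for $\mathfrak{sl}_3$, gives $\theta_{(1,2)}(b_{\varpi_1}) = q_1q_2\,\theta(b^*)\otimes\theta(b^*)$.) Relatedly, your parenthetical claim that $\pi_{r_1}(b_{\varpi_{r_1}}^*)$ is a nonzero multiple of $b$ is false in rank $\geq 2$: since $\eta_{w_0\varpi_{r_1}}$ generally does not lie in the $\mathfrak{su}(2)_{r_1}$-isotypic component of $\xi_{\varpi_{r_1}}$, one even has $\pi_{r_1}(b_{\varpi_{r_1}})=0$ (e.g.\ for $\mathfrak{sl}_3$). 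Your $SU(2)$ model computation $(\theta*\theta)(b)(e_0\otimes e_0)=(1-q^2)^{1/2}e_1\otimes e_0$ is correct, but $(1,1)$ is not a reduced word, and the correct generalization of the element whose coproduct has $a^*$ in its leading first leg is not $b_{\varpi_{r_1}}^*$ but rather $b_{\varpi_{r_1}}\lhd E_{r_1} = q_{r_1}U_{\varpi_{r_1}}(F_{r_1}\xi_{\varpi_{r_1}},\eta_{w_0\varpi_{r_1}})$, i.e.\ precisely the numerator of the element $e_{r_1}$ of Lemma \ref{LemInPol}.

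Beyond this, the triangularity/subtraction step is stated only as a plan: you do not identify the grading, nor prove the key vanishing statement ($\theta_{\mathbf{r}'}(y)\tilde e_0=0$ for $y$ in a suitable left ideal) that makes ``only one term survives'' and the cross-term bookkeeping go through; you flag this yourself as the main obstacle. So as written the hands-on argument does not close. If you want a self-contained proof along these lines, build the raising operators from $U_{\varpi_{r}}(F_{r}\xi_{\varpi_{r}},\eta_{w_0\varpi_{r}})$ (whose diagonal parts $b_{\varpi}$ are invertible on $\C[\N]^{\otimes M}$, so one can divide them out as in Lemma \ref{LemInPol}); otherwise simply cite the irreducibility of $\theta_{w_0}$ from \cite{LS91}, as the paper does.
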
 
\begin{proof} This is immediate from the irreducibility of $\theta_{w_0}$ as a representation on $L^2_{\hol,q}(N)$. 
\end{proof} 

To obtain a faithful representation of $\Pol_q(K)$, one now only needs to amplify $\theta_{w_0}$ with the characters of $\Pol(T)$, where we see $\Pol(T)$ as the group $*$-algebra of the discrete group $(P,+)$ with generators $u_{\omega}$, paired with $U_q(\mft)$ by means of \[(u_\omega,K_{\chi}) = q^{(\omega,\chi)}.\] This pairing induces a  quotient Hopf $*$-algebra homomorphism \[\pi_T:\Pol_q(K) \rightarrow \Pol(T),\quad x \mapsto \varepsilon(x) u_{\lwt(x)} = \varepsilon(x)u_{\rwt(x)}.\] It will follow from the upcoming Lemma \ref{LemRepAK} that there is no ambiguity in using the symbol $u_{\omega}$ both for elements in $\Pol_q(K)_{\loc}$ and $\Pol(T)$.

Let us further also denote  \[L^2_{\hol,q}(H)_0 = \C[P],\quad L^2_{\hol,q}(H) = l^2(P).\]
Then we can view $\Pol(T)\subseteq \End(L^2_{\hol,q}(H)_0)$ by \[u_{\omega}e_{\chi} = e_{\chi+\omega}.\]Denote further
\[ L^2_{\hol,q}(B)_0 = L^2_{\hol,q}(H)_0 \otimes L^2_{\hol,q}(N)_0,\quad L^2_{\hol,q}(B) =  L^2_{\hol,q}(H)\otimes  L^2_{\hol,q}(N),\] where we will sometimes use the shorthand notation \[e_{\omega}\otimes \xi = \xi_{\omega}.\] 
Then we obtain the faithful, bounded $*$-representation 
\[\pi_L = \pi_T*\theta_{w_0}:\Pol_q(K) \rightarrow  \End(L^2_{\hol,q}(B)_0).\] The faithfulness follows essentially from the faithfulness in the rank 1, combined with the fact that $U_q(\mfk)$ equals the product $\check{U}_{q_1}(\mfsu(2))\ldots \check{U}_{q_M}(\mfsu(2))$ up to factors in the Cartan part.  

We can extend the above $*$-representation of $\Pol_q(K)$ to a $*$-representation of $\Fun_q^{0}(AK)$ in the following way. 

\begin{Lem}\label{LemRepAK} The $*$-representation of $\Pol_q(K)$ on $L^2_{\hol,q}(B)_{0} = \C[P]\otimes \C[\N]^{\otimes M}$ extends uniquely to $\Fun_q^{0}(AK)_{\loc}$ such that for all $\xi \in  \C[\N]^{\otimes M}$ and $f\in \Func(P,\C)$ \[f \xi_ {\chi} = f(-\chi) \xi_{\chi},\quad u_{\omega}\xi_{\chi} =\xi_{\chi+\omega}.\] Moreover, the restriction of this representation to $\Fun_{q,\compact}^{0}(AK)$ is a bounded, non-degenerate, faithful and irreducible representation on $L^2_{\hol,q}(B)_{0}$. 
\end{Lem}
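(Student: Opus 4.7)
The plan is to define $\pi$ by hand on generators of $\Fun_q^{0}(AK)_{\loc}$ and then check the four claimed properties on $\Fun_{q,\compact}^{0}(AK)$. On $\Pol_q(K)$ I keep $\pi = \pi_L$; on the localization data I set $\pi(u_\omega) = u_\omega\otimes 1$ (the unitary shift on $\C[P]$) and $\pi(|b|_\omega) = 1\otimes \theta_{w_0}(b_\omega)$ for $\omega \in P^+$, extending to all $\omega \in P$ via the fact that $\theta_{w_0}(b_\omega)$ is a diagonal positive invertible operator on the PBW basis of $\C[\N]^{\otimes M}$; finally I set $\pi(f)\xi_\chi = f(-\chi)\xi_\chi$ for $f\in \Func(P,\C)$. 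The linchpin computation, which I would prove first, is the identity $\pi_L(y) = u_{\lwt(y)}\otimes \theta_{w_0}(y)$ for $y\in \Pol_q(K)$ of single left weight; it follows by unpacking $\pi_L = (\pi_T\otimes \theta_{w_0})\circ \Delta$ and noting that $\pi_T(U_\omega(\xi,e_i)) = \langle \xi,e_i\rangle u_{\wt(\xi)}$ is nonzero only when $e_i$ has the same weight as $\xi$.

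Next I would verify the defining relations of $\Fun_q^{0}(AK)_{\loc}$. Those internal to $\Pol_q(K)$ hold since $\pi_L$ is a $*$-homomorphism. The relations in $\Pol_q(K)_{\loc}$ involving $u_\omega$ and $|b|_\omega$ --- centrality of $u_\omega$, the polar identities, the equality $b_\varpi = u_\varpi|b|_\varpi$, and the weight commutation $|b|_\omega x = q^{(\omega,\lwt(x)-w_0\rwt(x))}x|b|_\omega$ --- reduce to the linchpin formula together with the explicit diagonal form of $\theta_{w_0}(b_\omega)$ on the weight basis. The crossed-product relation $fx = xf_{\lwt(x)}$ is a one-line check: for $x$ of left weight $\mu$, both sides act on $\xi_\chi$ as $f(-\chi-\mu)(\theta_{w_0}(x)\xi)_{\chi+\mu}$.

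For the four properties on $\Fun_{q,\compact}^{0}(AK)$: boundedness holds because a compactly supported $f$ restricts the action to finitely many slices $\{e_\chi\}\otimes \C[\N]^{\otimes M}$, on each of which $\pi_L(x)$ is bounded through the boundedness of $\theta_{w_0}$ on $l^2(\N)^{\otimes M}$; non-degeneracy is witnessed by $\pi(\delta_{-\chi})\xi_\chi = \xi_\chi$. For faithfulness, write an annihilated element as $\sum_{\chi,\mu}x_{\chi,\mu}\delta_\chi$ with $x_{\chi,\mu}\in \Pol_q(K)$ homogeneous of left weight $\mu$; evaluation on $\xi_\eta$ isolates the summand at $\chi = -\eta$ and, by separating distinct $P$-graded components, forces $\theta_{w_0}(x_{-\eta,\mu})\xi = 0$ for every $\eta,\mu,\xi$. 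Faithfulness of $\pi_L$ combined with the linchpin formula $\pi_L(x_{-\eta,\mu}) = u_\mu\otimes \theta_{w_0}(x_{-\eta,\mu})$ then forces each $x_{\chi,\mu} = 0$.

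For irreducibility, any $T$ in the commutant must commute with every $\pi(\delta_\chi)$, hence is diagonal along the $P$-grading as $T = \bigoplus_\eta T_\eta$. Commutation with $\pi(x)$ for $x$ of left weight $\mu$ gives the intertwining $T_{\eta+\mu}\theta_{w_0}(x) = \theta_{w_0}(x)T_\eta$, and taking $x = b_\mu$ for $\mu\in P^+$ with $\theta_{w_0}(b_\mu)$ invertible recovers each $T_\eta$ from $T_0$ by conjugation. Substituting back shows that $T_0$ commutes with every $\theta_{w_0}(b_\mu^{-1}x) = \theta_{w_0}(b_\mu)^{-1}\theta_{w_0}(x)$, i.e., with the $\theta_{w_0}$-image of the left-weight-$0$ subalgebra of $\Pol_q(K)_{\loc}$. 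The main obstacle is to conclude $T_0 \in \C\cdot I$ from this. I would handle it by exhibiting, rank by rank, explicit left-weight-$0$ elements of $\Pol_q(K)_{\loc}$ whose $\theta_{w_0}$-images are weighted creation and annihilation operators on the corresponding PBW factor of $\C[\N]^{\otimes M}$ (modelled on the rank-one computation showing that $\theta(b^{-1}a)$ and its adjoint act as $q$-weighted ladder operators on $\C[\N]$); these collectively generate an irreducible subalgebra of $\End(\C[\N]^{\otimes M})$, forcing $T_0$ to be scalar. Once $T_0 = \lambda I$, the conjugation relation gives $T_\eta = \lambda I$ for every $\eta$, hence $T = \lambda I$ and $\pi$ is irreducible.
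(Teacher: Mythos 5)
Your construction and your verifications of the relations, of boundedness, of non-degeneracy and of faithfulness are correct and follow essentially the same route as the paper: everything rests on the identity $x\xi_{\chi} = (\theta_{w_0}(x)\xi)_{\chi+\lwt(x)}$ for left-homogeneous $x$ (your ``linchpin'' formula) together with the fact that $\theta_{w_0}(b_{\varpi})$ is a positive, injective, diagonal operator on the standard basis of $\C[\N]^{\otimes M}$. Note that you assert this last fact rather than prove it; the paper derives it from the factorization $(\pi_{r_1}*\cdots*\pi_{r_M})(b_{\varpi}) = \otimes_{i=1}^M b_{r_i}^{(\varpi,\check{\beta}_i)}$ and the rank-one formula $\theta(b)e_n = q^n e_n$, and this computation is precisely what legitimizes setting $\pi(|b|_{\varpi}) = 1\otimes\theta_{w_0}(b_{\varpi})$ and $\pi(u_{\varpi}) = u_{\varpi}\otimes 1$, so it should be included.

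The genuine gap is the last step of your irreducibility argument. Having reduced to showing that $T_0$ commutes with the image of the left-weight-zero part of $\Pol_q(K)_{\loc}$, you propose to conclude by ``exhibiting, rank by rank, explicit left-weight-zero elements whose images are weighted creation and annihilation operators\dots generating an irreducible subalgebra.'' No such elements are produced, and carrying this out in higher rank would amount to re-deriving the irreducibility of the big-cell representation $\theta_{w_0}$, which is a nontrivial theorem (\cite[Theorem 3.6.7]{LS91}) that the paper simply cites. In fact your own intertwining relations already close the gap without any new construction: applying $T_{\eta+\mu}\theta_{w_0}(x) = \theta_{w_0}(x)T_{\eta}$ with $\mu = 0$ and $x = b_{\varpi}^*b_{\varpi}$ shows that $T_0$ commutes with the bounded positive operator $\theta_{w_0}(b_{\varpi})^2$, hence with its square root $\theta_{w_0}(b_{\varpi})$; combined with your relation $T_0\,\theta_{w_0}(b_{\varpi})^{-1}\theta_{w_0}(x) = \theta_{w_0}(b_{\varpi})^{-1}\theta_{w_0}(x)\,T_0$ on $\C[\N]^{\otimes M}$ and the injectivity of $\theta_{w_0}(b_{\varpi})$, this gives that $T_0$ commutes with $\theta_{w_0}(x)$ for every homogeneous $x$ of dominant left weight, and then (writing a general homogeneous $x$ inside $b_{\varpi}x$ with $\varpi+\lwt(x)\in P^+$) with all of $\theta_{w_0}(\Pol_q(K))$. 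The cited irreducibility of $\theta_{w_0}$ then yields $T_0\in\C\cdot I$, and your conjugation formula propagates this to every $T_{\eta}$. With that substitution the proof is complete.
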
 

\begin{proof} First of all, note that for $x$ left homogeneous in $\Pol_q(K)$ we have \begin{equation}\label{EqActxHom} x\xi_{\chi} =  (\theta_{w_0}(x)\xi)_{\chi + \lwt(x)}.\end{equation}

Let now again $\pi_r: \Pol_q(K) \rightarrow \Pol_{q_r}(SU(2))$ be the canonical Hopf $*$-algebra surjection corresponding to the inclusion $\check{U}_{q_r}(\mfsu(2)) \subseteq U_q(\mfk)$ at position $r$. Then it is well-known that 
 \[(\pi_{r_1}*\ldots *\pi_{r_M})(b_{\varpi}) = \otimes_{i=1}^M b_{r_i}^{(\varpi,\check{\beta}_{i})},\] where $b_r$ denotes the element $b$ within $\Pol_{q_r}(SU(2))$, and where \[\beta_i = s_{r_1}\ldots s_{r_{i-1}} \alpha_{r_i}.\] 
It follows in particular that \begin{equation}\label{EqActa}  b_{\varpi}(e_{n_1,\ldots,n_M})_{\chi} = q^{\sum_{i=1}^M n_i(\varpi,\beta_i)}  (e_{n_1,\ldots, n_M})_{\chi+\varpi},\end{equation} with $\{e_{n_1,\ldots,n_M}\}$ the ordinary orthonormal basis of $L^2_{\hol,q}(N)_{0} = \C[\N]^{\otimes M}$.

Hence it is clear that upon defining $u_{\omega}$ as in the statement of the lemma, we get a $*$-representation of $\Pol_q(K)_{\loc}$. By \eqref{EqActxHom}, it follows immediately that the proposed action of $\Func(P,\C)$ makes this into a $\Fun_{q}^{0}(AK)_{\loc}$-representation. Also the non-degeneracy, faithfulness, boundedness and irreducibility of $L^2_q(B)_0$ as a left $\Fun_{q,\compact}^{0}(AK)$-representation are immediately clear. 
\end{proof}

\subsection{Representation theory of $U_q^{0}(\mfb_{\R})$}

By means of Lemma \ref{LemRepAK} and the $*$-homomorphism $\pi$ in Lemma \ref{LemComHeis}, we obtain a $*$-representation of $U_q^0(\mfb_{\R})$ on $L^2_q(B)_0$. The goal of this section is to give another interpretation of this representation. The main idea is to study the representation theory of the $*$-algebra $U_q^0(\mfn_{\R})$ introduced in Definition \ref{DefUqn}. 

\begin{Def} A \emph{highest weight representation} of $U_q^{0}(\mfn_{\R})$ is a pre-Hilbert space $V$ with a $*$-representation by $U_q^{0}(\mfn_{\R})$ admitting a vector $\xi_0$, called \emph{highest weight vector}, such that $X_r\xi_0 = 0$ for all $r$ and $V = U_q^{0}(\mfn_{\R})\xi_0$.
\end{Def}

\begin{Theorem}\label{TheoUnique} Up to isomorphism, there is a unique highest weight representation $V$ for $U_q^{0}(\mfn_{\R})$, which is then necessarily faithful and irreducible. Moreover, with $\xi_0$ a highest weight vector, the map \[U_q(\mfn)^* \rightarrow V,\quad X \mapsto X\xi_0\] is an isomorphism of vector spaces. 
\end{Theorem}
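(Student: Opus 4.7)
The plan is to construct a universal Fock-type highest weight representation and reduce the theorem to its irreducibility.

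First I would establish a triangular decomposition: the multiplication map
\[U_q(\mfn)^{*}\otimes U_q(\mfn)\;\longrightarrow\;U_q^{0}(\mfn_{\R}),\qquad Y^{*}\otimes Z\mapsto Y^{*}Z,\]
is a linear isomorphism, where $U_q(\mfn)^{*}$ denotes the $*$-subalgebra of $U_q^{0}(\mfn_{\R})$ generated by the $X_r^{*}$. Surjectivity is a straightforward induction on word length using the defining relation $X_rX_s^{*}=q^{-(\alpha_r,\alpha_s)}X_s^{*}X_r-\delta_{rs}(q_r-q_r^{-1})^{-1}$ to push every $X_r$ past every $X_s^{*}$ on its left. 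Injectivity — essentially the PBW theorem for the $q$-Heisenberg algebra $U_q^{0}(\mfn_{\R})$ — I would derive by exhibiting a faithful representation separating the ordered monomials $Y_i^{*}Z_j$: this can be achieved by composing the $*$-homomorphism $\pi$ of Lemma \ref{LemComHeis} (restricted to $U_q^{0}(\mfn_{\R})\subseteq U_q^{0}(\mfb_{\R})$) with the faithful action of $\Fun_q^{0}(AK)_{\loc}$ on $L^{2}_{\hol,q}(B)_{0}$ from Lemma \ref{LemRepAK}.

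Next I would consider the Verma-type quotient
\[V_0\;=\;U_q^{0}(\mfn_{\R})\Big/\sum_{r\in I}U_q^{0}(\mfn_{\R})X_r,\]
with highest weight vector $\xi_0$ the image of $1$ and action by left multiplication. Under the triangular decomposition the denominator reads $U_q(\mfn)^{*}\otimes U_q(\mfn)_{+}$ (with $U_q(\mfn)_{+}$ the augmentation ideal of $U_q(\mfn)$, which coincides with $\sum_r U_q(\mfn)X_r$), so $Y^{*}\mapsto Y^{*}\xi_0$ defines a vector space isomorphism $U_q(\mfn)^{*}\cong V_0$. For any highest weight representation $(V,\xi_0')$, combining $X_r\xi_0'=0$ with the triangular decomposition yields $V=U_q(\mfn)^{*}\xi_0'$, so the universal property of $V_0$ supplies a surjective intertwiner $V_0\twoheadrightarrow V$. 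It therefore suffices to show that $V_0$ is irreducible.

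For irreducibility I would exploit the $Q_{+}$-grading on $V_0\cong U_q(\mfn)^{*}$ — with $X_s^{*}$ of degree $\alpha_s$ and $X_s$ of degree $-\alpha_s$ — to argue that any $v\in V_0\setminus\{0\}$ generates $V_0$: iteratively applying operators $X_r$ strictly decreases the maximal weight appearing in the homogeneous decomposition of $v$, so after finitely many steps one reaches a non-zero vector $w$ annihilated by every $X_r$. Such vacuum vectors must lie in $\C\xi_0$, which follows from the non-degeneracy of the canonical skew pairing $U_q(\mfn)\times U_q(\mfn)^{*}\to\C$ underlying the Heisenberg-double-type construction of $U_q^{0}(\mfn_{\R})$: this pairing identifies the action of the $X_r$'s on $V_0$ with the transpose of left multiplication in $U_q(\mfn)$. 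Hence $\xi_0\in U_q^{0}(\mfn_{\R})v$, giving irreducibility, and faithfulness of the $U_q^{0}(\mfn_{\R})$-action on $V_0$ is then immediate from the triangular decomposition together with the free action of $U_q(\mfn)^{*}$ on itself. The main technical obstacle is the injectivity in the triangular decomposition — equivalently, ensuring $V_0\neq 0$ — and invoking the explicit faithful Fock-space representation on $L^{2}_{\hol,q}(B)_{0}$ is the most economical route to secure it.
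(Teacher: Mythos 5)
Your overall strategy (Verma quotient $V_0$, reduction of uniqueness to irreducibility of $V_0$, weight-lowering argument for irreducibility) is a legitimate alternative to the paper's route, but as written it has two genuine gaps.

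First, your injectivity step for the triangular decomposition is circular in the context of this paper. Lemma \ref{LemRepAK} asserts faithfulness only of the $\Fun_{q,\compact}^{0}(AK)$-action on $L^2_{\hol,q}(B)_0$, not of the action of the localized algebra, and certainly not of the composite with $\pi$ from Lemma \ref{LemComHeis}: the faithfulness of that composite on $U_q^0(\mfn_{\R})$ is exactly Corollary \ref{CorFaithRep}, which the paper \emph{deduces from} Theorem \ref{TheoUnique}. So you cannot invoke it here without an independent argument that the images of the ordered monomials $Y_i^*Z_j$ in $\End(L^2_{\hol,q}(N)_0)$ are linearly independent. (The gap is repairable: the vector-space isomorphism $U_q^0(\mfb_{\R})\cong U_q(\mfh)\otimes U_q(\mfn)\otimes U_q(\mfn^-)\otimes U_q(\mfh^-)$, available from the cocycle-twist description and stated before Definition \ref{DefUqn}, already yields the PBW decomposition you need, with no representation theory. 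Note also that the paper sidesteps the whole issue: the computation $\langle X^*\xi_0,Y^*\xi_0\rangle=(X,Y^*)$ gives injectivity of $X\mapsto X^*\xi_0$ directly from non-degeneracy of the skew pairing, without first knowing the structure of $U_q^0(\mfn_{\R})$.)

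Second, a highest weight representation in this paper is by definition a \emph{pre-Hilbert space} carrying a $*$-representation, and your proposal never produces or controls the inner product. Your argument shows that any highest weight representation is isomorphic to $V_0$ \emph{as a module}; it does not show that $V_0$ admits a positive-definite invariant inner product (existence), nor that the inner product on a highest weight representation is determined up to scalar (so that module isomorphism upgrades to unitary isomorphism). Positivity is not a formal consequence of irreducibility; the paper obtains existence by identifying $U_q^0(\mfn_{\R})$ (after adjoining Cartan elements) with the algebra $U_q(\mfg;0,+)$ of \cite{DeC13} and citing Proposition 2.6 there, and obtains uniqueness of the inner product from the explicit formula $\langle X^*\xi_0,Y^*\xi_0\rangle=(X,Y^*)$. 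Relatedly, your claim that faithfulness is ``immediate'' from the triangular decomposition understates the work: the $U_q(\mfn)$-part acts by transposed multiplication, and showing that $\sum_{i,j}c_{ij}X_iX_j^*$ acting as zero forces $c_{ij}=0$ requires the chain of pairing manipulations the paper carries out, not just freeness of $U_q(\mfn)^*$ over itself.
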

 
\begin{proof} Recall the pairings introduced in \eqref{EqPairing} and \eqref{EqPairingDeg}. If $V$ is a highest weight representation for $U_q^0(\mfn_{\R})$ with highest weight vector $\xi_0$, then for $X,Y\in U_q(\mfn)$ we have, by the discussion before Definition \ref{DefHeisOmChi} describing $U_q^0(\mfn_{\R})$ as a cocycle deformation, \begin{eqnarray*} \langle X^*\xi_0,Y^*\xi_0\rangle &=& \langle \xi_0, XY^* \xi_0\rangle \\ &=& (S(X_{(1)}),Y_{(1)}^*)_0 (X_{(3)},Y_{(3)}^*) \langle \xi_0,Y_{(2)}^*X_{(2)}\xi_0\rangle \\ &=&  (S(X_{(1)}),Y_{(1)}^*)_0 (X_{(2)},Y_{(2)}^*) \\ &=&  (X,Y^*),
\end{eqnarray*}
where in the last step we used that $U_q(\mfn)^*$ is a right coideal in $U_q(\mfb^-)$, pairing as the counit with $U_q(\mfb)$ under the pairing $(\,\cdot\,,\,\cdot\,)_0$. Since the pairing $(\,\cdot\,,\,\cdot\,)$ is well-known to be non-degenerate, it follows that the map \[U_q(\mfn)^* \rightarrow V,\quad X\mapsto X\xi_0\] is bijective. 

To show that there exists a highest weight representation, let $U_q^{0}(\mfk)$ be the unital $*$-algebra obtained by adjoining to $U_q^{0}(\mfn_{\R})$ self-adjoint elements $K_{\omega}$ for $\omega\in \frac{1}{2}P$ such that $K_0 = 1$, $K_{\omega}K_{\chi} = K_{\omega+\chi}$ and \[K_{\omega}X_r  = q^{(\omega,\alpha_r)}X_rK_{\omega}.\] Then writing $X_r^+ = K_{\alpha_r/2}X_r$ and $X_r^- = X_r^* K_{\alpha_r/2}$, it is easy to see that we get the same $*$-algebra as the one which is introduced as $U_q(\mfg;0,+)$ in \cite[Definition 1.1]{DeC13}. It now follows from \cite[Proposition 2.6]{DeC13} that $U_q(\mfn)^*$ can be turned into a pre-Hilbert space with $1$ as a highest weight vector for the unique $U_q^0(\mfn_{\R})$-representation such that $U_q(\mfn)^*$ acts by left multiplication. 

The irreducibility of a highest weight module $V$ is clear. To see that it must be faithful, it is enough to prove the following: if $\{X_i\}$ is a basis of $U_q(\mfn)$, and $c_{ij}$ are such that for all $Z,W \in U_q(\mfn)$ \[\langle Z^*\xi_0,\left(\sum_{i,j} c_{ij}X_iX_j^*\right) W^*\xi_0\rangle = 0,\] then $c_{ij}=0$ for all $i,j$. But \begin{eqnarray*} \langle Z^*\xi_0, \sum_{i,j} c_{ij}X_iX_j^* W^*\xi_0\rangle  &=& \sum_{i,j} c_{ij}\langle X_i^*Z^*\xi_0, X_j^* W^*\xi_0\rangle \\ &=&\sum_{i,j} c_{ij} (ZX_i,X_j^* W^*) \\ &=& \sum_{i,j} c_{ij} (Z,X_{j(1)}^*W_{(1)}^*) (X_i,X_{j(2)}^*W_{(2)}^*)\end{eqnarray*} Since $(\,\cdot\,,\,\cdot\,)$ is non-degenerate, it follows that for all $W\in U_q(\mfn)$ \[\sum_{i,j} c_{ij} (X_i,X_{j(2)}^*W_{(2)}^*) X_{j(1)}^*W_{(1)}^* = 0.\] It is clear that this still holds for all $W\in U_q(\mfb)$. But since $\Delta(H)(H\otimes 1) = H\otimes H$ for a Hopf algebra, it follows that also \[\sum_{i,j} c_{ij} (X_i,X_{j(2)}^*W^*) X_{j(1)}^* = 0\] for all $W\in U_q(\mfn)$. Then \[\sum_{i,j} c_{ij} (X_{i(2)},X_{j(2)}^*)(X_{i(1)},W^*) X_{j(1)}^* = 0,\] hence \[\sum_{i,j} c_{ij} (X_{i(2)},X_{j(2)}^*)X_{i(1)}\otimes X_{j(1)}^* = 0.\]
From this, we get \[\sum_{i,j} c_{ij} (X_{i(3)},X_{j(3)}^*)(S(X_{i(2)}),X_{j(2)})X_{i(1)}\otimes X_{j(1)}^* = \sum_{i,j} c_{ij} X_i\otimes X_i^* =  0,\] hence all $c_{ij}=0$. 
\end{proof}
 
\begin{Cor}\label{CorGradHW} Let $V$ be a highest weight representation of $U_q^0(\mfn_{\R})$. Then there exists a unique $-Q^+$-grading on $V$ such that $\wt(\xi_0) = 0$ and $\wt(X_r^*\xi) = \wt(\xi) - \alpha_r$ for all homogeneous $\xi$ and all $r$.  
\end{Cor}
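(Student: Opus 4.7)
The plan is to transfer the grading question to the subalgebra $U_q(\mfn)^* \subseteq U_q^0(\mfn_{\R})$ (the unital $*$-subalgebra generated by the $X_r^*$) and use the vector space isomorphism $U_q(\mfn)^* \to V,\ X\mapsto X\xi_0$ provided by Theorem \ref{TheoUnique}.

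First I would equip $U_q(\mfn)^*$ with its natural $-Q^+$-grading in which $\wt(X_r^*) = -\alpha_r$. This is well-defined because the defining quantum Serre relations \eqref{EqQSerre} among the $X_r^*$ are homogeneous: each monomial $(X_r^*)^{1-(\check\alpha_r,\alpha_s)-p}(X_s^*)(X_r^*)^p$ appearing in $f_{rs}(X_r^*,X_s^*)$ has total weight $-(1-(\check\alpha_r,\alpha_s))\alpha_r - \alpha_s$. Equivalently, $U_q(\mfn)^*$ is isomorphic as a graded algebra to the negatively graded nilpotent part $U_q(\mfn^-)$.

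Next, transporting this grading through the bijection $X\mapsto X\xi_0$ produces a $-Q^+$-grading on $V$ for which $\xi_0$ (corresponding to $1$) has weight $0$. For homogeneous $\xi = X\xi_0$ of weight $\mu$, one has $X_r^*\xi = (X_r^*X)\xi_0$, and $X_r^*X$ is homogeneous of weight $\mu - \alpha_r$ in $U_q(\mfn)^*$, giving the required formula $\wt(X_r^*\xi) = \wt(\xi) - \alpha_r$.

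Uniqueness is then immediate: any grading obeying $\wt(\xi_0)=0$ together with $\wt(X_r^*\xi)=\wt(\xi)-\alpha_r$ has its value on the spanning set $\{X_{r_1}^*\cdots X_{r_n}^*\xi_0\}$ forced by induction on word length, and existence of the grading guarantees that this assignment is consistent. There is essentially no obstacle in this proof; the only non-trivial input is the concrete identification of $V$ with $U_q(\mfn)^*$ from Theorem \ref{TheoUnique}, without which the well-definedness across different words representing the same vector would not be evident.
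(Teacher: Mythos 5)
Your argument is correct and is exactly the (implicit) reasoning behind the paper's unproved corollary: transport the natural $-Q^+$-grading of $U_q(\mfn)^*$ (well-defined since the quantum Serre relations are homogeneous) through the vector space isomorphism $X\mapsto X\xi_0$ of Theorem \ref{TheoUnique}, with uniqueness forced on the spanning set of words in the $X_r^*$ applied to $\xi_0$. One trivial slip: $U_q(\mfn)^*$ is the unital \emph{subalgebra} generated by the $X_r^*$, not a $*$-subalgebra (it is not closed under the involution), but this does not affect the argument.
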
 
 
Consider now in $\Fun_q^0(AK)$ the elements \[x_r = u_{\alpha_r}e_r = e_ru_{\alpha_r}.\] Then it follows immediately that we obtain a $*$-homomorphism \[\pi:U_q^0(\mfn_{\R}) \rightarrow \Fun_q^0(AK),\quad X_r \mapsto x_r.\] On the other hand, under the representation of $\Fun_q^0(AK)$ on $L^2_{\hol,q}(B)_0$, the operators $x_r$ and $x_r^*$ act only on $L^2_{\hol,q}(N)_0$. 

\begin{Cor}\label{CorFaithRep} The representation of $U_q^{0}(\mfn_{\R})$ on $L^2_{\hol,q}(N)_0$ is faithful, and the map \[U_q(\mfn)^* \rightarrow L^2_{\hol,q}(N)_0, \quad X \mapsto \pi(X)e_0^{\otimes M}\] is a linear isomorphism.  
\end{Cor}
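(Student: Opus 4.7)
The plan is to identify $(L^2_{\hol,q}(N)_0, e_0^{\otimes M})$ as a highest weight representation of $U_q^0(\mfn_{\R})$ and then invoke Theorem \ref{TheoUnique}; both assertions of the corollary follow at once.

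First I would check that the representation is really on $L^2_{\hol,q}(N)_0$. Although $\pi(X_r)=x_r = u_{\alpha_r}e_r$ a priori acts on the larger space $L^2_{\hol,q}(B)_0$, the shift $u_{\alpha_r}$ cancels the left-weight $-\alpha_r$ of $e_r$, so both $x_r$ and $x_r^*$ preserve each slice $\C\cdot e_\chi\otimes L^2_{\hol,q}(N)_0$, acting there as $\theta_{w_0}(e_r)$ and $\theta_{w_0}(e_r^*)$ respectively. This is precisely the representation of $U_q^0(\mfn_{\R})$ on $L^2_{\hol,q}(N)_0$ referred to in the statement.

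Next I would verify the highest weight condition $\theta_{w_0}(e_r)e_0^{\otimes M}=0$ for all $r$. Since $\theta_{w_0}(b_{\varpi_r})$ is an invertible positive diagonal operator on $L^2_{\hol,q}(N)_0$, the formula $e_r=(1-q_r^2)^{-1}b_{\varpi_r}^{-1}(b_{\varpi_r}\lhd E_r)$ reduces this to $\theta_{w_0}(b_{\varpi_r}\lhd E_r)e_0^{\otimes M}=0$. A direct unpacking of the right action yields $b_{\varpi_r}\lhd E_r = q_r U_{\varpi_r}(F_r\xi_{\varpi_r},\eta_{w_0\varpi_r})$. In rank one this matrix coefficient is, up to a power of $q$, the generator $a$, so the defining relation $\theta(a)e_0=0$ gives the vanishing immediately. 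In higher rank, choose a reduced expression for $w_0$ and expand $\theta_{w_0}$ as the convolution $\theta_{r_1}*\cdots *\theta_{r_M}$; iterating Schur orthogonality within each rank-one slot — controlled by the $\check{U}_{q_{r_j}}(\mfsu(2))$-summand of $V_{\varpi_r}$ containing the right vector of the corresponding matrix coefficient — pins down the surviving terms and eventually produces a factor of $a$ in some slot that annihilates $e_0$ there.

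With $e_0^{\otimes M}$ established as a highest weight vector, Theorem \ref{TheoUnique} applied to the submodule $\mathscr{M}:=\pi(U_q^0(\mfn_{\R}))e_0^{\otimes M}\subseteq L^2_{\hol,q}(N)_0$ yields a $U_q^0(\mfn_{\R})$-linear isomorphism $U_q(\mfn)^*\to \mathscr{M}$, $X\mapsto \pi(X)e_0^{\otimes M}$, together with the faithfulness of the action on $\mathscr{M}$. To finish I would argue $\mathscr{M}=L^2_{\hol,q}(N)_0$ by a weight-space dimension comparison: the relation $|b|_\omega e_r = q^{-(\omega,\alpha_r)}e_r|b|_\omega$ in $\Pol_q(K)_{\loc}$ shows that $\theta_{w_0}(|b|_\omega)$ endows $L^2_{\hol,q}(N)_0$ with a $-Q^+$-grading compatible with the $-Q^+$-grading on $\mathscr{M}$ given by Corollary \ref{CorGradHW}. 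On $L^2_{\hol,q}(N)_0$ the $-\mu$-weight space has basis $\{e_{\mathbf{n}}:\sum_i n_i\beta_i=\mu\}$ with $\beta_i=s_{r_1}\cdots s_{r_{i-1}}\alpha_{r_i}$, of dimension equal to the Kostant partition number of $\mu$, which by PBW coincides with $\dim U_q(\mfn)^*_{-\mu}$. Hence the graded injection $\mathscr{M}\hookrightarrow L^2_{\hol,q}(N)_0$ matches dimensions weight by weight, so it is an equality.

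The main obstacle is the higher-rank verification of the highest weight condition. The rank-one computation is immediate from $\theta(a)e_0=0$, but in general one must exploit the freedom to vary the reduced decomposition of $w_0$ and carefully track which $\check{U}_{q_{r_j}}(\mfsu(2))$-subrepresentations of $V_{\varpi_r}$ contribute at each convolution slot.
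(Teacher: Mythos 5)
Your overall strategy --- exhibit $e_0^{\otimes M}$ as a highest weight vector for $U_q^0(\mfn_{\R})$ and invoke Theorem \ref{TheoUnique} --- is the same as the paper's, and your identification $b_{\varpi_r}\lhd E_r = q_r\,U_{\varpi_r}(F_r\xi_{\varpi_r},\eta_{w_0\varpi_r})$ is correct. The one genuine gap is exactly where you flag it: the higher-rank verification that $\theta_{w_0}(e_r)e_0^{\otimes M}=0$. As written (``iterating Schur orthogonality \ldots\ eventually produces a factor of $a$ in some slot'') this is a hope rather than an argument: you would have to control, for each chain of intermediate weight vectors in the convolution expansion of $U_{\varpi_r}(F_r\xi_{\varpi_r},\eta_{w_0\varpi_r})$ along a reduced word for $w_0$, which rank-one matrix coefficients annihilate $e_0$, and that bookkeeping is nontrivial. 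The irony is that you already hold the tool that makes this a one-liner: in your last step you observe that the joint eigenspaces of the $\theta_{w_0}(|b|_{\omega})$ grade $L^2_{\hol,q}(N)_0$ by elements $\sum_i n_i\beta_i$ of $Q^+$ (via \eqref{EqActa}), with $e_0^{\otimes M}$ in degree $0$, and that the relation $|b|_{\omega}e_r = q^{-(\omega,\alpha_r)}e_r|b|_{\omega}$ forces $\theta_{w_0}(e_r)$ to shift this degree by $-\alpha_r$. Hence $\theta_{w_0}(e_r)e_0^{\otimes M}$ lies in the degree-$(-\alpha_r)$ component, which vanishes because $-\alpha_r\notin Q^+$. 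No analysis of reduced decompositions is needed; this is what makes the vanishing ``easy to see \ldots\ by the formula for the $e_r$'' in the paper.

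For the remaining steps your route diverges from the paper's in a legitimate way. The paper proves cyclicity of $e_0^{\otimes M}$ directly: it tensors with the Cartan part via Lemma \ref{Lemtildb} and reduces, through the generation statement of Proposition \ref{PropGenAK}, to the cyclicity of $e_0^{\otimes M}$ for $\theta_{w_0}(\Pol_q(K))$ (Lemma \ref{LemCycKq}); the last part of Theorem \ref{TheoUnique} then applies to $L^2_{\hol,q}(N)_0$ itself. You instead apply Theorem \ref{TheoUnique} to the cyclic submodule $\mathscr{M}$ and close the gap by matching weight multiplicities: $\dim U_q(\mfn)^*_{-\mu}$ equals the Kostant partition number of $\mu$ by PBW, as does $\#\{\mathbf{n}:\sum_i n_i\beta_i=\mu\}$ because the $\beta_i$ enumerate $\Delta^+$. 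Granting the compatibility of the two gradings (which follows from the same commutation relation as above together with Corollary \ref{CorGradHW}), this is a correct, self-contained alternative that trades Proposition \ref{PropGenAK} and Lemma \ref{LemCycKq} for the PBW basis of $U_q(\mfn)$. Once the highest-weight condition is repaired as above, your proof goes through.
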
 

\begin{proof} We claim that $e_0^{\otimes M}$ is a highest weight vector.
By the formula for the $e_r$ in Lemma \ref{LemInPol}, it is easy to see that the vector $e_0^{\otimes M}$ vanishes under the action of the $x_r$. To see that $e_0^{\otimes M}$ is cyclic, it is sufficient to show that $e_0^{\otimes l}\otimes e_0^{\otimes M}$ is cyclic for the tensor product representation of $\widetilde{U}_q^0(\mfh_{\R})\otimes \widetilde{U}_q^0(\mfn_{\R}) \cong \widetilde{U}_q^0(\mfb_{\R})$ on $L^2_{\hol,q}(H)_0 \otimes L^2_{\hol,q}(N)_0 = L^2_{\hol,q}(B)_0$, where we have used the isomorphism \eqref{EqIsotildExt}. However, by Proposition \ref{PropGenAK} this will follow from the cyclicity of $e_0^{\otimes l}\otimes e_0^{\otimes M}$ for the representation of $\Fun_{q}^{0}(AK)_{\loc}$. But this holds since $e_0^{\otimes l}$ is clearly cyclic for $\widetilde{U}_q^0(\mfh_{\R})$, while $e_0^{\otimes M}$ is cyclic for $\Pol_q(K)$ under $\theta_{w_0}$ by Lemma \ref{LemCycKq}. 

It thus follows that $L^2_{\hol,q}(N)_0$ is a highest weight representation for $U_q^0(\mfn_{\R})$ under $\pi$. The corollary now follows from the last part of Theorem \ref{TheoUnique}.
\end{proof} 

Note that the notation is consistent with $U_q(\mfn)$ acting as locally nilpotent differential operators on holomorphic (regular) functions on $N$.

\begin{Cor}\label{CorFaithPi} The $*$-homomorphism \[\pi: U_q^0(\mfb_{\R}) \rightarrow \Fun_{q}^0(AK)_{\loc}\] is faithful, and in particular the representation of $U_q^0(\mfb_{\R})$ on $L^2_{\hol,q}(B)_0$ is faithful. 
\end{Cor}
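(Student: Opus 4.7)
The plan is to lift $\pi$ to the larger $*$-algebra $\widetilde{U}_q^0(\mfb_\R)$, then use the tensor product decomposition of Lemma~\ref{Lemtildb} to reduce faithfulness of the associated representation on $L^2_{\hol,q}(B)_0$ to two essentially independent problems. First, I would observe that $\pi$ extends to a $*$-homomorphism $\widetilde\pi:\widetilde{U}_q^0(\mfb_\R) \to \Fun_q^0(AK)_{\loc}$ by sending $U_\omega \mapsto u_\omega$, $Z_\omega \mapsto z_\omega$, $E_r\mapsto e_r$ and $F_r\mapsto f_r$; verifying the defining relations of Definition~\ref{DefUtildb} amounts to a short direct computation using the commutation relations already assembled before Lemma~\ref{LemComHeis}. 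Because $U_q^0(\mfb_\R)$ embeds as a $*$-subalgebra of $\widetilde{U}_q^0(\mfb_\R)$ (as noted after Definition~\ref{DefUtildb}), it suffices to show that the composition $\rho := \pi_L\circ\widetilde\pi$ with the representation $\pi_L$ from Lemma~\ref{LemRepAK} is a faithful representation of $\widetilde{U}_q^0(\mfb_\R)$ on $L^2_{\hol,q}(B)_0$; this simultaneously secures the second assertion of the corollary.

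Next, I would invoke the isomorphism of Lemma~\ref{Lemtildb},
\[
\widetilde{U}_q^0(\mfh_\R)\otimes U_q^0(\mfn_\R)\;\cong\;\widetilde{U}_q^0(\mfb_\R),\qquad X_r\mapsto U_{\alpha_r}E_r,
\]
and show that under this identification $\rho$ factorizes as a tensor product $\rho_\mfh\otimes\rho_\mfn$ on $L^2_{\hol,q}(B)_0 = L^2_{\hol,q}(H)_0\otimes L^2_{\hol,q}(N)_0$. The operators $u_\omega, z_\omega$ act solely on $L^2_{\hol,q}(H)_0$ by the explicit formulas of Lemma~\ref{LemRepAK}. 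For the other factor, the key observation is that $\lwt(e_r) = -\alpha_r$ (immediate from $\lwt(b_{\varpi_r})=\varpi_r$ and the identity $\lwt(x\lhd E_r)=\lwt(x)-\alpha_r$ used in Lemma~\ref{LemInPol}), so the element $x_r := u_{\alpha_r}e_r \in\Pol_q(K)_{\loc}$ representing $X_r$ has left weight zero; applying \eqref{EqActxHom} yields $\rho(X_r) = 1\otimes \theta_{w_0}(x_r)$ and, by adjunction, $\rho(X_r^*) = 1\otimes\theta_{w_0}(x_r)^*$. The main (though minor) subtlety is precisely to notice that the $U_{\alpha_r}$-twist appearing in Lemma~\ref{Lemtildb} is exactly what cancels the nontrivial left weight of $e_r$, producing the clean tensor product factorization.

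Finally, faithfulness of $\rho_\mfn$ is Corollary~\ref{CorFaithRep}. For $\rho_\mfh$, I would normal-order a generic element of $\widetilde{U}_q^0(\mfh_\R)$ as a finite sum $\sum_{\omega,\chi}c_{\omega,\chi}U_\omega Z_\chi$ and compute its action on the basis vector $e_\lambda\in L^2_{\hol,q}(H)_0$ as $\sum_{\omega,\chi}c_{\omega,\chi}q^{-(\chi,\lambda)}e_{\lambda+\omega}$. Requiring vanishing for every $\lambda\in P$ and separating by the target basis vector yields, for each $\omega$, the identity $\sum_\chi c_{\omega,\chi}q^{-(\chi,\lambda)}=0$ for all $\lambda\in P$. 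Since $q\in(0,1)$ is not a root of unity and $(\cdot,\cdot)$ is non-degenerate on the finite-rank free abelian group $P$, the characters $\lambda\mapsto q^{-(\chi,\lambda)}$ are linearly independent over distinct $\chi\in P$, forcing all $c_{\omega,\chi}=0$. Faithfulness of the tensor product $\rho_\mfh\otimes\rho_\mfn$ then follows from the standard fact that a tensor product of faithful representations acting on the two factors of a tensor product space is faithful, completing the argument.
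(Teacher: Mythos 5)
Your proposal is correct and follows essentially the same route as the paper: extend to $\widetilde{U}_q^0(\mfb_{\R})$, use the isomorphism of Lemma \ref{Lemtildb} to split the representation on $L^2_{\hol,q}(B)_0 = L^2_{\hol,q}(H)_0\otimes L^2_{\hol,q}(N)_0$ as a tensor product, and conclude from faithfulness of the two factors (Corollary \ref{CorFaithRep} for the $\mfn$-part, linear independence of the characters $\lambda\mapsto q^{-(\chi,\lambda)}$ for the $\mfh$-part). Your explicit remark that $\lwt(x_r)=0$ is what produces the clean factorization is a useful detail the paper leaves implicit.
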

\begin{proof} Recall the $*$-algebra $\widetilde{U}_q(\mfb_{\R})$ introduced in Definition \ref{DefUtildb}. Then the representation of $U_q^0(\mfb_{\R})$ on $L^2_{\hol,q}(B)_0$ extends in a unique way to $\widetilde{U}_q^0(\mfb_{\R})$ in such a way that the $U_{\omega}$ are sent to the $u_{\omega}$, and under the isomorphism of Lemma \ref{Lemtildb} the resulting representation is simply the tensor product of the representations of $\widetilde{U}_q^0(\mfh_{\R})$ and
$U_q^0(\mfn_{\R})$  on respectively $L^2_q(H)_0$ and $L^2_q(N)_0$. The second representation is faithful by Theorem \ref{TheoUnique}. The first representation is easily seen to be faithful as we have \[U_{\omega}e_{\chi} = e_{\chi+\omega},\quad Z_{\omega}e_{\chi} = q^{-(\omega,\chi)}e_{\chi},\] and the maps \[\evv_{\chi}: Z_{\omega}\mapsto q^{(\omega,\chi)},\qquad \chi\in P\] separate the elements in the linear span of the $Z_{\omega}$ by Zariski density.
\end{proof}

\section{Invariant integrals and von Neumann algebraic completions}

\subsection{Invariant integral on $\Fun_{q,\compact}^{+}(AK)$}

We begin by recalling the following definition.

\begin{Def}\cite{VDae98} A multiplier Hopf $*$-algebra $(H,\Delta)$ is said to admit \emph{positive left and right invariant integrals} if there exist non-zero functionals \[\varphi,\psi: H \rightarrow \C\] which are positive, that is, \[\varphi(x^*x) \geq 0,\qquad \psi(x^*x) \geq 0,\qquad x\in H,\] and such that, in the sense of multipliers, \[(\id\otimes \varphi)\Delta(h) = \varphi(h)1,\qquad (\psi\otimes \id)\Delta(h) = \psi(h)1.\] We then say that $(H,\Delta)$ determines a \emph{$*$-algebraic quantum group}. If $(H,\Delta)$ is an ordinary Hopf $*$-algebra, we say that $(H,\Delta)$ is a $*$-algebraic quantum group \emph{of compact type}. 
\end{Def} 

It can be shown that positive invariant integrals are automatically unique up to scalar multiplication and faithful, \[\varphi(x^*x) = 0 \Rightarrow x=0,\qquad \psi(x^*x)=0 \Rightarrow x=0.\] In the following, we fix $\psi$. It follows that one can form an associated pre-Hilbert space $L^2(H)_0$, which is $H$ endowed with the inner product \[\langle x,y \rangle = \psi(x^*y).\] We will denote by $L^2(H)$ the completion of $L^2(H)_0$, and by \[\Gamma: H \rightarrow L^2(H),\quad h\mapsto \Gamma(h)\] the canonical embedding. It is then also a general fact that the left $H$-module structure on $H$ by left multiplication extends to a $*$-representation of $H$ by bounded operators on $L^2(H)$, see \cite{KV97}. Moreover, as follows from the general theory in \cite{VDae98}, there exists a unique invertible grouplike \emph{modular element} $\delta \in M(H)$ such that, for a unique choice of $\varphi$, \[\psi(x) = \varphi(x\delta),\qquad x\in H.\] One also has \emph{modular automorphisms} $\sigma,\acsigma$ on the algebra $H$, satisfying  \begin{equation}\label{EqModAut}\varphi(x\sigma(y)) = \varphi(yx),\qquad \psi(x\acsigma(y)) = \psi(yx),\qquad x,y\in H.\end{equation} When $(H,\Delta)$ is of compact type we have $\varphi = \psi$ and hence $\delta = 1$, but $\sigma$ can be non-trivial. 

An example of a $*$-algebraic quantum group of compact type is given by $\Pol_q(K)$. 

\begin{Theorem} There exists a positive invariant state $\psi: \Pol_q(K) \rightarrow \C$, given by \[\psi(U_{\varpi}(\xi,\eta)) = \delta_{\varpi,\hat{\varepsilon}} \langle \xi,\eta\rangle,\] where we see $\hat{\varepsilon}$ as the trivial representation of $U_q(\mfk)$ on $\C$. 
\end{Theorem}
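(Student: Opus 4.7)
The plan is to verify in turn the three properties packaged into the statement: the formula specifies a well-defined linear functional, it is invariant, and it is positive. Well-definedness is immediate: since the matrix coefficients $U_{\varpi,ij} = U_\varpi(e_i,e_j)$ form a basis of $\Pol_q(K)$ as $\varpi$ runs over $P^+$ (by complete reducibility of type-$I$ representations, as recalled in the excerpt), the formula defines a unique linear functional $\psi$. Moreover $\psi(1) = 1$ since the unit corresponds to the matrix coefficient of the trivial representation on $V_{\hat\varepsilon} = \C$.

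For left invariance, I would apply the coproduct formula $\Delta(U_\varpi(\xi,\eta)) = \sum_i U_\varpi(\xi,e_i)\otimes U_\varpi(e_i,\eta)$. When $\varpi \neq \hat\varepsilon$, the slice $(\id\otimes\psi)$ annihilates each summand since $\psi$ vanishes on all matrix coefficients of non-trivial irreducibles; this matches $\psi(U_\varpi(\xi,\eta))\,1 = 0$. When $\varpi = \hat\varepsilon$, both sides equal $\langle\xi,\eta\rangle\,1$. Right invariance is entirely analogous.

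Positivity is the crux. Given $x = \sum_{\varpi,i,j} c_{\varpi,ij}U_\varpi(e_i,e_j)$, I would compute $\psi(x^*x)$ by combining the $*$-formula $U_\varpi(\xi,\eta)^* = q^{-(\rho,\wt(\xi)-\wt(\eta))}U_{-w_0\varpi}(\bar\xi,\bar\eta)$ with multiplicativity of matrix coefficients, so that $U_\varpi(\xi,\eta)^*\,U_{\varpi'}(\xi',\eta')$ is presented as a matrix coefficient of the tensor product representation $V_{-w_0\varpi}\otimes V_{\varpi'}$. The functional $\psi$ isolates the trivial isotypic component, and since $V_{-w_0\varpi}\otimes V_{\varpi'}\cong \overline{V_\varpi}\otimes V_{\varpi'}$ contains the trivial representation if and only if $\varpi = \varpi'$, and then with multiplicity one via the canonical pairing, all cross-terms between distinct irreducibles vanish. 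Within a fixed $\varpi$-block, the result reduces to a positive semi-definite sesquilinear form on the coefficient family $(c_{\varpi,ij})$.

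The delicate point is the bookkeeping of the $q^{(\rho,\cdot)}$ twists. The $*$-structure on $\Pol_q(K)$ is normalised precisely so that the twist implicit in the contragredient identification $\overline{V_\varpi}\cong V_{-w_0\varpi}$ (encoded via the unitary antipode $\hat R$) compensates exactly for the twist factor in $U_\varpi(\xi,\eta)^*$. After this cancellation, the projection of $\bar e_i\otimes e_{i'}$ onto the trivial summand yields $\delta_{ii'}$ times a positive factor involving the quantum dimension $d_\varpi$, and one finds $\psi(x^*x) = \sum_\varpi d_\varpi^{-1}\sum_j \bigl\|\sum_i c_{\varpi,ij} e_i\bigr\|^2 \geq 0$, up to a global positive scalar. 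Alternatively, a cleaner route is to recognise $(\Pol_q(K),\Delta)$ with the stated $*$-structure as a CQG algebra in the sense of Woronowicz and Dijkhuizen--Koornwinder, for which positivity of the Haar state is a structural theorem; uniqueness of invariant functionals on a Hopf algebra then identifies this Haar state with the functional $\psi$ above.
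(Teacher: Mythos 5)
Your proposal is correct and, in substance, follows the first of the two routes the paper itself invokes: the paper's proof is a two-sentence citation, either to ``general Tannakian methods'' (NT13, Section 2) or to the embedding of $\Pol_q(K)$ into its universal C$^*$-algebraic envelope (via LS91) followed by Woronowicz's existence theorem for the Haar state. Your Peter--Weyl computation --- cross terms vanish because the trivial representation occurs in $\overline{V_{\varpi}}\otimes V_{\varpi'}$ only for $\varpi=\varpi'$ and then with multiplicity one, and the diagonal blocks give a positive sesquilinear form --- is exactly what the cited Tannakian argument amounts to, and your alternative via the CQG-algebra structural theorem is the purely algebraic cousin of the Woronowicz route (it avoids needing the C$^*$-envelope at all). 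One quantitative slip in your final display: the positive factor in the diagonal blocks is not a uniform $d_{\varpi}^{-1}$ but depends on the weight of the basis vector, because the invariant vector in $\overline{V_{\varpi}}\otimes V_{\varpi}$ is the twisted sum $\sum_i q^{-2(\rho,\wt(e_i))}\,\overline{e_i}\otimes e_i$ up to normalisation; the correct outcome, consistent with the orthogonality relations quoted later in the paper, is $\psi(x^*x)=\sum_{\varpi,i,j} q^{-2(\rho,\wt(e_i))}\,|c_{\varpi,ij}|^2$. Since these factors are still strictly positive, positivity --- the only thing you need --- is unaffected, so this is a cosmetic rather than a genuine gap.
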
 

\begin{proof} This can be proven by general Tannakian methods, cf. \cite[Section 2]{NT13}, or by observing that by \cite{LS91} the $*$-algebra $\Pol_q(K)$ embeds in its universal C$^*$-algebraic envelope, so that the results of \cite{Wor98} provide a positive invariant state, which then necessarily has to agree on $\Pol_q(K)$ with $\psi$. 
\end{proof} 

We will in this case write $L^2_q(K)_0$ for the associated pre-Hilbert space, and by $L^2_q(K)$ its completion. 

We recall the following orthogonality relations, see e.g.~ \cite{NT13} (with slightly different conventions):  \[\psi(U_{\varpi}(\xi,\eta)^*U_{\varpi'}(\xi',\eta')) = \delta_{\varpi,\varpi'} q^{-2(\rho,\wt(\xi))}\langle \xi',\xi\rangle \langle \eta,\eta'\rangle  ,\]\[ \psi(U_{\varpi}(\xi,\eta) U_{\varpi'}(\xi',\eta')^*)=  \delta_{\varpi,\varpi'} q^{2(\rho,\wt(\eta))}\langle \xi,\xi'\rangle \langle \eta',\eta\rangle.\] It follows that we have \begin{equation}\label{EqDefAcSig} \acsigma(x) = q^{2(\rho,\lwt(x)+\rwt(x))}x.\end{equation}

The following proposition shows that also the amplification $(\Fun_{q,\compact}^+(AK),\Delta)$ is a $*$-algebraic quantum group. 

\begin{Prop}\label{PropMultIsAQG} The multiplier Hopf $*$-algebra $(\Fun_{q,\compact}^+(AK),\Delta)$ admits a left and right invariant positive integral $\psi^+$, determined by \[\psi^+(xf) = \psi(x)\sum_{\omega} f(\omega).\] In particular, $\Fun_{q,\compact}^+(AK)$ is a $*$-algebraic quantum group. 
\end{Prop}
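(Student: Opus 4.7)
The plan is to exploit the vector space decomposition $\Fun_{q,\compact}^+(AK) = \Pol_q(K)\otimes \Func_{\compact}(P,\C)$ (multiplication map) and define $\psi^+$ as the tensor product $\psi \otimes \tau_P$, where $\tau_P(f) = \sum_{\omega\in P}f(\omega)$ is the counting integral. I would write a typical element as $y = \sum_i x_i \delta_{\omega_i}$ with $x_i\in\Pol_q(K)$ and $\delta_\omega$ the indicator of $\omega\in P$, and set $\psi^+(x_i\delta_{\omega_i}) = \psi(x_i)$.

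For \emph{positivity}, compute $b_j^* b_k = \delta_{\omega_j} x_j^* x_k \delta_{\omega_k}$ and, using the commutation rule $\delta_\mu z = z\delta_{\mu + \lwt(z)-\rwt(z)}$ for bihomogeneous $z$, decompose $x_j^* x_k$ into bi-homogeneous components, observing that only the component with $\lwt-\rwt = \omega_k-\omega_j$ survives. Then note that $\psi$ on $\Pol_q(K)$ kills every non-trivial isotypic component, so its value on this survivor vanishes unless $\omega_j=\omega_k$, leaving
\[
\psi^+(y^*y) = \sum_{\omega\in P}\psi\Bigl(\Bigl(\sum_{j:\omega_j=\omega}x_j\Bigr)^{\!*}\Bigl(\sum_{j:\omega_j=\omega}x_j\Bigr)\Bigr)\geq 0,
\]
using positivity of $\psi$ on $\Pol_q(K)$.

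For \emph{left and right invariance}, I factor the coproduct as $\Delta(x\delta_\omega)=\Delta(x)\Delta(\delta_\omega)$ and use that in $M(\Func_{\compact}(P)\otimes \Func_{\compact}(P))$ one has the formal expansion $\Delta(\delta_\omega) = \sum_\mu \delta_\mu \otimes \delta_{\omega-\mu}$, which upon multiplication by the finite sum $\Delta(x) = \sum x_{(1)}\otimes x_{(2)}$ yields
\[
\Delta(x\delta_\omega) = \sum_\mu x_{(1)}\delta_\mu \otimes x_{(2)}\delta_{\omega-\mu}.
\]
Applying $\id\otimes \psi^+$ then gives $\sum_\mu x_{(1)}\psi(x_{(2)})\delta_\mu = \psi(x)\sum_\mu \delta_\mu = \psi^+(x\delta_\omega)\cdot 1_{M(\Fun_{q,\compact}^+(AK))}$, by left invariance of $\psi$ on $\Pol_q(K)$ (and translation invariance of $\tau_P$). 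The argument for $(\psi^+\otimes\id)\Delta$ is symmetric, using right invariance of $\psi$.

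The main obstacle, I expect, will be a careful bookkeeping of multipliers and the smash-product interchange relation $fx = xf_{\lwt(x)-\rwt(x)}$: one must justify both that $\psi^+$ is well-defined independently of whether one writes $xf$ or $fx$, and that the formal series for $\Delta(\delta_\omega)$ can legitimately be slit against $\psi^+$ to produce a bona fide multiplier in $M(\Fun_{q,\compact}^+(AK))$. Once these technicalities are handled (relying on the fact that multiplying any element of the relevant formal sums by a single compactly supported function gives a finite sum), everything reduces to invariance of $\psi$ on $\Pol_q(K)$ combined with translation invariance of the counting measure on $P$, and hence $(\Fun_{q,\compact}^+(AK),\Delta)$ is a $*$-algebraic quantum group.
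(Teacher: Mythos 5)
Your proposal is correct, and its overall strategy coincides with the paper's: define $\psi^+=\psi\otimes\tau_P$ via the linear isomorphism $\Pol_q(K)\otimes\Func_{\compact}(P,\C)\cong\Fun_{q,\compact}^+(AK)$, reduce positivity to positivity of $\psi$ on $\Pol_q(K)$, and get invariance from invariance of $\psi$ together with translation invariance of the counting measure on $P$ (the paper in fact dismisses invariance as immediate, so your explicit multiplier computation with $\Delta(\delta_\omega)=\sum_\mu\delta_\mu\otimes\delta_{\omega-\mu}$ is if anything more detailed). The one place where your mechanism differs is the positivity step: you expand in Dirac functions and invoke the fact that $\psi$ annihilates every bi-homogeneous element with $\lwt\neq\rwt$ (vanishing off the trivial isotypic component), so that only the diagonal terms $\omega_j=\omega_k$ survive; the paper instead keeps general $f_i$, pushes all shift functions to the right using $fx=xf_{\lwt(x)-\rwt(x)}$, and performs a change of variables in the summation index $\omega$ to rewrite $\psi^+(y^*y)$ directly as $\sum_\omega\psi(z_\omega^*z_\omega)$ with $z_\omega=\sum_i f_i(\omega+\lwt(x_i)-\rwt(x_i))\,x_i$, which needs no orthogonality input at all. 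Both arguments are valid; the paper's is marginally more self-contained, while yours is perhaps more transparent about why the cross terms vanish.
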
 

\begin{proof} The invariance of $\psi^{+}$ is immediate. To see that $\psi^{+}$ defines a positive functional on $\Fun_{q,\compact}^{+}(AK)$, recall first that $\lwt(x^*) = -\lwt(x)$. We note then that \begin{eqnarray*}&& \hspace{-0.5cm} \psi((\sum_i x_if_i)^*(\sum_i x_if_i))  \\ &&= \sum_{i,j,\omega} \psi(x_i^*x_j) (f_{i,(\lwt(x_j)-\lwt(x_i))-(\rwt(x_j)-\rwt(x_i))}^*f_j)(\omega) \\ &&= \sum_{i,j,\omega}  \psi(x_i^*x_j)  \overline{f_{i}(\omega - (\lwt(x_j)-\lwt(x_i))+(\rwt(x_j)-\rwt(x_i)))} f_j(\omega)
\\ && =
 \sum_{i,j,\omega}  \psi(x_i^*x_j)  \overline{f_{i}(\omega + \lwt(x_i)-\rwt(x_i))}  f_j(\omega + \lwt(x_j) - \rwt(x_j))\\
 && = \sum_{\omega}\psi ((\sum_i \lambda_{\omega,i} x_i)^* (\sum_i \lambda_{\omega,i}x_i))
\\ &&\geq 0,
\end{eqnarray*}
where $\lambda_{\omega,i} = f_{i}(\omega +  \lwt(x_i)-\rwt(x_i))$.
\end{proof} 

We can hence form the pre-Hilbert space  $L^{2}_{q,+}(AK)_0$, which is $\Fun_{q,\compact}^{+}(AK)$ endowed with the inner product \[\langle x,y\rangle = \psi^{+}(x^*y).\] We will denote the Hilbert space completion as $L^{2}_{q,+}(AK)$, and we denote by \[\Gamma^{+}: \Fun_{q,\compact}^{+}(AK) \rightarrow L^{2}_{q,+}(AK)\] the inclusion. We also easily find from \eqref{EqDefAcSig} that the modular automorphism $\acsigma^+$ is given by \begin{equation}\label{EqDefAcSigAmp} \acsigma^+(xf) = \acsigma(x)f.\end{equation}

Let us now consider again a general $*$-algebraic quantum group $(H,\Delta)$. As the representation of $H$ on $L^2(H)_0$ extends to a bounded $*$-representation on $L^2(H)$, we can consider the von Neumann algebraic closure \[M = H'' \subseteq B( L^2(H)).\] Let \[ V_0: L^2(H)_0\otimes L^2(H)_0\rightarrow L^2(H)_0\otimes L^2(H)_0,\] \begin{equation}\label{DefVAqg} \Gamma(x)\otimes \Gamma(y) \mapsto (\Gamma\otimes \Gamma)(\Delta(x)(1\otimes y)).\end{equation} Then one easily sees that $V_0$ is a unitary linear isomorphism, implementing $\Delta$ in the sense that \[\Delta(x) = V_0(x\otimes 1)V_0^*,\qquad x\in H.\] Let $V$ be the closure of $V_0$. Then it is easy to deduce from the above that we obtain a coassociative normal $*$-homomorphism \[\Delta: M\rightarrow M\vNtimes M,\quad x\mapsto V(x\otimes 1)V^*.\] Let us also remark that the multiplier $*$-algebra $M(H)$ can be $*$-represented on the pre-Hilbert space $L^2(H)_0$, but in general this will not be a $*$-representation by bounded operators. 

\begin{Theorem}\label{TheoCompHopf} With the above notations, $(M,\Delta)$ is a quantum group von Neumann algebra, and one can extend the $\varphi,\psi$ on $H$ to resp. left and right invariant nsf weights on $M$. Moreover, the modular element for $(M,\Delta)$ is the closure of the modular element $\delta\in M(H)$ acting on $L^2(H)_0$. 
\end{Theorem}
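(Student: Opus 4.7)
The plan is to follow the standard passage from algebraic to operator-algebraic quantum groups due to Kustermans and Van Daele, adapted to the GNS setting described in the statement.

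First, I would verify that the linear map $V_0$ defined by \eqref{DefVAqg} is an isometric surjection on the pre-Hilbert space $L^2(H)_0 \otimes L^2(H)_0$. For isometry, the right invariance $(\psi\otimes \id)\Delta(x^*x) = \psi(x^*x)\cdot 1$ gives
\[
\langle V_0(\Gamma(x)\otimes \Gamma(y)), V_0(\Gamma(x)\otimes \Gamma(y))\rangle = (\psi\otimes \psi)\bigl((1\otimes y^*)\Delta(x^*x)(1\otimes y)\bigr) = \psi(x^*x)\psi(y^*y),
\]
and polarisation yields isometry. Surjectivity on $L^2(H)_0\otimes L^2(H)_0$ follows from the defining property of multiplier Hopf algebras that the map $x\otimes y\mapsto \Delta(x)(1\otimes y)$ is a bijection on $H\otimes H$. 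Hence $V_0$ extends to a unitary $V$ on $L^2(H)\otimes L^2(H)$, and by construction implements $\Delta$ on $H$.

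Second, I would establish the pentagon equation $V_{12}V_{13}V_{23} = V_{23}V_{12}$ by a direct check on vectors of the form $\Gamma(x)\otimes \Gamma(y)\otimes \Gamma(z)$, invoking coassociativity of $\Delta$ on $H$. From this it follows immediately that the formula $\Delta(x) = V(x\otimes 1)V^*$ defines a normal $*$-homomorphism $M\to M\vNtimes M$, still coassociative, which extends the algebraic coproduct. Since $V\in M\vNtimes B(L^2(H))$ (as $H$ is dense in $M$), this is well-posed.

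Third, to construct the invariant nsf weights on $M$, I would apply the machinery of Kustermans--Van Daele showing that every positive left (resp.\ right) invariant integral on a $*$-algebraic quantum group extends uniquely to a normal, semi-finite, faithful weight on the associated von Neumann algebra, whose GNS construction is precisely $(L^2(H),\Gamma)$ on the algebraic side. Left-invariance of the extended $\varphi$ (and right-invariance of $\psi$) is then obtained by passing from the algebraic identity $(\omega\otimes \id)\Delta(x) = \varphi(x)\omega(1)$ for $\omega$ evaluating functionals on $H$ to a $\sigma$-weak closure argument: normality of $\Delta$ and of slice maps by elements of $M_*$, combined with lower semi-continuity of $\varphi$, yields the full invariance condition on $M^+$.

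Finally, for the identification of the modular element, I would observe that the multiplier $\delta\in M(H)$ acts on $L^2(H)_0$ as a positive, densely defined, invertible operator, and its closure is affiliated with $M$ because $\delta$ multiplies $H$ into itself on both sides. The algebraic identity $\psi(x)=\varphi(x\delta)$ combined with group-likeness $\Delta(\delta)=\delta\otimes\delta$ propagates to the extended weights, which by uniqueness of the Connes cocycle derivative up to the scaling constant $\nu$ forces the closure of $\delta$ to coincide with the modular element of $(M,\Delta)$. The hard part here, and really the crux of the whole theorem, is the weight-extension step: showing both that $\Gamma$ remains a GNS map for a legitimate nsf weight on $M$ and that its modular automorphism group extends the algebraic $\acsigma$; this is precisely where one leans on the Kustermans analytic-extension theorem, after which the quantum group axioms propagate by continuity.
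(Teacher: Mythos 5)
Your proposal is correct and follows essentially the same route as the paper: the preliminary steps (unitarity of $V_0$, implementation and normality of $\Delta$) are routine and are in fact already carried out in the paragraph preceding the theorem, while the substantive claims — extension of $\varphi,\psi$ to invariant nsf weights and identification of the modular element — are, exactly as you indicate, delegated to the Kustermans--Van Daele machinery (the paper cites \cite[Proposition 6.2, Corollary 6.14, Proposition 8.14 and Theorem 9.18]{KV97}, noting only that the C$^*$-algebraic arguments there carry over to, or can be transported to, the von Neumann algebraic level).
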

 
\begin{proof} Existence of a left invariant nsf weight $\varphi$ is proven in \cite[Proposition 6.2 and Corollary 6.14]{KV97} -- these results are proven in the C$^*$-context, but the same arguments can be used to deduce the result on the level of von Neumann algebras (alternatively, one could invoke the results of \cite{KV03} to pass from the C$^*$-level to the von Neumann algebra level). By considering the opposite coproduct, the same result holds for $\psi$. 

The claim concerning $\delta$ follows from \cite[Proposition 8.14 and Theorem 9.18]{KV97}. 
\end{proof} 

For the case of $H = \Fun_{q,\compact}^+(AK)$, we introduce the following notation. 

\begin{Def} We define 
\[L^{\infty}_{q,+}(AK) = \Fun_{q,\compact}^+(AK)''\subseteq B(L^2_{q,+}(AK)).\] 
\end{Def}

We can thus equip $L^{\infty}_{q,+}(AK)$ with a comultiplication extending the one on $\Fun_{q,\compact}^+(AK)$, giving us a quantum group von Neumann algebra. Let us give a direct proof for the existence of a (left and right) invariant nsf weight for this specific case

\begin{Prop}\label{PropPsiConc} The couple $(L^{\infty}_{q,+}(AK),\Delta)$ is a quantum group von Neumann algebra. 
\end{Prop}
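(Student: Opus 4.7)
The plan is to deduce this directly from Theorem \ref{TheoCompHopf} applied to the $*$-algebraic quantum group $(\Fun_{q,\compact}^+(AK),\Delta)$ of Proposition \ref{PropMultIsAQG}: the general theory produces a quantum group von Neumann algebra structure on the weak closure of the GNS representation, which by definition of $L^\infty_{q,+}(AK)$ is exactly the object we wish to study, together with nsf left and right invariant weights extending $\psi^+$.

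For a hands-on proof, I would first verify that left multiplication by any element of $\Fun_{q,\compact}^+(AK)$ on $L^2_{q,+}(AK)_0$ extends to a bounded operator. Using the vector space identification $\Fun_{q,\compact}^+(AK) = \Pol_q(K) \otimes \Func_{\compact}(P,\C)$ and the resulting decomposition of $L^2_{q,+}(AK)$ as an orthogonal direct sum over $\omega \in P$ of copies of $L^2_q(K)$, this reduces to the well-known boundedness of the left regular representation of $\Pol_q(K)$ on $L^2_q(K)$ (via the embedding into the universal C$^*$-envelope of \cite{LS91,Wor98}) together with the evidently bounded shift-type action of $\Func_{\compact}(P,\C)$.

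Next, applying the general construction \eqref{DefVAqg} to $H = \Fun_{q,\compact}^+(AK)$ yields a map which is isometric by left-invariance of $\psi^+$ and surjective by the defining multiplier Hopf identity $\Delta(H)(1\otimes H) = H\otimes H$; its closure $V^+$ is thus a multiplicative unitary. Setting $\Delta(x) = V^+(x\otimes 1)V^{+*}$ then gives a coassociative normal unital $*$-homomorphism $L^\infty_{q,+}(AK) \to L^\infty_{q,+}(AK) \vNtimes L^\infty_{q,+}(AK)$ extending the algebraic coproduct.

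Finally, $(\Fun_{q,\compact}^+(AK),\Gamma^+)$ forms a left Hilbert algebra whose associated left von Neumann algebra is $L^\infty_{q,+}(AK)$, and its canonical Plancherel weight is an nsf extension of $\psi^+$. Left and right invariance of this extended weight follow by $\sigma$-weak approximation from the algebraic two-sided invariance established in Proposition \ref{PropMultIsAQG}. The main technical obstacle is verifying the full left Hilbert algebra structure (involution closability and density of squares), which however is handled by the arguments of \cite[Proposition 6.2 and Corollary 6.14]{KV97} applied verbatim to our situation.
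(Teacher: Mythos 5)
Your proposal is correct, but it takes a different route from the paper at the one point where the proposition has actual content. The paper already observes, in the paragraph preceding the proposition, that Theorem \ref{TheoCompHopf} applied to Proposition \ref{PropMultIsAQG} settles the matter (your first paragraph); the purpose of the proof of Proposition \ref{PropPsiConc} is precisely to \emph{avoid} the left Hilbert algebra/Plancherel weight machinery of \cite{KV97} by exhibiting the invariant weight explicitly. Namely, the paper takes $\psi^+ = \sum_{\chi\in P}\omega_{\Gamma^+(\delta_\chi)}$ restricted to $L^{\infty}_{q,+}(AK)$, with $\delta_\chi$ the Dirac functions; this is manifestly normal and semifinite, and two-sided invariance is a three-line computation using $V^*(\Gamma^+(\delta_\nu)\otimes\xi_\chi) = \Gamma^+(\delta_{\nu+\chi})\otimes\xi_\chi$ on the orthogonal decomposition $\xi = \oplus_\chi\xi_\chi$ along the subspaces $\delta_\chi L^2_{q,+}(AK)$. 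By contrast, your ``hands-on'' version ultimately circles back to citing \cite[Proposition 6.2 and Corollary 6.14]{KV97} for the hard step — and rightly so, since invariance of a weight does \emph{not} pass to the von Neumann closure by naive $\sigma$-weak approximation (weights are not continuous); your acknowledgement of this saves the argument, but it means your proof is really the general one in disguise. What each approach buys: yours is modular and works for any $*$-algebraic quantum group; the paper's yields a concrete formula for the Haar weight that is reused verbatim for $\psi^0$ on $L^{\infty}_{q,0}(AK)$ and feeds into the trace-formula identification in Theorem \ref{TheoExtIso}, which is why the direct construction is worth the small extra computation here.
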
 
\begin{proof} 
Let $\delta_{\chi}$ be the Dirac function in $\chi\in P$, and let $\psi^{+}$ be the restriction to $L^{\infty}_{q,+}(AK)$ of the weight \begin{equation}\label{EqWeightPsiTilde} B(L^{2}_{q,+}(AK))^+ \rightarrow [0,+\infty),\quad x \mapsto \sum_{\chi\in P} \langle \Gamma^{+}(\delta_{\chi}),x\Gamma^{+}(\delta_{\chi})\rangle.\end{equation} Then clearly $\psi^{+}$ is an nsf weight. We claim that it is invariant for $\Delta$. We will prove right invariance, as left invariance follows similarly.

 Take $\xi \in L^{2}_{q,+}(AK)$, and let \[\xi = \oplus_{\chi} \xi_{\chi}\] be its decomposition along the orthogonal subspaces $\delta_{\chi}L^{2}_{q,+}(AK)$. Then with $x\in L^{\infty}_{q,+}(AK)$, we compute
   \begin{eqnarray*}&&\hspace{-0.6cm}\psi^{+}((\id\otimes \omega_{\xi,\xi})\Delta(x))\\ &&= \sum_{\chi,\chi',\nu} \langle V^* ( \Gamma^{+}(\delta_{\nu})\otimes \xi_{\chi}),(x\otimes 1)V^* (\Gamma^{+}(\delta_{\nu})\otimes \xi_{\chi'})\rangle \\ &&= \sum_{\chi,\chi',\nu} \langle  \Gamma^{+}(\delta_{\nu+\chi})\otimes \xi_{\chi},x\Gamma^{+}(\delta_{\nu+\chi'})\otimes  \xi_{\chi'}\rangle\\ &&= \sum_{\chi,\nu} \langle \Gamma^{+}(\delta_{\nu})\otimes \xi_{\chi},x\Gamma^{+}(\delta_{\nu})\otimes \xi_{\chi}\rangle \\ &&  = \langle \xi,\xi\rangle \psi^{+}(x).
\end{eqnarray*}
\end{proof}

\subsection{Duals for $*$-algebraic quantum groups and von Neumann algebraic completion of $U_q^+(\mfb_{\R})$.}

There is the following duality theory for $*$-algebraic quantum groups \cite{VDae98}. Let $(H,\Delta)$ be a $*$-algebraic quantum group, and let \[\hat{H} := \{\omega: H \rightarrow \C\mid \exists x\in H, \forall y\in H, \omega(y) = \psi(xy).\}.\] Then $\hat{H}$ becomes a $*$-algebra for the operations \[(\omega* \chi)(x) = (\omega\otimes \chi)(\Delta(x)),\qquad \omega^*(x) = \overline{\omega(S(x)^*)},\] where the right hand side of the first expression is meaningful by the concrete form of $\omega,\chi$. The $*$-algebra $\hat{H}$ then becomes a $*$-algebraic quantum group for the unique comultiplication map $\hat{\Delta}: \hat{H}\rightarrow M(\hat{H}\otimes \hat{H})$ such that \[(\hat{\Delta}(\omega)(1\otimes \chi))(x\otimes y) = (\omega\otimes \chi)((x\otimes 1)\Delta(y)),\qquad x,y\in H,\omega,\chi\in \hat{H}.\] 

We can represent $\hat{H}$ faithfully as a $*$-algebra on the pre-Hilbert space $L^2(H)_0$ by means of translation operators, \[\pi_{\rhd}(\omega)\Gamma(x) = \Gamma(\omega \rhd x) = \Gamma((\id\otimes \omega)(x)),\qquad x\in H,\omega\in \hat{H}.\] In fact, since $H = HH$, it follows from \eqref{EqModAut} that \[\hat{H} = \{\omega \in H\mid \exists x,y,\forall z\in H,\omega(z) = \psi(x^*zy)\}.\] Hence with $V_0$ as in \eqref{DefVAqg}, we see that, for $\omega(z) = \psi(x^*zy)$, \[\pi_{\rhd}(\omega) = (\omega_{\Gamma(x),\Gamma(y)}\otimes \id)(V_0),\] and the $*$-representation of $\hat{H}$ is bounded. It follows straightforwardly from these considerations that, with $(M,\Delta)$ the von Neumann algebraic completion of $(H,\Delta)$, we obtain a $*$-homomorphism \[\hat{H}\rightarrow \hat{M}',\quad (\omega_{\Gamma(x),\Gamma(y)}\otimes \id)(V_0) \rightarrow (\omega_{\Gamma(x),\Gamma(y)}\otimes \id)(V).\] It is clear that the image will be $\sigma$-weakly dense in $\hat{M}'$. Moreover, this embedding preserves the comultiplication since an easy computation reveals that \[(\pi_{\rhd}\otimes \pi_{\rhd})(\hat{\Delta}(\omega)) = V_0(1\otimes \pi_{\rhd}(\omega))V_0^*,\] in correspondence with the $V$ being the left regular corepresentation for $(\hat{M}',\hat{\Delta}')$ by \eqref{EqFormDualOp}. By an explicit identification of $L^2(H)_0 \cong L^2(\hat{H})_0$, intertwining $\pi_{\rhd}$ with the presentation by left multiplication, it is also not hard to see that $\hat{M}'$ will then coincide with the completion of $\hat{H}$ as constructed in the previous section. 

Let us further note that, by the remark above \cite[Proposition 4.3]{VDae98}, we can identify \[M(\hat{H}) = \{\omega: H \rightarrow \C\mid \forall x\in H: (\omega\otimes \id)\Delta(x) \in H \textrm{ and }(\id\otimes \omega)(\Delta(x)) \in H\}.\] It follows that we can extend $\pi_{\rhd}$ to a $*$-representation of $M(\hat{H})$ on $L^2(H)_0$. 

Let us now consider the case of $H = \Fun_{q,\compact}^+(AK)$, in which case we will write the dual and completed dual as \[\Fun_{q,\compact}^+(B_{\R})  = \Fun_{q,\compact}(AK)^{\wedge},\qquad  (\msR_{q}^+(B_{\R}),\hat{\Delta}) = (L^{\infty}_{q,+}(AK)^{\wedge \prime},\hat{\Delta}').\] Note also that by the above characterization of $M(\Fun_{q,\compact}^+(B_{\R}))$, it follows that the embedding of $U_q^+(\mfb_{\R})$ into the dual vector space of $\Fun_{q,\compact}^+(AK)$, provided by \eqref{EqPairUP}, gives us a comultiplication preserving $*$-homomorphism \[U_q^+(\mfb_{\R}) \rightarrow M(\Fun_{q,\compact}^+(B_{\R})).\] One can concretely write down the resulting action of $U_q^+(\mfb_{\R})$ on $L^2_{q,+}(AK)_0$ as follows: For $X\in U_q^+(\mfb_{\R})$ such that $\pi_{\mfk}(X_{(1)}) \otimes \pi_{\mfa}(X_{(2)}) = \pi_{\mfk}(X) \otimes u_{\omega}$ for some $\omega \in P$, we have
  \[\pi_{\rhd}(X) \Gamma^+(U_{\varpi}(\xi,\eta)f) =  \Gamma^+(U_{\varpi}(\xi,\pi_{\mfk}(X)\eta)f_{-\omega}).\] Note that any element of $U_q^{+}(\mfb_{\R})$ can be written as a linear combination of such elements. 
 
We see that each $\pi_{\rhd}(X)$ is in fact a direct sum of bounded operators on Hilbert spaces $L^2_{q,+}(AK)_{\varpi}$, whose natural dense pre-Hilbert spaces we write as $L^2_{q,+}(AK)_{\varpi,0}$. It follows that we can consider for each $X\in U_q^+(\mfb_{\R})$ the closed unbounded operator \[\mbX = \textrm{ closure of }\pi_{\rhd}(X),\] which will be affiliated with $\msR_{q}^+(B_{\R})$. The implementation of $\hat{\Delta}$ by means of $V_0$ also implies that we have compatibility with the coproduct, with \[\hat{\Delta}(\mbX) = \textrm{ closure of }(\pi_{\rhd}\otimes \pi_{\rhd})(\hat{\Delta}(X)).\] 

Let us note the following formula for the modular operator of $\msR_{q}^+(B_{\R})$.

\begin{Def} The modular operator $\hat{\delta}\eta \msR_{q}^+(B_{\R})$ is given by \[\hat{\delta} = \mbK_{-2\rho}^*\mbK_{-2\rho}.\] 
\end{Def} 
\begin{proof} By \cite{Kus97}, we have that, as a functional, the modular element $\hat{\delta} \in \Fun_{q,\compact}^+(B_{\R})$ is given by \[\hat{\delta}(x) = \varepsilon(\acsigma^{+,-1}(x)),\qquad x\in \Fun_{q,\compact}^+(AK).\] By \eqref{EqDefAcSigAmp}, it follows that \[\hat{\delta}(xf) = q^{-2(\rho,\lwt(x)+\rwt(x))} f(0) \varepsilon(x) = (L_{-2\rho}K_{-2\rho}, xf).\] By the final part of Theorem \ref{TheoCompHopf}, it now follows that the modular operator of $\msR_{q}^+(B_{\R})$ is the closure of $\pi_{\rhd}(L_{-2\rho}K_{-2\rho}) = \pi_{\rhd}(K_{-2\rho})^*\pi_{\rhd}(K_{-2\rho})$, which is  $\mbK_{-2\rho}^*\mbK_{-2\rho}$ as $\mbK_{-2\rho}$ is normal. 
\end{proof}

\subsection{von Neumann algebraic completion of $\Fun_{q,\compact}^0(AK)$ as a $I$-factorial Galois object.}

We now turn towards the von Neumann algebraic completion of the Galois object $(\Fun_{q,\compact}^0(AK),\gamma)$. 

\begin{Prop} Let \[\psi^0: \Fun_{q,\compact}^0(AK) \rightarrow \C,\quad xf \mapsto \psi(x)\sum_{\omega} f(\omega).\] Then $\psi^0$ is a positive, faithful functional, and \begin{equation}\label{EqInvPsi+} (\psi^+\otimes \id)\gamma(x) = \psi^0(x)1,\qquad x\in \Fun_{q,\compact}^0(AK).\end{equation}
\end{Prop}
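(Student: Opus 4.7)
The argument parallels the one for $\psi^+$ in Proposition~\ref{PropMultIsAQG}, with the change of interchange relation (now $fy = yf_{\lwt(y)}$ instead of $fy = yf_{\lwt(y)-\rwt(y)}$) being the only bookkeeping difference for positivity, and the invariance following from left invariance of $\psi$ on $\Pol_q(K)$ combined with the Haar integral on $\Func_{\compact}(P)$.

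For positivity, I would write an arbitrary element as $z = \sum_i x_if_i$ with $x_i \in \Pol_q(K)$ and $f_i \in \Func_{\compact}(P,\C)$. Using $f_i^*x_i^* = x_i^*(f_i^*)_{-\lwt(x_i)}$ (since $\lwt(x_i^*) = -\lwt(x_i)$) and then commuting past $x_j$, one finds
\[
z^*z = \sum_{i,j} x_i^*x_j\,(f_i^*)_{\lwt(x_j)-\lwt(x_i)}\,f_j.
\]
Applying $\psi^0$, the sum over $\omega$ in the function part and the substitution $\omega = \nu + \lwt(x_j)$ lead to
\[
\psi^0(z^*z) = \sum_{\nu \in P} \psi\!\left(\Big(\sum_i f_i(\nu+\lwt(x_i))\,x_i\Big)^*\!\Big(\sum_j f_j(\nu+\lwt(x_j))\,x_j\Big)\right) \geq 0,
\]
which is manifestly non-negative. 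Faithfulness then follows: if $\psi^0(z^*z) = 0$ then each term vanishes, so by faithfulness of $\psi$ on $\Pol_q(K)$ the element $\sum_i f_i(\nu+\lwt(x_i))x_i$ is zero in $\Pol_q(K)$ for every $\nu$; choosing a representation of $z$ with $\{x_i\}$ linearly independent in $\Pol_q(K)$ (possible via the vector space isomorphism $\Fun_{q,\compact}^0(AK) \cong \Pol_q(K)\otimes \Func_{\compact}(P)$), this forces $f_i(\nu+\lwt(x_i))=0$ for every $\nu$, hence $f_i = 0$ and $z = 0$.

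For left invariance, I would expand $f = \sum_\chi f(\chi)\delta_\chi$ and $\Delta(x) = \sum_i x_{(1),i}\otimes x_{(2),i}$, and use the (multiplier) formula $\Delta(\delta_\chi) = \sum_\nu \delta_\nu\otimes \delta_{\chi-\nu}$. Using the fact that the algebra decomposition $\Fun_{q,\compact}^0((AK)^2) = \Fun_{q,\compact}^+(AK)\otimes \Fun_{q,\compact}^0(AK)$ is compatible with the two separate interchange relations, one obtains
\[
\gamma(xf) = \sum_{\chi,i,\nu} f(\chi)\,(x_{(1),i}\delta_\nu)\otimes (x_{(2),i}\delta_{\chi-\nu}).
\]
Applying $\psi^+\otimes \id$, the Dirac factor $\delta_\nu$ satisfies $\sum_\omega \delta_\nu(\omega) = 1$, so $\psi^+(x_{(1),i}\delta_\nu) = \psi(x_{(1),i})$ is independent of $\nu$. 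Summing over $\nu$ produces the unit multiplier $\sum_\nu \delta_{\chi-\nu} = 1$, and left invariance of $\psi$ on $\Pol_q(K)$ collapses the remaining sum:
\[
\sum_i \psi(x_{(1),i})x_{(2),i} = \psi(x)\cdot 1.
\]
Combining these steps yields $(\psi^+\otimes \id)\gamma(xf) = \psi(x)\bigl(\sum_\chi f(\chi)\bigr)\cdot 1 = \psi^0(xf)\cdot 1$.

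The main delicate point is the consistency of the tensor product decomposition of $\Fun_{q,\compact}^0((AK)^2)$ with the interchange relations: one needs to check that writing $(f_1\otimes f_2)(y_1\otimes y_2)$ via the separate interchange rules in $\Fun_q^+(AK)$ and $\Fun_q^0(AK)$ agrees with the global interchange rule on $\Fun_q^0((AK)^2)$ involving the shift $\lwt-\eta\rwt$. This is a short direct verification using $\eta(\omega_1,\omega_2)=(\omega_1,0)$, and once in place, the rest of the computation is purely combinatorial and offers no further obstacle.
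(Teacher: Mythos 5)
Your proposal is correct. The paper's own proof is essentially a one-liner: it declares the invariance identity ``immediate by the definition of $\gamma$'' and obtains positivity and faithfulness by citing general results on Galois objects for algebraic quantum groups (\cite[Corollary 1.8 and Proposition 1.22]{DeC09}), applicable because $\Fun_{q,\compact}^0(AK)$ is realizable on a pre-Hilbert space; it then remarks that one can alternatively argue directly as in Proposition \ref{PropMultIsAQG}. Your argument is exactly that alternative, carried out in full: the positivity computation is the same telescoping-into-a-sum-of-squares trick as in Proposition \ref{PropMultIsAQG} with the shift $\lwt(x_j)-\lwt(x_i)$ replacing $\lwt-\rwt$, and your substitution $\omega=\nu+\lwt(x_j)$ is correct. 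What your route buys is self-containedness (no appeal to \cite{DeC09}) and, as a bonus, faithfulness falls out of the same computation. Two small points to tidy: in the positivity/faithfulness argument you should take the $x_i$ to be \emph{left-homogeneous} (as well as linearly independent) so that $\lwt(x_i)$ and the interchange relation make sense termwise -- this is possible since $\Pol_q(K)$ has a basis of bi-homogeneous matrix coefficients; and in the invariance computation the identity $\sum_\nu \delta_{\chi-\nu}=1$ only holds in the multiplier algebra, so strictly one should verify $(\psi^+\otimes\id)(\gamma(xf)(1\otimes y))=\psi^0(xf)\,y$ for $y\in\Fun_{q,\compact}^0(AK)$, which truncates all sums to finite ones; your formal computation goes through verbatim after this cutoff.
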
 
\begin{proof} The formula \eqref{EqInvPsi+} is immediate by the definition of $\gamma$. Since $\Fun_{q,\compact}^0(AK)$ is realizable as operators on a pre-Hilbert space, it follows by \cite[Corollary 1.8 and Proposition 1.22]{DeC09} that $\psi^0$ is faithful and positive. Alternatively, and can give a direct proof as in Proposition \ref{PropMultIsAQG}.
\end{proof} 

We can hence form the pre-Hilbert space $L^2_{q,0}(AK)_0$ as the vector space $\mathscr{F}_{q,\compact}^0(AK)$ with inner product \[\langle x,y\rangle = \psi^0(x^*y).\] We denote its completion by $L^2_{q,0}(AK)$ and by \[\Gamma^0: \Fun_{q,\compact}^0(AK) \rightarrow L^2_{q,0}(AK)\] for the natural inclusion map. Note that the $*$-representation of $\Fun_{q,\compact}^0(AK)$ as left multiplication operators on $L^2_{q,0}(AK)_0$ must be bounded, and hence extendable to $L^2_{q,0}(AK)$, since \emph{any} $*$-representation of $\Pol_q(K)$ or $\Func_{\compact}(P)$ on a pre-Hilbert space is bounded. 

\begin{Def} We define \[L^{\infty}_{q,0}(AK) = \Fun_{q,\compact}^0(AK)'' \subseteq B(L^2_{q,0}(AK)).\]
\end{Def} 

Since $(\Fun_{q,\compact}^0(AK),\gamma)$ is a left Galois object for $\Fun_{q,\compact}^+(AK)$, we have that the map \[\Vv_0: L^2_{q,0}(AK)_0 \otimes L^2_{q,0}(AK)_0 \rightarrow L^2_{q,+}(AK)_0\otimes L^2_{q,0}(AK)_0,\]\[\Gamma^0(x) \otimes \Gamma^0(y) \mapsto (\Gamma^+\otimes \Gamma^0)(\gamma(x)(1\otimes y))\] must be a linear isomorphism. By the invariance of $\psi^0$, it is also immediate that it is unitary. Furthermore, \[\Vv_0(x\otimes 1)\Vv_0^* = \gamma(x)\] as operators on $L^2_{q,+}(AK) \otimes L^2_{q,0}(AK)$. Write $\Vv$ for the closure of $\Vv_0$. Then it is clear by the above that $\gamma$ extends to a map \[\gamma: L^{\infty}_{q,0}(AK) \rightarrow L^{\infty}_{q,+}(AK) \vNtimes L^{\infty}_{q,0}(AK),\] and that this defines a coaction of $L^{\infty}_{q,+}(AK)$ on $L^{\infty}_{q,0}(AK)$. 

\begin{Theorem} The couple $(L^{\infty}_{q,0}(AK),\gamma)$ is a left Galois object for $L^{\infty}_{q,+}(AK)$.
\end{Theorem}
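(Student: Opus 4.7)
The plan is to verify the three defining properties of a left Galois object: unitarity of the Galois map $\Vv$, integrability (existence of a right-invariant nsf weight), and ergodicity of $\gamma$.

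First, the unitarity of $\Vv$ is essentially in hand: at the pre-Hilbert space level $\Vv_0$ is bijective because $(\Fun_{q,\compact}^0(AK),\gamma)$ is an algebraic Galois object, and it is isometric because of the invariance identity $(\psi^+\otimes\id)\gamma(x) = \psi^0(x)1$ combined with faithfulness and positivity of $\psi^+$ and $\psi^0$. Hence the closure $\Vv$ is unitary. The relation $\Vv_0(x\otimes 1)\Vv_0^* = \gamma(x)$ then extends by $\sigma$-weak density and normality to $\Vv(x\otimes 1)\Vv^* = \gamma(x)$ for all $x \in L^{\infty}_{q,0}(AK)$, so $\gamma$ is indeed a normal coaction implemented by $\Vv$.

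For integrability, I would promote $\psi^0$ to an nsf weight on $L^{\infty}_{q,0}(AK)$ in complete analogy with the proof of Proposition \ref{PropPsiConc}: define it as the restriction of the vector weight
\[
B(L^2_{q,0}(AK))^+ \rightarrow [0,+\infty],\qquad x\mapsto \sum_{\chi\in P}\langle \Gamma^0(\delta_\chi), x\Gamma^0(\delta_\chi)\rangle.
\]
This is an nsf weight whose restriction to $\Fun_{q,\compact}^0(AK)$ agrees with the algebraic $\psi^0$. Repeating essentially verbatim the computation used in Proposition \ref{PropPsiConc}, but with the left regular $V$ replaced by the Galois unitary $\Vv$ and using the algebraic identity $(\psi^+\otimes\id)\gamma(x) = \psi^0(x)1$, one finds $\psi^0((\omega_{\xi,\xi}\otimes \id)\gamma(x)) = \langle \xi,\xi\rangle\,\psi^0(x)$ for $\xi \in L^2_{q,+}(AK)$ and $x \in L^{\infty}_{q,0}(AK)^+$. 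Right invariance of $\psi^0$ for $\gamma$ thus follows.

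Ergodicity is the delicate point. If $x\in L^{\infty}_{q,0}(AK)$ satisfies $\gamma(x) = 1\otimes x$, then by $\Vv(x\otimes 1)\Vv^* = \gamma(x) = 1\otimes x$, the operator $x\otimes 1$ commutes with $\Vv$. Consequently $x$ commutes with all slices $(\omega\otimes \id)(\Vv)$ for $\omega \in B(L^2_{q,+}(AK))_*$, which by the definition of $\Vv_0$ and cyclicity of $\Gamma^0$ form a $\sigma$-weakly dense subalgebra of $L^{\infty}_{q,0}(AK)$. Hence $x$ is central in $L^{\infty}_{q,0}(AK)$. The main work is then to show the center reduces to $\C$: this I would do by noting that the representation of $\Fun_{q,\compact}^{0}(AK)$ on $L^2_{q,0}(AK)$ contains (through $\Pol_q(K)$ and $\Func(P,\C)$) the irreducible representation $\pi_L$ of $\Pol_q(K)$ on $L^2_{\hol,q}(B)_0$ from Lemma \ref{LemRepAK}, and the centre of the von Neumann algebra generated by an irreducible set of operators is trivial. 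Combined with faithfulness of $\psi^0$, this forces $x\in\C\cdot 1$.

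The main obstacle is ergodicity: one must be careful not to confuse ergodicity on the algebraic level (which is essentially immediate from triviality of $\Pol_q(K)^{\Delta}$) with ergodicity of the $\sigma$-weak closure, and the cleanest bridge is precisely the density of the left slices of $\Vv$ together with the irreducibility of $\pi_L$, which packages the irreducibility of the big-cell representation into a centre-trivialisation statement at the von Neumann algebra level.
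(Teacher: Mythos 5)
Your checklist (unitarity of $\Vv$, integrability, ergodicity) is the right one and the unitarity step is fine, but the other two steps have genuine gaps, and both are repaired by a single identity that you only invoke at the algebraic level. The paper's proof rests on showing that the nsf weight $\psi^0(x)=\sum_{\chi\in P}\langle\Gamma^0(\delta_\chi),x\Gamma^0(\delta_\chi)\rangle$ satisfies $(\psi^+\otimes\id)\gamma(x)=\psi^0(x)1$ for \emph{every} $x\in L^{\infty}_{q,0}(AK)^+$, by the same computation as in Proposition \ref{PropPsiConc}; this one identity yields integrability (every positive $x$ with $\psi^0(x)<\infty$ is an integrable element, and these are $\sigma$-weakly dense) and ergodicity simultaneously (for a positive coinvariant $x$ the left-hand side is $\psi^+(1)\,x$, an extended positive element which is infinite exactly on the support of $x$ since $\psi^+(1)=+\infty$, and this can equal the scalar $\psi^0(x)1$ only if $x$ has support $1$; applied to the projections of the fixed-point algebra this forces $N^{\gamma}=\C$). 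Your integrability paragraph instead establishes $(\omega_{\xi,\xi}\otimes\psi^0)\gamma(x)=\langle\xi,\xi\rangle\,\psi^0(x)$, i.e.\ that $\psi^0$ is a $\gamma$-invariant weight on $N=L^{\infty}_{q,0}(AK)$. That is the wrong invariance: integrability of a left coaction asks that $(\psi^+\otimes\id)\gamma(x)$ --- the invariant weight of the \emph{quantum group} applied to the \emph{first} leg --- be a bounded element of $N$ for a dense set of positive $x$, and an ergodic coaction with an invariant weight need not be integrable (an ergodic trace-preserving $\Z$-action on a type $II_1$ factor has $\sum_n\alpha_n(x)$ divergent for every nonzero $x\geq 0$).

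The ergodicity argument also does not go through as written. From $\gamma(x)=1\otimes x$ one gets $\Vv(x\otimes 1)=(1\otimes x)\Vv$, and slicing the first leg gives $x\,(\omega_{\xi,\eta}\otimes\id)(\Vv)=(\omega_{\xi,x\eta}\otimes\id)(\Vv)$: the coinvariant intertwines two \emph{different} slices rather than commuting with any one of them, so centrality of $x$ does not follow (your density claim for the slices is correct --- they act as left multiplication by $(\psi^+(c^*\cdot)\otimes\id)\gamma(a)$ --- but it is the commutation step that fails). Moreover, even granting centrality, triviality of the centre of $L^{\infty}_{q,0}(AK)$ is not available at this stage: the GNS representation on $L^2_{q,0}(AK)$ is not irreducible, containing an irreducible subrepresentation would not by itself trivialize the centre of the generated von Neumann algebra, and the identification $L^{\infty}_{q,0}(AK)\cong B(L^2_{\hol,q}(B))$ is exactly the content of the later Theorem \ref{TheoExtIso}, which requires the Reshetikhin--Yakimov trace formula. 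A last small point: to finish one must also check that the GNS space of $(L^{\infty}_{q,0}(AK),\psi^0)$ identifies with $L^2_{q,0}(AK)$ via $\Gamma^0$, so that the abstract Galois map of the coaction (defined through the canonical invariant weight) literally coincides with the unitary $\Vv$; knowing merely that $\Vv$ implements $\gamma$ is not the definition of the Galois map being unitary.
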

 
\begin{proof} As in Proposition \ref{PropPsiConc}, one shows that \[\psi^0: B(L^2_{q,0}(AK))^+ \rightarrow [0,+\infty],\quad x\mapsto \sum_{\chi\in P} \langle \Gamma^{0}(\delta_{\chi}),x\Gamma^{0}(\delta_{\chi})\rangle,\] restricted to $L^{\infty}_{q,0}(AK)$, defines an nsf weight satisfying \[(\psi^+\otimes \id)\gamma(x) = \psi^0(x)1,\quad x\in L^{\infty}_{q,0}(AK)^+.\] It follows that $\gamma$ is an ergodic and integrable coaction. Since the GNS-space of $L^{\infty}_{q,0}(AK)$ is easily seen to be identifiable with $L^2_{q,0}(AK)$, we see that $\Fun_{q,\compact}^0(AK) \subseteq \mathscr{N}_{\psi^0}$ is dense in the GNS-space of $L^{\infty}_{q,0}(AK)$. Hence the Galois map of $L^{\infty}_{q,0}(AK)$ coincides with the unitary $\Vv$, and $(L^{\infty}_{q,0}(AK),\gamma)$ is a Galois object. 
\end{proof}

We now will show that $L^{\infty}_{q,0}(AK)$ is in fact a $I$-factorial Galois object. For this, we first recall the following theorem from \cite{RY01}, establishing $\psi$ in terms of a trace formula with respect to the representation on $L^2_{\hol,q}(B)_0$. Let $T$ be the real spectrum of $\Pol(T)$, and let \[\evv_t: \Pol(T) \rightarrow \C\] be the evaluation map in $t\in T$. Then $T$ is a real torus, and we endow it with the Lebesgue measure.

\begin{Theorem}\cite{RY01}\label{TheoRY} An invariant positive integral $\psi$ on $\Pol_q(K)$ is given by \begin{equation}\label{EqInvFunct}\psi(x) = \int_{T} \Tr((\pi_t*\theta_{w_0})(x|b|_{\rho}^2))\rd t.\end{equation}\end{Theorem}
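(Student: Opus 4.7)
The plan is to deduce the theorem by appealing to the uniqueness of the positive invariant integral on $\Pol_q(K)$: it suffices to show that the right-hand side, call it $\tilde{\psi}$, defines a positive, left-invariant, non-zero functional, after which $\tilde{\psi}$ must be a positive multiple of $\psi$, and a direct computation at the trivial corepresentation will fix the normalisation. Throughout, $T$ denotes the compact real torus of characters of $\Pol(T)$ equipped with normalised Haar measure.

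\textbf{Well-definedness and positivity.} By formula \eqref{EqActa}, stripping the shift $u_\varpi$, the element $b_\varpi$ acts on $L^2_{\hol,q}(N)_0$ through $\theta_{w_0}$ as a diagonal operator with eigenvalues $q^{\sum_i n_i(\varpi,\beta_i)}$ on the basis vector $e_{n_1,\ldots,n_M}$. Since $(\rho,\beta_i)>0$ for every $\beta_i\in\Delta^+$ and $0<q<1$, the operator $\theta_{w_0}(|b|_\rho)$ is trace-class, so $\Tr((\pi_t*\theta_{w_0})(x|b|_\rho^2))$ is a well-defined, continuous function of $t\in T$, integrable over the compact torus. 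Cyclicity of the trace together with normality of $b_\rho$ (so that $\theta_{w_0}(|b|_\rho^2)=\theta_{w_0}(|b|_\rho)^2$) yields the Hilbert--Schmidt identity
\[
\Tr\bigl((\pi_t*\theta_{w_0})(x^*x)\theta_{w_0}(|b|_\rho)^2\bigr)=\|(\pi_t*\theta_{w_0})(x)\,\theta_{w_0}(|b|_\rho)\|_{\mathrm{HS}}^2\geq 0,
\]
so positivity of $\tilde\psi$ follows after integration over $T$. Non-vanishing is immediate: evaluating at $x=1$ gives $\tilde\psi(1)=\mathrm{vol}(T)\,\Tr(\theta_{w_0}(|b|_\rho^2))\in(0,+\infty)$.

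\textbf{Invariance.} By the orthogonality relations recalled after Theorem~\ref{TheoRY}, left invariance of a positive functional $\tilde\psi$ on $\Pol_q(K)$ is equivalent to $\tilde\psi(U_\varpi(\xi,\eta))=0$ for all $\varpi\neq \hat\varepsilon$. Decomposing $\Delta(U_\varpi(\xi,\eta))=\sum_j U_\varpi(\xi,f_j)\otimes U_\varpi(f_j,\eta)$ in an orthonormal $T$-weight basis $\{f_j\}$ of $V_\varpi$, and using $\pi_t(U_\varpi(\xi,f_j))=\langle\xi,f_j\rangle\, t^{\wt(\xi)}$ together with $\int_T t^\omega\,\rd t=\delta_{\omega,0}$, the integration over $T$ collapses $\tilde\psi(U_\varpi(\xi,\eta))$ into a sum over the zero-weight space of $V_\varpi$:
\[
\tilde\psi(U_\varpi(\xi,\eta))=\sum_{j:\wt(f_j)=0}\langle\xi,f_j\rangle\,\tau\bigl(U_\varpi(f_j,\eta)\bigr),\qquad \tau(x):=\Tr\bigl(\theta_{w_0}(x)\theta_{w_0}(|b|_\rho)^2\bigr).
\]
From the commutation relation $|b|_\rho x=q^{(\rho,\lwt(x)-w_0\rwt(x))}x|b|_\rho$ and $-w_0\rho=\rho$ one finds that $\tau$ is a twisted trace, $\tau(xy)=q^{2(\rho,\lwt(y)+\rwt(y))}\tau(yx)$, which (up to the scalar accounting for the convention) is precisely the modular relation \eqref{EqDefAcSig} satisfied by $\psi$. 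Combined with the fact that the big-cell representation $\theta_{w_0}$ is faithful on $\Pol_q(K)$ and that $\Pol_q(K)$ decomposes $T$-equivariantly as a direct sum of the matrix blocks $\End(V_\varpi)$, this modular property forces $\tau(U_\varpi(\zeta,\eta))=0$ whenever $\varpi\neq \hat\varepsilon$, by the same argument that identifies the Haar state as the unique $\acsigma$-KMS state annihilating non-trivial matrix coefficients.

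\textbf{Main obstacle.} The delicate step is the final vanishing of $\tau$ on the non-trivial blocks. Classically, this reflects the fact that the integral of a non-trivial matrix coefficient over an open dense Bruhat cell, weighted by the Jacobian of the Iwasawa decomposition (here quantised by $|b|_\rho^2$), vanishes. A clean proof uses Lusztig's braid operators $T_{w_0}$ to produce a PBW-type decomposition of $\theta_{w_0}$ in which the action of $U_q(\mfn)$ and $U_q(\mfn^-)$ is realised explicitly, so that $\tau$ reduces, up to the weighting by $|b|_\rho^2$, to a product of rank-one $q_r$-integrals of the form $\sum_n q_r^{2n}\theta(a^nx(a^*)^n)$ on each $\Pol_{q_r}(SU(2))$ factor; these rank-one integrals are well known to reproduce the SU$_q(2)$ Haar state and in particular kill all non-trivial matrix coefficients. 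Once the vanishing is established the normalisation constant is obtained from $\tilde\psi(1)$, and rescaling recovers the stated formula.
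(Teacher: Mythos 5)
First, a remark on context: the paper does not prove this statement at all --- it is imported from \cite{RY01} and used as a black box --- so there is no internal argument to compare yours with. Your reduction strategy (show the right-hand side is a non-zero positive functional vanishing on all non-trivial matrix coefficients, then invoke uniqueness of the invariant integral) is a legitimate route, and its first half is sound: $\theta_{w_0}(|b|_{\rho})$ is positive, diagonal and trace-class because $(\rho,\beta_i)>0$ for every positive root $\beta_i$, positivity of $\tilde{\psi}$ follows by cyclicity of the trace, and the $T$-integration correctly collapses $\tilde{\psi}(U_{\varpi}(\xi,\eta))$ to $\delta_{\wt(\xi),0}\,\Tr\bigl(\theta_{w_0}(U_{\varpi}(\xi,\eta))\,\theta_{w_0}(|b|_{\rho})^2\bigr)$.

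The genuine gap is the vanishing of this last trace for $\varpi\neq 0$, which is the entire content of the theorem, and neither of your two arguments establishes it. (a) The twisted-trace identity $\tau(xy)=q^{2(\rho,\lwt(x)+\rwt(x))}\tau(yx)$ only gives $\tau(z)=0$ when $(\rho,\lwt(z)+\rwt(z))\neq 0$ (take $y=1$); after the $T$-integration has forced $\lwt(z)=0$, there remain non-trivial coefficients with $\rwt(z)\neq 0$ but $(\rho,\rwt(z))=0$ (already in type $A_2$, e.g.\ the weight $\alpha_1-\alpha_2$ of $V_{3\varpi_2}$), and even in rank one it does not kill the zero-weight coefficients $U_N(\xi_0,\xi_0)$ of non-trivial $V_N$. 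Moreover $\Pol_q(K)$ is a domain, not a direct sum of matrix algebras, so there is no block decomposition of the \emph{algebra} to argue with, and ``the Haar state is the unique KMS functional for the modular automorphism'' is not a citable fact here --- it is essentially equivalent to what you are trying to prove. (b) The rank-one reduction does not come out as claimed: on the $i$-th tensor factor of $L^2_{\hol,q}(N)_0$ the density $\theta_{w_0}(|b|_{\rho})^2$ acts by $q^{2n(\rho,\beta_i)}$, whereas the $SU_{q_{r_i}}(2)$ Haar weight is $q_{r_i}^{2n}=q^{n(\beta_i,\beta_i)}$; these coincide only when $(\rho,\check{\beta}_i)=1$, i.e.\ when $\beta_i$ is simple. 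In addition, $\theta_{w_0}(U_{\varpi}(\xi,\eta))=\theta_{r_1}*\cdots*\theta_{r_M}(U_{\varpi}(\xi,\eta))$ is a sum over intermediate weight bases of $V_{\varpi}$, so the trace against a product density does not factor into independent rank-one Haar integrals of matrix coefficients. Supplying the missing character twists and the combinatorics of this convolution is precisely the work done in \cite{RY01}; as written, your argument assumes the conclusion at its crucial step.
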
 

It can be shown that \[\psi(1) =  \left(\prod_{\alpha\in \Delta^+} (1-q^{2(\rho,\alpha)})\right),\] with which $\psi$ can be rescaled to a state. This rescaling will however not be relevant for our purposes. Note also that we have adapted the statement of this theorem with respect to the conventions used in this paper. 

\begin{Theorem}\label{TheoExtIso} The $*$-representation \[\Fun_{q,\compact}^0(AK) \rightarrow \End(L^2_{\hol,q}(B)_0)\] extends to a normal $*$-isomorphism \[L^{\infty}_{q,0}(AK) \cong B(L^2_{\hol,q}(B)).\]
\end{Theorem}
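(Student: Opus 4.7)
The plan is to exhibit the irreducible $*$-representation $\pi_L$ of $\Fun_{q,\compact}^0(AK)$ on $L^2_{\hol,q}(B)$ from Lemma \ref{LemRepAK} as spatially equivalent to the standard GNS representation of $L^{\infty}_{q,0}(AK)$ on $L^2_{q,0}(AK)$. By irreducibility of $\pi_L$ and the double commutant theorem, $\pi_L(\Fun_{q,\compact}^0(AK))''=B(L^2_{\hol,q}(B))$. It hence remains to produce a $\sigma$-weakly continuous injective $*$-homomorphism $L^{\infty}_{q,0}(AK)\to B(L^2_{\hol,q}(B))$ extending $\pi_L$.

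The key step is a trace formula. Put $h=1_H\otimes h_N \in B(L^2_{\hol,q}(B))$, where
\[
h_N = c\cdot\theta_{w_0}(|b|_\rho)
\]
for a suitable positive scalar $c$ (absorbing the $T$-volume from Theorem \ref{TheoRY}); the operator $h_N$ is positive, invertible and Hilbert--Schmidt on $L^2_{\hol,q}(N)$ since, by \eqref{EqActa}, it is diagonal with eigenvalues $c\cdot q^{\sum_i n_i(\rho,\beta_i)}$, a product of convergent geometric series. I will verify the identity
\[
\psi^0(x^*x) = \Tr_{L^2_{\hol,q}(B)}\!\bigl(h\,\pi_L(x^*x)\,h\bigr),\qquad x\in\Fun_{q,\compact}^0(AK).
\]
Writing $x=yf$ with $y\in\Pol_q(K)$ bihomogeneous and $f\in\Func_{\compact}(P,\C)$, the commutation $fy=yf_{\lwt(y)}$ in $\Fun_q^0(AK)$ gives $(yf)^*(yf)=y^*y\,f^*f$, and nonvanishing of $\psi(y^*y)$ forces $\lwt(y^*y)=0$ by orthogonality of matrix coefficients, so that the left side equals $\psi(y^*y)\sum_\omega|f(\omega)|^2$. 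On the right, $\pi_L(f^*f)$ acts on the slice $\{e_\chi\}\otimes L^2_{\hol,q}(N)$ by the scalar $|f(-\chi)|^2$, while $\pi_L(y^*y)$ preserves this slice and acts there as $\theta_{w_0}(y^*y)$ (since $\lwt(y^*y)=0$). The resulting $\chi$-slicewise trace becomes $|f(-\chi)|^2\,\Tr_{L^2_{\hol,q}(N)}\!\bigl(h_N\theta_{w_0}(y^*y)h_N\bigr)$; summing over $\chi$ and invoking Theorem \ref{TheoRY} --- more specifically the fact that for left-weight-zero $z$ we have $(\pi_t\ast\theta_{w_0})(z)=\theta_{w_0}(z)$ independently of $t$, since only the summands $\varepsilon(z_{(1)})\theta_{w_0}(z_{(2)})$ of the coproduct survive --- reduces the claim to choosing $c$ so that $c^2\,\Tr\bigl(\theta_{w_0}(y^*y)\theta_{w_0}(|b|_\rho^2)\bigr)$ matches $\psi(y^*y)$ up to the $T$-volume.

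Granted the trace identity, the map $\Gamma^0(x)\mapsto \pi_L(x)h$ is an isometric linear map from $L^2_{q,0}(AK)$ into $\mathrm{HS}(L^2_{\hol,q}(B))$, the canonical standard space of $B(L^2_{\hol,q}(B))$, and it intertwines left multiplication by $x\in \Fun_{q,\compact}^0(AK)$ on $L^2_{q,0}(AK)$ with left multiplication by $\pi_L(x)\in B(L^2_{\hol,q}(B))$. This spatial identification extends $\pi_L$ by $\sigma$-weak continuity to a normal injective $*$-homomorphism $L^{\infty}_{q,0}(AK)\hookrightarrow B(L^2_{\hol,q}(B))$; its image is $\sigma$-weakly closed and contains $\pi_L(\Fun_{q,\compact}^0(AK))$, hence equals $B(L^2_{\hol,q}(B))$ by the double commutant computation above. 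The main obstacle is the trace identity itself, requiring careful tracking of the $\Fun_q^0(AK)$-twist $fy=yf_{\lwt(y)}$, the weight-shifting action of $\pi_L$, and the collapse of the $T$-integral in Theorem \ref{TheoRY} to a scalar once one restricts to the left-weight-zero part of $\Pol_q(K)$.
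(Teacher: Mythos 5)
Your proposal is correct and follows essentially the same route as the paper: both rest on the Reshetikhin--Yakimov formula (Theorem \ref{TheoRY}) to identify $\psi^0$ with the weight $\Tr(\,\cdot\,|b|_{\rho}^2)$ on the image of $\pi_L$, and then transport the GNS construction; your map $\Gamma^0(x)\mapsto \pi_L(x)h$ into Hilbert--Schmidt operators is exactly the paper's unitary $\Gamma^0(x)\mapsto\widetilde{\Gamma}^0(x)$ written in the concrete standard form of $B(L^2_{\hol,q}(B))$, and you are in fact more explicit than the paper about why the $T$-integral collapses on the left-weight-zero part. The one place where you are too quick is the opening appeal to ``irreducibility of $\pi_L$'': Lemma \ref{LemRepAK} asserts irreducibility of the representation on the \emph{pre}-Hilbert space $L^2_{\hol,q}(B)_0$, and the absence of invariant subspaces of a dense domain does not by itself force the commutant on the completion to be trivial; this is precisely why the paper's proof of Theorem \ref{TheoExtIso} re-derives $\pi_L(\Fun_{q,\compact}^0(AK))'=\C$ directly, using that $\Func_{\compact}(P)$ and the $u_{\omega}$ generate $B(l^2(P))$, that the $|b|_{\omega}$ have finite-dimensional joint eigenspaces (so a commutant element preserves $L^2_{\hol,q}(N)_0$), and that $e_0^{\otimes M}$ is a cyclic highest weight vector for the $x_r$. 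With that commutant computation inserted in place of the bare citation, your argument is complete.
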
 

\begin{proof} As noted before, any $*$-representation of $\Fun_{q,\compact}^0(AK)$ must be bounded, hence the representation on $\Fun_{q,\compact}^0(AK)$ can be extended to a representation on $L^2_{\hol,q}(B)$. By Theorem \ref{TheoRY}, it then follows that, with \[\widetilde{\psi}^0: B(L^2_{\hol,q}(B))^+ \rightarrow [0,+\infty],\quad x \mapsto  \Tr(x|b|_{\rho}^2),\] we have $\Fun_{q,\compact}^0(AK) \subseteq \mathscr{M}_{\widetilde{\psi}^0}$ with 
\[\psi^0(x) = \widetilde{\psi}^0(x)\qquad x\in \Fun_{q,\compact}^0(AK).\] 

Now it is easily seen that $\Func_{\compact}(P)$ and the $u_{\omega}$ generate a copy of $B(l^2(P))$. Hence if $x\in B(L^2_{\hol,q}(B))$ commutes with $\Fun_{q,\compact}^0(AK)$, it must be of the form $x = y\otimes 1$ for $y\in B(L^2_{\hol,q}(N))$. However, since $y$ then commutes with all $|b|_{\omega}$, which have finite-dimensional joint eigenspaces, it follows that $y$ must leave the subspace $L^2_{\hol,q}(N)_0$ invariant. But as $y$ then commutes with all $x_r$, and the latter have $e_0^{\otimes M}$ as a highest weight vector, it follows easily that $y$ must be a scalar. We deduce that $\Fun_{q,\compact}^0(AK)$ must be $\sigma$-weakly dense in $B(L^2_{\hol,q}(B))$. Moreover, it is also immediate that $\Fun_{q,\compact}^0(AK)$ is invariant under the modular automorphism group of $\widetilde{\psi}^0$. 

From the above, we deduce that we can make a unique unitary \[L^2_{q,0}(AK) \rightarrow L^2(B(L^2_{\hol,q}(B))),\quad \Gamma^0(x) \mapsto \widetilde{\Gamma}^0(x),\qquad x\in \Fun_{q,\compact}^0(AK).\] As this unitary intertwines the $\Fun_{q,\compact}^0(AK)$-representations, we deduce that it induces a normal $*$-isomorphism \[L^{\infty}_{q,0}(AK) \cong B(L^2_{\hol,q}(B)).\] 
\end{proof} 

To end, let us concretely identify the dual right coaction $\Ad_{\gamma}$ of $\msR_{q}^+(B_{\R})$ on $L^{\infty}_{q,0}(AK)$ in terms of the Galois object $(U_q^0(\mfb_{\R}),\alpha)$. Let us first introduce the following notation.

\begin{Def} We define $(\msR_{q}^0(B_{\R}),\alpha) = (L^{\infty}_{q,0}(AK),\Ad_{\gamma})$. 
\end{Def}

Again, for $X\in U_q^+(\mfb_{\R})$ interpreted as an endomorphism on $L^2_{\hol,q}(B)_0$, we denote by $\mbX$ its closure as an unbounded operator on $L^2_{\hol,q}(B)$. Note that this closure exists as $X$ has an adjoint as an operator on the pre-Hilbert space $L^2_{\hol,q}(B)_0$. It is clear that $\mbX$ is then affiliated with $L^{\infty}_{q,0}(AK) = B(L^2_{\hol,q}(B))$, since this is true of \emph{any} unbounded operator on $L^2_{\hol,q}(B)$. 

\begin{Theorem}\label{TheoAdisAd} For $X \in U_q^+(\mfb_{\R})$, we have that \[\alpha(\mbX) = \textrm{ closure of }\alpha(X)\textrm{ on }L^2_{\hol,q}(B)_0 \otimes L^2_{q,+}(AK)_0.\]
\end{Theorem}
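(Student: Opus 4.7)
The plan is to reduce the operator-algebraic equality to the purely algebraic intertwining identity of Theorem~\ref{TheoAdjTran}, by tracking cores for the relevant closed operators. Recall that by definition $\alpha = \Ad_{\gamma}$, and by Theorem~\ref{TheoAdGalLeft} this means $\alpha(y) = \Sigma \Vv(1\otimes y)\Vv^*\Sigma$, where $\Vv$ is the closure of the algebraic Galois map $\Vv_0$. For the affiliated operator $\mbX$ (the closure of $\pi(X)$ on $L^2_{\hol,q}(B)$, which under the identification of Theorem~\ref{TheoExtIso} is just an ordinary closed operator on $L^2_{\hol,q}(B)$), the element $\alpha(\mbX)$ is therefore realised as the closed operator $\Sigma \Vv(1\otimes \mbX)\Vv^*\Sigma$ on $L^2_{\hol,q}(B)\otimes L^2_{q,+}(AK)$.

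Next, I would specialise Theorem~\ref{TheoAdjTran} to the $\Fun_q^0(AK)_{\loc}$-module $\mathscr{M} = L^2_{\hol,q}(B)_0$, which is indeed a module by Lemma~\ref{LemRepAK}. Rearranging the intertwining relation gives
\[
\Sigma \Vv_0 (1\otimes \pi(X)) \Vv_0^* \Sigma = (\pi \otimes \pi_{\rhd})(\alpha(X))
\]
as an equality of linear operators on the pre-Hilbert space $L^2_{\hol,q}(B)_0 \otimes L^2_{q,+}(AK)_0$. The left-hand side is precisely the restriction of $\alpha(\mbX)$ to this algebraic dense subspace, while the right-hand side is the pre-closed operator whose closure appears in the statement.

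It then remains to promote this algebraic identity to equality of closed operators. Since $\alpha(X)$ is a finite sum $\sum_i X^{(1)}_i \otimes X^{(2)}_i$ with the first leg in $U_q^0(\mfb_{\R})$ acting on $L^2_{\hol,q}(B)_0$ through $\pi$ and the second leg in $U_q^+(\mfb_{\R})$ acting on $L^2_{q,+}(AK)_0$ through $\pi_{\rhd}$, each simple tensor preserves the algebraic subspace, and the closure of the finite sum is well-defined. On the other hand, $\alpha(\mbX)$ is the closure of a certain operator defined via conjugation by $\Vv$, and by the algebraic identity this closure must coincide with that of the right-hand side. The main technical obstacle is the core argument: one must verify that $L^2_{\hol,q}(B)_0 \otimes L^2_{q,+}(AK)_0$ is a core for both sides. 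For the right-hand side this follows because each summand maps the algebraic subspace into itself. For $\alpha(\mbX)$ one uses that $\Vv_0$ and its adjoint both send the relevant algebraic tensor products to themselves (immediate from the definition of $\Vv_0$), so conjugation by the unitary closure $\Vv$ preserves the algebraic core of $1 \otimes \mbX$, giving the required agreement of closed extensions.
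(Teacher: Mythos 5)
Your argument is correct and follows essentially the same route as the paper: both realise $\alpha(\mbX)$ as $\Sigma\Vv(1\otimes \mbX)\Vv^*\Sigma$ with the second leg of $\Vv$ acting on $L^2_{\hol,q}(B)$, invoke Theorem \ref{TheoAdjTran} with $\mathscr{M}=L^2_{\hol,q}(B)_0$, and conclude by noting that $\Vv_0$ restricts to a bijection between the algebraic tensor products, so that conjugation by the unitary $\Vv$ transports the core of $1\otimes\mbX$ onto $L^2_{q,+}(AK)_0\otimes L^2_{\hol,q}(B)_0$. The paper is only marginally more explicit about first reinterpreting $\Vv$ through the identification $L^{\infty}_{q,0}(AK)\cong B(L^2_{\hol,q}(B))$ and observing that it restricts to a unitary between the pre-Hilbert spaces.
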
 
\begin{proof} By definition, we have that \[\Ad_{\gamma}(x) = \Sigma \Vv(1\otimes x)\Vv^*\Sigma,\qquad x\in L^{\infty}_{q,0}(AK).\] However, as $\Vv\in B(L^2_{q,0}(AK),L^2_{q,+}(AK))\vNtimes L^{\infty}_{q,0}(AK)$, we can interpret $\Vv$ as an operator \[\Vv: L^2_{q,0}(AK)\otimes L^2_{\hol,q}(B) \rightarrow L^2_{q,+}(AK)\otimes L^2_{\hol,q}(B),\]\[\Gamma^0(x) \otimes \xi \mapsto (\Gamma^0\otimes \id)(\gamma(x)) \xi.\] Note now that this restricts to a unitary operator \[\Vv_0:L^2_{q,0}(AK)_0 \otimes L^2_{\hol,q}(B)_0 \rightarrow L^2_{q,+}(AK)_0\otimes  L^2_{\hol,q}(B)_0.\] As $L^2_{\hol,q}(B)_0$ satisfies the conditions for $\mathscr{M}$ in Theorem \ref{TheoAdjTran}, it follows that \[\Ad_{\gamma}(\mbX)_{\mid L^2_{\hol,q}(B)_0\otimes L^2_q(AK)_0} = (\id\otimes \pi_{\rhd})(\alpha(X)).\] But then $\Ad_{\gamma}(\mbX) = \Sigma \Vv(1\otimes \mbX)\Vv^*\Sigma$ must be the closure of the unbounded operator $(\id\otimes \pi_{\rhd})(\alpha(X)) = \Sigma \Vv_0 (1\otimes X)\Vv_0^*\Sigma$. 
\end{proof}

\begin{Cor} The left and right $\msR_q^+(B_{\R})$-invariant weights on $L^{\infty}_{q,0}(AK) \cong B(L^2_{\hol,q}(B))$ are given by \[\varphi_{\Ad}(x) = \Tr(x|b|_{\rho}^{-2}),\qquad \psi_{\Ad}(x) = \Tr(x|b|_{\rho}^{-2}\mbK_{-2\rho}^*\mbK_{-2\rho}).\]
\end{Cor}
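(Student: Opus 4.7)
The plan is to derive both formulas from the left-handed duality of Theorem \ref{TheoAdGalLeft}, with the operator $k$ identified using Theorem \ref{TheoRY}, and to pin down the modular element $\delta_{\Ad}$ of the dual right Galois object by applying Theorem \ref{TheoAdisAd} to a grouplike element.

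First, I would identify the right invariant weight for the left coaction $(L^{\infty}_{q,0}(AK),\gamma)$. The weight $\psi^0$ constructed earlier satisfies $(\psi^+\otimes \id)\gamma(x) = \psi^0(x) 1$ and $\psi^+$ is both left and right invariant on $L^{\infty}_{q,+}(AK)$, so $\psi^0$ plays the role of $\psi_N$ in Theorem \ref{TheoAdGalLeft}. By Theorem \ref{TheoRY} and the identification in the proof of Theorem \ref{TheoExtIso}, we have $\psi^0(x) = \Tr(x|b|_\rho^2) = \Tr(|b|_\rho\, x\, |b|_\rho)$, so one takes $k = |b|_\rho^2$. Theorem \ref{TheoAdGalLeft} then gives $\varphi_{\Ad}(x) = \Tr(|b|_\rho^{-1} x |b|_\rho^{-1}) = \Tr(x|b|_\rho^{-2})$ at once.

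Second, I would compute $\delta_{\Ad}$ using the characterization \eqref{EqDelGrouplike}, namely that it is the unique (up to scalar) positive operator affiliated with $L^{\infty}_{q,0}(AK)$ satisfying $\Ad_{\gamma}(\delta_{\Ad}^{it}) = \delta_{\Ad}^{it}\otimes \hat{\delta}^{it}$, where $\hat{\delta} = \mbK_{-2\rho}^*\mbK_{-2\rho}$ is the modular element of $\msR_q^+(B_{\R})$. The natural candidate is $\mbK_{-2\rho}^*\mbK_{-2\rho}$ acting on $L^2_{\hol,q}(B)$. To verify, apply Theorem \ref{TheoAdisAd} to $X = L_{-2\rho}K_{-2\rho} = K_{-2\rho}^*K_{-2\rho}\in U_q^0(\mfb_{\R})$: since $K_{-2\rho}$ and $L_{-2\rho}$ are grouplike under $\hat{\Delta}$, $\alpha(X) = L_{-2\rho}K_{-2\rho}\otimes L_{-2\rho}K_{-2\rho}$, and Theorem \ref{TheoAdisAd} yields $\Ad_{\gamma}(\mbK_{-2\rho}^*\mbK_{-2\rho}) = \mbK_{-2\rho}^*\mbK_{-2\rho}\otimes \hat{\delta}$, giving $\delta_{\Ad} = \mbK_{-2\rho}^*\mbK_{-2\rho}$.

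Third, I would apply the cocycle relation \eqref{EqConDerPhiPsi}, $(D\psi_{\Ad}:D\varphi_{\Ad})_t = \nu^{it^2/2}\delta_{\Ad}^{it}$, noting that $L^{\infty}_{q,+}(AK)$ is unimodular (both invariance conditions being witnessed by $\psi^+$), so $\nu = 1$ for $\msR_q^+(B_{\R})$ as well. The key technical point is that $|b|_\rho$ commutes with $\mbK_{-2\rho}$ as operators on $L^2_{\hol,q}(B)$: from the commutation rules of $\Pol_q(K)_{\loc}$ together with $w_0^2 = 1$, $|b|_\rho$ commutes with $u_\chi$ for every $\chi\in P$ and also with every $f\in \Func(P,\C)$, hence with $k_\chi$ and thus with $\mbK_{-2\rho}$; in fact under the tensor decomposition $L^2_{\hol,q}(B) = L^2_{\hol,q}(H)\otimes L^2_{\hol,q}(N)$ the operator $\mbK_{-2\rho}$ acts only on the first factor while $|b|_\rho$ acts only on the second, making the commutativity manifest. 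With commuting positive densities, the Connes cocycle identity reduces to the scalar equation at the level of densities and forces $\psi_{\Ad}(x) = \Tr(x\cdot \delta_{\Ad}|b|_\rho^{-2}) = \Tr(x|b|_\rho^{-2}\mbK_{-2\rho}^*\mbK_{-2\rho})$.

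The only real obstacle is the commutativity verification in the last step, but this is transparent once one uses the leg-wise decomposition of the operators on $L^2_{\hol,q}(B)$; the rest is a straightforward assembly of Theorems \ref{TheoRY}, \ref{TheoAdisAd}, and \ref{TheoAdGalLeft} together with the formula for $\hat{\delta}$ recalled before Theorem \ref{TheoExtIso}.
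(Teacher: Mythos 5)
Your proposal is correct and follows essentially the same route as the paper: $\varphi_{\Ad}$ comes from the left-handed weight-duality theorem together with the trace formula $\psi^0 = \Tr(\,\cdot\,|b|_{\rho}^2)$ established via Theorem \ref{TheoRY}, and $\psi_{\Ad}$ comes from identifying $\delta_{\Ad} = \mbK_{-2\rho}^*\mbK_{-2\rho}$ through the grouplike characterization \eqref{EqDelGrouplike} and Theorem \ref{TheoAdisAd}. The only cosmetic caveat is that your justification of $\nu=1$ via unimodularity is not a valid general implication, but this is immaterial: the relation $\psi_{\Ad}=\varphi_{\Ad}(\delta_{\Ad}^{1/2}\,\cdot\,\delta_{\Ad}^{1/2})$ holds for any value of the scaling constant, and your commuting-densities observation then yields the stated trace formula.
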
 
Note that $|b|_{\rho}^{-2}$ and $\mbK_{-2\rho}^*\mbK_{-2\rho}$ strongly commute as they act on different tensorands of $L^2_{\hol,q}(B) = L^2_{\hol,q}(H)\otimes L^2_{\hol,q}(N)$. There is thus no problem in interpreting the formula for $\psi_{\Ad}$. 

\begin{proof} The formula for $\varphi_{\Ad}$ follows immediately from Theorem \ref{TheoWeight} and the proof of Theorem \ref{TheoExtIso}. By \eqref{EqDelGrouplike}, the formula for $\psi_{\Ad}$ will follow if we can show that $\delta_0 = \mbK_{-2\rho}^*\mbK_{-2\rho}$ satisfies \[\Ad_{\gamma}(\delta_0) = \delta_0 \otimes \mbK_{-2\rho}^*\mbK_{-2\rho},\] where the right tensorand is the modular element in $\msR_q^+(B_{\R})$. However, this is immediate from Theorem \ref{TheoAdisAd}. 
\end{proof}

To end, let us strengthen Theorem \ref{TheoAdisAd}. We first prove the following lemma. 

\begin{Lem}\label{LemClosAd} For $X \in \{E_r,E_r^*,F_r,F_r^*,K_{\omega},L_{\omega}\}\subseteq U_q^0(\mfb_{\R})$, we have that $\mbX^*$ is the closure of $X^*$ on $L^2_{\hol,q}(B)_0$. 
\end{Lem}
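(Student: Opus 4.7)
The inclusion $\overline{X^*|_D} \subseteq \mbX^*$ is automatic from closedness of adjoints, so the real task is to produce the reverse inclusion. My plan is to exhibit a bigrading of $D := L^2_{\hol,q}(B)_0$ into finite-dimensional pieces on which each listed operator acts as a single finite-rank map of fixed bigrading shift; once this is in place, both $\mbX^*$ and $\overline{X^*|_D}$ will admit the same componentwise $\ell^2$-description, and the equality will follow at once.

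I will decompose $D = \bigoplus_{(\chi,\mu) \in P \times (-Q^+)} D_{\chi,\mu}$ with $D_{\chi,\mu} = \C e_\chi \otimes V_\mu$, where $V_\mu \subseteq L^2_{\hol,q}(N)_0$ is the $\mu$-weight subspace for the $-Q^+$-grading on $L^2_{\hol,q}(N)_0 \cong U_q(\mfn)^*$ supplied by Corollary \ref{CorGradHW} and Theorem \ref{TheoUnique}; each $V_\mu$ is finite-dimensional and hence so is each $D_{\chi,\mu}$. The key structural claim I then need is that every $X$ in the list is \emph{bigraded}, in the sense that there exists a fixed $(a_X,b_X) \in P\times Q$ with $X(D_{\chi,\mu}) \subseteq D_{\chi+a_X,\mu+b_X}$. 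For $K_\omega$ and $L_\omega$ this is read off from $k_\omega = q^{-(\omega,\omega)/2}z_\omega u_{-\omega}$, which acts only on the $H$-factor with shifts $(\mp\omega,0)$. For $E_r$, I will use the factorization $e_r = u_{-\alpha_r} x_r$ coming from $x_r = u_{\alpha_r}e_r$: the factor $u_{-\alpha_r}$ is the $H$-shift by $-\alpha_r$, while $x_r = \pi(X_r)$ raises the $N$-weight by $\alpha_r$ by Corollary \ref{CorGradHW}, yielding shift $(-\alpha_r,+\alpha_r)$. The remaining cases $F_r$, $E_r^*$, $F_r^*$ are then handled by composition using the defining formulas $F_r = e_r^*l_{\alpha_r}^{-1}$, $E_r^* = F_rL_{\alpha_r}$ and $F_r^* = K_{\alpha_r}^{-1}E_r$.

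Granted the bigrading, a Riesz argument applied in each finite-dimensional summand identifies
\[\mathrm{dom}(\mbX^*) = \Bigl\{\eta \Bigm| \sum_{\chi,\mu}\|X^*|_{D_{\chi,\mu}}\eta_{\chi,\mu}\|^2 < \infty\Bigr\},\qquad \mbX^*\eta = \sum_{\chi,\mu} X^*|_{D_{\chi,\mu}}\eta_{\chi,\mu},\]
where orthogonality of the $D_{\chi,\mu}$'s makes $X^*|_{D_{\chi,\mu}}:D_{\chi,\mu}\to D_{\chi-a_X,\mu-b_X}$ literally the finite-dimensional Hilbert-space adjoint of $X|_{D_{\chi-a_X,\mu-b_X}}$. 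Exactly the same summability condition and formula describe $\overline{X^*|_D}$: the truncations $\eta_N := \sum_{|\chi|,|\mu|\leq N}\eta_{\chi,\mu}$ lie in $D$, converge to $\eta$ in norm, and their images $X^*\eta_N$ converge to $\mbX^*\eta$ by the $\ell^2$-summability. This exhibits $\eta \in \mathrm{dom}(\overline{X^*|_D})$ with $\overline{X^*|_D}\eta = \mbX^*\eta$, closing the argument.

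The hard part will be the bigrading verification for $E_r$ and $F_r$, which rests on the factorization of $e_r$ through $x_r$ and on the highest-weight structure of Corollary \ref{CorGradHW}; the Cartan generators $K_\omega,L_\omega$ are transparent, and everything downstream (domain identification and the truncation approximation) is routine bookkeeping in the direct-sum decomposition.
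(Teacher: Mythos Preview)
Your proof is correct and takes a somewhat different route from the paper's. The paper first writes each generator as a tensor product along $L^2_{\hol,q}(B)_0 = L^2_{\hol,q}(H)_0 \otimes L^2_{\hol,q}(N)_0$ (e.g.\ $E_r = U_{-\alpha_r}\otimes X_r$, $F_r = q_r^{-1}Z_{-\alpha_r}\otimes X_r^*$), so that the claim reduces to the analogous statement for $X_r,X_r^*$ on $L^2_{\hol,q}(N)_0$; it then uses the $-Q^+$-grading with finite-dimensional weight spaces to decompose $L^2_{\hol,q}(N)_0$ as an orthogonal direct sum of copies of the unique highest-weight module for the rank-one subalgebra $U_{q_r}^0(\C_{\R})$, and finally verifies the claim by hand for the explicit weighted shift on $\ell^2(\N)$ in the $\mfsu(2)$ case. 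Your argument instead exploits the full bigrading $P\times(-Q^+)$ directly: once each generator is seen to be a ``block shift'' between finite-dimensional orthogonal summands, the equality $\overline{X^*|_D}=\mbX^*$ follows from the general $\ell^2$-description of closures and adjoints of such operators, with no need to reduce to $\mfsu(2)$ or identify irreducible subrepresentations. The paper's route is more concrete and makes the underlying representation theory visible; yours is shorter and isolates the purely operator-theoretic mechanism (finite-dimensional homogeneous components plus a fixed grading shift) that actually drives the result.
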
 

\begin{proof} In the notation of the proof of Corollary \ref{CorFaithPi}, we have on $L^2_{\hol,q}(B)_0 = L^2_{\hol,q}(H)_0\otimes L^2_{\hol,q}(N)_0$ that \[K_{\omega} = q^{-\frac{1}{2}(\omega,\omega)}Z_{\omega}U_{-\omega}\otimes 1,\quad L_{\omega} = q^{-\frac{1}{2}(\omega,\omega)}U_{\omega}Z_{\omega}\otimes 1,\]\[E_r = U_{-\alpha_r}\otimes X_r,\quad F_r = q_r^{-1} Z_{-\alpha_r} \otimes X_r^*.\] 

The statement in the lemma is clear for $K_{\omega}$ and $L_{\omega}$, and for $E_r,F_r,E_r^*,F_r^*$ it suffices to prove the corresponding statement for $X_r,X_r^*$ on $L^2_{\hol,q}(N)_0$. However, if $U_{q_r}^0(\C_{\R})$ is the $*$-algebra generated by $X_r, X_r^*$, it follows inductively from the fact that $L^2_{\hol,q}(N)_0 \cong U_q(\mfn)^*$ is graded by $-Q^+$ by Corollary \ref{CorGradHW}, with finite-dimensional weight spaces, that $L^2_{\hol,q}(N)_0$ decomposes as a direct sum of copies of the (unique) highest weight representation of $U_{q_r}^0(\C_{\R})$. 

It is thus sufficient to prove the statement for the case $\mfk = \mfsu(2)$. But it is easily seen that one can then identify $L^2_{\hol,q}(N)_0$ with $\C[\N]$ in such a way that the generator $X$ of $U_q^0(\mfn)$ acts by \[q^{1/2}(q^{-2}-1)Xe_n =  (q^{-2n}-1)^{1/2}e_n.\] For this $X$ it is immediately clear that the closure of $X^*$ is the adjoint of the closure of $X$, and similarly for $X^*$.  
\end{proof} 

For $X,Y$ closed unbounded operators, let us write $X\dotplus Y$ for the closure of the sum $X+Y$, and $X\vNtimes Y$ for the closure of the algebraic tensor product $X\otimes Y$. 

\begin{Prop}\label{PropAdConc} The following identities hold: \[\alpha(\mbK_{\omega}) = \mbK_{\omega}\vNtimes \mbK_{\omega},\quad \alpha(\mbK_{\omega}^*)  = \mbK_{\omega}^*\vNtimes \mbK_{\omega}^*,\]\[\alpha(\mbE_r) = \mbE_r\vNtimes \mbK_{\alpha_r} \dotplus 1\vNtimes \mbE_r,\quad \alpha(\mbE_r^*) = \mbE_r^*\vNtimes \mbK_{\alpha_r}^* \dotplus 1\vNtimes \mbE_r.\] 
\end{Prop}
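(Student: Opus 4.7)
The plan is to combine Theorem \ref{TheoAdisAd} with Lemma \ref{LemClosAd} and the explicit coproduct formulas from Definition \ref{DefQUE}/Definition \ref{DefHeisOmChi}. By Theorem \ref{TheoAdisAd}, for $X\in U_q^+(\mfb_{\R})$ we know that $\alpha(\mbX)$ is the closure of $\alpha(X)=\hat{\Delta}(X)$ acting on the algebraic tensor product $L^2_{\hol,q}(B)_0\otimes L^2_{q,+}(AK)_0$, and by Lemma \ref{LemClosAd} we may identify $\mbK_\omega^*$ and $\mbE_r^*$ with the closures of $K_\omega^*$ and $E_r^*$ on $L^2_{\hol,q}(B)_0$. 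Thus each of the four identities reduces to an equality between the closure of an algebraic expression in $X\otimes Y$'s and a formal $\dotplus$ or $\vNtimes$ combination of closures.

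For the Cartan case, I would first treat $\alpha(\mbK_\omega)=\mbK_\omega\vNtimes \mbK_\omega$ and its adjoint. From the description in the proof of Corollary \ref{CorFaithPi}, on $L^2_{\hol,q}(B)_0=L^2_{\hol,q}(H)_0\otimes L^2_{\hol,q}(N)_0$ the operator $K_\omega$ factors as (up to scalar) $Z_\omega U_{-\omega}\otimes 1$ with $U_{-\omega}$ unitary and $Z_\omega$ a diagonal operator in a natural orthonormal basis; a similar diagonal/shift description holds for the representation $\pi_\rhd$ defining $\mbK_\omega$ on $L^2_{q,+}(AK)$. These closures strongly commute with everything in the opposite factor, so standard tensor-product theory gives that the closure of $K_\omega\otimes K_\omega$ on the algebraic tensor product agrees with $\mbK_\omega\vNtimes \mbK_\omega$; the starred version is immediate from Lemma \ref{LemClosAd}, since $K_\omega^*=L_\omega$ has the same type of diagonal/shift structure.

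For $\alpha(\mbE_r)$, using $\hat{\Delta}(E_r)=E_r\otimes K_{\alpha_r}+1\otimes E_r$, one direction is free: the algebraic tensor product lies in the domain of both summands on the right-hand side and they sum to the claimed $\dotplus$ operator, so the closure of the algebraic sum is contained in $\mbE_r\vNtimes \mbK_{\alpha_r}\dotplus 1\vNtimes \mbE_r$. The reverse inclusion requires showing that $L^2_{\hol,q}(B)_0\otimes L^2_{q,+}(AK)_0$ is a core for the right-hand side. I would exploit the weight-grading: $L^2_{q,+}(AK)$ splits as a Hilbert direct sum $\oplus_\varpi L^2_{q,+}(AK)_\varpi$ of $\mbK_{\alpha_r}$-invariant subspaces, and inside $L^2_{\hol,q}(B)_0$ one has the $P$-grading from Corollary \ref{CorGradHW}; on each finite block of this bigrading $\mbK_{\alpha_r}$ restricts to a bounded operator, reducing the problem to a core statement for $\mbE_r\otimes 1_{\mathrm{bounded}}\dotplus 1\vNtimes \mbE_r$. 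The starred identity then follows by taking adjoints and invoking Lemma \ref{LemClosAd} once more.

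The main obstacle I foresee is the core argument for the $E_r$ case: verifying that the algebraic tensor product is a core for $\mbE_r\vNtimes \mbK_{\alpha_r}\dotplus 1\vNtimes \mbE_r$, rather than merely a common domain, is precisely the step where unboundedness genuinely interferes. The weight decomposition reduces it to a block-diagonal problem, but one must check carefully that the Cauchy sequences approximating vectors in the domain of the $\dotplus$ sum can be chosen within the algebraic tensor product, using the finite-dimensionality of the weight blocks of $L^2_{\hol,q}(B)_0$ under the combined action of $\mbK_{\alpha_r}$ on $L^2_{q,+}(AK)$. Once this is established, the starred analogues are obtained immediately by applying the $*$-operation and Lemma \ref{LemClosAd}, since the same weight-space argument applies verbatim.
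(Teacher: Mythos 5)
Your outline follows the same route as the paper (reduce everything to the block decomposition $L^2_{q,+}(AK)=\oplus_{\varpi}L^2_{q,+}(AK)_{\varpi}$ and then take adjoints plus Lemma \ref{LemClosAd} for the starred identities), but the one step you flag as "the main obstacle" --- showing that $L^2_{\hol,q}(B)_0\otimes L^2_{q,+}(AK)_0$ is a core for $\mbE_r\vNtimes\mbK_{\alpha_r}\dotplus 1\vNtimes\mbE_r$ --- is left open, and the tool you propose for it (the $P$-grading of the \emph{first} leg $L^2_{\hol,q}(B)_0$ with its finite-dimensional weight spaces) does not close it: $\mbE_r$ on the first leg does not preserve those weight blocks in any way that makes the sum of blockwise-bounded pieces bounded, so you are still left with a genuinely unbounded sum of two unbounded operators and no control over its closure.

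The observation that resolves this, and which your write-up misses, is that the representation $\pi_{\varpi}$ of \emph{all} of $U_q^+(\mfb_{\R})$ on each block $L^2_{q,+}(AK)_{\varpi}$ is bounded --- not just $K_{\alpha_r}$ but also $E_r$. Hence on each block the second summand $1\otimes\pi_{\varpi}(E_r)$ is a bounded perturbation, so the domain of $\mbE_r\vNtimes\pi_{\varpi}(K_{\alpha_r})+1\otimes\pi_{\varpi}(E_r)$ coincides with that of $\mbE_r\vNtimes\pi_{\varpi}(K_{\alpha_r})$, and the core question collapses to the standard statement that a core for a closed operator tensored (algebraically) with the whole block is a core for the closure of (closed operator)$\,\otimes\,$(bounded operator). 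No Cauchy-sequence gymnastics or first-leg weight decomposition is needed. With that in place your adjoint argument for the starred identities goes through exactly as you describe: the adjoint of $\mbE_r\vNtimes\mbK_{\alpha_r}\dotplus 1\vNtimes\mbE_r$ is computed blockwise by the same boundedness argument to be $\mbE_r^*\vNtimes\mbK_{\alpha_r}^*\dotplus 1\vNtimes\mbE_r^*$, and Lemma \ref{LemClosAd} identifies this with the closure of $\alpha(E_r^*)$ on the algebraic tensor product. Your treatment of the Cartan generators is fine.
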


\begin{proof} The statement is clear for $\mbK_{\omega}$ and $\mbK_{\omega}^*$. 

Let us prove the statement for $\alpha(\mbE_r)$. Since $L^2_{q,+}(AK)$ is a direct sum $\oplus_{\varpi\in P^+} L^2_{q,+}(AK)_{\varpi}$ of bounded representations $\pi_{\varpi}$ of $U_q^+(\mfb_{\R})$, it is sufficient to prove that \[(\id\otimes \pi_{\varpi})\alpha(\mbE_r) = \mbE_r\vNtimes \pi_{\varpi}(K_{\alpha_r}) + 1\otimes \pi_{\varpi}(E_r).\]  But the domain of the right hand side is the same as for $\mbE_r\vNtimes \pi_{\varpi}(K_{\alpha_r})$, which has $L^2_{\hol,q}(B)_0\otimes L^2_q(AK)_{\varpi,0}$ as a core. It follows that $\mbE_r\vNtimes \mbK_{\alpha_r} \dotplus 1\vNtimes \mbE_r$ is equal to the closure of $\alpha(E_r)$, from which the statement follows by \eqref{TheoAdisAd}.

The formula for $\alpha(\mbE_r^*)$ follows similarly, using Lemma \eqref{LemClosAd} and the fact that the adjoint of $\mbE_r\vNtimes \mbK_{\alpha_r} \dotplus 1\vNtimes \mbE_r$ is given by $\mbE_r^*\vNtimes \mbK_{\alpha_r}^* \dotplus 1\vNtimes \mbE_r^*$ by the  same reasoning as in the previous paragraph. 
\end{proof}

\begin{Prop}\label{PropAdGen} The operators $\mbK_{\omega},\mbL_{\omega},\mbE_r,\mbF_r$ generate $\msR_{q}^0(B_{\R})$. 
\end{Prop}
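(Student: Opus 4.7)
The plan is to show that the von Neumann algebra $\mathcal{M} \subseteq B(L^2_{\hol,q}(B))$ generated by bounded functional calculus of $\mbK_{\omega}, \mbL_{\omega}, \mbE_r, \mbF_r$ exhausts $\msR_{q}^0(B_{\R}) = B(L^2_{\hol,q}(B))$, by exploiting the tensor factorisation $L^2_{\hol,q}(B) = L^2_{\hol,q}(H) \otimes L^2_{\hol,q}(N)$ established in the proof of Corollary \ref{CorFaithPi}, where $K_{\omega} = q^{-\frac{1}{2}(\omega,\omega)} Z_{\omega} U_{-\omega} \otimes 1$, $E_r = U_{-\alpha_r} \otimes X_r$, and so on.

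First I would recover $B(L^2_{\hol,q}(H)) \otimes 1$ from the $\mbK_{\omega}$'s. The polar decomposition of the closed operator $\mbK_{\omega}$ identifies its partial isometry with the bounded unitary $\mathbf{U}_{-\omega} \otimes 1 \in \mathcal{M}$ and its absolute value with $q^{-\frac{1}{2}(\omega,\omega)} \mbZ_{\omega} \otimes 1$, affiliated with $\mathcal{M}$. The joint eigenvalues $(q^{-(\omega,\chi)})_{\omega \in P}$ of the family $\{\mbZ_{\omega}\}_{\omega \in P}$ separate the points $\chi \in P$, so bounded Borel functional calculus produces every rank-one diagonal projection $|e_{\chi}\rangle\langle e_{\chi}| \otimes 1$; combined with the shifts this generates all matrix units on $L^2_{\hol,q}(H)$ and yields $B(L^2_{\hol,q}(H)) \otimes 1 \subseteq \mathcal{M}$.

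Next I would extract $1 \vNtimes \mbX_r$ and $1 \vNtimes \mbX_r^*$. Since $\mathbf{U}_{\alpha_r} \otimes 1$ is a bounded unitary in $\mathcal{M}$, the product $(\mathbf{U}_{\alpha_r} \otimes 1) \mbE_r$ agrees on the core $L^2_{\hol,q}(B)_0$ with $1 \otimes X_r$, hence equals the closure $1 \vNtimes \mbX_r$, which is therefore affiliated with $\mathcal{M}$. Lemma \ref{LemClosAd} identifies $\mbE_r^*$ as the closure of $E_r^* = U_{\alpha_r} \otimes X_r^*$ on the same core, and the analogous multiplication shows that $1 \vNtimes \mbX_r^*$ is affiliated with $\mathcal{M}$.

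The heart of the argument, and the main obstacle, is to deduce that the von Neumann algebra generated by $\{1 \vNtimes \mbX_r, 1 \vNtimes \mbX_r^* : r \in I\}$ equals $1 \otimes B(L^2_{\hol,q}(N))$; I would prove this by showing that its commutant is trivial. Let $S \in B(L^2_{\hol,q}(N))$ strongly commute with every $\mbX_r$ and every $\mbX_r^*$. By Corollary \ref{CorGradHW} together with the isomorphism $L^2_{\hol,q}(N)_0 \cong U_q(\mfn)^*$ of Theorem \ref{TheoUnique}, the core decomposes as $\bigoplus_{\gamma \in -Q^+} V_{\gamma}$ with finite-dimensional weight spaces, and $\mbX_r$ is the orthogonal direct sum of the finite-rank blocks $V_{\gamma} \to V_{\gamma + \alpha_r}$. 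Consequently $\mbX_r \eta = 0$ forces $X_r \eta_{\gamma} = 0$ for every component $\eta_{\gamma}$; combined with the uniqueness of the highest weight vector, the joint kernel of the $\mbX_r$'s in $L^2_{\hol,q}(N)$ is $\C \xi_0$ with $\xi_0 = e_0^{\otimes M}$. Since $\mbX_r S \xi_0 = S \mbX_r \xi_0 = 0$, we get $S\xi_0 = \lambda \xi_0$ for some $\lambda \in \C$; strong commutation of $S$ with the $\mbX_r^*$'s then propagates this inductively to $S (X_{r_1}^* \cdots X_{r_k}^* \xi_0) = \lambda X_{r_1}^* \cdots X_{r_k}^* \xi_0$, so that $S = \lambda \cdot 1$ on the dense subspace $U_q^0(\mfn_{\R}) \xi_0 = L^2_{\hol,q}(N)_0$. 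Combining with the first step gives $\mathcal{M} \supseteq B(L^2_{\hol,q}(H)) \vNtimes B(L^2_{\hol,q}(N)) = B(L^2_{\hol,q}(B))$, which is the required equality.
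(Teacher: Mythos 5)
Your proof is correct and follows essentially the same strategy as the paper: split off $B(L^2_{\hol,q}(H))$ using the Cartan generators, reduce to showing that the $\mbX_r$ generate $B(L^2_{\hol,q}(N))$, and conclude via the highest weight vector and its cyclicity. The only (minor) divergence is in how the line $\C\xi_0$ is pinned down: the paper produces the rank-one projection onto $\xi_0$ \emph{inside} the generated algebra from the spectral data of the $\mbX_r$ (via the diagonal operators $C_r$), whereas you show directly that any commutant element preserves the joint kernel $\bigcap_r \ker \mbX_r = \C\xi_0$; both arguments then finish by cyclicity of $\xi_0$.
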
 

\begin{proof} It is clear that the $\mbK_{\omega},\mbL_{\omega}$ generate $B(L^2_{\hol,q}(H))$. It is then sufficient to prove that the $\mbX_r$ generate $B(L^2_{\hol,q}(N))$. However, by considering the decomposition in terms of the rank $1$ case as in the proof of Lemma \ref{LemClosAd}, it follows that the von Neumann algebra generated by the $\mbX_r$ contains operators $C_{r}$ such that \[C_{r} \xi = q^{-(\check{\alpha}_r,\wt(\xi))}\xi,\qquad \xi \in L^2_q(N)_0,\] where $\wt$ is the weight on $L^2_{\hol,q}(N)_0$ inherited from $U_q(\mfn)^* \cong L^2_{\hol,q}(N)_0$ by Corollary \ref{CorGradHW} and Corollary \ref{CorFaithRep}. It follows in particular that the von Neumann algebra generated by the $\mbX_r$ will contain the projection onto the highest weight vector. But then any operator in the commutant must preserve the one-dimensional subspace spanned by the highest weight vector, and is hence a scalar as the highest weight vector is cyclic. 
\end{proof} 

Propositions \ref{PropAdConc} and \ref{PropAdGen} give a complete description of $(\msR_q^0(B_{\R}),\alpha)$ as generated by unbounded operators. 

\section{Conclusion and outlook}

In this article, we have constructed by means of Theorem \ref{TheoDualMain} a duality theory for von Neumann algebraic Galois objects whose underlying von Neumann algebra is a type $I$-factor. We then have shown in Theorem \ref{TheoAdisAd} how, for $\mfg$ a semisimple complex Lie algebra with Borel subalgebra $\mfb$ and compact form $\mfk$, a particular Hopf algebraic Galois object $U_q^0(\mfb_{\R})$ for a certain amplification $U_q^{+}(\mfb_{\R})$ of $U_q(\mfk)$ can be integrated into an operator algebraic $I$-factorial Galois object $\msR_q^0(B_{\R})$ for the operator algebraic integration $\msR_q^+(B_{\R})$ for $U_q^+(\mfb_{\R})$.

Let us sketch in the following how this theory can be further developed. In \cite{Sch96}, it is shown how a Hopf algebraic (right) Galois object $A$ for a Hopf algebra $H$ can be completed into a \emph{bi-Galois object}, meaning that there exists a (unique) new Hopf algebra $L$ with a (left) Galois structure on $A$, commuting with the coaction of $H$. For example, if $H$ is a Hopf algebra and $\omega,\chi$ two $2$-cocycles on $H$, then ${}_{\chi}H_{\chi^{-1}}$ is the new Hopf algebra constructed out of the right ${}_{\omega}H_{\omega^{-1}}$-Galois object ${}_{\chi}H_{\omega^{-1}}$. For the right Galois object $U_q^0(\mfb_{\R})$, this will lead to the Hopf $*$-algebra $U_q(\mfb_{\R})$ which is generated by a copy of $U_q(\mfb)$ and a copy of $U_q(\mfb^-)$ such that $E_r^* = F_rL_{\alpha_r}$, $K_{\omega}^* = L_{\omega}$ and such that the following interchange relations holds: 
\[K_{\omega}L_{\chi} =  L_{\chi}K_{\omega},\]
\[K_{\omega} F_r = q^{(\omega,\alpha_r)}F_rK_{\omega},\qquad L_{\omega}E_r = q^{-(\omega,\alpha_r)}E_rL_{\omega}\]\[\lbrack E_r,F_s\rbrack = 0.\]

On the other hand, in \cite{DeC11} it is shown how from an operator algebraic Galois object can also be constructed a new quantum group von Neumann algebra, providing an analytic version of Schauenburgs result. Given our Theorem \ref{TheoAdisAd}, we see that the new quantum group von Neumann algebra $\msR_q(B_{\R})$, constructed from the Galois object $(\msR_q^0(B_{\R}),\alpha)$, can be seen as the operator algebraic analogue of $U_q(\mfb_{\R})$. However, the precise relation between $U_q(\mfb_{\R})$ and $\msR_q(B_{\R})$ requires some further delicate analysis, which we leave for a future occasion.

\end{document}